\newtheorem{theorem}{Theorem}
\newtheorem{corollary}{Corollary}
\newtheorem{lemma}{Lemma}
\newtheorem{proposition}{Proposition}
\theoremstyle{definition}
\newtheorem{definition}{Definition}
\theoremstyle{remark}
\newtheorem{remark}{Remark}
\numberwithin{equation}{section}
\numberwithin{lemma}{section}
\numberwithin{remark}{section}
\newcommand\R{\mathbb{R}}
\newcommand\Z{\mathbb{Z}}
\newcommand\N{\mathbb{N}}
\newcommand\zp{\zeta_+}
\newcommand\zm{\zeta_-}
\newcommand\zpm{\zeta_\pm}
\newcommand\zn{\zeta_0}
\newcommand\pp{\boldsymbol{\phi}}
\newcommand\alphab{\boldsymbol{\alpha}}
\newcommand\betab{\boldsymbol{\beta}}
\newcommand\Omegab{\boldsymbol{\Omega}}
\newcommand\ab{\boldsymbol{a}}
\newcommand\uu{\boldsymbol{u}}
\newcommand\ww{\boldsymbol{w}}
\newcommand\zz{\boldsymbol{z}}
\newcommand\hh{\boldsymbol{h}}
\newcommand\kk{\boldsymbol{k}}
\newcommand\jj{\boldsymbol{j}}
\newcommand\gb{\boldsymbol{g}}
\newcommand\ff{\boldsymbol{f}}
\newcommand\vv{\boldsymbol{v}}
\newcommand\AAA{\boldsymbol{A}}
\newcommand\BB{\boldsymbol{B}}
\newcommand\DD{\boldsymbol{D}}
\newcommand\EE{\boldsymbol{E}}
\newcommand\FF{\boldsymbol{F}}
\newcommand\GG{\boldsymbol{G}}
\newcommand\HH{\boldsymbol{H}}
\newcommand\JJ{\boldsymbol{J}}
\newcommand\LL{\boldsymbol{L}}
\newcommand\RR{\boldsymbol{R}}
\newcommand\TT{\boldsymbol{T}}
\newcommand\VV{\boldsymbol{V}}
\newcommand\cB{\mathcal{B}}
\newcommand\cC{\mathcal{C}}
\newcommand\cF{\mathcal{F}}
\newcommand\cA{\mathcal{A}}
\newcommand\cH{\mathcal{H}}
\newcommand\cI{\mathcal{I}}
\newcommand\cJ{\mathcal{J}}
\newcommand\cT{\mathcal{T}}
\newcommand\cG{\mathcal{G}}
\newcommand\cK{\mathcal{K}}
\newcommand\cLL{\mathcal{L}}
\newcommand\cP{\mathcal{P}}
\newcommand\cE{\mathcal{E}}
\newcommand\cM{\mathcal{M}}
\newcommand\cN{\mathcal{N}}
\newcommand\cR{\mathcal{R}}
\newcommand\cZ{\mathcal{Z}}
\newcommand\xib{\boldsymbol{\xi}}
\newcommand\ONE{\mathrm{1\kern-0.25em l}}
\newcommand\MM{M}
\newcommand\Mb{\boldsymbol{M}}
\newcommand\OO{O}
\newcommand\TTT{T_2}
\newcommand\RRR{R_2}
\newcommand\SSS{S_2}
\newcommand\SSb{\boldsymbol{S}}
\newcommand\TTb{\boldsymbol{T}}
\newcommand\NN{N}
\newcommand\DSG{{\textnormal{dsG}}}
\newcommand\SSG{{\textnormal{ssG}}}
\newcommand\SG{{\textnormal{sG}}}
\newcommand\cL{L}
\newcommand\ud{\, \textnormal{d}}
\newcommand{\inprod}[2]{\left\langle{#1},{#2}\right\rangle}
\newcommand{\norm}[1]{\left\| #1 \right\|}
\newcommand{\XX}{X_\varepsilon}
\newcommand{\PSI}{\psi_{A,B}}
\newcommand{\CHI}{\chi_A}
\DeclareMathOperator\sech{sech}
\DeclareMathOperator\Id{\boldsymbol I}
\title[Asymptotic stability of kinks for scalar fields]
{A sufficient condition for asymptotic stability of kinks in general (1+1)-scalar field models}
\author[M. Kowalczyk]{Micha{\l} Kowalczyk}
\address{Departamento de Ingenier\'{\i}a Matem\'atica and Centro
de Modelamiento Matem\'atico (UMI 2807 CNRS), Universidad de Chile, Casilla
170 Correo 3, Santiago, Chile}
\email {kowalczy@dim.uchile.cl}
\author[Y. Martel]{Yvan Martel}
\address{CMLS, \'Ecole polytechnique, CNRS, Institut Polytechnique de Paris, 91128 Palaiseau Cedex, France}
\email{yvan.martel@polytechnique.edu}
\author[C. Mu\~noz]{Claudio Mu\~noz}
\address{CNRS and Departamento de Ingenier\'{\i}a Matem\'atica and Centro
de Modelamiento Matem\'atico (UMI 2807 CNRS), Universidad de Chile, Casilla
170 Correo 3, Santiago, Chile}
\email{cmunoz@dim.uchile.cl}
\author[H. Van Den Bosch]{Hanne Van Den Bosch}
\address{Departamento de Ingenier\'{\i}a Matem\'atica and Centro
de Modelamiento Matem\'atico (UMI 2807 CNRS), Universidad de Chile, Casilla
170 Correo 3, Santiago, Chile}
\email{hannevdbosch@gmail.com}
\thanks{M.K. was partially funded by Chilean research grant FONDECYT 1170164.
C.M. was partially funded by Chilean research grant FONDECYT 1191412.
H.VDB. was partially funded by Chilean research grants FONDECYT 3180059 and REDI 170157.
M.K., H.VDB. and C.M. were partially funded by project France-Chile ECOS-Sud C18E06, MathAmSud EEQUADDII 20-MATH-04 and CMM Conicyt PIA AFB170001.
Part of this work was done while Y.M. was visiting Centro
de Modelamiento Matem\'a\-tico, Universidad de Chile, whose hospitality is acknowledged.}
\subjclass[2010]{35L71 (primary), 35B40, 37K40}
\begin{document}
\begin{abstract}
We study stability properties of kinks for the (1+1)-dimensional nonlinear scalar field theory models
\begin{equation*}
\partial_t^2\phi -\partial_x^2\phi + W'(\phi) = 0, \quad (t,x)\in\R\times\R.
\end{equation*}
The orbital stability of kinks under general assumptions on the potential $W$ is a consequence of energy arguments. Our main result is the derivation of 
a simple and explicit sufficient condition on the potential $W$ for the asymptotic stability of a given kink. 
This condition applies to any static or moving kink, in particular no symmetry assumption is required.
Last, motivated by the Physics literature, we present applications of the criterion to the $P(\phi)_2$ theories and the double sine-Gordon theory.
\end{abstract}
\maketitle
\tableofcontents

\section{Introduction}
\subsection{General setting}
This article is concerned with the stability of kinks for general (1+1)-dimensional nonlinear scalar field theory models
\begin{equation}\label{eq:W}
\partial_t^2\phi -\partial_x^2\phi + W'(\phi) = 0, \quad (t,x)\in\R\times\R.
\end{equation}
This model rewrites as a first order system for $\pp=(\phi,\partial_t\phi)=(\phi_1,\phi_2)$
\begin{equation}\label{syst}
\begin{cases}
\partial_t \phi_1 = \phi_2 \\
\partial_t \phi_2 = \partial_x^2 \phi_1 -W'(\phi_1). 
\end{cases}
\end{equation}
The assumptions on the potential $W$ are standard
\begin{equation}\label{on:W}
\begin{cases} 
\mbox{$W:\R\to [0,\infty)$ is of class $\cC^3$,}\\
\mbox{$W$ has at least two zeros $\zm,\zp\in \R$ such that $\zm<\zp$,}\\
\mbox{$W'(\zpm)=0$, $W''(\zpm)>0$, and $W>0$ on $(\zm,\zp)$.}
\end{cases}
\end{equation}
Such assumptions ensure that~\eqref{eq:W} admits a \emph{kink} corresponding to the heteroclinic orbit of the equation $h''=W'(h)$ connecting the two consecutive vacua $\zm$ and $\zp$, and whose main properties are gathered in the following statement
(see \S\ref{s:2.1} for a proof and further properties).
\begin{lemma}[Static kink]\label{pr:H}
Assume~\eqref{on:W}.
There exists a solution $H$ of class $\cC^4$ of the equation
\begin{equation*}
\begin{cases}
H'' = W'(H) \mbox{ on $\R$},\\
\lim_{-\infty} H = \zm,\quad \lim_{+\infty} H=\zp,
\end{cases}
\end{equation*}
unique up to translation, which satisfies
$H'>0$ on $\R$. Moreover, there exist  $C>0$ and $\omega>0$ such that for any $k=1,2,3,4$,
\begin{equation}\label{eq:Hasym}
|H(x)-\zpm|\leq C e^{\mp \omega x},\quad |H^{(k)}(x)|\leq C e^{-\omega |x|} \quad \mbox{on $\R$}.
\end{equation}
\end{lemma}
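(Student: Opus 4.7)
The plan is to exploit the fact that the ODE $H'' = W'(H)$ is conservative, with first integral $E(H,H') = \tfrac{1}{2}(H')^2 - W(H)$. Any solution with $H(x) \to \zeta_\pm$ as $x \to \pm\infty$ must have $H'(x) \to 0$ there (otherwise one would exit a bounded range), so it lives on the level set $E \equiv 0$, i.e.
\begin{equation*}
(H')^2 = 2 W(H).
\end{equation*}
On $(\zm,\zp)$, $W>0$, and a solution cannot have $H'$ vanish in the interior (that would give $H\equiv\text{const}$ by the existence-uniqueness theorem applied to the autonomous equation), so $H'$ keeps a sign; the requirement $\lim_{-\infty}H=\zm<\zp=\lim_{+\infty}H$ forces $H' > 0$ and hence $H' = \sqrt{2W(H)}$.

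Next I would construct $H$ explicitly by separation of variables. Fix any $h_0\in(\zm,\zp)$ and set
\begin{equation*}
F(h) := \int_{h_0}^h \frac{ds}{\sqrt{2 W(s)}}, \qquad h\in(\zm,\zp).
\end{equation*}
The key point is that $F$ is a $\cC^1$ bijection from $(\zm,\zp)$ onto $\R$: near $\zpm$, the condition $W''(\zpm)>0$ together with $W(\zpm)=W'(\zpm)=0$ gives $W(s)\sim \tfrac{1}{2}W''(\zpm)(s-\zpm)^2$, so the integrand behaves like $|s-\zpm|^{-1}/\sqrt{W''(\zpm)}$, which is non-integrable at both endpoints. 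Defining $H := F^{-1}$ on $\R$ then gives $H'=\sqrt{2W(H)}>0$, $H''=W'(H)$, and the required limits at $\pm\infty$. Translation invariance of the equation accounts for the non-uniqueness; uniqueness up to translation follows from standard ODE uniqueness applied with the normalization $H(0)=h_0$. Regularity comes from bootstrapping: $H\in\cC^2$ directly from the equation; differentiating twice more using $W\in\cC^3$ yields $H\in\cC^4$.

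For the exponential decay, I would linearize at $\zp$. Setting $v(x):=\zp-H(x)>0$, the identity $(H')^2=2W(H)$ combined with the Taylor expansion $2W(\zp-v)=W''(\zp)v^2+O(v^3)$ gives
\begin{equation*}
v'(x) = -\sqrt{W''(\zp)}\,v(x)\,(1+o(1)) \quad \text{as } x\to+\infty.
\end{equation*}
A Gronwall-type comparison (or direct integration of this scalar ODE) then yields $v(x)\le C e^{-\omega x}$ for any $\omega<\sqrt{W''(\zp)}$ and $x$ large; the same argument at $-\infty$ gives the analogous bound for $\zm-H$. Picking $\omega$ smaller than both $\sqrt{W''(\zpm)}$ and extending the bound to all of $\R$ by compactness on bounded intervals yields the first estimate in \eqref{eq:Hasym}. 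The bounds on $H^{(k)}$ for $k=1,2,3,4$ then follow inductively by differentiating $H''=W'(H)$ and using $|H'|=\sqrt{2W(H)}\lesssim |H-\zpm|$, together with $W\in\cC^3$.

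The only mildly delicate step is justifying the sharp exponential rate and keeping track of constants when differentiating; the improper integral convergence check at $\zpm$ and the linearization near the vacua are the true content, and both rest directly on the nondegeneracy hypothesis $W''(\zpm)>0$ built into \eqref{on:W}. I do not anticipate any genuine obstacle: this is the classical phase-plane construction of a heteroclinic for a conservative one-dimensional mechanical system, with the extra verification that $W$'s nondegeneracy at the wells gives both the bijectivity of $F$ and the exponential approach.
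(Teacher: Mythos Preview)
Your approach is essentially identical to the paper's: both derive the first integral $(H')^2=2W(H)$, invert the separating integral $\int ds/\sqrt{2W(s)}$ (the paper calls it $G$ and normalizes at the midpoint), and read off exponential asymptotics from the Taylor expansion of $W$ near the wells. One small correction: your parenthetical that $H'(x_0)=0$ would force $H\equiv\text{const}$ by ODE uniqueness is not quite right (the constant $H(x_0)$ is not a solution unless $W'$ vanishes there), but the desired conclusion $H'\neq 0$ follows immediately from $(H')^2=2W(H)>0$ on $(\zm,\zp)$, which you already have.
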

The solution $\HH=(H,0)$ of~\eqref{syst} is called the static kink.
For $c\in (-1,1)$ we define $\gamma=\gamma(c)\in [1,\infty)$ and the functions $H_c$, $\HH_c$ by
\[
\gamma = \frac 1 {\sqrt{1-c^2}} , \quad H_c(x)=H(\gamma x),\quad 
\HH_{c}(x)= 
\begin{pmatrix}
H_{c}(x) \\[2pt]
- c H'_{c}(x)
\end{pmatrix}.
\]
The Lorentz boosted versions of the kink are defined by
$\pp(t,x)=\HH_c(x-ct)$, for any $c\in (-1,1)$.
These solutions, together with their translated versions, form the kink family of the model~\eqref{syst} related to the consecutive wells $\zeta_-$, $\zeta_+$ of the potential $W$.

Recall the conservation laws of the model
\begin{align*}
\cE[\pp] & = \frac 12 \int \left[ \phi_2^2 + (\partial_x \phi_1)^2 + 2 W(\phi_1)\right],\tag{Energy}\\
\cP[\pp] & = \int \phi_2 \partial_x \phi_1.\tag{Momentum}
\end{align*}
Since $W\geq 0$ on $\R$, the set of functions $\pp\in L^1_{\rm loc}(\R)\times L^1_{\rm loc}(\R)$ for which
the energy is finite is
\[
\EE=\left \{\pp\in L^1_{\rm loc}(\R)\times L^1_{\rm loc}(\R) :
\partial_x \phi_1 \in L^2(\R),\ \sqrt{W(\phi_1)}\in L^2(\R),\ \phi_2\in L^2(\R)\right\}.
\]
To study the stability of $\HH$, we introduce the following subset $\EE_{\HH}$ of $\EE$
\[
\EE_{\HH}=\left\{\pp\in \EE : \pp - \HH \in H^1(\R)\times L^2(\R)\right\}.
\]
By the properties of $W$ in~\eqref{on:W} and Lemma~\ref{pr:H}, we observe that 
\[
\EE_{\HH} = \left\{\pp\in L^1_{\rm loc}(\R)\times L^1_{\rm loc}(\R) :
\partial_x \phi_1 \in L^2(\R),\ \phi_1-H\in L^2(\R),\ \phi_2\in L^2(\R)\right\}.
\]
For $c\in (-1,1)$ and $\vv=(v_1,v_2)\in H^1(\R)\times L^2(\R)$, we set
\begin{equation*}
\norm{\vv}_{c}^2= \gamma^{-1} \norm{\partial_x v_1}_{L^2}^2
+\gamma \norm{v_1}_{L^2}^2+ \gamma \norm{v_2+c\partial_x v_1}_{L^2}^2,
\end{equation*}
and for $R>0$,
\begin{equation*}
\norm{\vv}_{c,R}^2= \gamma^{-1} \norm{\partial_x v_1}_{L^2(|x|<R)}^2
+\gamma \norm{v_1}_{L^2(|x|<R)}^2+ \gamma \norm{v_2+c\partial_x v_1}_{L^2(|x|<R)}^2.
\end{equation*}

\subsection{Main results}
For the sake of completeness, we state the result of orbital stability of the kink family in the general context~\eqref{on:W}.
The proof given in Sections~\ref{S:2} and~\ref{S:3} relies on standard energy arguments; see also~\cite{MR678151,Lohe}.

\begin{theorem}[Orbital stability]\label{th:1}
Assume~\eqref{on:W}.
There exist $\delta_*>0$ and $C_*>0$ such that for any $c_0\in (-1,1)$
and any $\pp^{in}\in \EE_{\HH}$ with
\[
\norm{\pp^{in} - \HH_{c_0}}_{c_0} \leq \delta_*,
\]
there exists a unique global solution $\pp\in \cC(\R,\EE_{\HH})$ of~\eqref{eq:W} 
with $\pp(0)=\pp^{in}$. Moreover, for all $t\in \R$
\begin{equation*}
\inf_{y\in \R} \norm{\pp(t,\cdot+y) - \HH_{c_0}}_{c_0}
\leq C_* \norm{\pp^{in} - \HH_{c_0}}_{c_0}.
\end{equation*}
\end{theorem}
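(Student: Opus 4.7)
\textbf{Proof proposal for Theorem~\ref{th:1}.} The plan is to follow the classical Cazenave--Lions/Weinstein-type scheme using a conserved Lyapunov functional adapted to the boost. First I would set up local well-posedness of \eqref{syst} in the affine energy space $\HH_{c_0}+H^1(\R)\times L^2(\R)$: since $W\in\cC^3$ and $\pp-\HH_{c_0}\in H^1\times L^2$ keeps $\phi_1$ in a uniformly bounded range up to exponentially decaying tails, the nonlinearity $W'(\phi_1)-W'(H_{c_0})$ is locally Lipschitz from $H^1$ to $L^2$, so a standard Duhamel fixed point gives a unique maximal solution $\pp\in\cC(I,\EE_{\HH})$. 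Global existence will follow a posteriori from the orbital stability bound, since the latter keeps $\|\pp(t)-\HH_{c_0}(\cdot-y(t))\|_{c_0}$ uniformly bounded.

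The Lyapunov functional is $\cF_{c_0}[\pp]:=\cE[\pp]+c_0\cP[\pp]$, which is conserved in time. A direct computation using $H_{c_0}''=\gamma^2 W'(H_{c_0})$ and the identity $\gamma^2(1-c_0^2)=1$ shows that $\HH_{c_0}$ is a critical point of $\cF_{c_0}$ on $\EE_{\HH}$, with Taylor expansion around $\HH_{c_0}$ in the variable $\vv=\pp-\HH_{c_0}\in H^1\times L^2$ of the form
\begin{equation*}
\cF_{c_0}[\HH_{c_0}+\vv]-\cF_{c_0}[\HH_{c_0}]
=\tfrac12 Q_{c_0}(\vv,\vv)+\cN_{c_0}(\vv),
\end{equation*}
where the quadratic form $Q_{c_0}$ is
\begin{equation*}
Q_{c_0}(\vv,\vv)=\int\bigl[(\partial_x v_1)^2+W''(H_{c_0})v_1^2+(v_2+c_0\partial_x v_1)^2\bigr]-c_0^2\int(\partial_x v_1)^2,
\end{equation*}
and the cubic remainder $\cN_{c_0}(\vv)=O(\|\vv\|_{c_0}^3)$ comes from the Taylor remainder of $W$ together with the uniform $L^\infty$ bound on $v_1$ obtained from Sobolev embedding.

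The central step is the coercivity of $Q_{c_0}$ modulo the translation direction. The linearized operator $L_{c_0}=-\partial_x^2+W''(H_{c_0})$ is a Schrödinger operator on $\R$ satisfying $L_{c_0}H_{c_0}'=0$; since $H_{c_0}'>0$, Sturm--Liouville theory identifies $H_{c_0}'$ as the (positive) ground state, so $L_{c_0}\geq 0$ with kernel spanned by $H_{c_0}'$ and a strictly positive spectral gap. Standard arguments (see, e.g., Weinstein) then give $\kappa>0$ such that for $\vv\in H^1\times L^2$ with $\langle v_1,H_{c_0}'\rangle=0$,
\begin{equation*}
Q_{c_0}(\vv,\vv)\geq \kappa\,\|\vv\|_{c_0}^2.
\end{equation*}
To enforce this orthogonality, I would modulate: by the implicit function theorem, for $\pp$ sufficiently close to the orbit $\{\HH_{c_0}(\cdot-y)\}_{y\in\R}$ there is a unique $y=y(\pp)$ such that $\vv(\pp):=\pp(\cdot+y)-\HH_{c_0}$ satisfies $\langle v_1,H_{c_0}'\rangle=0$, with $\|\vv\|_{c_0}\lesssim\inf_y\|\pp(\cdot+y)-\HH_{c_0}\|_{c_0}$.

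Finally, I would combine the ingredients via a bootstrap. Applying the expansion both at time $0$ (without modulation) and at time $t$ (with modulation $y(t)$), and using conservation $\cF_{c_0}[\pp(t)]=\cF_{c_0}[\pp^{in})]$ together with translation invariance of $\cF_{c_0}$, yields
\begin{equation*}
\kappa\|\vv(t)\|_{c_0}^2 \leq C\|\pp^{in}-\HH_{c_0}\|_{c_0}^2+C\bigl(\|\vv(t)\|_{c_0}^3+\|\pp^{in}-\HH_{c_0}\|_{c_0}^3\bigr).
\end{equation*}
For $\delta_*$ small enough, a standard continuity argument in $t$ forces $\|\vv(t)\|_{c_0}\leq C_*\|\pp^{in}-\HH_{c_0}\|_{c_0}$, which is exactly the orbital stability bound; this a priori control also rules out finite-time blow-up and gives $\pp\in\cC(\R,\EE_{\HH})$. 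The main obstacle I anticipate is the coercivity step: identifying $H_{c_0}'$ as the ground state of $L_{c_0}$ under only the general hypotheses~\eqref{on:W}, and handling the cross term $-c_0^2\int(\partial_x v_1)^2$ in $Q_{c_0}$, which requires the weighted norm $\|\cdot\|_{c_0}$ to be chosen precisely so that $Q_{c_0}$ remains positive definite on the orthogonal complement; everything else is standard energy/modulation bookkeeping.
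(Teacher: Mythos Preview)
Your approach is correct and is the classical one: fix the speed $c_0$, use the Lyapunov functional $\cE+c_0\cP$, modulate only in the translation $y$ with the single orthogonality $\langle v_1,H_{c_0}'\rangle=0$, and close by coercivity of $L_{c_0}$ on $(H_{c_0}')^\perp$. After the rescaling $v_1(x)=w_1(\gamma_0 x)$ the quadratic part becomes $\gamma_0^{-1}\bigl(\|v_2+c_0\partial_x v_1\|^2_{L^2}\cdot\gamma_0+\langle \cL w_1,w_1\rangle\bigr)$ with $\cL=-\partial_x^2+W''(H)$, so both the upper bound and the coercivity constant scale like $\gamma_0^{-1}$ and cancel in the bootstrap; this is how you get $\delta_*$ and $C_*$ independent of $c_0$, which you should make explicit.

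The paper does it differently. It modulates in \emph{both} parameters $(c,y)$ with two orthogonality conditions $\langle\uu,\GG_{c,y}\rangle=\langle\uu,\FF_{c,y}\rangle=0$, and replaces your Lyapunov functional by the Lorentz-invariant quantity $\cM=\cE^2-\cP^2$, which has the pleasant feature $\cM[\HH_{c,y}]=\|H'\|_{L^2}^4$ independently of $c$. Coercivity of $\cM$ then controls $\|\uu(t)\|_{c_0}$, and a separate use of momentum conservation controls $|c(t)-c_0|$. Your route is shorter for Theorem~\ref{th:1} alone; the paper's two-parameter modulation is not needed for orbital stability per se, but it is exactly the framework reused in \S\ref{S:4} for asymptotic stability, where the speed genuinely drifts and one must track $c(t)\to c_\pm$. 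So the trade-off is: your argument is more economical here, the paper's is an investment that pays off in the next section.
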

\begin{remark}
In the statement of Theorem~\ref{th:1}, the constants $\delta_*$ and $C_*$ are independent of the speed $c_0$ of the kink. To state such an optimal stability result, one has to use the norm $\|\cdot\|_{c_0}$.
Moreover, while equivalent to $\|\cdot\|_{H^1\times L^2}$, this norm seems more natural since it measures the perturbation in the moving frame of the kink:
if $\pp$ is a Lorentz boosted version of the kink with speed $c$, then
$\phi_2+c\partial_x \phi_1=0$.
\end{remark}

We turn to the notion of asymptotic stability.

\begin{definition}[Asymptotic stability]\label{def:1} Assume \eqref{on:W}.
The kink $H$ is said to be \emph{asymptotically stable} if
for any $c_0\in (-1,1)$ there exists $\delta_0\in (0,\delta_*]$ such that
for any $\pp^{in}\in \EE_{\HH}$ with
\[
\norm{\pp^{in} - \HH_{c_0}}_{c_0}\leq \delta_0,
\]
there exist $c_-,c_+ \in (-1,1)$ and a $\cC^1$ function $y:\R\to \R$ with
$\lim_{t\to\pm\infty} \dot y(t)= c_\pm$,
such that
the global solution $\pp$ of~\eqref{eq:W} with $\pp(0)=\pp^{in}$ satisfies, for any $R>0$
\begin{equation*}
\lim_{t\to \pm \infty} \|\pp(t,\cdot +y(t)) - \HH_{c_\pm}\|_{c_\pm,R} =0.
\end{equation*}
\end{definition}

This definition gives a strong notion of asymptotic completeness of the family of kinks defined from $H$, requiring that any translated and Lorentz boosted version of the kink is asymptotically stable, under any small perturbation in the energy space. In this definition, 
the solution $\pp(t)$ approaches as $t\to\pm\infty$ the final moving kinks $\HH_{c_\pm}(x-y(t))$  
\emph{locally in space} around the time-dependent kink position $y(t)$.

The goal of this article is to exhibit a  simple and general \emph{sufficient condition} on the potential $W$ for asymptotic stability, without assuming any symmetry property for the potential nor for the kink.
To formulate the condition, we introduce the \emph{transformed potential}
$V:(\zm,\zp)\to \R$ defined by
\begin{equation}\label{def:V}
V=- W\left(\log W\right)''.
\end{equation}
The main result of this article is the following theorem, proved in Section~\ref{S:4}.
\begin{theorem}[Sufficient condition for asymptotic stability] \label{th:2}
Assume~\eqref{on:W}. If the transformed potential $V$ satisfies $V'\not\equiv 0$ on $(\zm,\zp)$ and
\begin{equation}\label{on:V}
\mbox{there exists $\zn\in [\zm,\zp]$
such that $(\phi-\zeta_0)V'(\phi)\leq 0$ for all $\phi\in (\zm,\zp)$,}
\end{equation}
then the kink $H$ is asymptotically stable.
\end{theorem}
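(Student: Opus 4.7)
The plan is to extend the virial-method strategy developed for $\phi^4$ and related models by Kowalczyk--Martel--Muñoz, the novelty being that the hypothesis \eqref{on:V} is precisely the algebraic condition making the virial identity for the Darboux-transformed problem coercive. First, by a standard modulation argument, decompose the solution as
\[
\pp(t,x) = \HH_{c(t)}(x-y(t)) + \vv(t, x-y(t)),
\]
with $(c(t), y(t))$ chosen so that $\vv$ satisfies two orthogonality conditions killing the tangent directions $\partial_c \HH_{c(t)}$ and $\partial_x \HH_{c(t)}$. Theorem~\ref{th:1} together with an implicit function argument produces such a decomposition globally in $t$, preserves $\|\vv(t)\|_{c(t)}\lesssim \delta_0$, and yields modulation equations
\[
|\dot c(t)| + |\dot y(t) - c(t)|\lesssim \|\vv(t)\|_{c(t),R_0}^2
\]
for a fixed $R_0$. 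Convergence $c(t)\to c_\pm$, $\dot y(t)\to c_\pm$, and the local-in-space convergence required by Definition~\ref{def:1} all follow from the integrability estimate $\int_{\R} \|\vv(t)\|_{c(t),R}^2\, dt < \infty$ for every $R>0$, combined with a standard continuity-in-time argument. The rest of the proof is devoted to establishing this bound via two virial identities.

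The first virial is applied to $\vv$ itself with a multiplier of Morawetz type built from an odd bounded weight $\zeta_A(x) = A\tanh(x/A)$. Differentiating a suitable quantity like $\int \zeta_A\,(v_2 + c\partial_x v_1)\,\partial_x v_1\,dx$ in time, then integrating by parts and absorbing modulation errors using the smallness of $\delta_0$, I expect a bound of the form
\[
\int_\R \int_\R \sech^2(x/A)\, \bigl[(\partial_x v_1)^2 + (v_2 + c\partial_x v_1)^2\bigr]\,dx\,dt \lesssim \delta_0^2.
\]
This controls the derivative of $v_1$ and the moving-frame conjugate momentum locally near the kink, but not $v_1$ itself, because the linearized spatial operator $\cLL_c = -\gamma^{-2}\partial_x^2 + W''(H_c)$ has the nontrivial kernel spanned by $H'_c$.

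To recover local $L^2$ control of $v_1$, I would use the Darboux factorization of $\cLL_c$. Setting $\mu = -\gamma^{-1} H''_c/H'_c$ and $\AAA = \gamma^{-1}\partial_x + \mu$, the positive ground state $H'_c$ satisfies $\AAA H'_c = 0$ and $\cLL_c = \AAA^*\AAA$; a direct computation using $(H')^2 = 2W(H)$ shows that the partner operator is
\[
\AAA\AAA^* = -\gamma^{-2}\partial_x^2 + V(H_c(x)),
\]
with $V = -W(\log W)''$ exactly the transformed potential \eqref{def:V}. The transformed variable $w = \AAA v_1$ then satisfies a Klein--Gordon--type equation governed by $\AAA\AAA^*$, with source terms controlled by the first virial. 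By monotonicity of $H_c$ and condition \eqref{on:V}, the composed potential $V(H_c(x))$ is single-humped with unique maximum at $x_0 := H_c^{-1}(\zn)$, and $\partial_x V(H_c(x))$ has sign opposite to $x-x_0$. Running a second virial identity on $w$ with an odd weight centred at $x_0$, say $\phi_B(x) = \tanh((x-x_0)/B)$, the critical potential term
\[
-\tfrac12 \int_\R \phi_B(x) \,\partial_x V(H_c(x))\, w^2\, dx \ge 0
\]
is nonnegative pointwise and strictly positive on a neighborhood of $x_0$ thanks to $V'\not\equiv 0$; combined with the usual positive contribution $\tfrac12\int \phi_B'(x)\,(\partial_x w)^2\,dx$, this produces a coercive virial for $w$ localized near $x_0$.

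The most delicate point, and the main technical obstacle, is to close the bookkeeping between the two virials, the modulation, and any remaining spectral obstruction. One must show: (i) that the modulation and coupling corrections appearing in the equation for $w$ are absorbed by the coercivity from \eqref{on:V}, exploiting $\AAA H'_c = 0$ to automatically decouple $w$ from the translation zero mode of $\cLL_c$; (ii) that condition \eqref{on:V}, which morally places all of the spectral obstruction at the maximum of $V$ located at $\zn$, implies the absence of nonzero bound states of $\AAA\AAA^*$ in the gap $(0, \min W''(\zpm))$, so that the second virial actually controls $w$ in $L^2_{\mathrm{loc}}$ with no loss; and (iii) that $v_1$ itself is then recovered in $L^2_{\mathrm{loc}}$ by inverting the first-order relation $\AAA v_1 = w$ on the complement of $\mathrm{span}(H'_c)$, using the orthogonality afforded by the modulation. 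Once these points are in place, chaining the two virial estimates gives the required $\int_\R \|\vv(t)\|_{c(t),R}^2\, dt < \infty$ for every $R>0$, completing the proof.
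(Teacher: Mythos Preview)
Your overall architecture---modulation, a first virial on the perturbation, Darboux factorization $\cLL=\AAA^\star\AAA$, and a second virial on the transformed variable exploiting \eqref{on:V}---is indeed the paper's strategy, but two essential ingredients are missing and the first virial is misstated.

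The most serious gap is the treatment of nonzero speed. After passing to the kink frame, the linearized system carries a transport term $2c\gamma\,\partial_x$ in the second equation, and applying $\AAA$ to $v_1$ alone does not conjugate it away. The relevant identity (Lemma~\ref{le:identities}) is
\[
(U+c\gamma\partial_t)(L-2c\gamma\partial_x\partial_t)=(L_0-2c\gamma\partial_x\partial_t)(U+c\gamma\partial_t),
\]
which forces the transformed variables to be $g_1=Uz_1+c\gamma z_2$ and $g_2=Uz_2+c\gamma\Theta(\zz)$, mixing both components. Your choice $w=\AAA v_1$ leaves $O(c)$ cross-terms of the same size as the main ones, and the second virial cannot absorb them. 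A direct consequence is that $g_2$ lies only in $H^{-1}$ (it contains $\partial_x z_2$), so the second virial functional is not even well-defined on $\gb$; the paper therefore regularizes $\ff=(1-\varepsilon\partial_x^2)^{-1}\gb$ with $\varepsilon=A^{-1/2}$ and runs the second virial on $\ff$ at a separate scale $B$ with $1\ll B\ll A$, generating a large family of error terms (\S\ref{S:4.11}) that must be tracked across the two scales. Neither the mixed transformation nor the regularization appears in your outline, and without them your step (iii) has no object of the right regularity to work with.

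Secondly, the first virial does not yield your claimed bound $\lesssim\delta_0^2$. What it gives (Proposition~\ref{pr:2}) is
\[
\int_0^T\!\!\int(\partial_x w_1+c\gamma w_2)^2\le C\bigl(A\delta^2+\|z_1\|_{T,\rho}^2\bigr),
\]
with the weighted norm $\|z_1\|_{T,\rho}^2$ still uncontrolled. The loop closes only after the transformed virial (Proposition~\ref{pr:3}) bounds the $\ff$-norms by $CA\delta^2+CA^{-1/4}\|z_1\|_{T,\rho}^2$ with a \emph{small} prefactor, and a separate coercivity step (Proposition~\ref{le:coer}) uses the two orthogonality relations to recover $\|z_1\|_{T,\rho}^2$ from $\ff$. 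Your step (iii) is the analogue of this last proposition, but it must be carried out for the regularized variable and must return a bound with no loss of smallness; this, rather than mere decoupling from $H'_c$, is where the orthogonality conditions are genuinely spent.
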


\begin{remark}
From the orbital stability result, in the context of Theorem~\ref{th:2}, it also holds, for a constant $C>0$
\begin{equation*}
\gamma_0^2 \sup_{t\in \R} |\dot y(t)-c_0|+ \gamma_0^2 |c_\pm-c_0|\leq C \norm{\pp^{in} - \HH_{c_0}}_{c_0}.
\end{equation*}
Moreover, the proof of Theorem~\ref{th:2} yields the following information: there exists a $\cC^1$ function $c:\R\to (-1,1)$ satisfying
\[
\lim_{\pm \infty} c=c_\pm\quad\mbox{and}\quad \sup_{t\in \R}|\dot c(t)-c_0|\leq C \norm{\pp^{in} - \HH_{c_0}}_{c_0},
\]
and such that, for any $R>0$
\begin{equation}\label{eq:int0T}
\int_{-\infty}^{+\infty}
\|\pp(t,\cdot +y(t)) - \HH_{c(t)}\|_{(H^1\times L^2)(|x|<R)}^2 \ud t <\infty.
\end{equation}
\end{remark}

The sufficient condition \eqref{on:V} is related to the repulsivity of the potential $V(H)$ (see Lemma~\ref{rk:P}) that appears after transformation by factorization of the linearized equation around the kink.
This condition implies that the model does not have internal mode nor resonance for the kink $H$, which are known spectral obstructions for estimates of the form \eqref{eq:int0T}.
The factorization procedure is a key tool of the proof of Theorem~\ref{th:2}. 
 We refer to \S\ref{s.4.6} for its heuristic presentation and a more technical discussion on the condition~\eqref{on:V}.

Interestingly, one easily checks that the transformed potential is constant for the (integrable) sine-Gordon equation corresponding to the potential
\begin{equation}\label{W:sineG}
W_\SG(\phi)=1-\cos \phi.
\end{equation}
The converse statement is established in \S\ref{S:5.2}.
This threshold situation $V'\equiv 0$ is excluded by Theorem~\ref{th:2}, which is consistent with 
the fact that the kink of the sine-Gordon equation is \emph{not} asymptotically stable in the sense of Definition~\ref{def:1} (see references in \S\ref{S:1.4}). This observation also motivates the study of some models approximating the sine-Gordon equation; see below and \S\ref{S:applications}.

For the $\phi^4$ model, corresponding to the potential
\begin{equation}\label{W:phi4}
W_4(\phi)=(\phi^2-1)^2,
\end{equation}
the asymptotic stability of the kink is usually conjectured but seems very challenging to prove in general mainly because of the presence of one even resonance and one odd internal mode.
In particular, the condition~\eqref{on:V} does not hold for the $\phi^4$ model (see \S\ref{S:5.3.1})
and Theorem~\ref{th:2} is inconclusive in this case.
However, the main result in \cite{KMM} establishes the asymptotic stability of the static kink in the case of \emph{odd} perturbations, avoiding the resonance, but taking into account the internal mode. 
In the same context, for odd initial data in weighted spaces,
a very recent result~\cite{DM} shows refined decay estimates on the solution, up to a time of
size $\delta_0^{-\beta}$, where $\delta_0$ is the size of the initial perturbation and $\beta$
is any number less than $4$.

The above two classical models contain the main obstructions and difficulties encountered in addressing asymptotic stability and continue to be great challenges.

This being said, we point out that  Theorem~\ref{th:2} allows us to prove new results of 
asymptotic stability for several models from the Physics literature. In Section~\ref{S:applications},
the sufficient condition~\eqref{on:V} is first checked for the $\phi^6$ model
\begin{equation}\label{W:phi6}
W_6(\phi)=\phi^2(\phi^2-1)^2,
\end{equation}
(see Theorem~\ref{th:phi6})
and then for generalized versions of the $\phi^{4n}$ and $\phi^{4n+2}$ models for a large range of parameters
(see Theorems~\ref{th:phi8}, \ref{th:phi10} and~\ref{th:lambdan}).
It is also striking that Theorem~\ref{th:2} implies asymptotic stability of kinks
for several approximations of the sine-Gordon equation
in the $P(\phi)_2$ and double sine-Gordon theories, \emph{arbitrarily close} to the sine-Gordon model
(see Theorems~\ref{th:Wt4n2}, \ref{th:Wt4n} and~\ref{th:DSG1}).

\subsection{References}\label{S:1.4}
The Physics literature provides many references motivating the mathematical study of the dynamical properties of kinks for one-dimensional  scalar field models (see \emph{e.g.} \cite{DP,Goldman,Kevrekidisbook,Lamb,MaSutbook,PeSc,Va,ViSh,MR2318156}).
Two articles have especially motivated the applications considered in Section~\ref{S:applications}:
\cite{Lohe} for the $P(\phi)_2$ theories, and ~\cite{Campbell} for the double sine-Gordon model
(see also \cite{BG,Gani2,Khare}).

Basic properties and orbital stability of kinks for general scalar field models were studied in several previous articles, such as~\cite{AIMa,MR678151,HT,Lohe}.

The results stated in Section~\ref{S:applications}, together with results in~\cite{AMP2,DM,KMM} concern the asymptotic stability of kinks for the $P(\phi)_2$ and double sine-Gordon theories.
Previous works are devoted to asymptotic stability for various related models.

First, the question is similar to the asymptotic stability of the solitons of the nonlinear focusing Klein-Gordon equation, addressed in~\cite{BCS,KMM4,KNS}. 

Second, several authors have considered the model~\eqref{eq:W} under different assumptions on the potential.
In~\cite{KK1,KK2}, the authors consider natural spectral assumptions (absence of resonance, knowledge of internal modes)
but they also need a more technical flatness condition of the potential near the wells (related to assumptions in~\cite{bus_per1,bus_per2}). Moreover, initial data in~\cite{KK1,KK2} are taken in weighted spaces.
A three-dimensional version of the $\phi^4$ model is studied in~\cite{MR2373326}, using dispersive estimates for free solutions available in higher space dimensions.

The sine-Gordon model has been the object of many studies, mainly due to its physical relevance and integrable structure (see the general references above).
Recall from~\cite{CQS} that exceptional periodic solutions called \emph{wobbling kinks} arbitrarily close to the kink,
are obstacle to the asymptotic stability of the kink in the energy space. 
However, there is no known obstruction to asymptotic stability 
in a different topology (this corrects~\cite[End of Remark 1.3]{KMM}).
We refer to~\cite{AMP2} for asymptotic stability of the kink for perturbations with symmetry.

For the sine-Gordon and the $\phi^4$ models, breathers, kink-antikink solutions 
and multi-solitons were studied mathematically in~\cite{AMP,JKL,MuPal}.
Other results concern the nonexistence of breathers for nonintegrable models~\cite{denzler,KMM1,kruskal_segur,Segur}.

More generally, the asymptotic stability of kinks is closely related to the asymptotic behavior of small solutions of nonlinear Klein-Gordon or wave-type equations, which has been studied by many different techniques and various authors, for constant or variable coefficients. 
Pioneering works on the subject are~\cite{K1,K2,Shatah1,SoWe1,SoWe2,Delort}.
We also refer to~\cite{Delort_fourier,MR2056833, Bam_Cucc,DM, GP, HN,HN1a,LLS,LLS2,LS1,LS2,LS3,Ste} for notable progress on this question,
in different directions. For the special case of the wave equation in one dimension, we refer to~\cite{LT,WY}.

Last, recall that the question of the asymptotic stability of solitary waves has been addressed by various techniques for nonlinear dispersive models, like the generalized Korteweg de Vries equation (generalization of the classical KdV equation) and variants of the nonlinear Schr\"odinger equation, with or without potential,
which have some analogy with the models considered here.
We briefly quote some results and refer to the review~\cite{KMM3} for more references.
The techniques used and developed in the present paper are partly reminiscent of the ones introduced in~\cite{Ma,MMjmpa,MM1,MM2} to prove the asymptotic stability of the solitons of the gKdV equations.
See also~\cite{CMPS} for an extension to a 2-dimensional variant of this model.
Recall that the first result on such problem was obtained in~\cite{PW2} by different methods.
Still different techniques, involving  the special algebraic structure of the modified KdV equation and refined linear estimates were used in~\cite{GPR}. For the nonlinear Schr\"odinger equations, we mention the 
pioneering articles~\cite{bus_per1,bus_per2} 
and~\cite{cuccagna_3,cuccagna_4,CuMa,Delort2,KS,Sc06,Sc07,We85}, as well as
\cite{cuc_cub_nls} concerning the integrable case. See also~\cite{FMR,MR} in the blow up context.
The article~\cite{Z} concerns kinks of nonlinear Schr\"odinger equations.

For the proof of Theorem~\ref{th:2}, we follow and develop the approach to asymptotic stability of kinks and solitons for wave-type equations initiated in~\cite{KMM} and~\cite{KMM4}. 
In particular, we adapt the method to the case of initial data without symmetry and non zero speed.
We emphasize that the articles~\cite{CGNT,Ma,MMjmpa,MM2,RR} for the nonlinear Schr\"odinger equations, the generalized KdV equations and the wave maps were also inspiring for this approach.

\section{Modulation around the kink family}\label{S:2}
In this section, we study the neighborhood of kinks.
This includes the expansion of the conservation laws around the kink, the study of the linearized operator and a  decomposition result around the kink family by modulation.

\subsection{Preliminary observations about the potential and the kink}\label{s:2.1}
First, we provide a proof of Lemma \ref{pr:H}, containing standard information on the kink
(see also~\cite{MR678151,JKL}).
Set
\begin{equation}\label{def:omega}
\omega_- = \sqrt{W''(\zeta_-)},\quad \omega_+=\sqrt{W''(\zeta_-)},
\quad \omega = \min(\omega_-;\omega_+)>0.
\end{equation}

\begin{proof}[Proof of Lemma~\ref{pr:H}]
We integrate explicitly the equation $H''=W'(H)$ after multiplication by $H'$
as $(H')^2=2W(H)$. We obtain $G(H(x))=x$, where the function $G:(\zm,\zp)\to \R$ is defined by
\begin{equation}\label{def:G}
G(h)=\int_{\zeta_0}^h \frac{\ud s}{\sqrt{2 W(s)}},
\quad \zeta_0=\frac{\zm+\zp}{2}.
\end{equation}
The invariance by translation of the equation is handled by the arbitrary choice $H(0)=\zeta_0$.
Note that by the assumptions~\eqref{on:W} on $W$ and the Taylor expansion at the points $\zpm$, we have
for $s\sim \zpm$,
\begin{equation}\label{TaylorW}
W(s) = \frac{W''(\zpm)}{2!} (s-\zpm)^2 + \frac{W'''(\zpm)}{3!} (s-\zpm)^3 + o\left((s-\zpm)^3\right).
\end{equation}
In particular, it holds
\begin{equation*}
\frac 1{\sqrt{2 W(s)}} = \frac 1{\sqrt{W''(\zpm)}}
\frac 1{|s-\zpm|}
\left( 1 - \frac 16 \frac{W'''(\zpm)}{W''(\zpm)} (s-\zpm)+o(s-\zpm)\right).
\end{equation*}
This justifies that $G$ is an increasing one-to-one map from $(\zm,\zp)$ to $\R$.
Thus, $H$ is uniquely defined on $\R$ by $H(x)= G^{-1}(x)$.
Moreover, by integration of the above Taylor expansion, one has
\begin{equation*}
G(h) = \mp\frac 1{\sqrt{W''(\zpm)}}\log|h-\zpm|+C_\pm+C_{\pm}' (h-\zpm) + o(h-\zpm),
\end{equation*}
for some constants $C_\pm$ and $C_\pm'$. Using $G(H(x))=x$, this gives
\[
x=\mp\frac 1{\sqrt{W''(\zpm)}}\log|H(x)-\zpm|+C_\pm+C_{\pm}' (H(x)-\zpm) + o(H(x)-\zpm).
\]
It follows that for two positive constants $\lambda_\pm$,
\begin{equation}\label{on:H}
H(x) = \zeta_\pm \mp \lambda_\pm e^{\mp \omega_\pm x} + O\left( e^{\pm 2 \omega_\pm x}\right)
\quad \mbox{as $x\to \pm\infty$.}
\end{equation}
Using $H'(x) = \sqrt{2W(H(x))}$,~\eqref{TaylorW} and~\eqref{on:H} we also obtain
\begin{equation}\label{on:dH}
H'(x)=\omega_\pm \lambda_\pm e^{\mp \omega_\pm x} + O\left( e^{\mp 2 \omega_\pm x}\right)
\quad \mbox{as $x\to \pm\infty$.}
\end{equation}
By the equation $H''=W'(H)$ and the $\cC^3$ regularity of $W$, it is clear that $H$ is of class $\cC^4$
and $H''' = H' W''(H)$, $H^{(4)}=H'' W''(H) + H'^2 W'''(H)$.
Moreover, by Taylor expansions of $W'$, $W''$ and $W'''$ near $\zpm$, it holds
\begin{equation}\label{on:ddH}\begin{aligned}
H''(x)&=\mp \omega_\pm^2 \lambda_\pm e^{\mp \omega_\pm x} + O\left( e^{\mp 2 \omega_\pm x}\right),\\
H'''(x)&=\omega_\pm^3 \lambda_\pm e^{\mp \omega_\pm x} + O\left( e^{\mp 2 \omega_\pm x}\right),\\
H^{(4)}(x)&=\mp \omega_\pm^4 \lambda_\pm e^{\mp \omega_\pm x} + O\left( e^{\mp 2 \omega_\pm x}\right).
\end{aligned}\end{equation}
In particular, $H''/H'$ is bounded on $\R$. Another observation is 
\[
|W''(H(x))-\omega_{\pm}^2|\leq C e^{\mp \omega_\pm x}.
\]
This completes the proof of the lemma.
\end{proof}

Second, we observe that the sufficient condition for asymptotic stability~\eqref{on:V} on the potential $V$  is a reformulation of the condition that will actually be used in this paper.
The formulation~\eqref{on:V} is privileged in our presentation since it can be checked directly on the potential $W$, without using the expression of the kink $H$.

\begin{lemma}\label{rk:P}
Let $P:\R\to\R$ be the function of class $\cC^1$ defined by
\[
P=\frac{(W'(H))^2}{W(H)}-W''(H)
=2 \left(\frac {H''}{H'}\right)^2 - \frac{H'''}{H'}
=H' \left( \frac 1{H'} \right)''.
\]
The condition~\eqref{on:V} on $V$ is equivalent to $P'\not \equiv 0$ on $\R$ and $P$ satisfies one of the following \begin{description}
\item[Repulsivity at a point] there exists $x_0\in \R$ such that $(x-x_0)P'\leq 0$ on $\R;$
\item[Repulsivity at $-\infty$] $P'\leq 0$ on $\R;$
\item[Repulsivity at $+\infty$] $P'\geq 0$ on $\R$.
\end{description}
Moreover,
\begin{equation}\label{eq:L2.1}
\lim_{\pm\infty} P = V(\zeta_\pm) = W''(\zeta_\pm)=\omega_\pm^2.
\end{equation}
\end{lemma}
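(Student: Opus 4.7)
The plan is to show that $P = V \circ H$, which reduces the repulsivity-type conditions on $P$ to the sign condition on $V'$ via the diffeomorphism $H$, and then to verify the endpoint limits by Taylor expansion.

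First I would unpack the definition $V = -W (\log W)''$. Using $(\log W)'' = W''/W - (W'/W)^2$ (valid on $(\zm,\zp)$ where $W>0$), multiplication by $-W$ gives
\[
V(\phi) = \frac{(W'(\phi))^2}{W(\phi)} - W''(\phi),\qquad \phi\in(\zm,\zp),
\]
so $P(x) = V(H(x))$ follows directly from the first expression listed for $P$. The equivalence of the three formulas for $P$ is then routine bookkeeping: integrating $H''H' = W'(H)H'$ and using $\lim_{\pm\infty} H' = 0$ gives the first integral $(H')^2 = 2W(H)$, which combined with $H'' = W'(H)$ yields $(W'(H))^2/W(H) = 2(H''/H')^2$; differentiating $H'' = W'(H)$ yields $H''' = W''(H)H'$, so $W''(H) = H'''/H'$; and a direct computation of $(1/H')''$ shows $H'(1/H')'' = 2(H''/H')^2 - H'''/H'$.

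Next I would exploit the fact that, by Lemma~\ref{pr:H}, $H' > 0$ on $\R$ and $H$ is a $\cC^4$ diffeomorphism from $\R$ onto $(\zm,\zp)$. Differentiating the identity $P = V\circ H$ gives $P'(x) = V'(H(x))\, H'(x)$, so $P'$ and $V'\circ H$ share signs pointwise, and the condition $P'\not\equiv 0$ on $\R$ is equivalent to $V'\not\equiv 0$ on $(\zm,\zp)$. For the sign condition, when $\zn \in (\zm,\zp)$, set $x_0 := H^{-1}(\zn)\in \R$; since $H$ is strictly increasing, $x-x_0$ and $H(x)-\zn$ share signs for every $x\in\R$, hence
\[
(x-x_0)\,P'(x) = (x-x_0)\,H'(x)\,V'(H(x))
\]
has the sign of $(H(x)-\zn)V'(H(x))$, so the pointwise repulsivity of $P$ at $x_0$ is equivalent to \eqref{on:V} for this $\zn$. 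The boundary cases $\zn = \zm$ and $\zn = \zp$ correspond to $\phi-\zn$ keeping constant sign throughout $(\zm,\zp)$ (positive and negative, respectively), which translates into $P' \leq 0$ on $\R$ (repulsivity at $-\infty$) and $P' \geq 0$ on $\R$ (repulsivity at $+\infty$). The reverse implications all follow by running the same argument backwards, using that $H$ is a bijection onto $(\zm,\zp)$.

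Finally, the limits in \eqref{eq:L2.1} come from the Taylor expansion \eqref{TaylorW}: the ratio $(W'(\phi))^2/W(\phi)$ is of indeterminate form at $\zpm$, but the leading terms $W(\phi) = \tfrac{\omega_\pm^2}{2}(\phi-\zpm)^2 + O((\phi-\zpm)^3)$ and $W'(\phi) = \omega_\pm^2(\phi-\zpm) + O((\phi-\zpm)^2)$ yield $(W'(\phi))^2/W(\phi) \to 2\omega_\pm^2$, so $V(\phi) \to 2\omega_\pm^2 - \omega_\pm^2 = \omega_\pm^2$. Since $H(x)\to\zpm$ as $x\to\pm\infty$ by Lemma~\ref{pr:H}, the identity $P = V\circ H$ gives $\lim_{\pm\infty} P = V(\zpm) = W''(\zpm) = \omega_\pm^2$, as required. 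There is no substantial obstacle in this lemma; the only subtlety is that the two endpoint cases $\zn = \zpm$ must be treated separately from the interior case, since they correspond to $x_0 = \pm\infty$ rather than to a finite repulsor point.
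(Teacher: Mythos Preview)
Your proof is correct and follows essentially the same approach as the paper: both rest on the identity $P = V\circ H$, the chain rule $P' = H'\,(V'\circ H)$ together with $H'>0$, and the Taylor expansion of $W$ at the wells for the limits. You have simply filled in the routine verifications (the equivalence of the three formulas for $P$, the explicit limit computation) that the paper leaves to the reader.
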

\begin{proof}
The result follows from $P(x)=V(H(x))$ so that $P'(x) = H'(x) V'(H(x))$ and the fact that $H'>0$ on $\R$.
The cases $\zeta_0=\zm$ and $\zeta_0=\zp$ in~\eqref{on:V} correspond to the repulsivity of $P$ at $-\infty$ and $+\infty$, respectively. The case $\zeta_0\in (\zm,\zp)$ corresponds to the repulsivity of $P$ at the point $x_0=H^{-1}(\zeta_0)\in \R$.

Last, it is direct from its definition \eqref{def:V} and Taylor expansion that $V$ extends by continuity to $[\zeta_-,\zeta_+]$ and that~\eqref{eq:L2.1} holds.
\end{proof}
\begin{remark}\label{rk:RS}
\begin{enumerate}
\item 
Recall that by the standard virial argument, this assumption on $P$ implies the absence of eigenvalues for the Schr\"{o}dinger operator $L_0=-\partial_x^2 + P$. See~\cite[Theorem~XIII.60]{RS}.
See also the discussion in \S\ref{s.4.6}.
\item Since we consider $H^1\times L^2$ perturbations of the kink, the potential $W$ only needs to be defined in a neighborhood of the interval $[\zp,\zm]$.
We also observe that the assumption~\eqref{on:V} of Theorem~\ref{th:2} only concerns the values of $W$ on the range $(\zeta_-,\zeta_+)$ of the kink. For a potential having several kinks joining different zeros of the potential, the criterion \eqref{on:V} for asymptotic stability may hold or not depending on each kink. See examples in \S\ref{S:5.3}. 
\item By convention, we have chosen to consider the increasing kink  $H$ corresponding to the heteroclinic orbit 
of $H''=W'(H)$ joining $\zm$ to $\zp$.
One can use the change of variable $x\mapsto -x$ to treat the decreasing kink $H(-x)$.
\item To prove the stability result in Theorem~\ref{th:1}, the assumption that $W$ is of class~$\cC^2$ is sufficient.
For the asymptotic stability result Theorem~\ref{th:2}, our method requires $\cC^3$ regularity.
\item The non degeneracy assumption $W''(\zpm)>0$ is needed for the exponential convergence of $H$ to its limits at $\pm \infty$.
However, some models, such as special cases of the generalized $\phi^8$ model (see~\cite[p. 268]{Kevrekidisbook}) and the double sine-Gordon model (see \S\ref{S:5.4}), involve degenerate potentials $W$, \emph{i.e.} satisfying $W''(\zpm)=0$. Such case would require a specific study, taking into account the lower decay rates of the kink at $\pm\infty$ and different spectral properties for the linearized operator.
\end{enumerate}
\end{remark}
For future reference, we also state and prove some decay properties for auxiliary functions that will be considered throughout the paper. Define the function $Q$ (to be used in \S\ref{S:4.9}) by
\begin{equation}\label{def:Q}
Q=\left(\log H'\right)''.
\end{equation}

\begin{lemma}\label{le:PQ}
The functions $P$ and $Q$ are of class $\cC^1$ on $\R$ and satisfy
\begin{align}
&|P(x)-\omega_{\pm}^2|\leq C e^{\mp \omega x},\quad
|P'(x)|\leq C e^{- \omega |x|},\label{on:P}\\
&|Q(x)|+|Q'(x)|\leq C e^{- \omega |x|}.\label{on:Q}
\end{align}
Moreover, under hypothesis~\eqref{on:V}, $P\geq \omega^2$ on $\R$, where $\omega$ is defined in~\eqref{def:omega}.
\end{lemma}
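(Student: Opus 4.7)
\medskip

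My plan is to reduce everything to the already-established asymptotics of $H$ and its derivatives in Lemma~\ref{pr:H}, via the observation that both $P$ and $Q$ are compositions of smooth functions with the kink $H$. For $P$, I would use the identity $P(x)=V(H(x))$ recorded in Lemma~\ref{rk:P}, which reduces the claim to the regularity and values of the transformed potential $V(\phi)=(W'(\phi))^2/W(\phi)-W''(\phi)$ at the endpoints $\zeta_\pm$. The key step is to Taylor expand $W,W',W''$ to second order at $\zeta_\pm$ (using $W\in\cC^3$ and \eqref{TaylorW}) and show that $V$ extends to $[\zeta_-,\zeta_+]$ with $V(\zeta_\pm)=\omega_\pm^2$ and a finite one-sided derivative $V'(\zeta_\pm)=\tfrac13 W'''(\zeta_\pm)$; in particular $V\in\cC^1([\zeta_-,\zeta_+])$. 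Then $|P(x)-\omega_\pm^2|=|V(H(x))-V(\zeta_\pm)|\le C|H(x)-\zeta_\pm|\le Ce^{\mp\omega x}$, and $P'(x)=V'(H(x))H'(x)$ is bounded by $Ce^{-\omega|x|}$ using $\|V'\|_\infty<\infty$ and~\eqref{eq:Hasym}.

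For $Q$, I would use $H''=W'(H)$ to rewrite
\[
Q=(\log H')''=\frac{H'''}{H'}-\left(\frac{H''}{H'}\right)^2=W''(H)-\frac{(W'(H))^2}{2W(H)},
\]
so that $Q=\tilde V(H)$ with $\tilde V(\phi)=W''(\phi)-(W'(\phi))^2/(2W(\phi))=(W''(\phi)-V(\phi))/2$. By the previous step $\tilde V$ extends to a $\cC^1$ function on $[\zeta_-,\zeta_+]$ with $\tilde V(\zeta_\pm)=0$. Hence $|Q(x)|=|\tilde V(H(x))-\tilde V(\zeta_\pm)|\le C|H(x)-\zeta_\pm|\le Ce^{-\omega|x|}$, and likewise $Q'(x)=\tilde V'(H(x))H'(x)$ is controlled by $Ce^{-\omega|x|}$.

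For the lower bound $P\ge\omega^2$, I would run a monotonicity argument on $V$ restricted to $[\zeta_-,\zeta_+]$ using hypothesis~\eqref{on:V}. The condition $(\phi-\zeta_0)V'(\phi)\le 0$ means $V$ is nondecreasing on $(\zeta_-,\zeta_0)$ and nonincreasing on $(\zeta_0,\zeta_+)$ (with the degenerate cases $\zeta_0=\zeta_\pm$ giving global monotonicity). In each of the three cases, the minimum of $V$ over $[\zeta_-,\zeta_+]$ is attained at an endpoint, and equals $\min(V(\zeta_-),V(\zeta_+))=\min(\omega_-^2,\omega_+^2)=\omega^2$. Since $H$ takes values in $(\zeta_-,\zeta_+)$, this yields $P(x)=V(H(x))\ge\omega^2$ on $\R$.

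The only slightly delicate point is the regularity of $V$ (and hence $\tilde V$) at the endpoints, where both $W$ and $(W')^2$ vanish: I expect the main care to be in carrying the Taylor expansions of $W$ to sufficient order so that the ratios $(W')^2/W$ and $(W')^2/(2W)$ cancel the apparent singularities to first order, giving $\cC^1$ extensions with the explicit values $V(\zeta_\pm)=\omega_\pm^2$ and $\tilde V(\zeta_\pm)=0$. Once this is in place, the rest of the lemma follows mechanically from Lemma~\ref{pr:H}.
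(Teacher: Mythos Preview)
Your approach is correct and, in fact, more detailed than the paper's own proof. The paper dispatches \eqref{on:P}--\eqref{on:Q} in one line by writing $P=2(H''/H')^2-H'''/H'$ and $Q=H'''/H'-(H''/H')^2$ and inserting the explicit asymptotics of $H',H'',H''',H^{(4)}$ from~\eqref{on:dH}--\eqref{on:ddH}; everything stays in the $x$ variable. You instead pass to the $\phi$ variable via $P=V(H)$ and $Q=\tilde V(H)$ with $\tilde V=\tfrac12(W''-V)$, and show that $V,\tilde V$ extend to $\cC^1([\zeta_-,\zeta_+])$ with $V(\zeta_\pm)=\omega_\pm^2$, $\tilde V(\zeta_\pm)=0$; the decay then follows from a Lipschitz bound and~\eqref{eq:Hasym}. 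Your route is a bit more conceptual and makes the endpoint values (and the derivative $V'(\zeta_\pm)=\tfrac13 W'''(\zeta_\pm)$) transparent, at the cost of the mildly delicate verification that $(W')^2/W$ extends $\cC^1$ across the wells; the paper's route sidesteps this by using the higher-order expansions of $H$ already established. For the lower bound $P\ge\omega^2$, your monotonicity argument on $V$ is exactly what ``a consequence of Lemma~\ref{rk:P}'' unpacks to.
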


\begin{proof}
The decay properties~\eqref{on:Q} and~\eqref{on:P} follow directly from the definitions of $P$ and $Q$
and the asymptotics~\eqref{on:dH} and~\eqref{on:ddH} on $H$.
The last property is a consequence of Lemma~\ref{rk:P}.
\end{proof}

\subsection{Notation and expansion of the conservation laws}
We will use the following notation for
functions $u,$ $v\in L^2$, $\uu=(u_1,u_2)$, $\vv=(v_1,v_2)\in L^2$,
\[
\inprod{u}{v}=\int u(x) v(x) \ud x,\quad
\inprod{\uu}{\vv}=\int \left[ u_1(x) v_1(x) + u_2(x) v_2(x)\right] \ud x.
\]
For $c\in (-1,1)$ and $y\in \R$, we define the functions $H_{c,y}$ and $\HH_{c,y}$
\begin{equation*}
H_{c,y}(x)= H_c(x-y),\quad  \HH_{c,y}(x)= \HH_c(x-y).
\end{equation*}
Note that with this notation
$H'_{c,y}(x) = \gamma H'\left(\gamma(x-y)\right)$.

\begin{lemma}\label{le:ener}
For any $c\in (-1,1)$ and $y\in \R$, the following hold.
\begin{enumerate}
\item Conservation laws for the kink.
\begin{equation*}
\cE[\HH_{c,y}] = \gamma \|H'\|_{L^2}^2, \quad
\cP[\HH_{c,y}]= -c\gamma \|H'\|_{L^2}^2.
\end{equation*}
\item Expansion of the conservation laws around the kink.
For any $\uu=(u_1,u_2)$ such that $\|u_1\|_{L^\infty}\leq 1$,
\begin{align}
\cE[\HH_{c,y}+\uu]&=\cE[\HH_{c,y}] - c \int H_{c,y}' (u_2-c\partial_x u_1)
\nonumber \\ & \quad +\frac 12 \left( \| u_2\|_{L^2} ^2 + \inprod{\cL_{c,y} u_1}{u_1} +\cR\right),\label{Expansion_E}\\
\cP[\HH_{c,y}+\uu]&=\cP[\HH_{c,y}]
+ \int H_{c,y}' (u_2-c\partial_x u_1) + \cP[\uu],\label{Expansion_P}
\end{align}
where
\begin{equation}\label{def:L}
\cL_{c,y} = -\partial_x^2 + W''(H_{c,y}),
\end{equation} 
and
\begin{equation}\label{2.3bis}
|\cR|\leq C \|u_1\|_{L^\infty}\|u_1\|_{L^2}^2,
\end{equation}
for some constant $C>0$.
\end{enumerate}
\end{lemma}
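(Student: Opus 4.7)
The plan is a direct calculation in two steps. Throughout, write $H = H_{c,y}$ for brevity and recall the two identities that will carry most of the work: the Bogomolny relation $(H')^2 = 2W(H)$ at speed zero (which gives $(H'_{c,y})^2 = \gamma^2 \cdot 2 W(H_{c,y})$, i.e.\ $2 W(H_{c,y}) = \gamma^{-2}(H'_{c,y})^2$), and the kink equation in boosted form $H''_{c,y} = \gamma^2 W'(H_{c,y})$, both of which follow from Lemma~\ref{pr:H} and a chain rule in $\gamma$.

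For part~(1), I plug $\HH_{c,y} = (H_{c,y},\,-cH'_{c,y})$ into the definitions. The energy becomes
\[
\cE[\HH_{c,y}] = \tfrac{1}{2}\int\!\bigl[c^2 (H'_{c,y})^2 + (H'_{c,y})^2 + \gamma^{-2}(H'_{c,y})^2\bigr]
= \tfrac{1}{2}(c^2+1+(1-c^2)) \int (H'_{c,y})^2,
\]
which equals $\int(H'_{c,y})^2 = \gamma \|H'\|_{L^2}^2$ after the change of variables $u=\gamma(x-y)$. The momentum computation $\cP[\HH_{c,y}] = \int(-cH'_{c,y})H'_{c,y} = -c\gamma\|H'\|_{L^2}^2$ is immediate.

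For part~(2), I expand each term of $\cE[\HH_{c,y}+\uu]$ and separate by order in $\uu$. The zeroth-order terms reassemble $\cE[\HH_{c,y}]$. The first-order contribution is
\[
\int\!\bigl[-cH'_{c,y} u_2 + H'_{c,y}\partial_x u_1 + W'(H_{c,y})u_1\bigr].
\]
I integrate $\int H'_{c,y}\partial_x u_1 = -\int H''_{c,y}u_1 = -\gamma^2\!\int W'(H_{c,y})u_1$ by parts and use the kink equation, so the last two terms combine with factor $1-\gamma^2 = -c^2\gamma^2$; undoing the integration by parts rewrites them as $c^2\int H'_{c,y}\partial_x u_1$. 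The first-order sum then collapses to $-c\int H'_{c,y}(u_2 - c\partial_x u_1)$, exactly the desired cross term. The purely quadratic contributions in $\uu$ give $\tfrac{1}{2}\|u_2\|_{L^2}^2 + \tfrac{1}{2}\inprod{\cL_{c,y}u_1}{u_1}$ by definition of $\cL_{c,y}$ in~\eqref{def:L}. The remainder $\cR$ accounts for the cubic-and-higher Taylor terms of $W(H_{c,y}+u_1)$ around $H_{c,y}$; since $H_{c,y}$ takes values in the compact interval $[\zm,\zp]$ and $\|u_1\|_{L^\infty}\leq 1$, $W'''$ is bounded on the effective range, and Taylor's theorem with integral remainder gives $|\cR|\leq C\|u_1\|_{L^\infty}\|u_1\|_{L^2}^2$ as in~\eqref{2.3bis}. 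The momentum expansion~\eqref{Expansion_P} is obtained by simply multiplying out $(-cH'_{c,y}+u_2)(H'_{c,y}+\partial_x u_1)$ and regrouping the three resulting orders.

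The only step requiring any care is the bookkeeping of the first-order term in the energy expansion: one must invoke the boosted kink equation and the Lorentz identity $1-\gamma^2=-c^2\gamma^2$ so that the first variation appears in the natural combination $u_2 - c\partial_x u_1$ (which is the perturbation measured in the moving frame, as already emphasized after Theorem~\ref{th:1}). Everything else is routine Taylor expansion and a single integration by parts, both licit under the hypotheses $\uu\in H^1(\R)\times L^2(\R)$ and $\|u_1\|_{L^\infty}\leq 1$.
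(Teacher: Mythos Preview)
Your proof is correct and follows essentially the same route as the paper's. The only cosmetic difference is in the first-order energy term: the paper substitutes $W'(H_{c,y})=\gamma^{-2}H''_{c,y}$ and integrates by parts once to produce the factor $1-\gamma^{-2}=c^2$, whereas you integrate by parts first, invoke $H''_{c,y}=\gamma^2 W'(H_{c,y})$, obtain $1-\gamma^2=-c^2\gamma^2$, and then undo the integration by parts---the two computations are algebraically equivalent.
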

\begin{remark}\label{rk:1}
These computations motivate the use of the orthogonality relation
\begin{equation}\label{eq:motivation}
\int H_{c,y}' (u_2-c\partial_x u_1)=0 \iff
\inprod{\uu}{ \begin{pmatrix} c H_{c,y}''\\ H_{c,y}' \end{pmatrix} }=0.
\end{equation}
\end{remark}

\begin{proof}
The proof of (1) follows from direct computations.

Proof of (2). First, we expand $\cE[\HH_{c,y}+\uu]$ for $\uu=(u_1,u_2)$.
We have
\begin{multline*}
\cE[\HH_{c,y}+\uu]= \cE[\HH_{c,y}] +
\int \left[ -c H_{c,y}' u_2 + H_{c,y}' \partial_x u_1 + W'(H_{c,y}) u_1 \right] \\
 +\frac 12 \int \left\{ u_2^2 + (\partial_x u_1)^2
+ 2\left[ W(H_{c,y}+u_1) -W(H_{c,y})- W'(H_{c,y}) u_1\right]\right\}.
\end{multline*}
Using integration by parts and $W'(H_{c,y})=\gamma^{-2} H_{c,y}''$, we obtain
\[
\int \left[ -c H_{c,y}' u_2 + H_{c,y}' \partial_x u_1 + W'(H_{c,y}) u_1 \right]
=- c \int H_{c,y}' (u_2-c\partial_x u_1).
\]
For $u_1$ such that $\|u_1\|_{L^\infty}<1$, we have by the Taylor formula (recall that we assume the potential $W$ of class $\cC^3$)
\[
\left| W(H_{c,y}+u_1)-W(H_{c,y})-W'(H_{c,y}) u_1 - W''(H_{c,y}) \frac{u_1^2}2\right| \leq C |u_1|^3
\]
where $C=\frac 16 \sup_{[-1+\zm,1+\zp]} |W'''|$.
This proves~\eqref{Expansion_E}.

Second, we observe by a direct expansion
\begin{equation*}
\cP[\HH_{c,y} + \uu] = \cP[\HH_{c,y}]
 + \int \left(H_{c,y}' u_2 -cH_{c,y}' \partial_x u_1 \right) + \cP[\uu],
\end{equation*}
which is~\eqref{Expansion_P}.
\end{proof}

\subsection{Generalized null spaces}\label{S2.2}
We introduce some more notation related to the kink family.
Let
\[
\TT_{c,y} = -\partial_y \HH_{c,y}=\HH_{c,y}',\quad \DD_{c,y} = \partial_c \HH_{c,y},
\]
respectively related to translations and the Lorentz transform, given explicitly by
\begin{equation*}
\begin{aligned}
\TT_{c,y}(x)&:=
\begin{pmatrix} H_{c,y}'(x) \\ -c H_{c,y}''(x)\end{pmatrix},\\
\DD_{c,y}(x) &:= 
\begin{pmatrix} c \gamma^2 (x-y) H_{c,y}'(x) \\ 
-\gamma^2 H_{c,y}'(x) - c^2 \gamma^2 (x-y) H_{c,y}''(x) \end{pmatrix}.\end{aligned}
\end{equation*}
We also define
\begin{equation*}
\GG_{c,y} = \JJ \TT_{c,y},\quad \FF_{c,y}=\JJ \DD_{c,y}\quad\mbox{where}\quad \JJ=\begin{pmatrix} 0 & 1\\ -1 & 0\end{pmatrix}.
\end{equation*}
Let
\begin{equation}\label{def:LL}
\LL_{c,y} = 
\begin{pmatrix} \cL_{c,y} & - c\partial_x \\ c \partial_x & 1 \end{pmatrix},
\end{equation}
so that
\begin{equation}\label{def:LLbis}
\inprod{\LL_{c,y} \uu}{\uu}=\| u_2\|_{L^2} ^2 + \inprod{\cL_{c,y} u_1}{u_1} + 2 c\cP[\uu].
\end{equation}
The following relations also motivate the introduction of $\TT_{c,y}$ and $\DD_{c,y}$.
\begin{lemma}
It holds
\begin{gather}
\LL_{c,y} \TT_{c,y}= 0,\quad \LL_{c,y} \DD_{c,y} = \JJ \TT_{c,y} ,\label{eigen:LL}\\
\inprod{\DD_{c,y}}{\GG_{c,y}}= -\inprod{\TT_{c,y}}{\FF_{c,y}}= \gamma^3 \norm{H'}_{L^2}^2.\label{eq:nondeg}
\end{gather}
\end{lemma}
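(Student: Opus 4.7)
The plan is to verify both identities by direct computation from the explicit formulas for $\HH_{c,y}$, $\TT_{c,y}$, $\DD_{c,y}$, $\GG_{c,y}$ and $\FF_{c,y}$, using only the kink equation $H''=W'(H)$ together with its scaled form $H''_{c,y}=\gamma^{2}W'(H_{c,y})$ (which follows from $H_{c,y}(x)=H(\gamma(x-y))$) and, differentiating once more, $H'''_{c,y}=\gamma^{2}W''(H_{c,y})H'_{c,y}$. All manipulations will rely on the two algebraic identities $c^{2}\gamma^{2}=\gamma^{2}-1$ and $(c^{2}-1)\gamma^{2}=-1$.

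For the first identity $\LL_{c,y}\TT_{c,y}=0$, the second component of $\LL_{c,y}\TT_{c,y}$ reads $c\partial_{x}H'_{c,y}-cH''_{c,y}$ and vanishes by inspection. The first component equals $\cL_{c,y}H'_{c,y}+c^{2}H'''_{c,y}$; substituting $H'''_{c,y}=\gamma^{2}W''(H_{c,y})H'_{c,y}$ and using $1-\gamma^{2}=-c^{2}\gamma^{2}$, the two terms cancel exactly.

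For the second identity $\LL_{c,y}\DD_{c,y}=\JJ\TT_{c,y}$, either one expands all $x$-derivatives of the explicit formula for $\DD_{c,y}$ and collects terms using $H'''_{c,y}=\gamma^{2}W''(H_{c,y})H'_{c,y}$ (the terms containing $(x-y)H'''_{c,y}$ then cancel against terms containing $(x-y)W''(H_{c,y})H'_{c,y}$ via the same identity $1-\gamma^{2}=-c^{2}\gamma^{2}$), or one argues variationally: since $\LL_{c,y}$ is the Hessian of $\cE+c\cP$ at $\HH_{c,y}$ (as \eqref{def:LLbis} records), and since for every $c'\in(-1,1)$ the kink $\HH_{c',y}$ satisfies the Euler--Lagrange equation $(\cE+c'\cP)'[\HH_{c',y}]=0$, differentiating this identity in $c'$ at $c'=c$ yields $\LL_{c,y}\DD_{c,y}+\cP'[\HH_{c,y}]=0$; a one-line computation gives $\cP'[\HH_{c,y}]=-\JJ\TT_{c,y}$, which closes the argument.

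For the non-degeneracy \eqref{eq:nondeg}, I would compute $\inprod{\DD_{c,y}}{\GG_{c,y}}$ directly from the matrix formulas, observing that the two cross terms of the form $c^{2}\gamma^{2}\int(x-y)H'_{c,y}H''_{c,y}$ enter with opposite signs and cancel; what remains is $\gamma^{2}\int(H'_{c,y})^{2}$. The change of variable $\tilde x=\gamma(x-y)$ gives $\int(H'_{c,y})^{2}=\gamma\norm{H'}_{L^{2}}^{2}$, hence $\inprod{\DD_{c,y}}{\GG_{c,y}}=\gamma^{3}\norm{H'}_{L^{2}}^{2}$. The computation of $-\inprod{\TT_{c,y}}{\FF_{c,y}}$ proceeds identically, with the extra minus sign accounted for by the antisymmetry of $\JJ$.

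There is no real obstacle here; the lemma is a bookkeeping exercise. The only point requiring some care is keeping track of the $\gamma$-factors produced by the Lorentz scaling and systematically exploiting the identities $c^{2}\gamma^{2}=\gamma^{2}-1$ and $(c^{2}-1)\gamma^{2}=-1$ to collapse the apparent mismatches between $\cL_{c,y}$ and the operator $-\partial_{x}^{2}+\gamma^{2}W''(H_{c,y})$ arising from differentiation of the traveling-wave profile.
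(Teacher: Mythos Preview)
Your proposal is correct and the arguments for $\LL_{c,y}\TT_{c,y}=0$ and for \eqref{eq:nondeg} match the paper's (the paper also differentiates the scaled kink equation for the former and simply calls the latter ``elementary''). The one genuine difference is in the computation of $\LL_{c,y}\DD_{c,y}$. The paper avoids both of your options: it rewrites $\DD_{c,y}=\gamma^{2}\bigl[(0,-H'_{c,y})^{T}+c(x-y)\TT_{c,y}\bigr]$, applies $\LL_{c,y}$, and uses $\LL_{c,y}\TT_{c,y}=0$ to reduce $\LL_{c,y}[(x-y)\TT_{c,y}]$ to the commutator $[\LL_{c,y},(x-y)]\TT_{c,y}=\begin{pmatrix}-2\partial_{x}&-c\\ c&0\end{pmatrix}\TT_{c,y}$, which closes in one line. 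Your direct expansion works but is more tedious; your variational argument (differentiating $(\cE+c'\cP)'[\HH_{c',y}]=0$ in $c'$ and identifying $\cP'[\HH_{c,y}]=-\JJ\TT_{c,y}$) is correct and arguably the most conceptual of the three, as it explains \emph{why} the generalized kernel vector $\DD_{c,y}$ is mapped to $\JJ\TT_{c,y}$ rather than merely verifying it. Note, though, that \eqref{def:LLbis} alone does not record that $\LL_{c,y}$ is the Hessian of $\cE+c\cP$; you need the expansions \eqref{Expansion_E}--\eqref{Expansion_P} in addition.
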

\begin{proof}
First, the statement $\LL_{c,y} \TT_{c,y}=0$ follows easily by differentiating in $x$ the relation $(1-c^2) H_{c,y}'' + W'(H_{c,y})=0$.
To compute $\LL_{c,y} \DD_{c,y}$, we use $\LL_{c,y} \TT_{c,y}= 0$ and a commutator relation
\begin{align*}
\LL_{c,y} \DD_{c,y} & = \gamma^2 \LL_{c,y} \left\{ \begin{pmatrix} 0\\ - H'_{c,y}\end{pmatrix}
+ c(x-y) \TT_{c,y}\right\}\\
&= \gamma^2 \begin{pmatrix} cH_{c,y} ''\\ - H'_{c,y}\end{pmatrix} 
+ \gamma^2 c\left[ \LL_{c,y}, (x-y) \right] \TT_{c,y}\\
& =\gamma^2 \begin{pmatrix} cH_{c,y} ''\\ - H'_{c,y}\end{pmatrix}
+ \gamma^2 c\begin{pmatrix} -2 \partial_x & -c \\ c & 0 \end{pmatrix} \TT_{c,y}=\JJ \TT_{c,y}.
\end{align*}
The proof of~\eqref{eq:nondeg} is elementary.
\end{proof}
\begin{remark}
The functions $\TT_{c,y}$ and $\DD_{c,y}$ span the generalized null space of the operator $\JJ\LL_{c,y}$, 
whereas the functions $\GG_{c,y}$ and $\FF_{c,y}$ span the generalized null space of $\LL_{c,y}\JJ$.
The function $\GG_{c,y}$ is also related to the expansion of the invariant quantities in Lemma~\ref{le:ener}.
See~\eqref{eq:motivation} in Remark~\ref{rk:1}.
We refer to~\cite{KK1,KK2} for the introduction of these functions.
\end{remark}

For future reference, we also compute
\[
\AAA_{c,y} = - \partial_y \DD_{c,y} = \partial_c \TT_{c,y}
=\begin{pmatrix}
c\gamma^2 H'_{c,y}+c\gamma^2(x-y) H''_{c,y}\\
-(1+c^2) \gamma^2 H''_{c,y} - c^2\gamma^2(x-y)H'''_{c,y}\end{pmatrix}
\]
and
\[
\BB_{c,y} = \partial_c \DD_{c,y}= \begin{pmatrix}
\gamma^4(1+2c^2)(x-y) H'_{c,y}+c^2\gamma^4(x-y)^2 H''_{c,y}\\
-3c\gamma^4 H'_{c,y} - c\gamma^4(3+2c^2) (x-y)H''_{c,y}
- c^3\gamma^4 (x-y)^2H'''_{c,y}\end{pmatrix}.
\]

\subsection{Lorentz invariant conserved quantity}
In addition to the energy $\cE$ and momentum $\cP$, we introduce the following Lorentzian invariant quantity $\cM$
\begin{equation*}
\cM [\pp] = \left( \cE[\pp] \right)^2 - \left( \cP[\pp] \right)^2 .
\end{equation*}
We continue the computations of Lemma~\ref{le:ener}.
\begin{lemma}
For any $c\in (-1,1)$ and $y\in \R$, the following hold.
\begin{enumerate}
\item Conservation law for the kink.
\begin{equation*}
\cM[\HH_{c,y}] = \|H'\|_{L^2}^4.
\end{equation*}
\item Expansion of the conservation law around the kink.
For any $\uu=(u_1,u_2)$ such that
$\|u_1\|_{L^\infty}\leq 1$ and $\inprod{\uu}{\GG_{c,y}}=0$,
it holds
\begin{multline}\label{Expansion_M}
\cM[\HH_{c,y} + \uu]= \|H'\|_{L^2}^4\\
+ \left( \inprod{\LL_{c,y} \uu}{\uu} + \cR\right)
\left(\gamma\|H'\|_{L^2}^2+ \frac 14 \inprod{\LL_{-c,y} \uu}{\uu}+\frac 14 \cR\right).
\end{multline}
\end{enumerate}
\end{lemma}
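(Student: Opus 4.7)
The first claim is a direct computation: substituting $\cE[\HH_{c,y}] = \gamma\|H'\|_{L^2}^2$ and $\cP[\HH_{c,y}] = -c\gamma\|H'\|_{L^2}^2$ from Lemma~\ref{le:ener}(1) into the definition of $\cM$ gives $\gamma^2(1-c^2)\|H'\|_{L^2}^4 = \|H'\|_{L^2}^4$, using $\gamma^2(1-c^2)=1$.

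For part (2), I would begin from the expansions~\eqref{Expansion_E} and~\eqref{Expansion_P} and use the orthogonality $\inprod{\uu}{\GG_{c,y}}=0$, which by Remark~\ref{rk:1} kills the common linear integral $\int H_{c,y}'(u_2 - c\partial_x u_1)$ in both. This leaves $\cE[\HH_{c,y}+\uu] = \gamma\|H'\|_{L^2}^2 + \cE_1$ and $\cP[\HH_{c,y}+\uu] = -c\gamma\|H'\|_{L^2}^2 + \cP_1$, where I set $\cE_1 := \frac12(\|u_2\|_{L^2}^2 + \inprod{\cL_{c,y} u_1}{u_1} + \cR)$ and $\cP_1 := \cP[\uu]$. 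The algebraic bridge to the Lorentz-adapted quadratic form is provided by~\eqref{def:LLbis} together with the observation that $\cL_{-c,y}=\cL_{c,y}$ (as $\gamma$ is even in $c$), which yields the key identity
\begin{equation*}
\inprod{\LL_{\pm c,y}\uu}{\uu} + \cR \;=\; 2\bigl(\cE_1 \pm c\cP_1\bigr).
\end{equation*}

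I then expand $\cM = (\gamma\|H'\|_{L^2}^2 + \cE_1)^2 - (-c\gamma\|H'\|_{L^2}^2 + \cP_1)^2$ and group by order in $\uu$. Part (1) disposes of the constant $\|H'\|_{L^2}^4$. The cross term equals $2\gamma\|H'\|_{L^2}^2(\cE_1 + c\cP_1)$, which via the displayed identity (with the plus sign) is precisely $\gamma\|H'\|_{L^2}^2(\inprod{\LL_{c,y}\uu}{\uu}+\cR)$, matching the first half of the claimed factorization. The pure quadratic piece $\cE_1^2 - \cP_1^2$ factors as $(\cE_1+c\cP_1)(\cE_1-c\cP_1) - (1-c^2)\cP_1^2$, and its first term is exactly $\frac14(\inprod{\LL_{c,y}\uu}{\uu}+\cR)(\inprod{\LL_{-c,y}\uu}{\uu}+\cR)$ by applying the identity with both signs. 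Assembling these pieces produces the product form~\eqref{Expansion_M}, provided the quartic leftover $-\gamma^{-2}\cP_1^2$ is absorbed into the generic remainder $\cR$ of the second factor.

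The only mildly delicate step is this last bookkeeping: the two occurrences of $\cR$ in~\eqref{Expansion_M} are of the same cubic/quartic nature — each bounded in the spirit of~\eqref{2.3bis}, augmented by the elementary $|\cP[\uu]|^2 \le \|u_2\|_{L^2}^2\|\partial_x u_1\|_{L^2}^2$ — but are not literally the same quantity, so $\cR$ should be read as a generic higher order remainder. Since the whole computation is a bilinear expansion of a factored Lorentz-invariant combination, I expect no real analytical obstacle: orthogonality to $\GG_{c,y}$ is exactly what removes the linear directions tangent to the kink family, and the symmetric $\pm c$ structure of $\LL_{\pm c,y}$ is what makes both Lorentz-boosted quadratic forms appear naturally in the two factors.
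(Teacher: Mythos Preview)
Your argument is the same as the paper's: expand $\cE$ and $\cP$ under the orthogonality $\inprod{\uu}{\GG_{c,y}}=0$, then factor $\cM-\|H'\|_{L^2}^4$ via the identity $\inprod{\LL_{\pm c,y}\uu}{\uu}+\cR = 2(\cE_1 \pm c\cP_1)$. You are in fact more careful than the paper about the quartic leftover $-\gamma^{-2}\cP[\uu]^2$, which the paper's own proof silently drops when passing from $\tfrac14(2\cE_1)^2-\cP[\uu]^2$ to the product $\tfrac14(\inprod{\LL_{c,y}\uu}{\uu}+\cR)(\inprod{\LL_{-c,y}\uu}{\uu}+\cR)$; your reading of $\cR$ as a generic higher-order remainder in the second factor is the natural fix, and in any case this quartic discrepancy is harmless for the only downstream use in Lemma~\ref{cor:1}.
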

\begin{remark}\label{rk:2}
The quantity $\cM$ enjoys two remarkable properties: the value of $\cM$ for a kink $\HH_{c,y}$ does not depend on its speed $c$ and any kink is a critical point of $\cM$.
\end{remark}
\begin{proof} The first point is a direct consequence of (1) of Lemma~\ref{le:ener}.
Using (1) of Lemma~\ref{le:ener},~\eqref{Expansion_E},~\eqref{Expansion_P} and the orthogonality $\inprod{\uu}{\GG_{c,y}}=0$, we recall that
\begin{align*}
\cE[\HH_{c,y}+\uu] 
&= \gamma \|H'\|_{L^2}^2+\frac 12\left( \| u_2\|_{L^2} ^2 + \inprod{\cL_{c,y} u_1}{u_1} + \cR\right),\\
\cP[\HH_{c,y}+\uu]
&= -c\gamma \|H'\|_{L^2}^2+\cP[\uu].
\end{align*}
As a consequence, we compute
\begin{align*}
\cM[\HH_{c,y} + \uu] &= \|H'\|_{L^2}^4
+\gamma\|H'\|_{L^2}^2\left(\| u_2\|_{L^2} ^2 + \inprod{\cL_{c,y} u_1}{u_1} +2c \cP[\uu] + \cR\right)\\
&\quad +\frac 14 \left(\| u_2\|_{L^2} ^2 + \inprod{\cL_{c,y} u_1}{u_1}+\cR\right)^2 - (\cP[u])^2.
\end{align*}
Using the expression of $\inprod{\LL_{c,y} \uu}{\uu}$ in~\eqref{def:LLbis}, we write
\begin{align*}
\cM[\HH_{c,y} + \uu] &= \|H'\|_{L^2}^4
+\gamma\|H'\|_{L^2}^2\left( \inprod{\LL_{c,y} \uu}{\uu} + \cR\right)\\
&\quad +\frac 14 \left(\inprod{\LL_{c,y} \uu}{\uu}+ \cR\right)\left(\inprod{\LL_{-c,y} \uu}{\uu}+ \cR \right),
\end{align*}
which is~\eqref{Expansion_M}. To justify Remark~\ref{rk:2}, observe that even without the assumption $\inprod{\uu}{\GG_{c,y}}=0$, the terms $\inprod{\uu}{\GG_{c,y}}$ in~\eqref{Expansion_E} and~\eqref{Expansion_P} vanish at order $1$ in $\uu$ in the expression of $\cM[\HH_{c,y}+\uu]$.
\end{proof}

\subsection{Linearized operator around the static kink}
\begin{lemma}[See \emph{e.g.}~\cite{Lohe}]\label{on:L}
The linear operator $\cL$ on $L^2$ with domain $H^2$ defined by
\begin{equation*}
\cL = -\partial_x^2 + W''(H)
\end{equation*}
 satisfies the following properties.
\begin{enumerate}
\item The absolutely continuous spectrum of $L$ is $[\omega, +\infty)$ where $\omega>0$ is defined in Lemma~\ref{pr:H}.
\item The operator $L$ is non negative.
\item Denote $Y=H'>0$.
Then, $L Y=0$ and there exists $\mu_0\in (0,1)$ such that for any $v\in H^1$,
\begin{equation}\label{eq:coerL0}
\inprod{v}{Y}=0 \implies \inprod{L v}{v} \geq \mu_0 \|v\|_{H^1}^2.
\end{equation}
\item There exists $\mu_1\in (0,1)$ such that for any $Z\in L^2$ with 
$\inprod{Z}{Y}=\|Y\|_{L^2}^2$ and for any $v_1\in H^1$,
\begin{equation}\label{eq:coerLZ}
 \inprod{\cL v_1}{v_1} \geq \frac 1{\|Z\|_{L^2}^2} \left(\mu_1 \norm{v_1}_{H^1}^2 - \frac 1{\mu_1} \inprod{v_1}{Z}^2\right).
\end{equation}
\end{enumerate}
\end{lemma}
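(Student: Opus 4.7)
My plan is to treat the four claims in sequence, each leaning on standard one-dimensional Schr\"odinger-operator technology together with the exponential asymptotics on $H$ and $W''(H)$ already established in Lemma~\ref{pr:H}.

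For (1), I would appeal to Weyl's theorem on essential spectrum. By \eqref{eq:Hasym} and Taylor expansion of $W''$ around $\zeta_\pm$, the potential $W''(H(x))$ converges exponentially to $\omega_\pm^2$ as $x\to\pm\infty$. Splitting $\cL = \cL_0 + K$ where $\cL_0 = -\partial_x^2 + m(x)$ with $m$ equal to $\omega_\pm^2$ in the two half-lines and $K$ is a bounded multiplication operator decaying exponentially at infinity (hence $\cL$-relatively compact), one obtains that the essential spectrum of $\cL$ coincides with that of $\cL_0$, which is $[\min(\omega_-^2,\omega_+^2),+\infty)$.

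For (2) and (3), note that $Y:=H'>0$ lies in $L^2(\R)$ by \eqref{eq:Hasym} and satisfies $\cL Y = 0$ by differentiating $H''=W'(H)$. Being a nowhere-vanishing $L^2$-eigenfunction of a one-dimensional Schr\"odinger operator, $Y$ must be the ground state by the standard Sturm--Perron--Frobenius argument: any lower eigenfunction would be orthogonal to $Y$ in $L^2$, contradicting the positivity of $Y$. Hence $\cL\geq 0$ and $0$ is a simple isolated eigenvalue, with next spectral value $\lambda_1>0$ (either a second discrete eigenvalue in $(0,\omega^2)$ or the bottom $\omega^2$ of the essential spectrum). By the spectral theorem this gives $\inprod{\cL v}{v}\geq \lambda_1 \|v\|_{L^2}^2$ for every $v\in H^1$ with $\inprod{v}{Y}=0$. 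To upgrade to $H^1$, use that $\inprod{\cL v}{v}=\|v'\|_{L^2}^2+\inprod{W''(H)v}{v}$ together with the uniform bound $\|W''(H)\|_{L^\infty}<\infty$ to get $\|v'\|_{L^2}^2\leq \inprod{\cL v}{v}+C\|v\|_{L^2}^2$; combining with the $L^2$-coercivity yields \eqref{eq:coerL0} with some $\mu_0\in(0,1)$.

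For (4), given $Z$ with $\inprod{Z}{Y}=\|Y\|_{L^2}^2$ and $v_1\in H^1$, I would decompose orthogonally $v_1 = \alpha Y + w$ with $\inprod{w}{Y}=0$. Taking the $L^2$-inner product against $Z$ gives $\alpha\|Y\|_{L^2}^2 = \inprod{v_1}{Z}-\inprod{w}{Z}$, whence by Cauchy--Schwarz
\begin{equation*}
\alpha^2 \|Y\|_{L^2}^4 \leq 2\inprod{v_1}{Z}^2 + 2\|Z\|_{L^2}^2\|w\|_{L^2}^2.
\end{equation*}
Moreover, Cauchy--Schwarz on the normalization $\inprod{Z}{Y}=\|Y\|_{L^2}^2$ also gives $\|Z\|_{L^2}\geq \|Y\|_{L^2}$. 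Using $\inprod{\cL v_1}{v_1}=\inprod{\cL w}{w}\geq \mu_0\|w\|_{H^1}^2$ from~\eqref{eq:coerL0} and expanding $\|v_1\|_{H^1}^2 = \alpha^2\|Y\|_{H^1}^2 + \|w\|_{H^1}^2$, the inequality \eqref{eq:coerLZ} follows once $\mu_1$ is chosen small enough (in terms of $\mu_0$, $\|Y\|_{H^1}$ and $\|Y\|_{L^2}$) so that both the cross term $\alpha^2\|Y\|_{H^1}^2$ is dominated by $\mu_1^{-1}\inprod{v_1}{Z}^2 + \mu_0\|Z\|_{L^2}^2\|w\|_{L^2}^2/2$ and the term $\mu_1\|w\|_{H^1}^2$ is absorbed by $\mu_0\|Z\|_{L^2}^2\|w\|_{H^1}^2$, using $\|Z\|_{L^2}^2\geq \|Y\|_{L^2}^2$ uniformly.

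The main obstacle is the ground-state identification in (2): showing that the positive solution $Y$ realizes the \emph{bottom} of the spectrum and that $0$ is \emph{simple} and \emph{isolated} from the rest of $\sigma(\cL)$. Simplicity and isolation then hand (3) on a plate via the spectral theorem, and (4) is a purely algebraic modulation argument on top of (3). The rest of the proof is bookkeeping of exponential tails and uniform bounds.
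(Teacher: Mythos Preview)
Your approach is essentially the same as the paper's: Weyl's theorem for the essential spectrum, positivity of $Y=H'$ to identify it as the ground state, the spectral gap plus the bound on $W''(H)$ to upgrade $L^2$-coercivity to $H^1$, and the orthogonal decomposition $v_1=\alpha Y+w$ with $\inprod{w}{Y}=0$ for part~(4). One minor slip: the equality $\|v_1\|_{H^1}^2 = \alpha^2\|Y\|_{H^1}^2 + \|w\|_{H^1}^2$ is false since $Y$ and $w$ are only $L^2$-orthogonal, but the trivial inequality $\|v_1\|_{H^1}^2 \leq 2\alpha^2\|Y\|_{H^1}^2 + 2\|w\|_{H^1}^2$ is all you need and the rest of your argument goes through unchanged.
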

\begin{proof}
The first property follows from $\lim_{\pm \infty} W''(H)= W''(\zeta_\pm)$, the definition of~$\omega$ and standard arguments (see~\cite{MR678151}).
Differentiating the equation $H''=W'(H)$, we see with the notation $Y=H'$ that
$Y''=W''(H) Y$. Moreover, from Lemma~\ref{pr:H}, $Y>0$ on $\R$, which means that $Y$
is the first eigenfunction of $L$ and thus $L$ is non negative.
The coercivity property~\eqref{eq:coerL0} then follows from standard arguments
(see \emph{e.g.}~\cite{MR678151,We85}).

Now, we justify~\eqref{eq:coerLZ}. We decompose $v_1 = v+ a Y$, where $\inprod{v}{Y}=0$, so that 
$\inprod{\cL v_1}{v_1} =\inprod{\cL v}{v}\geq \mu_0 \norm{v}_{H^1}^2$.
Next, $a \inprod{Y}{Z} = \inprod{v_1}{Z} - \inprod{v}{Z}$
yields the estimate $|a|\leq C \left| \inprod{v_1}{Z}\right|+ C\norm{Z}_{L^2} \norm{v}_{L^2} $ and so
$\norm{v_1}_{H^1}\leq C (1+\norm{Z}_{L^2})\norm{v}_{H^1}+C\left|\inprod{v_1}{Z}\right|$, which completes the proof.
\end{proof}

\subsection{Lorentz invariant norm}\label{S.LIN}
Recall that for any $c\in (-1,1)$, we have defined the norm $\|\cdot\|_c$ on $H^1\times L^2$ as follows
\begin{equation}\label{def:norm}
\norm{\uu}_c^2= \gamma^{-1} \norm{\partial_x u_1}_{L^2}^2
+\gamma \norm{u_1}_{L^2}^2+ \gamma \norm{u_2+c\partial_x u_1}_{L^2}^2.
\end{equation}
For fixed $c$, this norm is equivalent to the standard norm. We have
\begin{equation}\label{eq:coerLLbis}
\norm{\uu}_{H^1\times L^2}\leq 2 \gamma^{\frac 12} \norm{\uu}_c
\quad\mbox{and}\quad
\norm{u_1}_{L^\infty} \leq \norm{\uu}_c.
\end{equation}
Indeed, first using $\norm{u_2+c\partial_x u_1}_{L^2}^2 \geq \frac 12\|u_2\|_{L^2}^2 -\|\partial_x u_1\|_{L^2}^2$, we have
\begin{equation*}
\norm{u_1}_{H^1}^2+\gamma^2 \norm{u_2+c\partial_x u_1}_{L^2}^2
\geq \norm{u_1}_{H^1}^2+\frac 12 \norm{u_2+c\partial_x u_1}_{L^2}^2\geq \frac 14 \norm{\uu}_{H^1\times L^2}^2.
\end{equation*}
Second, we observe by standard arguments $\|u_1\|_{L^\infty}^2\leq 2\|\partial_x u_1\|_{L^2}\|u_1\|_{L^2}\leq \norm{\uu}_c^2$.

We also observe that for $\gb_c (x) = (g(\gamma x), h(\gamma x)-c\gamma g'(\gamma x))$, it holds
\begin{equation*}
\|\gb_c\|_c^2 = \|g\|_{H^1}^2+ \|h\|_{L^2}^2.
\end{equation*}
For future reference, we also prove the following result.
\begin{lemma}
There exist constants $C,\delta_0>0$ such that the following holds.
Let $c_0\in (-1,1)$ and $y_0\in \R$. Set $\gamma_0=(1-c_0^2)^{-\frac 12}$. For any $c\in (-1,1)$ and $y\in \R$ such that
\begin{equation}\label{eq:closecy}
\gamma_0^2 |c-c_0| + \gamma_0 |y-y_0|=:\delta \leq \delta_0 ,
\end{equation}
it holds
\begin{enumerate}
\item Close kinks estimate.
\begin{equation}\label{eq:Hclose}
\|\HH_{c_0,y_0} - \HH_{c,y} \|_{c_0} 
\leq C \delta.
\end{equation}
\item Close norms estimate. For all $u\in H^1\times L^2$,
\begin{equation}\label{eq:nul}
\|u\|_c \leq C \|u\|_{c_0}.
\end{equation}
\end{enumerate}
\end{lemma}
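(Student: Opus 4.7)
The plan is to establish the two claims separately: estimate \eqref{eq:Hclose} by a telescoping argument in the kink parameters $(c,y)$, and estimate \eqref{eq:nul} by a direct algebraic comparison of the two norms.

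For \eqref{eq:Hclose}, I would insert the intermediate kink $\HH_{c_0,y}$ and apply the fundamental theorem of calculus twice, using the identifications $\TT_{c,y}=-\partial_y\HH_{c,y}$ and $\DD_{c,y}=\partial_c\HH_{c,y}$ from \S\ref{S2.2}, to write
\[
\HH_{c_0,y_0}-\HH_{c,y}=\int_y^{y_0}\TT_{c_0,s}\,\ud s+\int_c^{c_0}\DD_{s,y}\,\ud s.
\]
Minkowski in the $\|\cdot\|_{c_0}$ norm reduces the problem to the pointwise bounds $\|\TT_{c_0,s}\|_{c_0}\leq C\gamma_0$ and $\|\DD_{s,y}\|_{c_0}\leq C\gamma_0^2$, uniformly in $s$ between $c$ and $c_0$. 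The first bound rests on a cancellation built into $\TT$: the second component $-c_0 H''_{c_0,s}=-c_0\partial_x H'_{c_0,s}$ annihilates the cross term in $\|\cdot\|_{c_0}$, leaving only $\gamma_0^{-1}\|H''_{c_0,s}\|_{L^2}^2+\gamma_0\|H'_{c_0,s}\|_{L^2}^2$. The rescaling $H^{(k)}_{c_0,s}(x)=\gamma_0^k H^{(k)}(\gamma_0(x-s))$, together with the finiteness of $\|H'\|_{L^2}$ and $\|H''\|_{L^2}$ from Lemma~\ref{pr:H}, makes both quantities $O(\gamma_0^2)$.

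For the bound on $\DD_{s,y}$ I would use the explicit formula from \S\ref{S2.2}. A direct computation gives
\[
(\DD_{s,y})_2+c_0\partial_x(\DD_{s,y})_1=\gamma(s)^2(c_0s-1)H'_{s,y}+s\gamma(s)^2(c_0-s)(x-y)H''_{s,y},
\]
which at $s=c_0$ collapses to $-H'_{c_0,y}$ thanks to $\gamma_0^{-2}=1-c_0^2$. The hypothesis $\gamma_0^2|c-c_0|\leq\delta_0$ forces $1-s^2=\gamma_0^{-2}(1+O(\delta_0))$ on the whole segment, so $\gamma(s)\leq C\gamma_0$ and, after expanding $c_0s-1=c_0(s-c_0)-\gamma_0^{-2}$, both coefficients in the display above stay $O(1)$. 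Combined with the scaling identities for the remaining components and the finiteness of the weighted norms $\|xH'\|_{L^2}$, $\|xH''\|_{L^2}$ (again from the exponential decay in Lemma~\ref{pr:H}), this yields $\|\DD_{s,y}\|_{c_0}^2\leq C\gamma_0^4$. Collecting, $\|\HH_{c_0,y_0}-\HH_{c,y}\|_{c_0}\leq C\gamma_0|y_0-y|+C\gamma_0^2|c-c_0|\leq C\delta$.

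Estimate \eqref{eq:nul} follows from the algebraic identity $u_2+c\partial_x u_1=(u_2+c_0\partial_x u_1)+(c-c_0)\partial_x u_1$ and $(a+b)^2\leq 2a^2+2b^2$: the cross term in $\|u\|_c^2$ is controlled by the cross term in $\|u\|_{c_0}^2$ plus $(c-c_0)^2\|\partial_x u_1\|_{L^2}^2$. One then absorbs the first piece using $\gamma\leq C\gamma_0$, and the second using $(c-c_0)^2\leq\gamma_0^{-4}\delta_0^2$ combined with the trivial bound $\|\partial_x u_1\|_{L^2}^2\leq\gamma_0\|u\|_{c_0}^2$, producing the harmless residual factor $\gamma_0^{-2}\delta_0^2\leq 1$. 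The comparison of the remaining $\gamma^{\pm 1}\|\cdot\|_{L^2}^2$ terms is immediate. The main technical point is the estimate for $\DD_{s,y}$: despite the prefactor $\gamma(s)^2$ in each component of $\DD_{s,y}$, the closure of the bound hinges on the cancellation $\gamma_0^2(c_0^2-1)=-1$ at $s=c_0$ together with the smallness of $\gamma_0^2|c-c_0|$, both of which are exactly what the hypothesis \eqref{eq:closecy} provides.
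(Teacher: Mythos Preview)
Your proof is correct. For part~(2) your argument is essentially identical to the paper's. For part~(1) you take a different route: the paper argues by direct change of variables, exploiting the identity $\|\gb_{c_0}\|_{c_0}^2=\|g\|_{H^1}^2+\|h\|_{L^2}^2$ from \S\ref{S.LIN} to reduce $\|\HH_{c_0,y_0}-\HH_{c_0,y}\|_{c_0}$ to a translation estimate on $H$ in $H^1$, and then handles the $c$-variation through the pointwise bound $|1-\gamma/\gamma_0|\leq C\gamma_0^2|c-c_0|$ applied to the differences $H'(\cdot)-H'(\tfrac{\gamma}{\gamma_0}\cdot)$, etc. You instead differentiate in the parameters and integrate, reusing the objects $\TT_{c,y}$ and $\DD_{c,y}$ from \S\ref{S2.2}. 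Both arguments are elementary; yours has the aesthetic advantage of recycling structures already introduced for the modulation analysis, while the paper's change-of-variable computation is slightly more explicit and avoids the need to track $\gamma(s)$ along the segment between $c$ and $c_0$. A minor remark: your bound $\|\DD_{s,y}\|_{c_0}\leq C\gamma_0^2$ is correct but not sharp---the cross-term contribution is only $O(\gamma_0)$, as your own computation $(\DD_{c_0,y})_2+c_0\partial_x(\DD_{c_0,y})_1=-H'_{c_0,y}$ shows---though the weaker bound is all that is needed since $\gamma_0\leq\gamma_0^2$.
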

\begin{proof}
First, by the definition of $\HH_{c,y}$ \and change of variable, we see that
\[
\|\HH_{c_0,y_0} - \HH_{c_0,y} \|_{c_0}^2
=\|H-H(\cdot -\gamma_0(y-y_0))\|_{H^1}
\leq C \gamma_0 |y-y_0|.
\]
Second, we have
\begin{align*}
\|\HH_{c_0,y} - \HH_{c,y} \|_{c_0}^2 & =
\left\|H'-H'\left( \frac{\gamma}{\gamma_0} \cdot\right)\right\|_{L^2}^2
+\left\|H-H\left( \frac{\gamma}{\gamma_0} \cdot\right)\right\|_{L^2}^2\\
&\quad+\gamma\gamma_0(c_0-c)^2\|H'\|_{L^2}^2.
\end{align*}
Thus,
\[
\|\HH_{c_0,y} - \HH_{c,y} \|_{c_0}\leq C \left|1-\frac{\gamma}{\gamma_0}\right|+ C \sqrt{\gamma\gamma_0}|c_0-c|.
\]
The estimate 
\begin{equation}\label{eq:gamma}
\left|1-\frac{\gamma}{\gamma_0}\right| \leq C \gamma_0^2 |c-c_0|\leq C \delta
\end{equation}
completes the proof of~\eqref{eq:Hclose}.
To prove~\eqref{eq:nul}, we observe from~\eqref{eq:gamma} and~\eqref{eq:closecy} that $\frac 12 \gamma_0<\gamma<2\gamma_0$ and $|c_0-c|\leq \gamma_0^{-2}\delta$.
In particular,
\[
\left| \|u_2+c\partial_x u_1\|_{L^2}- \|u_2+c_0\partial_x u_1\|_{L^2}\right|
\leq |c_0-c| \left\|\partial_x u_1\right\|_{L^2}\leq \gamma_0^{-2}\delta_0\|\partial_x u_1\|_{L^2}.
\]
Estimate~\eqref{eq:nul} follows.
\end{proof}

\subsection{Spectral properties}
For the vector-valued operator $\LL_{c,y}$ defined in~\eqref{def:L}, \eqref{def:LL}
and related to the expansion of $\cM$ in~\eqref{Expansion_M}, we prove a coercivity result under the orthogonality condition with respect to $\FF_{c,y}$.
\begin{lemma}
There exist constants $C>0$ and $\mu\in (0,1)$ such that for any $c\in (-1,1)$, $y\in \R$ and for any $\uu \in H^1 \times L^2$,
the following hold.
\begin{enumerate}
\item Bound. 
\begin{equation}\label{eq:bdLL}
 | \gamma \inprod{\LL_{c,y} \uu}{\uu}| \leq C \norm{\uu}_c^2.
\end{equation}
\item Coercivity. If $\inprod{\uu}{\FF_{c,y}}=0$ then
\begin{equation}\label{eq:coerLL}
 \gamma \inprod{\LL_{c,y} \uu}{\uu} \geq \mu \norm{\uu}_c^2.
\end{equation}
\end{enumerate}
\end{lemma}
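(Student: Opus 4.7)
\textbf{Step 1 (Explicit expansion and the bound).} Starting from~\eqref{def:LLbis} and using $\cP[\uu]=\int u_2\partial_x u_1$, I first rewrite
\begin{equation*}
\inprod{\LL_{c,y}\uu}{\uu} = \norm{u_2+c\partial_x u_1}_{L^2}^2 + (1-c^2)\norm{\partial_x u_1}_{L^2}^2 + \int W''(H_{c,y})u_1^2,
\end{equation*}
completing the square in $u_2+c\partial_x u_1$. Multiplying by $\gamma$, the first two terms are bounded by $\norm{\uu}_c^2$ by definition of the norm. Since $H_{c,y}$ takes values in $[\zm,\zp]$ and $W''$ is continuous there, $W''(H_{c,y})$ is uniformly bounded; thus $\gamma\int W''(H_{c,y})u_1^2\leq C\gamma\norm{u_1}_{L^2}^2\leq C\norm{\uu}_c^2$, which yields~\eqref{eq:bdLL}.

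\textbf{Step 2 (Lorentz reduction).} To prove coercivity I kill the $(c,y)$-dependence by the isometric rescaling from \S\ref{S.LIN}: set $g(\tilde x)=u_1(\gamma^{-1}\tilde x+y)$ and $h(\tilde x)=u_2(\gamma^{-1}\tilde x+y)+c\gamma\, g'(\tilde x)$, so that $\norm{\uu}_c^2=\norm{g}_{H^1}^2+\norm{h}_{L^2}^2$. Substituting in the identity of Step~1 and changing variables, the three contributions become respectively $\norm{h}_{L^2}^2$, $\norm{g'}_{L^2}^2$ and $\int W''(H)g^2$, which combine to
\begin{equation*}
\gamma\inprod{\LL_{c,y}\uu}{\uu}=\norm{h}_{L^2}^2+\inprod{\cL g}{g},
\end{equation*}
with $\cL=-\partial_x^2+W''(H)$ the linearized operator around the static kink of Lemma~\ref{on:L}. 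The task reduces to proving $\norm{h}_{L^2}^2+\inprod{\cL g}{g}\geq\mu(\norm{g}_{H^1}^2+\norm{h}_{L^2}^2)$, uniformly in $c,y$.

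\textbf{Step 3 (Handling the orthogonality).} Decompose $g=aH'+g_\perp$ with $\inprod{g_\perp}{H'}=0$. By~\eqref{eq:coerL0}, $\inprod{\cL g}{g}=\inprod{\cL g_\perp}{g_\perp}\geq\mu_0\norm{g_\perp}_{H^1}^2$, so it suffices to bound $|a|$ by $C(\norm{g_\perp}_{L^2}+\norm{h}_{L^2})$. Plugging the explicit form of $\FF_{c,y}$ into $\inprod{\uu}{\FF_{c,y}}=0$, rewriting in the rescaled variables, and integrating by parts the term $c^2\gamma^2\int g'\,\tilde x H'\,d\tilde x$, the coefficients $\gamma^2(1+c^2)$ and $c^2\gamma^2$ coming from the two components of $\FF_{c,y}$ recombine via $\gamma^2(1+c^2)-c^2\gamma^2=\gamma^2$ into the clean identity
\begin{equation*}
a\norm{H'}_{L^2}^2=-2c^2\int g_\perp\,\tilde x H''\,d\tilde x-c\gamma^{-1}\int h\,\tilde x H'\,d\tilde x.
\end{equation*}
Because $|c^2|\leq 1$, $|c\gamma^{-1}|=|c|\sqrt{1-c^2}\leq 1$, and $\tilde x H'$, $\tilde x H''$ are in $L^2$ by the exponential decay of Lemma~\ref{pr:H}, this yields $|a|\leq C(\norm{g_\perp}_{L^2}+\norm{h}_{L^2})$ uniformly in $c,y$. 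Then $\norm{g}_{H^1}^2\leq C(\norm{g_\perp}_{H^1}^2+a^2)\leq C(\norm{g_\perp}_{H^1}^2+\norm{h}_{L^2}^2)$, and
\[
\norm{h}_{L^2}^2+\inprod{\cL g}{g}\geq\min(1,\mu_0)\bigl(\norm{h}_{L^2}^2+\norm{g_\perp}_{H^1}^2\bigr)\geq \mu\bigl(\norm{g}_{H^1}^2+\norm{h}_{L^2}^2\bigr),
\]
which proves~\eqref{eq:coerLL}.

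\textbf{Main obstacle.} The delicate point is the \emph{uniform} control of $|a|$ as $|c|\to 1$: each of the factors $\gamma^2(1+c^2)$ and $c^2\gamma^2$ appearing in the raw orthogonality blows up. What saves the argument is the algebraic cancellation leaving only $\gamma^2 a\norm{H'}_{L^2}^2$ after the integration by parts, together with the arithmetic bound $|c\gamma^{-1}|\leq 1$ which keeps the coupling to $h$ finite. I would check this computation carefully, since it is precisely the structural feature of $\FF_{c,y}$ (as the symplectic companion to the Lorentz generator $\DD_{c,y}$ in the generalized null space of $\LL_{c,y}\JJ$) that makes the whole modulation framework work at arbitrary subluminal speeds.
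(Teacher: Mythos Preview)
Your proof is correct and follows essentially the same route as the paper: reduce to the static kink by the Lorentz rescaling of \S\ref{S.LIN}, rewrite $\gamma\inprod{\LL_{c,y}\uu}{\uu}=\|h\|_{L^2}^2+\inprod{\cL g}{g}$, and exploit the coercivity of $\cL$ away from its kernel together with the orthogonality condition. The only cosmetic difference is that the paper packages your decomposition $g=aH'+g_\perp$ into the abstract estimate~\eqref{eq:coerLZ} applied with $Z=(1+c^2)Y+2c^2xY'$, and uses the equivalent formulation~\eqref{eq:equivv} of the orthogonality; your identity $a\|H'\|_{L^2}^2=-2c^2\inprod{g_\perp}{\tilde x H''}-c\gamma^{-1}\inprod{h}{\tilde x H'}$ is exactly the content of~\eqref{eq:csq} after the same integration by parts (equivalently, the cancellation $\inprod{Z}{Y}=\|Y\|_{L^2}^2$), so the ``main obstacle'' you flag is handled identically in both arguments.
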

\begin{proof} 
Let $c\in (-1,1)$, $y\in \R$ and $\uu=(u_1,u_2) \in H^1 \times L^2$.
We change variable, letting $\vv = (v_1,v_2)\in H^1 \times L^2$ be such that
\begin{gather*}
 u_1 (x) = v_1 (\gamma (x-y)),\quad u_2(x) = v_2(\gamma (x-y)),\\
\gamma \|\partial_x u_1\|_{L^2}^2 = \gamma^2 \|\partial_x v_1\|_{L^2}^2,\quad \gamma \|u_1\|_{L^2}^2 = \|v_1\|_{L^2}^2,\quad \gamma \|u_2\|_{L^2}^2 = \|v_2\|_{L^2}^2.
\end{gather*}
In particular, by direct computations,
\begin{equation}\label{eq:dir1}
\|\uu\|_c^2 = \|v_2+c\gamma \partial_x v_1\|_{L^2}^2 + \|v_1\|_{H^1}^2,
\end{equation}
and
\begin{align}
\gamma \inprod{\LL_{c,y} \uu}{\uu}
&= \int v_2^2+ \gamma^2\int (\partial_x v_1)^2 + \int W''(H) v_1^2 + 2 \gamma c\int (\partial_x v_1) v_2\nonumber\\
&= \|v_2+c\gamma \partial_x v_1\|_{L^2}^2 + \inprod{\cL v_1}{v_1} .
\label{eq:dir2}\end{align}
From~\eqref{def:L}, we have
$|\inprod{\cL v_1}{v_1}|\leq C \|v_1\|_{H^1}^2$ and~\eqref{eq:bdLL} follows from~\eqref{eq:dir1}-\eqref{eq:dir2}.

Now, observe that
\begin{equation}\label{eq:equivv}
\inprod{\uu}{\FF_{c,y}}=0 \iff 
\inprod{v_1}{Z} + c\gamma^{-1} \inprod{v_2+c\gamma \partial_x v_1}{xY}=0,
\end{equation}
where
\begin{equation*}
Z= (1+c^2) Y + 2c^2 xY',\quad \inprod{Z}{Y}=\|Y\|_{L^2}^2.
\end{equation*}
Fix a constant $C\geq 1$ independent of $\gamma$ such that
\begin{equation}\label{bd:Z}
\frac 1C \leq \|Z\|_{L^2}\leq C ,
\end{equation}
and from~\eqref{eq:equivv},
\begin{equation}\label{eq:csq}
\inprod{\uu}{\FF_{c,y}}=0 \implies 
|\inprod{v_1}{Z}|\leq C \gamma^{-1} \norm{v_2+c\gamma \partial_x v_1}_{L^2}.
\end{equation}
Using~\eqref{eq:coerLZ},~\eqref{bd:Z} and~\eqref{eq:csq}, we observe that
\begin{align*}
\inprod{\cL v_1}{v_1} 
& \geq \frac{\mu_1} {C^2} \norm{v_1}_{H^1}^2 
- \frac{C^2} {\mu_1} \inprod{v_1}{Z}^2 \\
& \geq \frac{\mu_1} {C^2} \norm{v_1}_{H^1}^2 
- \frac{C^4}{\mu_1 \gamma^2} \|v_2+c\gamma \partial_x v_1\|_{L^2}^2.
\end{align*}
By $\cL\geq 0$, $\mu_1\in (0,1)$, $\gamma\geq 1$, and
then the above estimate, we obtain
\begin{align*}
\inprod{\cL v_1}{v_1} + \|v_2+c\gamma \partial_x v_1\|_{L^2}^2
&\geq \frac{\mu_1}{2C^4} \inprod{\cL v_1}{v_1} + \|v_2+c\gamma \partial_x v_1\|_{L^2}^2\\
&\geq \frac{\mu_1^2} {2C^8} \norm{v_1}_{H^1}^2 
+\frac 12 \|v_2+c\gamma \partial_x v_1\|_{L^2}^2.
\end{align*}
This proves~\eqref{eq:coerLL}.
\end{proof}

\begin{lemma}\label{cor:1}
There exist constants $C>0$ and $\mu,\delta\in (0,1)$ such that for any $c\in (-1,1)$, $y\in \R$ and for any $\uu \in H^1 \times L^2$
with $\|\uu\|_c\leq \delta$, the following hold.
\begin{enumerate}
\item Bound.
\begin{equation}\label{eq:bdM}
\cM[\HH_{c,y}+\uu] - \|H'\|_{L^2}^4 \leq C \|\uu\|_c^2.
\end{equation}
\item Coercivity. If $\inprod{\uu}{\FF_{c,y}}=\inprod{\uu}{\GG_{c,y}}=0$ then
\begin{equation}\label{eq:coerM}
\cM[\HH_{c,y}+\uu] - \|H'\|_{L^2}^4 \geq \mu \|\uu\|_c^2.
\end{equation}
\end{enumerate}
\end{lemma}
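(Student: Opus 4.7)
The plan is to derive both estimates from the expansion of $\cM = \cE^2 - \cP^2$ around $\HH_{c,y}$, combined with the quadratic bounds \eqref{eq:bdLL}, \eqref{eq:coerLL}, and the cubic remainder estimate
\[
|\cR| \leq C\|u_1\|_{L^\infty}\|u_1\|_{L^2}^2 \leq C\gamma^{-1}\|\uu\|_c^3,
\]
which follows from \eqref{2.3bis}, \eqref{eq:coerLLbis} and the norm definition \eqref{def:norm}.

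For the coercivity (2), the orthogonality $\inprod{\uu}{\GG_{c,y}}=0$ triggers the product decomposition \eqref{Expansion_M}. By the $\FF$-orthogonality and \eqref{eq:coerLL} combined with the remainder bound, the first factor $\inprod{\LL_{c,y}\uu}{\uu}+\cR$ is $\geq \tfrac{\mu}{2}\gamma^{-1}\|\uu\|_c^2$ once $\|\uu\|_c$ is small. For the second factor, the identity $\gamma\inprod{\LL_{-c,y}\uu}{\uu}=\|v_2-c\gamma\partial_x v_1\|_{L^2}^2+\inprod{\cL v_1}{v_1}$ (the analogue of \eqref{eq:dir2} with $\cL_{c,y}=\cL_{-c,y}$) combined with $v_2-c\gamma\partial_x v_1=(v_2+c\gamma\partial_x v_1)-2c\gamma\partial_x v_1$ yields $|\inprod{\LL_{-c,y}\uu}{\uu}|\leq C\gamma\|\uu\|_c^2$, so the second factor is $\geq \tfrac12\gamma\|H'\|_{L^2}^2$ for small $\|\uu\|_c$. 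Multiplying gives \eqref{eq:coerM}.

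For the bound (1), no orthogonality is available, so I expand $\cM$ directly via \eqref{Expansion_E}-\eqref{Expansion_P}. Setting $\alpha=\int H_{c,y}'(u_2-c\partial_x u_1)$ and $Q_E=\|u_2\|_{L^2}^2+\inprod{\cL u_1}{u_1}+\cR$, the Lorentz identity $\gamma^2(1-c^2)=1$ together with the relation $Q_E+2c\cP[\uu]=\inprod{\LL_{c,y}\uu}{\uu}+\cR$ (from \eqref{def:LLbis}) gives
\[
\cM[\HH_{c,y}+\uu]-\|H'\|_{L^2}^4 = \gamma\|H'\|_{L^2}^2\bigl(\inprod{\LL_{c,y}\uu}{\uu}+\cR\bigr)-\gamma^{-2}\alpha^2-\alpha(cQ_E+2\cP[\uu])+\tfrac14 Q_E^2-\cP[\uu]^2.
\]
The leading term is $\leq C\|\uu\|_c^2$ via \eqref{eq:bdLL}. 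Elementary $v$-variable bounds yield $|\alpha|\leq C\gamma\|\uu\|_c$ and $|\cP[\uu]|\leq C\gamma\|\uu\|_c^2$, so $\gamma^{-2}\alpha^2\leq C\|\uu\|_c^2$. Multiplying the key identity by $c$ gives $cQ_E+2\cP[\uu]=c\inprod{\LL_{c,y}\uu}{\uu}+c\cR+2\gamma^{-2}\cP[\uu]$, whence $|cQ_E+2\cP[\uu]|\leq C\gamma^{-1}\|\uu\|_c^2$ and $|\alpha(cQ_E+2\cP[\uu])|\leq C\|\uu\|_c^3$. Finally, substituting $Q_E=\inprod{\LL_{c,y}\uu}{\uu}+\cR-2c\cP[\uu]$ in $\tfrac14 Q_E^2-\cP[\uu]^2$ expands it as $\tfrac14(\inprod{\LL_{c,y}\uu}{\uu}+\cR)^2-c\cP[\uu](\inprod{\LL_{c,y}\uu}{\uu}+\cR)-\gamma^{-2}\cP[\uu]^2$, each term bounded by $C\|\uu\|_c^4\leq C\|\uu\|_c^2$ when $\|\uu\|_c\leq \delta$.

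The main obstacle is part (1): individual contributions naively carry $\gamma$ or $\gamma^2$ factors, and the uniform-in-$c$ bound emerges only through the algebraic identity $\gamma^2(1-c^2)=1$ and the relation $Q_E+2c\cP[\uu]=\inprod{\LL_{c,y}\uu}{\uu}+\cR$, which manifest the cancellations encoded in the Lorentz-invariant structure of $\cM$. Part (2) is more direct since \eqref{Expansion_M} already packages $\cM$ as a product whose spectral factor is handled by \eqref{eq:coerLL}.
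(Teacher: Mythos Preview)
Your proof is correct. For part (2) you follow exactly the paper's route: apply the product expansion \eqref{Expansion_M}, control the first factor from below via \eqref{eq:coerLL} and the cubic remainder bound $\gamma|\cR|\leq C\|\uu\|_c^3$, and show the second factor stays comparable to $\gamma\|H'\|_{L^2}^2$ (your bound $|\inprod{\LL_{-c,y}\uu}{\uu}|\leq C\gamma\|\uu\|_c^2$ is the right substitute for \eqref{eq:bdLL} here, and follows as you indicate from \eqref{eq:dir2}).

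For part (1) there is a genuine, if minor, difference. The paper's one-line argument invokes \eqref{Expansion_M}, but that identity was derived under the hypothesis $\inprod{\uu}{\GG_{c,y}}=0$, which is \emph{not} assumed in (1). In the paper's actual application (Section~\ref{S:3}) the bound \eqref{eq:bdM} is only ever used on the modulated $\uu(0)$, which does satisfy this orthogonality, so the paper's shortcut is adequate for its purposes. You instead expand $\cM=\cE^2-\cP^2$ directly from \eqref{Expansion_E}--\eqref{Expansion_P}, observe the cancellation of the linear-in-$\alpha$ terms (this is the content of Remark~\ref{rk:2}), and then use the Lorentz identity $\gamma^2(1-c^2)=1$ together with $Q_E+2c\cP[\uu]=\inprod{\LL_{c,y}\uu}{\uu}+\cR$ to get uniform-in-$c$ control of each remaining piece. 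This is more work, but it actually proves \eqref{eq:bdM} as stated, without any orthogonality assumption.
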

\begin{proof}
Note that~\eqref{2.3bis},~\eqref{def:norm},~\eqref{eq:coerLLbis} imply 
$\gamma | \cR| \leq C \|\uu\|_c^3$. Thus,~\eqref{eq:bdM}-\eqref{eq:coerM} follow from~\eqref{Expansion_M},
\eqref{eq:bdLL},~\eqref{eq:coerLL} for $\delta>0$ small enough independent of $(c,y)$.
\end{proof}
\subsection{Time-independent modulation}
We use a standard argument to modulate any function $\pp$ close to a kink $\HH_{c_0,y_0}$ in terms of the orthogonality conditions
identified in the previous results.
\begin{lemma}\label{le:modulation1}
There exist $C,\delta_1>0$ such that for any $c_0\in (-1,1)$, $y_0\in\R$, $\delta\in (0,\delta_1)$,
and for any $\pp\in (H,0)+ H^1\times L^2$ with $\|\pp-\HH_{c_0,y_0}\|_{c_0}\leq \delta$,
there exist unique $c=c(\pp) \in (-1,1)$ and $y=y(\pp)\in \R$ such that setting
\begin{equation}\label{u1u2}
\uu = \pp - \HH_{c,y}
\end{equation}
the following hold.
\begin{enumerate}
\item Smallness.
\begin{equation*}
\gamma_0^2|c-c_0|+ \gamma_0|y-y_0|+\|\uu\|_{c_0} \leq C\delta.
\end{equation*}
\item Orthogonality.
\begin{equation*}
\inprod{\uu}{\GG_{c,y}}=\inprod{\uu}{\FF_{c,y}}=0.
\end{equation*}
\end{enumerate}
\end{lemma}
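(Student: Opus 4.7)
The plan is to apply the implicit function theorem to the map
\[
\Psi: (c, y) \mapsto \bigl( \inprod{\pp - \HH_{c,y}}{\GG_{c,y}}, \inprod{\pp - \HH_{c,y}}{\FF_{c,y}} \bigr),
\]
seeking a zero $(c, y)$ close to $(c_0, y_0)$. At $\pp = \HH_{c_0, y_0}$ one trivially has $\Psi(c_0, y_0) = 0$, so existence and uniqueness reduce to invertibility of the Jacobian at this reference point. Using $\partial_c \HH_{c,y} = \DD_{c,y}$ and $\partial_y \HH_{c,y} = -\TT_{c,y}$, the leading contributions to the entries of $D_{(c,y)}\Psi$ are $\partial_c \Psi_1 = -\inprod{\DD_{c,y}}{\GG_{c,y}}$, $\partial_y \Psi_1 = \inprod{\TT_{c,y}}{\GG_{c,y}}$, $\partial_c \Psi_2 = -\inprod{\DD_{c,y}}{\FF_{c,y}}$, and $\partial_y \Psi_2 = \inprod{\TT_{c,y}}{\FF_{c,y}}$, up to corrections of order $\|\pp - \HH_{c,y}\|$. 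Since $\GG_{c,y} = \JJ\TT_{c,y}$ and $\FF_{c,y} = \JJ\DD_{c,y}$ with $\JJ$ antisymmetric, the off-diagonal entries vanish, and by~\eqref{eq:nondeg} both diagonal entries equal $-\gamma_0^3 \|H'\|_{L^2}^2$. Hence the Jacobian at the reference point is a nonzero scalar multiple of the identity.

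The main technical point is uniformity in $c_0 \in (-1, 1)$, since $\gamma_0$ becomes unbounded as $c_0 \to \pm 1$. I would reparametrize using the rescaled variables $\tilde c = \gamma_0^2 (c - c_0)$ and $\tilde y = \gamma_0 (y - y_0)$, which are exactly the combinations appearing in the claimed smallness estimate, and correspondingly rescale the outputs of $\Psi$ by an appropriate power of $\gamma_0$. The close-kinks estimate~\eqref{eq:Hclose} ensures that $(\tilde c, \tilde y) \mapsto \HH_{c,y}$ is Lipschitz uniformly in $c_0$ from a small ball in $\R^2$ into $(\EE_{\HH}, \|\cdot\|_{c_0})$, while~\eqref{eq:nul} together with the scaling identities for $\TT_{c,y}, \DD_{c,y}, \GG_{c,y}, \FF_{c,y}$ produce $c_0$-independent bounds on the rescaled Jacobian, its inverse, and the quadratic remainder in the Taylor expansion of $\Psi$.

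Applying the IFT in these rescaled coordinates then yields a unique $(c, y)$ with $\gamma_0^2 |c - c_0| + \gamma_0 |y - y_0| \leq C \delta$ solving $\Psi(c, y) = 0$, giving the required orthogonality. The bound on $\|\uu\|_{c_0}$ then follows from the triangle inequality $\|\uu\|_{c_0} \leq \|\pp - \HH_{c_0, y_0}\|_{c_0} + \|\HH_{c_0, y_0} - \HH_{c,y}\|_{c_0} \leq \delta + C\delta$ via~\eqref{eq:Hclose}. The chief obstacle is the uniformity bookkeeping sketched above; once the correct rescaling is identified, the IFT application itself is routine.
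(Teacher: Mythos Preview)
Your proposal is correct and follows essentially the same route as the paper: define the map $\Psi$ (the paper calls it $\cF$, with the two components in the opposite order), compute its Jacobian using~\eqref{eq:nondeg} and the antisymmetry of $\JJ$, and apply the implicit function theorem. The paper handles the uniformity in $c_0$ by computing the Jacobian directly in the original $(c,y)$ variables and showing it equals $-\gamma_0^3\|H'\|_{L^2}^2$ times a matrix whose off-diagonal entries are $O(\gamma_0^{\pm 1}\delta_1)$ (via the estimate~\eqref{eq:truc}), which is equivalent to your rescaling $\tilde c = \gamma_0^2(c-c_0)$, $\tilde y = \gamma_0(y-y_0)$ together with an appropriate rescaling of the outputs.
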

\begin{proof}
For some $\delta_1>0$ small to be fixed, let $\delta\in(0,\delta_1)$. Let $c_0\in (-1,1)$, $y_0\in \R$. 
We define
\begin{multline*}
\cB(c_0,y_0,\delta)= 
\{(c,y,\pp) \in(-1,1)\times \R\times ((H,0)+ H^1\times L^2) : \\
\quad \gamma_0^2|c-c_0|< \delta,\ \gamma_0|y-y_0|< \delta,\ \|\pp-\HH_{c_0,y_0}\|_{c_0}< \delta\}.
\end{multline*}
For any $(c,y,\pp)\in \cB(c_0,y_0,\delta)$, we also define $\cF=(\cF_1,\cF_2)$ where
\[
\cF_1(c,y;\pp) = \inprod{\pp-\HH_{c,y}}{\FF_{c,y}},\quad
\cF_2(c,y;\pp) = \inprod{\pp-\HH_{c,y}}{\GG_{c,y}}.
\]
We see that $\cF(c_0,y_0;\HH_{c_0,y_0})=(0,0)$ and we compute
using \S\ref{S2.2} (in particular, identity~\eqref{eq:nondeg}),
\begin{align*}
 \frac{\partial \cF_1}{\partial y} (c,y;\pp) 
&= -\inprod{\partial_y \HH_{c,y}}{\FF_{c,y}}+\inprod{\pp-\HH_{c,y}}{\partial_y\FF_{c,y}}\\
&= -\gamma^3 \|H'\|_{L^2}^2-\inprod{\pp-\HH_{c,y}}{\JJ \AAA_{c,y}},\\
\frac{\partial \cF_1}{\partial c} (c,y;\pp)
&= - \inprod{\partial_c\HH_{c,y}}{\FF_{c,y}}
+\inprod{\pp-\HH_{c,y}}{\partial_c\FF_{c,y}}=\inprod{\pp-\HH_{c,y}}{\JJ\BB_{c,y}}\\
\frac{\partial \cF_2}{\partial y} (c,y;\pp) &= - \inprod{\partial_y\HH_{c,y}}{\GG_{c,y}}+\inprod{\pp-\HH_{c,y}}{\partial_y\GG_{c,y}}
=-\inprod{\pp-\HH_{c,y}}{\JJ\TT_{c,y}'}\\
\frac{\partial \cF_2}{\partial c} (c,y;\pp) &= 
-\inprod{\partial_c \HH_{c,y}}{\GG_{c,y}}+\inprod{\pp-\HH_{c,y}}{\partial_c\GG_{c,y}}\\
&= - \gamma^3 \|H'\|_{L^2}^2+\inprod{\pp-\HH_{c,y}}{\JJ \AAA_{c,y}}.
\end{align*}
We claim that
\begin{equation}\label{cl:mar2}\begin{aligned}
|\cF_1(c,y;\pp)|&\leq C \gamma^2 \|\pp-\HH_{c,y}\|_{c}\leq C \gamma_0^2 \delta,\\
|\cF_2(c,y;\pp)|&\leq C \gamma \|\pp-\HH_{c,y}\|_{c}\leq C \gamma_0 \delta,
\end{aligned}\end{equation}
and
\begin{equation}\label{cl:mar}\begin{aligned}
|\inprod{\pp-\HH_{c,y}}{\JJ \TT_{c,y}'}|&\leq C \gamma^2 \|\pp-\HH_{c,y}\|_{c}\leq C \gamma_0^2 \delta,\\
|\inprod{\pp-\HH_{c,y}}{\JJ \AAA_{c,y}}|&\leq C \gamma^3 \|\pp-\HH_{c,y}\|_{c}\leq C \gamma_0^3 \delta,\\ 
|\inprod{\pp-\HH_{c,y}}{\JJ \BB_{c,y}}| &\leq C \gamma^4 \|\pp-\HH_{c,y}\|_{c}\leq C \gamma_0^4 \delta.
\end{aligned}\end{equation}
Indeed, for any Schwartz functions $V$ and $W$, setting $\VV_{c,y}=(V_{c,y},W_{c,y})$, it holds
\begin{align*}
\inprod{\uu}{\JJ \VV_{c,y}}
&=- \inprod{u_2}{V_{c,y}} + \inprod{u_1}{W_{c,y}} \\
&=- \inprod{u_2+c\partial_x u_1 }{V_{c,y}} + \inprod{u_1}{W_{c,y}-cV_{c,y}'}\\
&=- \gamma^{-1} \inprod{v_2+c\gamma \partial_x v_1}{V}+\gamma^{-1} \inprod{v_1}{W-\gamma cV'}.
\end{align*}
In particular,
\begin{equation}\label{eq:truc}
|\inprod{\uu}{\JJ \VV_{c,y}}|\leq \gamma^{-1} \|\uu\|_c \left(\|V\|_{L^2}+\|W-\gamma cV'\|_{L^2}\right).
\end{equation}
By~\eqref{eq:Hclose} and~\eqref{eq:nul}, we have
\begin{align*}
\|\pp-\HH_{c,y}\|_{c}
&\leq C\|\pp-\HH_{c,y}\|_{c_0}\leq
C\|\pp-\HH_{c_0,y_0}\|_{c_0}+C\|\HH_{c_0,y_0}-\HH_{c,y}\|_{c_0}\\
&\leq C\|\pp-\HH_{c_0,y_0}\|_{c_0} + C \left( \gamma_0^2 |c_0-c| + \gamma_0 |y_0-y|\right)
\leq C \delta.
\end{align*}
Combining this with~\eqref{eq:truc},~\eqref{cl:mar}, $|1-\frac{\gamma}{\gamma_0} | \leq C \delta$ (see~\eqref{eq:gamma}) and the definitions of $\FF_{c,y}$, $\GG_{c,y}$, $\TT_{c,y}$, $\AAA_{c,y}$, $\BB_{c,y}$, we obtain~\eqref{cl:mar2}-\eqref{cl:mar}.

It follows from~\eqref{cl:mar2}-\eqref{cl:mar} that for $\delta_1>0$ small enough, for any $(c,y,\pp)\in \cB(c_0,y_0,\delta_1)$
the Jacobian Matrix of $\cF$ writes
\begin{equation*}
{\rm Jac}_{\cF}(c,y;\pp)
=- \gamma_0^3 \|H'\|_{L^2}^2 \begin{pmatrix} 1+O(\delta_1) & O(\gamma_0 \delta_1) \\ O(\gamma_0^{-1} \delta_1)& 1+O(\delta_1) \end{pmatrix}.
\end{equation*}
Therefore, there exists a small constant $\sigma>0$ such that for any $\pp\in (H,0)+ H^1\times L^2$ with $\|\pp-\HH_{c_0,y_0}\|_{c_0}< \delta\leq \sigma \delta_1$,
the Implicit Function Theorem shows the existence of unique $(c,y)$ such that $\cF(c,y;\pp)=(0,0)$
and $\gamma_0^2 |c-c_0|+\gamma_0 |y-y_0|\leq C\delta$. Defining $\uu = \pp-H_{c,y}$, we find
$\|\uu\|_{c_0} \leq \|\pp-H_{c_0,y_0}\|_{c_0} + \|H_{c_0,y_0}-H_{c,y}\|_{c_0}\leq C \delta$.
\end{proof}

\subsection{Time-dependent modulation}
\begin{lemma}\label{le:modulation2}
There exist $C,\delta_1>0$ such that for any $c_0\in (-1,1)$, $y_0\in\R$, $\delta\in (0,\delta_1)$, if
$\pp=(\phi_1,\phi_2)$ is a solution of~\eqref{syst} on $[T_1,T_2]$ satisfying
\[
\sup_{t\in [T_1,T_2]} \left\{\inf_{y_0\in \R} \|\pp(t)-\HH_{c_0,y_0}\|_{c_0} \right\}\leq \delta,
\]
then, there exist unique functions $c:[T_1,T_2]\to (-1,1)$ and $y:[T_1,T_2]\to \R$ of class $\cC^1$,
such that decomposing
\begin{equation}\label{eq:phiu}
\pp(t) = \HH_{c(t),y(t)}+\uu(t),
\end{equation}
the following properties hold, for all $t\in [T_1,T_2]$,
\begin{enumerate}
\item Smallness.
\begin{equation}\label{eq:small}
\gamma_0^2|c(t)-c_0|+\|\uu(t)\|_{c_0} \leq C \delta.
\end{equation}
\item Orthogonality.
\begin{equation}\label{eq:ortho}
\inprod{\uu(t)}{\GG_{c(t),y(t)}}=\inprod{\uu(t)}{\FF_{c(t),y(t)}}=0.
\end{equation}
\item Equation of $\uu$.
\begin{equation}\label{eq:u}
\partial_t \uu 
= \left(\JJ \LL_{c,y} - \Id c \partial_x \right) \uu
+ \left(\dot y-c\right) \TT_{c,y} - \dot c \DD_{c,y}+ \RR,
\end{equation}
where $|\RR|\leq C u_1^2$.
\item Estimate on time derivatives.
\begin{equation}\label{eq:param}
\gamma_0|(\dot y-c)(t)| +\gamma_0^2 |\dot c(t)|
\leq \int u_1^2(x) e^{-\frac12 {\gamma_0 \omega}|x-y|} \ud x.
\end{equation}
\end{enumerate}
\end{lemma}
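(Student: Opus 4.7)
My plan is to apply the static modulation Lemma~\ref{le:modulation1} pointwise in time, upgrade the regularity of the resulting $(c(t),y(t))$ to $\cC^1$ via the Implicit Function Theorem, and then extract the evolution equations and the parameter estimates by differentiating the two orthogonality relations along the PDE flow.

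For the first step, the assumption that $\inf_{y_0}\|\pp(t)-\HH_{c_0,y_0}\|_{c_0}\leq \delta$ at each $t$ yields, through Lemma~\ref{le:modulation1}, unique parameters $(c(t),y(t))$ satisfying (1) and (2). The Jacobian with respect to $(c,y)$ of the defining system $\cF(c,y;\pp)=0$ was shown in the proof of Lemma~\ref{le:modulation1} to be uniformly invertible for $\delta_1$ small. Since $\pp\in \cC^1([T_1,T_2],\EE_\HH)$ as a solution of~\eqref{syst}, the IFT transfers this regularity to $c(\cdot),y(\cdot)\in \cC^1$.

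For the second step, I would derive~\eqref{eq:u} using the Hamiltonian structure $\partial_t\pp=\JJ \nabla\cE[\pp]$, which combined with $c\partial_x\pp=-c\JJ\nabla\cP[\pp]$ gives the boosted-frame identity $\partial_t\pp+c\partial_x\pp=\JJ\nabla(\cE+c\cP)[\pp]$. Since $\HH_{c,y}$ is a critical point of $\cE+c\cP$ with Hessian $\LL_{c,y}$ (as seen by $\LL_{c,y}\TT_{c,y}=0$ in~\eqref{eigen:LL}), a Taylor expansion of $W'$ around $H_{c,y}$ yields
\begin{equation*}
\nabla(\cE+c\cP)[\HH_{c,y}+\uu]=\LL_{c,y}\uu+(\cQ(u_1),0),\quad \cQ(u_1)=O(u_1^2),
\end{equation*}
using $W\in\cC^3$. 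Substituting $\partial_t\HH_{c,y}=\dot c\DD_{c,y}-\dot y\TT_{c,y}$ and $\partial_x\HH_{c,y}=\TT_{c,y}$ and collecting terms produces~\eqref{eq:u} with $\RR=\JJ(\cQ(u_1),0)$ and hence $|\RR|\leq C u_1^2$ as required.

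For the third step, I would differentiate $\inprod{\uu}{\GG_{c,y}}=0$ and $\inprod{\uu}{\FF_{c,y}}=0$ in $t$ and substitute~\eqref{eq:u}. The identities $\LL_{c,y}\TT_{c,y}=0$ and $\LL_{c,y}\DD_{c,y}=\JJ\TT_{c,y}$, together with the self-adjointness of $\LL_{c,y}$ and the skew-symmetry of $\JJ$, produce $\inprod{\JJ\LL_{c,y}\uu}{\GG_{c,y}}=\inprod{\uu}{\LL_{c,y}\TT_{c,y}}=0$ and $\inprod{\JJ\LL_{c,y}\uu}{\FF_{c,y}}=\inprod{\uu}{\LL_{c,y}\DD_{c,y}}=\inprod{\uu}{\GG_{c,y}}=0$. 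This yields a $2\times 2$ linear system for $(\dot y-c,\dot c)$ whose leading matrix, by~\eqref{eq:nondeg}, is $-\gamma^3\|H'\|_{L^2}^2\,\Id$ plus corrections of size $O(\delta)$ (estimated exactly as in the proof of Lemma~\ref{le:modulation1}), hence invertible for $\delta_1$ small.

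The main technical obstacle lies in the last step, namely, obtaining the purely $u_1^2$-weighted bound~\eqref{eq:param} rather than one involving $\|\uu\|_c$. After inverting the linear system, one pairs $\RR$ with the exponentially decaying profiles $\GG_{c,y},\FF_{c,y}$ (and their $\partial_y,\partial_c$ derivatives), producing integrals of $u_1^2$ against weights of the form $(1+\gamma_0|x-y|)^k e^{-\gamma_0\omega|x-y|}$; the factor $\tfrac12$ in~\eqref{eq:param} is precisely what is needed to absorb the polynomial prefactors coming from the $(x-y)$-growth of $\DD_{c,y}$, $\AAA_{c,y}$, $\BB_{c,y}$ into a single clean exponential. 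The linear-in-$\uu$ contributions coming from the transport term $-c\partial_x\uu$ and from $\dot c\,\partial_c\GG_{c,y}+\dot y\,\partial_y\GG_{c,y}$ (and analogously for $\FF$) are controlled by combining the orthogonality conditions~\eqref{eq:ortho} with the smallness $\|\uu\|_{c_0}\leq C\delta$, so that their $O(\delta)\cdot(\dot y-c,\dot c)$ contributions can be absorbed back into the left-hand side of the modulation system. This absorption step is where the smallness threshold $\delta_1$ is ultimately fixed.
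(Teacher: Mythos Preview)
Your proposal follows essentially the same strategy as the paper: derive \eqref{eq:u} by substituting the decomposition into \eqref{syst}, differentiate the two orthogonality relations, use \eqref{eigen:LL} and \eqref{eq:nondeg} to identify a $2\times 2$ system for $(\dot y-c,\dot c)$ with leading part $\gamma^3\|H'\|_{L^2}^2\,\Id$ and $O(\delta)$ corrections, and then read off \eqref{eq:param} by pairing $\RR=O(u_1^2)$ against the exponentially localized profiles.

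The one point where you diverge from the paper is the $\cC^1$ regularity of $(c,y)$. Your claim that $\pp\in\cC^1([T_1,T_2],\EE_\HH)$ is not quite right: the well-posedness theory only gives $\pp\in\cC([T_1,T_2],H^1\times L^2)\cap\cC^1([T_1,T_2],L^2\times H^{-1})$. Your IFT argument can be rescued by noting that the scalar maps $t\mapsto\cF_j(c,y;\pp(t))$ are nonetheless $\cC^1$, because $\FF_{c,y},\GG_{c,y}$ are smooth and exponentially decaying so that the pairing with $\partial_t\pp\in L^2\times H^{-1}$ is well-defined; but this needs to be said explicitly. The paper instead avoids the issue altogether: it first derives the differential system \eqref{eq:Scy} for $(y,c)$ (equivalent to preserving the orthogonality), observes that its right-hand side is Lipschitz in $(c,y)$ and continuous in $t$, and then \emph{solves} it by Cauchy--Lipschitz with initial data supplied by Lemma~\ref{le:modulation1} at $t=T_1$. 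This produces $\cC^1$ parameters by construction, and uniqueness follows from that of the static lemma. Both routes are valid; the paper's is slightly cleaner on the regularity bookkeeping.
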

\begin{proof}
We begin with formal computations.
Inserting~\eqref{eq:phiu} into the system~\eqref{syst}, we observe that
\[
\partial_t \uu 
= \begin{pmatrix} \phi_2\\ \partial_x^2 \phi_1 - W'(\phi_1) \end{pmatrix}
+ \dot y \TT_{c,y} - \dot c \DD_{c,y}.
\]
Using $H_{c,y}'' = c^2 H_{c,y}''+W'(H_{c,y})$, we have
\begin{align*}
\partial_t \uu 
& = \begin{pmatrix} u_2 \\ \partial_x^2 u_1 - [ W'(H_{c,y}+u_1)-W'(H_{c,y})] \end{pmatrix}
+ \left(\dot y-c\right) \TT_{c,y} - \dot c \DD_{c,y} \\
& = \begin{pmatrix} 0 & 1 \\ - \cL_{c,y} & 0 \end{pmatrix} \uu
+ \left(\dot y-c\right) \TT_{c,y}- \dot c \DD_{c,y}
+ \RR,
\end{align*}
where
\[
\RR = - \begin{pmatrix} 0 \\ W'(H_{c,y}+u_1)-W'(H_{c,y})-W''(H_{c,y}) u_1\end{pmatrix}.
\]
For $u_1$ such that $\|u_1\|_{L^\infty}\leq 1$, we have by Taylor expansion (recall that we assume $W$ of class $\cC^3$)
\begin{equation}\label{eq:bdRR}
 \left| W'(H_{c,y}+u_1)-W'(H_{c,y})-W''(H_{c,y}) u_1 \right| \leq \frac { u_1^2} 2 \sup_{[-1+\zm,1+\zp]} |W'''|.
\end{equation}
From the definition~\eqref{def:LL}, we have the relation 
\begin{equation*}
\begin{pmatrix} 0 & 1 \\ - \cL_{c,y} & 0 \end{pmatrix}
=\JJ \LL_{c,y} - \Id c \partial_x,
\end{equation*}
which formally justifies~\eqref{eq:u}. Differentiating the first orthogonality condition in~\eqref{eq:ortho},
using~\eqref{eq:u}, $\GG_{c,y}=\JJ \TT_{c,y}$, and $\inprod{\TT_{c,y}}{\JJ\TT_{c,y}}=0$, we obtain
\begin{align*}
0&= \frac \ud{\ud t} \inprod{\uu}{\GG_{c,y}}
= \inprod{\partial_t \uu}{\GG_{c,y}}+\inprod{\uu}{\partial_t \GG_{c,y}}\\
& = \inprod{ (\JJ \LL_{c,y} - \Id c \partial_x ) \uu}{\GG_{c,y}} 
 - \dot c \inprod{\DD_{c,y}}{\GG_{c,y}}\\
&\quad +\inprod{\RR}{\GG_{c,y}}-\dot y \inprod{\uu}{\partial_x \GG_{c,y}}
+\dot c \inprod{\uu}{\JJ \AAA_{c,y}}.
\end{align*}
Note that by~\eqref{eigen:LL}
\begin{equation*}
\inprod{\JJ \LL_{c,y} \uu}{\GG_{c,y}}
=\inprod{\LL_{c,y} \uu}{\TT_{c,y}}=\inprod{\uu}{\LL_{c,y} \TT_{c,y}}=0.
\end{equation*}
Thus, using also~\eqref{eq:nondeg}, the above identity rewrites
\begin{equation}\label{eq:modG}
\dot c \left( \gamma^3\|H'\|_{L^2}^2 -\inprod{\uu}{\JJ \AAA_{c,y}}\right)
+(\dot y-c) \inprod{\uu}{\JJ \TT_{c,y}'} =
 \inprod{\RR}{\GG_{c,y}}.
\end{equation}
Differentiating the second orthogonality condition in~\eqref{eq:ortho},
using~\eqref{eq:u}, $\FF_{c,y}=\JJ \DD_{c,y}$, and $\inprod{\DD_{c,y}}{\JJ\DD_{c,y}}=0$, we obtain
\begin{align*}
0&= \frac \ud{\ud t} \inprod{\uu}{\FF_{c,y}}
= \inprod{\partial_t \uu}{\FF_{c,y}}+\inprod{\uu}{\partial_t \FF_{c,y}}\\
& = \inprod{ (\JJ \LL_{c,y} - \Id c \partial_x ) \uu}{\FF_{c,y}} 
+ (\dot y-c) \inprod{\TT_{c,y}}{\FF_{c,y}}\\
&\quad +\inprod{\RR}{\FF_{c,y}}-\dot y \inprod{\uu}{\partial_x \FF_{c,y}}
+\dot c \inprod{\uu}{\JJ \BB_{c,y}}.
\end{align*}
Note that by~\eqref{eigen:LL}
\begin{equation*}
\inprod{\JJ \LL_{c,y} \uu}{\FF_{c,y}}
=\inprod{\LL_{c,y} \uu}{\DD_{c,y}}=\inprod{\uu}{\LL_{c,y} \DD_{c,y}}=\inprod{\uu}{\GG_{c,y}}=0.
\end{equation*}
Thus, using also~\eqref{eq:nondeg}, the above identity rewrites
\begin{equation}\label{eq:modF}
(\dot y-c) \left(\gamma^3\|H'\|_{L^2}^2 + \inprod{\uu}{\JJ \AAA_{c,y}}\right)
-\dot c \inprod{\uu}{\JJ \BB_{c,y}} = \inprod{\RR}{\FF_{c,y}}.
\end{equation}
Setting
\begin{gather*}
D = \begin{pmatrix} 
 \gamma^3\|H'\|_{L^2}^2 +\inprod{\uu}{\JJ \AAA_{c,y}} & - \inprod{\uu}{\JJ \BB_{c,y}} \\
\inprod{\uu}{\JJ \TT_{c,y}'} & \gamma^3\|H'\|_{L^2}^2 -\inprod{\uu}{\JJ \AAA_{c,y}} 
\end{pmatrix},\\
M= \begin{pmatrix}\dot y- c \\ \dot c \end{pmatrix}, 
\quad
S = \begin{pmatrix}\inprod{\RR}{\FF_{c,y}}\\ \inprod{\RR}{\GG_{c,y}} \end{pmatrix},
\end{gather*}
we observe that~\eqref{eq:modG}-\eqref{eq:modF} rewrite as
$D M = S$. 
From the estimates of the proof of Lemma~\ref{le:modulation1}, for $\|\uu\|_{c_0}$ small, the matrix $D$ writes
\[
D = \gamma_0^3\|H'\|_{L^2}^2 \begin{pmatrix} 1+O(\delta_1) & O(\gamma_0^{-1} \delta_1) \\ O(\gamma_0 \delta_1) & 1+O(\delta_1)\end{pmatrix},
\]
and thus
\[
D^{-1} = \gamma_0^{-3} \|H'\|_{L^2}^{-2} \begin{pmatrix} 1+O(\delta_1) & O(\gamma_0 \delta_1)
\\ O(\gamma_0^{-1} \delta_1) & 1+O(\delta_1)\end{pmatrix}.
\]
We rewrite
\begin{equation}\label{eq:Scy}
M = D^{-1} S \iff
\frac{\ud}{\ud t} \begin{pmatrix} y\\ c\end{pmatrix}
=\begin{pmatrix} c \\ 0\end{pmatrix} + D^{-1} S.
\end{equation}
From these computations, we see that the relations 
$\frac{\ud}{\ud t} \inprod{\uu}{\GG_{c,y}} = \frac{\ud}{\ud t}\inprod{\uu}{\FF_{c,y}}=0$ are equivalent to 
the differential system~\eqref{eq:Scy}.
The solution $\pp$ of~\eqref{syst} being fixed on some time interval,~\eqref{eq:Scy} is a first-order non-autonomous differential system with Lipschitz continuous dependency in $(c,y)$ and continuity in $t$.
The proof of the lemma now goes as follows: at $t=T_1$, we perform a time-independent modulation of the function $\pp(T_1)$ according to
Lemma~\ref{le:modulation1}. Then, we apply the Cauchy-Lipschitz theorem to the differential system~\eqref{eq:Scy}
to prove existence of a solution $(c(t),y(t))$ on $[T_1,T_2]$.
This justifies~\eqref{eq:small},~\eqref{eq:ortho} and~\eqref{eq:u}. To complete the proof, in view of the form of the matrix $D^{-1}$ above, we only have to establish the estimates
\begin{equation}\label{on:FG}
|\inprod{\RR}{\FF_{c,y}} | \leq C \gamma_0 \|\uu\|_{w,c_0}^2,
\quad |\inprod{\RR}{\GG_{c,y}}|\leq C \|\uu\|_{w,c_0}^2.
\end{equation}
From the definition of $\FF_{c,y}$, $\GG_{c,y}$, $\RR$ and estimates~\eqref{eq:Hasym},~\eqref{eq:gamma},~\eqref{eq:bdRR}, we have
\begin{align*}
|\inprod{\RR}{\FF_{c,y}}| & \leq C\gamma_0^2 \int u_1^2 e^{-\frac 34 \gamma \omega |x-y|}\leq C\gamma_0^2 \int u_1^2 e^{-\frac 12 \gamma_0 \omega |x-y|},\\
|\inprod{\RR}{\GG_{c,y}} | & \leq C\gamma_0 \int u_1^2 e^{-\gamma \omega |x-y|}\leq C\gamma_0 \int u_1^2 e^{-\frac 12 \gamma_0 \omega |x-y|},
\end{align*}
which imply~\eqref{on:FG}.
\end{proof}

\section{Orbital stability}\label{S:3}
Let $\delta>0$ small enough to be chosen. Let $c_0\in (-1,1)$ and $y_0\in \R$.
We consider an initial data $\pp^{in}\in \EE_{\HH}$ such that
\begin{equation}\label{eq:small:in}
\| \pp^{in} - \HH_{c_0,y_0} \|_{c_0} \leq \delta.
\end{equation}

\subsection{Local well-posedness in a neighborhood of a kink}\label{S.LWP}
Looking for a solution $\pp(t)=(\phi_1(t),\phi_2(t))$ of~\eqref{syst} in $\EE_{\HH}$ for all time with data $\pp(0)=\pp^{in}$ and setting 
\begin{align*}
v_1 (t) &= \phi_1(t)-H(\gamma_0 (x -y_0-c_0 t)),\\
v_2(t) &= \phi_2(t) + \gamma_0 c_0 H'(\gamma_0(x -y_0-c_0 t)),
\end{align*}
we are reduced to solve
\begin{equation}\label{syst:th}
\begin{cases}
\partial_t v_1 = v_2 \\
\partial_t v_2 = \partial_x^2 v_1 - F(t,x,v_1),
\end{cases}
\end{equation}
in $H^1\times L^2$, where 
\[
F(t,x,v_1)=W'(H(\gamma_0 (x -y_0-c_0 t))+v_1)- W'(H(\gamma_0 (x -y_0-c_0 t))).
\]
There exists $C>0$ such that for any $v_1,\tilde v_1$, if $\|v_1\|_{L^\infty}\leq 1$ and
$\|\tilde v_1\|_{L^\infty}\leq 1$ then 
\[
|F(t,x,v_1)-F(t,x,\tilde v_1)|\leq C |v_1-\tilde v_1|.
\]
By standard arguments, for $\delta$ small enough, there exists a local in time solution $(v_1,v_2)$ of~\eqref{syst:th} in $H^1\times L^2$.
In this paper, we will only need the above notion of solution
$\pp =(\phi_1,\phi_2)$ of~\eqref{syst}.

\subsection{Proof of Theorem~\ref{th:1}}
Let $\pp(t)$ be the local in time solution of~\eqref{syst} with initial data $\pp^{in}$ given in~\S\ref{S.LWP}.
For $C^*>1$ to be fixed later, define
\begin{multline*}
T^* = \sup\Big\{ t \geq 0 : \mbox{$\pp(t)$ is well-defined on $[0,t]$ and } \\ \sup_{s\in[0,t]} \Big( \inf_{y_1\in \R} \|\pp(s)-\HH_{c_0,y_1}\|_{c_0}\Big) \leq C^* \delta\Big\}.
\end{multline*}
By \S\ref{S.LWP} and continuity, $T^*>0$ is well-defined. 
Moreover, if $T^*<\infty$ then by a continuity argument and \S\ref{S.LWP}, $\pp$ would be well-defined on $[0,t+\tau]$ for some $\tau>0$ and it would hold \begin{equation}\label{eq:saturation}
\inf_{y_1\in \R} \|\pp(T^*)-\HH_{c_0,y_1}\|_{c_0} = C^* \delta .
\end{equation}
We assume $T^*<\infty$ and we work on the time interval $[0,T^*]$. We use the decomposition of $\pp(t)$ given by Lemma~\ref{le:modulation2}.
In particular,
\begin{equation}\label{eq:init}
\gamma_0^2 |c(0)-c_0|+ \|\uu(0)\|_{c_0}\leq C \delta,\quad 
\gamma_0^2 |c(t)-c_0|\leq C C^* \delta.
\end{equation}
By~\eqref{eq:gamma}, we have $\frac 12 \gamma_0\leq \gamma(t)\leq 2 \gamma_0$, and by~\eqref{eq:nul}, $C^{-1} \|\cdot\|_c\leq \|\cdot\|_{c_0}
\leq C \|\cdot \|_c$.

By the conservation of energy and momentum,
and Lemma~\ref{cor:1}, we have for any $t\in [0,T^*]$,
\begin{multline*}
\mu \|\uu(t)\|_{c_0}^2\leq 
\cM[\HH_{c,y}+\uu(t)] - \|H'\|_{L^2}^4 \\ 
=\cM[\HH_{c(0),y(0)}+\uu(0)] - \|H'\|_{L^2}^4\leq 
C \|\uu(0)\|_{c_0}^2\leq C \delta^2.
\end{multline*}
Thus, for all $t\in [0,T^*]$,
\begin{equation}\label{first}
\|\uu(t)\|_{c_0}^2\leq C \delta^2,
\end{equation}
where $C$ is independent of $C^*$.

From~\eqref{eq:coerLLbis}, we have $|\cP[\uu]|\leq C \|\uu\|_{H^1\times L^2}^2 \leq C \gamma \|\uu\|_{c_0}^2$, and so using~\eqref{Expansion_P} and~\eqref{first},
\[
|\cP[ \HH_{c,y} + \uu] - \cP[\HH_{c,y}] |\leq C \gamma \|\uu\|_{c_0}^2 \leq C \gamma \delta^2. 
\]
Thus, by conservation of the momentum $\cP[\pp]=\cP[ \HH_{c,y} + \uu]$ and $P[H_{c,y}]=- c\gamma \|H'\|_{L^2}^2$ (see (1) of Lemma~\ref{le:ener}), we obtain
\[
|c(t)\gamma(t)-c(0)\gamma(0)|\leq C \gamma_0 \delta^2.
\]
Observe by direct computation that
\begin{equation}\label{dcg}
\frac{\ud}{\ud c} (c\gamma) = \gamma^3 \dot c.
\end{equation}
Thus, the previous estimate shows that $\gamma_0^2 |c(t)-c(0)|\leq C \delta^2$.
Combined with~\eqref{eq:init}, this gives, for all $t\in [0,T^*]$,
\begin{equation}\label{second}
\gamma_0^2 |c(t)-c_0|\leq C \delta,
\end{equation}
where $C$ is independent of $C^*$.

By~\eqref{first},~\eqref{second},~\eqref{eq:Hclose} and the triangle inequality, we obtain, for all $t\in [0,T^*]$,
\[
\|\pp(t)-\HH_{c_0,y(t)}\|_{c_0}
\leq \|\uu(t)\|_{c_0}+\|\HH_{c(t),y(t)}-\HH_{c_0,y(t)}\|_{c_0}\leq C_1 \delta
\]
for some $C_1$ independent of $C^*$. We contradict~\eqref{eq:saturation} by fixing $C^*>C_1$.
Therefore, the solution $\pp$ is global for $t\geq 0$, $T^*=\infty$ and~\eqref{first},~\eqref{second} hold for any $t\geq 0$. Moreover, estimate~\eqref{eq:param} shows that
$\gamma_0^2 \sup_{t\geq 0}|\dot y(t) -c_0 |\leq C \delta$.
Last, by time reversibility of the equation, the same holds true for all $t\leq 0$.

\section{Asymptotic stability}\label{S:4}

We work in the framework of the orbital stability result Theorem~\ref{th:1}.
In particular, we consider a solution $\uu\in \cC(\R,H^1\times L^2)$ of~\eqref{eq:ortho}-\eqref{eq:u} that satisfies the uniform smallness condition
\begin{equation}\label{eq:smallfinal}
\gamma_0^2|c(t)-c_0|+\|\uu(t)\|_{c_0} \leq C \delta
\end{equation} 
for all $t\in\R$, where $\delta$, defined from the initial data $\pp^{in}$ in~\eqref{eq:small:in} is to be taken small enough. Moreover,~\eqref{eq:param} holds on $\R$.
In this section, we do not track the dependency in $c_0$ and $\gamma_0$ (this is why the constant $\delta_0>0$
in Theorem~\ref{th:2} may depend on $c_0$) and constants $C$ may depend on $c_0$ from now on.

\subsection{Change of variables}\label{S:4.1}
We define new unknowns $\zz(t,x)=(z_1(t,x),z_2(t,x))$, by setting
\begin{equation}\label{z1z2}
\begin{cases}
u_1(t,x')= z_1(t,\gamma(t)(x'-y(t))),\\
u_2(t,x')= z_2(t,\gamma(t)(x'-y(t)))-c(t)\gamma(t)\partial_xz_1(t,\gamma(t)(x'-y(t))).
\end{cases}
\end{equation}
This change of variables allows to work around the static kink $\HH=(H,0)$. 
This does not mean that we are reduced to the case $c=0$ since we do not change the time variable.
In particular, we note the following relations for the derivatives of $\uu(t,x')$
in the Lorentz frame
\begin{align*}
(\partial_x u_1 + c u_2)\left(t, \frac{x}{\gamma(t)}+y(t)\right)
&=\frac 1{\gamma(t)}(\partial_x z_1+c\gamma z_2)(t,x),\\
(u_2+c \partial_x u_1)\left(t, \frac{x}{\gamma(t)}+y(t)\right)&=z_2(t,x).
\end{align*}
The directional derivative $\partial_x z_1+c\gamma z_2$ of $\zz$ and the function $z_2$ will appear naturally in our computations
(see for example Lemmas~\ref{le:cZ} and~\ref{le:H} and \eqref{def:kk}).
We introduce the notation
\[
\Lambda = x \partial_x.
\]
We summarize the information on $\zz$. 
First, from~\eqref{eq:u} and explicit computations, the equation satisfied by $\zz$ writes
\begin{equation}\label{eq:z}
\begin{cases}
\dot z_1 = z_2 +\MM_1,\\
\dot z_2 = \Theta(\zz) + \MM_2,
\end{cases}
\end{equation}
where
\begin{equation*}
\Theta(\zz)=\partial_x^2 z_1 - \left[W'(H+z_1)- W'(H)\right] + 2 c \gamma\partial_x z_2,
\end{equation*}
and
\begin{align}
\MM_1&=(\dot y-c)\gamma \partial_x (H+z_1)- \dot c c\gamma^2 \Lambda (H+z_1),\label{def:M1}\\
\MM_2&=(\dot y-c) \gamma \partial_x z_2 - \dot cc\gamma^2 \Lambda z_2 + \dot c \gamma \partial_x (H+z_1).\label{def:M2}
\end{align}
Observe that $\Theta(\zz)$ also writes
\begin{equation}\label{other:Theta}
\Theta(\zz)=-L z_1 +2c\gamma \partial_x z_2 + \RRR,
\end{equation}
where the operator $L$ is defined in Lemma~\ref{on:L} and
\begin{equation*}
\RRR = -\left[ W'(H+z_1)-W'(H) - W''(H) z_1\right].
\end{equation*}
We introduce the notation $\Omegab(\ab)=(\Omega_1(\ab),\Omega_2(\ab))$, where $\ab=(a_1,a_2)$ and
\begin{equation}\label{def:Omega}\begin{aligned}
\Omega_1(\ab)&=(\dot y-c)\gamma \partial_x a_1 - \dot c c\gamma^2 \Lambda a_1,\\
\Omega_2(\ab)&=(\dot y-c)\gamma \partial_x a_2 - \dot c c\gamma^2 \Lambda a_2
+\dot c \gamma \partial_x a_1.
\end{aligned}\end{equation}
Then, $\Mb=(\MM_1,\MM_2)$ rewrites as
$\Mb= \Omegab(\HH)+\Omegab(\zz)$.

Second, from $\|\zz\|_{H^1\times L^2}^2 = \|\uu \|_{c}^2$ and~\eqref{eq:smallfinal},
\begin{equation}\label{bd:z}
 \gamma_0^2 |c(t)-c_0|+\|\zz\|_{H^1\times L^2}\leq C \delta.
\end{equation}
Third, we check by direct computations that the orthogonality conditions~\eqref{eq:ortho} on~$\uu$ rewrite
(recall $Y=H'$)
\begin{align}
0 & = 2 c \gamma \langle z_1 , Y'\rangle + \langle z_2 , Y\rangle, \label{ortho:z1}\\
0 & = \gamma \langle z_1, (1+c^2)Y + 2c^2 xY'\rangle + c \langle z_2, xY\rangle.\label{ortho:z2}
\end{align}
Last, from~\eqref{eq:gamma} and \eqref{eq:param}
\begin{equation}\label{bd:yc}
\gamma_0^2 |\dot y - c| + \gamma_0^3 |\dot c| \leq C \int \sech\left(\frac {\omega x}4 \right) z_1^2 .
\end{equation}

\subsection{Heuristic}\label{s.4.6}
Consider the linear problem
\begin{equation}\label{eq:zlin}
\begin{cases}
\dot z_1 = z_2 \\
\dot z_2 = -L z_1 + 2 c \gamma\partial_x z_2
\end{cases}
\end{equation}
where for the sake of simplicity, we have neglected the modulation terms $\MM_1$, $\MM_2$ and the nonlinear term $\RRR$ in the system~\eqref{eq:z} for $\zz$.
We follow the strategy of~\cite{KMM4} (see also references there for previous works), introducing a transformed operator with simplified spectrum.
In order to introduce the transformed problem, we set
\[
U = Y \cdot \partial_x \cdot Y^{-1},\quad U^\star = - Y^{-1} \cdot \partial_x \cdot Y,
\]
where the above notation means $U f = Y \left(f/Y\right)'$,
and
\begin{equation*}
L_0 =- \partial_x^2 + P\quad\mbox{where}\quad P= 2 \left(\frac {Y'}{Y}\right)^2 - \frac{Y''}Y.
\end{equation*}
See Lemmas~\ref{rk:P} and~\ref{le:PQ} for some properties of $P$.
The strategy of the proof of Theorem~\ref{th:2} is based on the following observations, 
inspired by~\cite[Section 3.2]{CGNT}.
\begin{lemma}\label{le:identities}\quad
\begin{enumerate}\item Factorization by first order operators.
\begin{equation}\label{eq:fact}
L = U^\star U,\quad L_0=U U^\star.
\end{equation} 
\item Commutator relation.
\begin{equation}\label{4.30}
 U \partial_x - \partial_x U =\frac 12 ( L- L_0).
\end{equation}
\item Conjugate identities. It holds
$U L = L_0 U$
and more generally, for any $c,\gamma$,
\begin{equation}\label{eq:conj2}
(U+c\gamma \partial_t) (L - 2 c \gamma\partial_x\partial_t)
=(L_0 - 2 c \gamma\partial_x\partial_t)(U+c\gamma \partial_t) .
\end{equation}
\end{enumerate}
\end{lemma}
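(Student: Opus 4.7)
The three identities are algebraic consequences of the defining relation $LY=0$ for the ground-state $Y=H'$, which rewrites as $Y''/Y = W''(H)$. My plan is to prove them in order, with (2) and (3) drawing on (1).

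For (1), I would verify both factorizations by direct operator composition. Writing $Uf = Y(f/Y)' = f' - (Y'/Y)f$ and $U^\star f = -Y^{-1}(Yf)' = -f' - (Y'/Y)f$, I compute $U^\star U f$ by first expanding $Y^2(f/Y)' = Yf' - Y'f$ and then differentiating: this collapses to $-f'' + (Y''/Y)f$, which is $Lf$ since $Y''/Y = W''(H)$. The computation of $UU^\star f$ is analogous and produces $-f'' + [2(Y'/Y)^2 - Y''/Y]f$, which is $L_0 f$ precisely because $P = 2(Y'/Y)^2 - Y''/Y$ by the definition recorded in Lemma~\ref{rk:P} (via $Y''/Y = W''(H)$).

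For (2), I just compute both compositions. Using $Uf = f' - (Y'/Y)f$, one has $U\partial_x f = f'' - (Y'/Y)f'$ and $\partial_x U f = f'' - (Y'/Y)f' - (Y'/Y)'f$, so
\begin{equation*}
(U\partial_x - \partial_x U)f = \left(\frac{Y'}{Y}\right)' f = \left(\frac{Y''}{Y} - \left(\frac{Y'}{Y}\right)^2\right) f.
\end{equation*}
On the other hand, $\tfrac12(L-L_0) = \tfrac12(W''(H) - P) = Y''/Y - (Y'/Y)^2$, matching term by term.

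For (3), I would note first that the intertwining $UL = L_0 U$ is an immediate corollary of (1), since $UL = U(U^\star U) = (UU^\star)U = L_0 U$. Then I expand both sides of the claimed identity, using that $H$, $Y$, and $W''(H)$ are time-independent so that $\partial_t$ commutes with $U$, $L$ and $L_0$; after cancellation of the common terms $UL$ (rewritten as $L_0 U$) and $-2c^2\gamma^2\partial_x\partial_t^2$, the identity reduces to
\begin{equation*}
-2c\gamma (U\partial_x - \partial_x U)\partial_t + c\gamma (L - L_0)\partial_t = 0,
\end{equation*}
which is exactly $c\gamma \partial_t$ applied to the commutator identity (2). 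The only thing to be a bit careful about is distinguishing the formal algebraic role of $c\gamma$ as a scalar coefficient in this lemma from its time-dependent meaning elsewhere in the paper; since all three identities are pointwise-in-$t$ statements about operators acting on $x$-functions, this causes no real obstacle, and in fact no step of the proof is technically difficult — the content lies entirely in recognizing the three compact identities as the right bookkeeping for the factorization that drives the rest of Section~\ref{S:4}.
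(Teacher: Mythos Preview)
Your proof is correct and follows essentially the same approach as the paper. The only tactical difference is in part (2): where you compute the commutator $U\partial_x - \partial_x U$ directly and then identify the result with $\tfrac12(L-L_0)$, the paper instead observes the identity $U^\star = U - 2\partial_x$ and subtracts $U^\star U = U^2 - 2\partial_x U$ from $UU^\star = U^2 - 2U\partial_x$ to obtain (2) purely algebraically from (1); both routes are equally elementary.
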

\begin{proof}
The identities in~\eqref{eq:fact} follow from direct computations
and have motivated the introduction of the operator $L_0$.
Next, we remark by explicit computations that
$U^\star=U-2 \partial_x$, so that
$U^\star U = U^2 - 2 \partial_x U$
and $U U^\star = U^2 - 2 U \partial_x$, which implies~\eqref{4.30}.
Last, we check by expansion and using~\eqref{4.30}
\begin{equation*}
(U+c\gamma \partial_t) (-U^\star U + 2 c \gamma\partial_x\partial_t)
=(-U U^\star + 2 c \gamma\partial_x\partial_t)(U+c\gamma \partial_t) ;
\end{equation*}
then~\eqref{eq:conj2} follows from~\eqref{eq:fact}.
\end{proof}
Setting 
\begin{equation*}
\begin{cases}
g_1 = (U+c\gamma \partial_t) z_1,\\
g_2 = (U+c\gamma \partial_t) z_2,
\end{cases}
\end{equation*}
we find from~\eqref{eq:zlin} and~\eqref{eq:conj2} (and neglecting that $c\gamma$ depends on $t$) the following transformed system for $\gb=(g_1,g_2)$
\begin{equation*}
\begin{cases}
\dot g_1 = g_2 \\
\dot g_2 = -L_0 g_1 + 2 c \gamma\partial_x g_2.
\end{cases}
\end{equation*}
Now, we justify formally that under the condition for $P$ given in Lemma~\ref{rk:P}, a virial argument provides asymptotic stability for $\gb$.
To simplify the discussion, we choose $c=0$ and we are thus reduced to the simple system
\begin{equation*}
\begin{cases}
\dot g_1 = g_2 \\
\dot g_2 = \partial_x^2 g_1 - P g_1.
\end{cases}
\end{equation*}
For a bounded increasing function $\psi:\R\to \R$ to be chosen ($\psi'>0$), let
\[
\cG = \int \left(2 \psi \partial_x g_1+ \psi' g_1\right) g_2.
\]
We obtain formally by integration by parts 
\begin{align*}
\dot{\cG}
&= \int \left(2 \psi \partial_x \dot g_1+\psi' \dot g_1\right) g_2
+\int \left(2 \psi \partial_x g_1+\psi' g_1\right)\dot g_2\\
&= \int \left(2 \psi \partial_x g_1+\psi' g_1\right)(\partial_x^2 g_1 -Pg_1)\\
&= -2 \int (\partial_x g_1)^2 \psi '+ \frac 12 \int g_1^2 \psi'''+ \int g_1^2 P' \psi.
\end{align*}
Setting $\zeta = \sqrt{\psi'}$ and $h_1 = g_1 \zeta$
we obtain (see the proof of Lemma~\ref{le:cK})
\begin{equation}\label{eq:heurist}
-\dot{\cG}= 2\int (\partial_x h_1)^2 + \int \left(\frac{\zeta ''}{\zeta }-\frac{(\zeta')^2}{\zeta^2}\right) h_1^2
- \int h_1^2 P ' \frac{\psi}{\psi'}.
\end{equation}
We claim that by the condition on $P$ from Lemma~\ref{rk:P} and for a suitable choice of~$\psi$, 
the above quadratic form in $h_1$ is positive.
The key idea is to define the function $\psi$ so that $P'\psi\leq 0$ and $P' \psi\not \equiv 0$, in order for the last term in~\eqref{eq:heurist} to be a positive contribution related to $h_1^2$.
We distinguish three cases according to the conditions in Lemma~\ref{rk:P}.
\begin{itemize}
\item If there exists $x_0\in \R$ such that $(x-x_0) P' \leq 0$, we consider a bounded increasing function $\psi$ which is negative on $(-\infty,x_0)$ and positive on $(x_0,+\infty)$.
\item If $P'\leq 0$ on $\R$, we consider
a bounded increasing function $\psi:\R\to (0,\infty)$.
\item If $P'\geq 0$ on $\R$, we consider
a bounded increasing function $\psi:\R\to (-\infty,0)$.
\end{itemize}
Then, the second term on the right-hand side of~\eqref{eq:heurist}
can be absorbed by the term $-\int h_1^2 P'\frac{\psi}{\psi'}$ for a suitable choice of $\psi$ depending on $P'$,
as in~\cite[Section~4]{KMM4}.

Observe that the condition on $P$ given in Lemma~\ref{rk:P} rules out for $L$ the possibility  of having an eigenfunction other than $Y$.
Indeed, by~\eqref{eq:fact}, if $\phi$ is an eigenfunction for $L_0$ then
$U^\star\phi$ is an eigenfunction for $L$ different from~$Y$.
This means that we cannot expect this condition to be necessary. When the condition on $P$ is not satisfied, 
it can be related to the existence of internal modes or resonances for the operator $L$, which may not be definitive obstables to asymptotic stability but may strongly complicate its proof.
Last, from the proof of Theorem~\ref{th:2}, we exhibit examples where asymptotic stability is true, with no such spectral difficulty, but for which the condition on $P$ is not satisfied. 
In Corollary~\ref{cor:new} and Remark~\ref{rk:perturb}
we show that some flexibility in the proof of asymptotic stability can be used to treat some of these cases.

To prove Theorem~\ref{th:2}, we introduce tools to justify the above heuristic arguments and to take into account the following technical difficulties:
\begin{itemize}
\item Regularity issues related to the transformed problem: the function $\gb$ is only bounded in $L^2\times H^{-1}$, which leads us to introduce a regularization $\ff$ of this function (Lemma \ref{le:4p7}).
\item Localization of the virial arguments: 
the proof of Theorem~\ref{th:2} involves a preliminary virial argument on the function $\zz$ (Lemma~\ref{le:cZ})
and a key virial argument on the function $\ff$ related to the formal computation~\eqref{eq:heurist}
(Proposition~\ref{pr:3}) .
Since these two virial arguments involve a bounded function $\psi$, which is a bounded approximation of the function $x\mapsto x$, we only obtain control on localized versions of the functions $\zz$ and $\ff$.
For technical reasons, the functions $\zz$ and $\ff$ have to be localized at two different
scales, denoted respectively by $A$ and $B$ and defined such that $1\ll B \ll A$. See notation introduced in~\S\ref{s:notvirial}.
\item Nonzero speed: the usual virial argument for linearized Klein-Gordon type equations has to be adapted to the non zero speed case $c\neq 0$. General computations are presented in Lemma~\ref{le:cK}, and then applied to both $\zz$ and $\ff$, respectively
in Lemma~\ref{le:cZ} and Proposition~\ref{pr:3}.
A consequence of nonzero speed is that virial type arguments give control only on a particular directional derivative (Lemma~\ref{le:cK}), and thus most estimates have to be integrated in time (see for example the estimate of the cubic terms in Lemma~\ref{le:cubic}).
\end{itemize}
In the rest of this section, the proof of Theorem~\ref{th:2} is organized in three steps.
\begin{description}
\item[Proposition \ref{pr:2}] The virial argument in the variable $\zz$ provides the first key estimate for asymptotic stability, controlling the directional derivative of $\zz$ at the large scale $A$ by a weighted $L^2$ norm of $\zz$.
\item[Proposition \ref{le:coer}] The weighted $L^2$ norm of $\zz$ is estimated in terms of the function $\ff$. It is decisive to use the orthogonality conditions \eqref{ortho:z1},\eqref{ortho:z2} on $\zz$ since they guarantee that no information is lost when applying the transformation $U$.
\item[Proposition \ref{pr:3}] The main step of the proof is a second virial argument in the regularization $\ff$ of the tranformed function $\gb$. Using assumption~\eqref{on:V}, localized versions of~$\ff$ at the scale $B$ are controlled by the weighted $L^2$ norm of $\zz$ \emph{multiplied by an arbitrarily small factor}.
This is due to the fact that ignoring the nonlinear terms, the localization of the virial argument and regularity issues, the virial argument in the transformed variable yields a positive quadratic form in~\eqref{eq:heurist}.

\end{description}
Since we use several auxilliary functions related to the kink perturbation throughout the proof, as a guide for the reader, we list these functions in Table~\ref{Fig:table}.
\begin{table}[ht]
\begin{tabular}{|l l l l|} 
 \hline
 Notation & Use & Definition & Regularity \\ [0.5ex] 
 \hline\hline
 $\ab=(a_1,a_2)$ & Generic virial variable & \eqref{systab} & $H^1\times L^2$     \\
  \hline
 $\uu=(u_1,u_2)$ & Perturbation in the moving frame & \eqref{u1u2} & $H^1\times L^2$ \\ 
 \hline
  $\zz=(z_1,z_2)$ & Perturbation in the fixed frame & \eqref{z1z2} & $H^1\times L^2$     \\
   \hline
  $\ww=(w_1,w_2)$ & Localization of $\zz$ at scale $A$ & \eqref{w1w2} & $H^1\times L^2$ \\
  \hline
  $\kk=(k_1,k_2)$ & Directional derivative of $\zz$ &\eqref{def:kk} & $L^2\times H^{-1}$  \\
 \hline
  $\jj=(j_1,j_2)$ & Regularization of $\kk$ & \eqref{def:jj} & $H^2 \times H^1$   \\
 \hline
$\gb=(g_1,g_2)$  & Transformed function of $\zz$ & \eqref{def:g} & $L^2\times H^{-1}$    \\
 \hline
 $\ff=(f_1,f_2)$ & Regularization of $\gb$  & \eqref{def:f} & $H^2\times H^1$  \\
 \hline
 $\hh=(h_1,h_2)$ & Localization of $\ff$ at scale $B$ & \eqref{h1h2} & $H^2 \times H^1$   \\  
 \hline 
\end{tabular}
\vspace{0.5ex}
\caption{Functions related to the kink perturbation used in \S\ref{S:4}.}\label{Fig:table}
\end{table}
\subsection{General virial computation}
In the sequel, we will need computations of virial type on solutions of two linearized systems related to~\eqref{syst}, the first of them being the system~\eqref{eq:z} satisfied by $\zz$ itself. Thus, we present a computation that is sufficiently general to be applied to both of  these cases.
Let $\ab=(a_1(t,x),a_2(t,x))$ be a solution in $H^1\times L^2$ of the system
\begin{equation}\label{systab}
\begin{cases}
\dot a_1 = a_2 +\Omega_1(\ab)+\beta_1 \\
\dot a_2 = \partial_x^2 a_1 - f(x,a_1) + 2c\gamma \partial_x a_2
+\Omega_2(\ab)+ \beta_2,
\end{cases}
\end{equation}
where $f=f(x,a_1)$ satisfies $f(x,0)=0$ and $\betab=(\beta_1(t,x),\beta_2(t,x))$ is a given function, and where we recall the notation
\begin{align*}
\Omega_1(\ab)&=(\dot y-c)\gamma \partial_x a_1 - \dot c c\gamma^2 \Lambda a_1,\\
\Omega_2(\ab)&=(\dot y-c)\gamma \partial_x a_2 - \dot c c\gamma^2 \Lambda a_2
+\dot c \gamma \partial_x a_1.
\end{align*}
The virial computation has to be adapted to the presence of the transport term
$2c\gamma\partial_x a_2$ at the linear level.
Denote $F(x,a_1)=\int_0^{a_1} f(x,a) \ud a$.
Consider a bounded $\cC^3$ function $\varphi=\varphi(x)$ and set
\[
\cA = \cA_1 + c \gamma (\cA_2 +\cA_3),
\]
where
\begin{align*}
\cA_1 & = \int \left(2 \varphi \partial_x a_1+ \varphi' a_1 \right) a_2 ,\\
\cA_2 & = - 2 \int (\partial_x a_1)^2 \varphi,\\
\cA_3 & = \int \left[ a_2^2 + (\partial_x a_1)^2 + 2 F(a_1) \right] \varphi.
\end{align*}
The term $\cA_1$ corresponds to the standard definition of a localized virial functional for wave type equations. The terms $\cA_2$ and $\cA_3$ are introduced because of the transport term $2 c \gamma\partial_x a_2$ in the equation of $\dot a_2$.

\begin{lemma}[General virial identity]\label{le:cK}
Let $\ab$ be a solution in $H^1\times L^2$ of~\eqref{systab}.
\begin{enumerate}
\item It holds
\begin{align*}
\dot \cA & = -2 \int (\partial_x a_1+ c \gamma a_2)^2\varphi'
- 2 \int (\partial_x a_1 + c\gamma a_2) a_1 \varphi''
- \frac 12 \int a_1^2 \varphi''' \\
&\quad +\int \left(2F(a_1) - a_1 f(a_1)\right)\varphi'
+ 2\int \varphi (\partial_x F) (a_1)\\
&\quad + (\dot y-c) \gamma \cI +\dot c \gamma \cJ 
+ \cN +\dot c \gamma^3 (\cA_2+\cA_3),
\end{align*}
where
\begin{align*}
\cI &= 
\int \left[c\gamma (\partial_x a_1)^2 -c\gamma a_2^2-2 (\partial_x a_1) a_2 \right] \varphi'- \int a_1 a_2 \varphi''\\
&\quad -2c\gamma\int[ F(a_1)\varphi'+(\partial_x F)(a_1)\varphi],
\end{align*}
\begin{align*}
\cJ &= 
\int 2 (\partial_x a_1)^2 (1+c^2\gamma^2) \varphi
+ c\gamma[-c\gamma (\partial_x a_1)^2 +2(\partial_x a_1) a_2 
+c \gamma a_2^2] (x\varphi)' \\
&\quad -\frac 12 \int a_1^2\varphi'' +c\gamma \int a_1 a_2 (x\varphi')' 
+2c^2\gamma^2\int [ (x\varphi)'F(a_1)+x\varphi (\partial_x F)(a_1)],
\end{align*}
and
\begin{align*}
\cN & = \int (2 \varphi \partial_x \beta_1 + \varphi' \beta_1)a_2
 + \int (2 \varphi \partial_x a_1 + \varphi' a_1) \beta_2\\
&\quad + 2c\gamma \int \left[-\varphi (\partial_x \beta_1)(\partial_x a_1) + \varphi \beta_1 f(a_1) \right]+ 2 c\gamma\int \varphi\beta_2 a_2 .
\end{align*}
\item Assuming that $\varphi'>0$ and setting
\[
\zeta = \sqrt{\varphi'}, \quad b_1 = a_1 \zeta,\quad b_2 = a_2 \zeta,
\]
it holds
\begin{align*}
\dot \cA & = 
- 2 \int (\partial_x b_1 + c \gamma b_2)^2
-\int b_1^2 \left(\frac{\zeta''}{\zeta} - \frac{(\zeta')^2}{\zeta^2}\right) \\
&\quad 
+\int \left(2F(a_1) -a_1 f(a_1)\right)\varphi'
+ 2\int \varphi (\partial_x F) (a_1)\\
&\quad + (\dot y-c) \gamma \cI +\dot c \gamma \cJ 
+ \cN +\dot c \gamma^3 (\cA_2+\cA_3).
\end{align*}
\end{enumerate}
\end{lemma}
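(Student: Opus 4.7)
The plan is a direct differentiation of $\cA = \cA_1 + c\gamma(\cA_2 + \cA_3)$ in $t$: substitute~\eqref{systab} for $\dot a_1, \dot a_2$ in each $\dot{\cA_j}$ and organize the contributions by origin: the \emph{free wave} terms (taking $\dot a_1 \leftarrow a_2, \dot a_2 \leftarrow \partial_x^2 a_1 - f(x,a_1)$); the \emph{transport} terms from $2c\gamma \partial_x a_2$; the \emph{modulation} terms from $\Omega_i(\ab)$; and the \emph{source} terms from $\beta_i$. An ancillary ingredient is the identity $\frac{d}{dt}(c\gamma) = \dot c(\gamma + c^2\gamma^3) = \dot c \gamma^3$ (since $\gamma = (1-c^2)^{-1/2}$ gives $\gamma' = c\gamma^3$ and $1 + c^2\gamma^2 = \gamma^2$), which accounts for the stray term $\dot c \gamma^3(\cA_2 + \cA_3)$ on the right-hand side.

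The free-wave part of $\dot{\cA_1}$ is the classical localized virial: the $a_2^2$ contributions from $\int 2\varphi \partial_x a_2 \cdot a_2$ and $\int \varphi' a_2^2$ cancel, while $\int \varphi' a_1 \partial_x^2 a_1 = \tfrac12 \int \varphi''' a_1^2 - \int \varphi'(\partial_x a_1)^2$ and (via $f(x,a_1)\partial_x a_1 = \partial_x[F(x,a_1)] - (\partial_x F)(x,a_1)$) $-\int 2\varphi \partial_x a_1 \cdot f(x,a_1) = 2\int \varphi' F + 2\int \varphi (\partial_x F)(a_1)$ yield $-2\int \varphi'(\partial_x a_1)^2 + \tfrac12 \int \varphi''' a_1^2 + \int(2F - a_1 f)\varphi' + 2\int \varphi(\partial_x F)(a_1)$. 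The free parts of $\dot{\cA_2}, \dot{\cA_3}$ reduce respectively to $-4\int \varphi(\partial_x a_1)(\partial_x a_2)$ and $-2\int \varphi' a_2 \partial_x a_1$ (the $f(a_1) a_2$ cross terms in $\dot{\cA_3}$ cancel). The transport piece of $\dot{\cA_1}$, namely $4c\gamma \int \varphi(\partial_x a_1)(\partial_x a_2) + 2c\gamma \int \varphi' a_1 \partial_x a_2$, becomes after the IBP $\int \varphi' a_1 \partial_x a_2 = -\int \varphi'' a_1 a_2 - \int \varphi' a_2 \partial_x a_1$ a combination whose first term cancels $c\gamma \dot{\cA_2}^{\mathrm{free}}$ and whose remainder joins $c\gamma \dot{\cA_3}^{\mathrm{free}}$ and $c\gamma \dot{\cA_3}^{\mathrm{trans}} = -2c\gamma \int \varphi' a_2^2$. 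Completing the square and using $\tfrac12 \int \varphi''' a_1^2 = -2\int \varphi'' a_1 \partial_x a_1 - \tfrac12 \int \varphi''' a_1^2$ (one more IBP) produces the first line of (1): $-2\int \varphi'(\partial_x a_1 + c\gamma a_2)^2 - 2\int \varphi''(\partial_x a_1 + c\gamma a_2) a_1 - \tfrac12 \int \varphi''' a_1^2$.

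For the modulation terms, substitute~\eqref{def:Omega} into $\dot{\cA_1}, c\gamma\dot{\cA_2}, c\gamma\dot{\cA_3}$, factor the scalar coefficients $(\dot y - c)\gamma$ and $\dot c \gamma$, and collect. For instance, the $(\dot y - c)\gamma$ contribution to $\dot{\cA_1}$ is $-2\int \varphi' a_2 \partial_x a_1 - \int \varphi'' a_1 a_2$; to $c\gamma \dot{\cA_2}$ it is $2c\gamma \int \varphi'(\partial_x a_1)^2$; and to $c\gamma \dot{\cA_3}$ it is $c\gamma[-\int \varphi' a_2^2 - \int \varphi'(\partial_x a_1)^2 - 2\int \varphi' F - 2\int \varphi(\partial_x F)]$. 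Summing these reproduces exactly $\cI$. The analogous computation for the $\dot c \gamma$ coefficient, which involves the $\Lambda = x\partial_x$ derivatives, gives $\cJ$ after one further IBP converting $x\varphi$-weighted terms. The $\beta$-contributions from $\dot{\cA_1}$ provide the first line of $\cN$; the $c\gamma$-weighted pieces $-2c\gamma \int \varphi(\partial_x \beta_1)(\partial_x a_1) + 2c\gamma \int \varphi \beta_1 f(a_1)$ and $2c\gamma \int \varphi \beta_2 a_2$ come from $c\gamma \dot{\cA_2}$ (after IBP on $\partial_x \beta_1$) and $c\gamma \dot{\cA_3}$ respectively.

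Part (2) is a purely algebraic reformulation. With $\zeta = \sqrt{\varphi'}$ and $b_i = a_i \zeta$, one has $\partial_x b_1 + c\gamma b_2 = \zeta(\partial_x a_1 + c\gamma a_2) + \zeta' a_1$, so (using $\varphi'' = 2\zeta\zeta'$)
\[
(\partial_x b_1 + c\gamma b_2)^2 = \varphi'(\partial_x a_1 + c\gamma a_2)^2 + \varphi''(\partial_x a_1 + c\gamma a_2) a_1 + (\zeta')^2 a_1^2.
\]
Then $\varphi''' = 2(\zeta')^2 + 2\zeta\zeta''$ gives $-\tfrac12 \varphi''' + 2(\zeta')^2 = (\zeta')^2 - \zeta\zeta''$, and factoring $b_1^2 = \zeta^2 a_1^2$ packages the first three terms of (1) into $-2\int (\partial_x b_1 + c\gamma b_2)^2 - \int b_1^2(\zeta''/\zeta - (\zeta')^2/\zeta^2)$. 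The main obstacle throughout is bookkeeping: the transport $2c\gamma \partial_x a_2$, the time dependence of $c\gamma$, and the auxiliary $\cA_2, \cA_3$ must conspire exactly to produce the square $(\partial_x a_1 + c\gamma a_2)^2$ and the canonical forms of $\cI, \cJ$. A safe tactic is to first verify the case $c \equiv 0$ (where $\cA = \cA_1$ reduces to the classical wave virial), then add the $c\gamma$-linear corrections from the transport, and finally the $(c\gamma)^2$-corrections from $\cA_2, \cA_3$; the modulation and source contributions can be tracked in parallel since they do not interact with the quadratic completion.
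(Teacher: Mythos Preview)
Your proposal is correct and follows essentially the same approach as the paper: differentiate $\cA_1+c\gamma(\cA_2+\cA_3)$, substitute~\eqref{systab}, integrate by parts, and use $\frac{d}{dt}(c\gamma)=\dot c\gamma^3$. The only difference is organizational---you group the contributions by type (free, transport, modulation, source) whereas the paper computes each $\dot\cA_j$ in full and then collects---and your derivation of part~(2) is identical to the paper's. One minor slip: the $c\gamma$-weighted $\beta$-terms in $\cN$ do not require an integration by parts on $\partial_x\beta_1$; the $-2c\gamma\int\varphi(\partial_x\beta_1)(\partial_x a_1)$ arises directly as the sum of $-4c\gamma\int\varphi(\partial_x a_1)(\partial_x\beta_1)$ from $c\gamma\dot\cA_2$ and $+2c\gamma\int\varphi(\partial_x a_1)(\partial_x\beta_1)$ from $c\gamma\dot\cA_3$.
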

\begin{proof}
We set
\[
\alpha_1=\Omega_1(\ab)+\beta_1,\quad
\alpha_2=\Omega_2(\ab)+\beta_2.
\]
We compute $\dot \cA$ using the system~\eqref{systab} and various integrations by parts.
We will use that for any functions $f,g$, integrating by parts, it holds
\[
\int (2\varphi f' + \varphi' f) g = - \int (2\varphi g' + \varphi' g) f,\quad
\int (2\varphi f' + \varphi' f) f = 0.
\]
First,
\begin{align*}
\dot \cA_1 & = \int \left(2 \varphi \partial_x \dot a_1+ \varphi' \dot a_1 \right) a_2
+\left(2 \varphi \partial_x a_1+ \varphi' a_1 \right) \dot a_2\\
& = \int \left(2 \varphi \partial_x a_1+ \varphi' a_1 \right) \left(\partial_x^2 a_1 - f(a_1) + 2 c\gamma \partial_x a_2\right)\\
&\quad - \int \left(2 \varphi \partial_x a_2 + \varphi' a_2 \right) \alpha_1 
+ \int \left(2 \varphi \partial_x a_1 + \varphi' a_1 \right) \alpha_2.
\end{align*}
We observe that
\[
\int \left(2 \varphi \partial_x a_1+ \varphi' a_1 \right) \partial_x^2 a_1
=-2 \int (\partial_x a_1)^2 \varphi' + \frac 12 \int a_1^2 \varphi''',
\]
and
\begin{align*}
- \int \left(2 \varphi \partial_x a_1+ \varphi' a_1 \right) f(a_1)
& = - \int \left[ 2 \varphi \partial_x \{F(a_1)\} - 2 \varphi (\partial_x F) (a_1) + \varphi' a_1 f(a_1)\right]\\
& = - \int \varphi' \left[a_1 f(a_1)-2F(a_1)\right] + 2\int \varphi (\partial_x F) (a_1).
\end{align*}
Moreover,
\begin{align*}
\int \left(2 \varphi \partial_x a_1+ \varphi' a_1 \right) \left(\partial_x a_2\right)
& = 2\int \varphi (\partial_x a_1)(\partial_x a_2) 
- \int \varphi' (\partial_x a_1) a_2 - \int \varphi '' a_1 a_2.
\end{align*}
Thus,
\begin{align*}
\dot \cA_1 & = - \int \left[2 (\partial_x a_1)^2 +\left(a_1 f(a_1)-2F(a_1)\right)\right]\varphi' 
+ 2\int \varphi (\partial_x F) (a_1) + \frac 12 \int a_1^2 \varphi'''\\
& \quad + 4 c \gamma \int (\partial_x a_1)(\partial_x a_2) \varphi 
- 2 c\gamma \int (\partial_x a_1) a_2 \varphi' - 2 c\gamma \int a_1 a_2 \varphi''\\
& \quad - \int \left(2 \varphi \partial_x a_2 + \varphi' a_2 \right) \alpha_1 
+ \int \left(2 \varphi \partial_x a_1 + \varphi' a_1 \right) \alpha_2.
\end{align*}
Second,
\begin{equation*}
\dot \cA_2
 = - 4 \int (\partial_x a_1)(\partial_x \dot a_1) \varphi
 = - 4 \int (\partial_x a_1)(\partial_x a_2) \varphi 
- 4 \int (\partial_x a_1)(\partial_x \alpha_1) \varphi .
\end{equation*}
Last,
\begin{align*}
\dot \cA_3 & = 
2 \int \left[a_2 \dot a_2 + (\partial_x a_1)(\partial_x \dot a_1) + f(a_1) \dot a_1 \right] \varphi\\
& = 2 \int \left[ a_2 (\partial_x^2 a_1) - a_2 f(a_1) +2c\gamma (\partial_x a_2) a_2 
+ (\partial_x a_1) (\partial_x a_2) + f(a_1) a_2\right] \varphi\\
& \quad + 2 \int \left[ a_2 \alpha_2 + (\partial_x a_1) (\partial_x \alpha_1) + f(a_1) \alpha_1 \right]\varphi.
\end{align*}
Thus,
\begin{align*}
\dot \cA_3 & = - 2 \int (\partial_x a_1) a_2 \varphi '- 2 c \gamma \int a_2^2 \varphi'\\
&\quad + 2 \int \left[ (\partial_x a_1)(\partial_x \alpha_1)+ f(a_1) \alpha_1 \right]\varphi +2 \int \alpha_2 \varphi a_2.
\end{align*}
Gathering the above computations 
and using $\frac {\ud}{\ud t} (c\gamma) = \dot c \gamma^3$, we obtain 
\begin{align*}
\dot \cA & = -2 \int (\partial_x a_1+ c \gamma a_2)^2\varphi'
- 2 \int (\partial_x a_1 + c\gamma a_2) a_1 \varphi''
- \frac 12 \int a_1^2 \varphi''' \\
&\quad +\int \left(2F(a_1)-a_1 f(a_1)\right)\varphi'
+ 2\int \varphi (\partial_x F) (a_1)
+ \cN(\alphab)+\dot c \gamma^3 (\cA_2+\cA_3),
\end{align*}
where $\cN(\alphab)$ is defined by
\begin{align*}
\cN(\alphab) & = \int (2 \varphi \partial_x \alpha_1 + \varphi' \alpha_1) a_2
 + \int (2 \varphi \partial_x a_1 + \varphi' a_1) \alpha_2\\
&\quad + 2c\gamma \int \left[-\varphi (\partial_x \alpha_1)(\partial_x a_1) + \varphi \alpha_1 f(a_1) \right]+ 2 c\gamma\int \varphi\alpha_2 a_2.
\end{align*}
To complete the proof of (1), we expand $\cN(\alphab)$,
using $\alphab=\Omegab(\ab)+\betab$ so that
$\cN(\alphab)=\cN(\Omegab(\ab))+\cN(\betab)$. In the expression of $\cN(\Omegab(\ab))$, terms multiplying $(\dot y-c)\gamma$ are denoted by $\cI$. We compute using cancellations and integration by parts
\begin{align*}
\cI&= - \int (2\varphi \partial_x a_2+\varphi' a_2) \partial_x a_1
+\int (2\varphi \partial_x a_1+\varphi' a_1) \partial_x a_2\\
&\quad +2 c\gamma \int [-\varphi (\partial_x^2 a_1)(\partial_x a_1)
+\varphi(\partial_x a_1)f(a_1)]
+2 c\gamma \int \varphi (\partial_x a_2) a_2\\
&=
\int \left[c\gamma (\partial_x a_1)^2 -c\gamma a_2^2-2 (\partial_x a_1) a_2 \right] \varphi'- \int a_1 a_2 \varphi''\\
&\quad -2c\gamma\int[ F(a_1)\varphi'+(\partial_x F)(a_1)\varphi].
\end{align*}
Similarly, terms multiplying $\dot c\gamma$ in the expression of $\cN(\Omegab(\ab))$ are denoted by $\cJ$ and we compute
\begin{align*}
\cJ
& = c\gamma \int (2\varphi \partial_x a_2+\varphi' a_2) \Lambda a_1
- c\gamma \int (2\varphi \partial_x a_1+\varphi' a_1) \Lambda a_2\\
&\quad +\int (2\varphi \partial_x a_1+\varphi' a_1) \partial_x a_1- 2 c^2 \gamma^2 \int [-\varphi (\partial_x \Lambda a_1)(\partial_x a_1)
+\varphi (\Lambda a_1) f(a_1)]\\
&\quad - 2 c^2 \gamma^2 \int \varphi (\Lambda a_2) a_2
+2c\gamma \int \varphi (\partial_x a_1) a_2\\
&=\int\left\{2 (\partial_x a_1)^2 (1+c^2\gamma^2) \varphi
+ c\gamma[-c\gamma (\partial_x a_1)^2 +2(\partial_x a_1) a_2
+c \gamma a_2^2] (x\varphi)'\right\} \\
&\quad -\frac 12 \int a_1^2\varphi'' +c\gamma \int a_1 a_2 (x\varphi')' 
+2c^2\gamma^2\int [ (x\varphi)'F(a_1)+x\varphi (\partial_x F)(a_1)].
\end{align*}
This justifies (1).

Last, we use $\varphi'=\zeta^2$ and $b_1 = a_1 \zeta$, $b_2= a_2 \zeta$
to expand
\begin{align*}
\int (\partial_x b_1 + c \gamma b_2)^2 
&= \int \left( (\partial_x a_1 + c\gamma a_2) \zeta + a_1 \zeta'\right)^2\\
&= \int (\partial_x a_1+c\gamma a_2)^2 \varphi'
+\int a_1^2 (\zeta')^2 + \int (\partial_x a_1 +c\gamma a_2) a_1 \varphi''.
\end{align*}
From this and
$2 (\zeta')^2 -\frac 12 \varphi'''= (\zeta')^2-\zeta'' \zeta$,
we see that (1) implies (2).
\end{proof}

\subsection{Notation for virial arguments and repulsivity}\label{s:notvirial}
First, we choose a cut-off function adapted to the potential $P$.
From the assumption~\eqref{on:W}, and Lemma~\ref{rk:P},
the function $P'$ is continuous. Since we assume $P'\not \equiv 0$, there exists
$C_1>0$ and $x_1,x_2\in \R$, $x_1<x_2$ such that
\begin{equation*}
\mbox{for all $x\in [x_1,x_2]$, } |P'(x)|\geq \frac 1{C_1}.
\end{equation*}
We define $\bar x = \frac 12 (x_1+x_2)$ and a smooth nondecreasing function $\eta:\R\to \R$ such that
\begin{equation*}
\eta(x) =\begin{cases} 
-1 &\mbox{if $x<x_1$}\\
1 &\mbox{if $x>x_2$}
\end{cases}\quad \mbox{and}\quad \eta(\bar x)=0.
\end{equation*}
In particular, it holds $(x-\bar x)\eta(x) \geq 0$ on $\R$.
For any $K\geq 1$, we define the function $\zeta_K$ as follows
\[
\zeta_K(x)=\exp\left(-\frac \omega K (x-\bar x)\eta(x) \right).
\]
(Recall that $\omega$ is defined in~\eqref{eq:Hasym}.)
For $A\geq 10$, we define the following function $\varphi_A$
\begin{equation*}
\varphi_A(x)=\int_0^x \zeta_A^2(y) dy,\quad x\in \R.
\end{equation*}
For $B\geq 10$, we define
\begin{equation*}
\varphi_B(x)= \int_{x_0}^x \zeta_B^2(y) dy,\quad x\in \R,
\end{equation*}
where, following the three cases in Lemma~\ref{rk:P}, 
\begin{itemize}
\item either $x_0\in \R$ is such that $(x-x_0) P'\leq 0$ on $\R$,
\item or $x_0=-\infty$ if $P'\leq 0$ on $\R$,
\item or $x_0=+\infty$ if $P'\geq 0$ on $\R$.
\end{itemize}
In the three cases, since $|P'|> 0$ on $[x_1,x_2]$, we have $x_0\not \in [x_1,x_2]$.
Moreover, by the definition of $\zeta_B$ and $(x-\bar x)\eta(x) \geq 0$, 
there exists $C_2>0$ such that,
for all $B\geq 1$, for all $x\in [x_1,x_2]$,
\[
|\varphi_B(x) |= \left| \int_{x_0}^x \zeta_B^2(y) dy \right|
\geq \left| \int_{x_0}^x \zeta_{B=1}^2(y) dy \right|\geq \frac 1{C_2}.
\]
In particular, for $C_0=C_1C_2$, the following lemma holds.
\begin{lemma}\label{le:sp}
There exist $C_0>0$ and $x_1,x_2\in \R$ with $x_1<x_2$ such that, for any $B>1$,
\begin{equation*}
\mbox{$\varphi_B P' \leq 0$ on $\R$} \quad \mbox{and}\quad \varphi_B P' \leq -\frac1{C_0} \mbox{ on $[x_1,x_2]$.}
\end{equation*}
\end{lemma}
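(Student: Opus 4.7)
The plan is to split the statement into a qualitative sign inequality and a quantitative lower bound on a compact interval, and to reduce both to the geometry of $\varphi_B$ relative to the pivot $x_0$ picked from Lemma~\ref{rk:P}. First, since $P$ is $\cC^1$ (by Lemma~\ref{le:PQ}) and $P'\not\equiv 0$, continuity produces an interval $[x_1,x_2]$ and a constant $C_1>0$ with $|P'|\geq 1/C_1$ on $[x_1,x_2]$. The claim $\varphi_B P'\leq 0$ on $\R$ is then checked case by case using the trichotomy of Lemma~\ref{rk:P}: if $x_0\in\R$ satisfies $(x-x_0)P'\leq 0$, then $\varphi_B(x)=\int_{x_0}^x \zeta_B^2$ has the same sign as $x-x_0$ while $P'$ has the opposite sign, so their product is $\leq 0$; if $x_0=-\infty$ then $\varphi_B>0$ and $P'\leq 0$; if $x_0=+\infty$ then $\varphi_B<0$ and $P'\geq 0$. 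In each alternative the pointwise inequality follows immediately.

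Next I verify that the pivot $x_0$ can be arranged to lie outside $[x_1,x_2]$, for otherwise $\varphi_B$ would vanish somewhere in the interval and no uniform lower bound would be possible. For $x_0=\pm\infty$ this is automatic. In the finite case, the sign condition $(x-x_0)P'(x)\leq 0$ on $\R$ together with continuity of $P'$ forces $P'(x_0)=0$, so $x_0\notin[x_1,x_2]$ because $|P'|\geq 1/C_1$ there; if needed I shrink $[x_1,x_2]$ slightly so that it stays on one side of $x_0$.

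The third and main step is the $B$-uniform lower bound $|\varphi_B|\geq 1/C_2$ on $[x_1,x_2]$. The key observation is that $(x-\bar x)\eta(x)\geq 0$ by the choice of $\bar x$ and $\eta$, so the exponent in $\zeta_B(x)=\exp\bigl(-\tfrac{\omega}{B}(x-\bar x)\eta(x)\bigr)$ is nonpositive, and its magnitude is monotone nonincreasing in $B$; equivalently, $\zeta_B\geq \zeta_1$ pointwise for every $B\geq 1$. Hence $|\varphi_B(x)|\geq \bigl|\int_{x_0}^x \zeta_1^2(y)\,dy\bigr|$, and because $x_0$ lies outside $[x_1,x_2]$ (whether finite or at $\pm\infty$), this last quantity is bounded below on $[x_1,x_2]$ by a strictly positive constant $1/C_2$ depending only on $\zeta_1$ and the interval. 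Setting $C_0=C_1 C_2$ then gives $\varphi_B P'\leq -1/C_0$ on $[x_1,x_2]$. The only genuinely nontrivial point is this third step, and the difficulty is essentially bookkeeping: one must cleanly separate the monotonicity of $\zeta_B$ in $B$ from its spatial decay, and confirm that the integral defining $\varphi_B$ stays bounded away from zero on a set whose distance to $x_0$ is controlled independently of $B$.
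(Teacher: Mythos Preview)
Your proof is correct and follows essentially the same route as the paper's argument, which is laid out in the paragraphs immediately preceding the lemma statement: pick $[x_1,x_2]$ where $|P'|\geq 1/C_1$, note $x_0\notin[x_1,x_2]$, use the pointwise monotonicity $\zeta_B\geq\zeta_1$ coming from $(x-\bar x)\eta(x)\geq 0$ to bound $|\varphi_B|$ below by $|\int_{x_0}^x\zeta_1^2|$, and set $C_0=C_1C_2$. The only cosmetic difference is that your remark about possibly shrinking $[x_1,x_2]$ to keep it on one side of $x_0$ is unnecessary: since $P'$ is continuous and nonzero on $[x_1,x_2]$ it has a fixed sign there, and the condition $(x-x_0)P'(x)\leq 0$ then forces the whole interval to lie on one side of $x_0$ automatically.
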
 

This property of \emph{strict repulsivity} of the potential $P$ is essential in the proof.
In particular, it excludes the resonant case $P'\equiv 0$.

Next, we consider an even, smooth cut-off function $\chi:\R\to\R$ that satisfies
\begin{equation*}
\mbox{$\chi=1$ on $[-1,1]$, $\chi=0$ on $(-\infty,-2]\cup[2,+\infty)$ and
$\chi'\leq 0$ on $[0,+\infty)$.}
\end{equation*}
We define the function $\PSI$ by setting
\begin{equation}\label{def:CHI}
\PSI(x)=\CHI^2(x)\varphi_B(x)\quad\mbox{where}\quad
\CHI(x)=\chi\left(\frac{x}{A}\right),\quad x\in \R.
\end{equation}
The functions $\varphi_A$ and $\PSI$ will be used in two distinct virial arguments at different scales $A$ and $B$ satisfying $1\ll B \ll A$.
In the final step of the proof of Theorem~\ref{th:2}, the constants $A$ and $B$ are fixed, and the parameter $\delta>0$ which controls the size of the initial data (see~\eqref{eq:smallfinal})
is then taken small enough, depending of these constants.
In what follows, we will systematically assume the following
\begin{equation}\label{delta:A}
0<\delta \leq \frac 1{A^4}.
\end{equation}

For future reference, we provide estimates that follow directly from the definitions.
We denote by $\ONE_J$ the indicator function of an interval $J$.
First,
\begin{align*}
\frac{\zeta_K'}{\zeta_K} & = 
-\frac \omega K\left( \eta(x) +(x-\bar x) \eta'(x)\right),\\
\frac{\zeta_K''}{\zeta_K} & = 
\left(\frac{\zeta_K'}{\zeta_K}\right)^2 
- \frac \omega K \left( 2 \eta'(x) + (x-\bar x) \eta''(x)\right)
\end{align*}
and
\[
 \frac{\zeta_K''}{\zeta_K}
- \frac{(\zeta_K')^2}{\zeta_K^2}
=- \frac \omega K \left( 2 \eta'(x) + (x-\bar x) \eta''(x)\right).
\]
Thus, there exists $C>1$ such that any $K\geq 1$, on $\R$
\begin{equation}\label{on:zetaK}
\begin{cases}
\frac 1C \sech\left(\frac{\omega}K x\right)\leq \zeta_K(x)\leq C \sech\left(\frac{\omega}K x\right),\\
|\zeta_K'(x)|\leq C K^{-1} \sech\left(\frac{\omega}K x\right),\\
|\zeta_K''(x)|\leq C K^{-2} \sech\left(\frac{\omega}K x\right) + CK^{-1} \ONE_{[x_1,x_2]}(x),\\
|x \zeta_K(x)|\leq C K,\quad |x\zeta_K'(x)|\leq C,
\end{cases}
\end{equation}
and
\begin{equation}\label{eq:preta}
\left| \frac{\zeta_K''}{\zeta_K}
- \frac{(\zeta_K')^2}{\zeta_K^2}\right| \leq \frac{C}{K}\ONE_{[x_1,x_2]}.
\end{equation}
Note that the expression in \eqref{eq:preta} appears in (2) of Lemma~\ref{le:cK}.
For the function $\varphi_A$, it holds on $\R$,
\begin{equation}\label{on:phiA}
\begin{cases}
0<\varphi_A'(x)\leq 1,\quad |\varphi_A(x)|\leq |x|,\quad |\varphi_A(x) |\leq CA,\\
|\varphi_A(x) H^{(k)}(x)|+|\varphi_A(x) \Lambda^{k} H(x)|\leq C\sech\left(\frac{3\omega}{4} x\right),\quad
\mbox{for $k= 1,2,3,4$.}
\end{cases}
\end{equation}
For the functions $\varphi_B$ and $\PSI$, it holds on $\R$,
\begin{equation}\label{on:PSI}
\begin{cases}
|\varphi_B(x)|\leq C B,\quad |\PSI(x)|\leq CB \ONE_{|x|<2A},\\
|\PSI'(x)|\leq C \frac BA \ONE_{A<|x|<2A}(x) + C \sech\left(\frac{\omega}K x\right) \ONE_{|x|<2A}(x).
\end{cases}
\end{equation}

Let $\rho$ be the following weight function
\begin{equation*}
\rho(x)=\sech\left( \frac{\omega}{10} x\right).
\end{equation*}
In particular, by~\eqref{on:zetaK}, for $K\geq 10$, it holds on $\R$
\begin{equation}\label{zeta:rho}
\rho \leq C \zeta_K.
\end{equation}
For any $T>0$, we consider the norm $\|\cdot\|_{T,\rho}$ defined by
\begin{equation}\label{def:NTrho}
\|z \|_{T,\rho}^2=\int_0^T\!\! \int z^2(t,x) \rho^2 (x) \ud x \ud t.
\end{equation}

\subsection{Virial computation on the linearized system}
We perform a virial computation on the function $\zz$ solution of~\eqref{eq:z}, applying Lemma~\ref{le:cK} with
$f(x,z_1)=W'(H+z_1)-W'(H)$ and so
$F(x,z_1)=W(H+z_1)-W(H)-W'(H)z_1$.
We set
\[
\cZ = \cZ_1 + c \gamma (\cZ_2 + \cZ_3) ,
\]
where
\begin{align*}
\cZ_1 & = \int \left(2 \varphi_A \partial_x z_1 + \varphi_A' z_1 \right) z_2,\\
\cZ_2 & = - 2 \int (\partial_x z_1)^2 \varphi_A,\\
\cZ_3 & = \int \left\{ z_2^2 + (\partial_x z_1)^2 + 2 \left[ W(H+z_1)- W(H)-W'(H) z_1\right] \right\} \varphi_A .
\end{align*}
We also define $\ww=(w_1,w_2)$, where
\begin{equation}\label{w1w2}
w_1 = \zeta_A z_1,\quad w_2 = \zeta_A z_2 .
\end{equation}
\begin{lemma}\label{le:cZ}
Assuming ~\eqref{delta:A} it holds
\begin{equation*}
\int_0^T \int (\partial_x w_1 + c \gamma w_2)^2
\le C A \delta^{2} + C \|z_1\|_{T,\rho}^2 + C \int_0^T \!\! \int |z_1|^3 \zeta_A^2.
\end{equation*}
\end{lemma}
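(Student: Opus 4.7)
The plan is to apply part (2) of Lemma~\ref{le:cK} directly to $\ab = \zz$ with the weight $\varphi = \varphi_A$, so that $\zeta = \zeta_A$ and $(b_1, b_2) = (w_1, w_2)$. The system~\eqref{eq:z} fits the template~\eqref{systab} with $f(x, a_1) = W'(H+a_1) - W'(H)$, $F(x,a_1) = W(H+a_1) - W(H) - W'(H) a_1$, and forcing $\betab = \Omegab(\HH)$, that is $\beta_1 = (\dot y - c)\gamma H' - \dot c c\gamma^2 \Lambda H$ and $\beta_2 = \dot c \gamma H'$. Integrating the resulting identity over $[0,T]$, the leading contribution is $-2\int_0^T\!\int (\partial_x w_1 + c\gamma w_2)^2$, which is the left-hand side; our task is to control every other term by the right-hand side of the lemma.

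Since $|\varphi_A| \leq C A$ by \eqref{on:phiA} and $\|\zz(t)\|_{H^1\times L^2}\leq C\delta$ by \eqref{bd:z}, we obtain $|\cZ(t)| \leq C A \delta^2$, which produces the $C A \delta^2$ boundary contribution $\cZ(0)-\cZ(T)$. The curvature term $\int w_1^2(\zeta_A''/\zeta_A - (\zeta_A')^2/\zeta_A^2)$ is supported on $[x_1,x_2]$ with size $C/A$ by \eqref{eq:preta}, so after time integration it is bounded by $(C/A)\|z_1\|_{T,\rho}^2 \leq C\|z_1\|_{T,\rho}^2$. For the nonlinear corrections, Taylor expansion of $W$ at $H$ gives the crucial cancellation $2F(z_1) - z_1 f(z_1) = -\tfrac{1}{6}W'''(H) z_1^3 + O(z_1^4)$; combined with $\varphi_A' = \zeta_A^2$, this yields exactly the cubic term $\int_0^T\!\int |z_1|^3 \zeta_A^2$. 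Similarly $|(\partial_x F)(z_1)| \leq C|H'|z_1^2$ and $|\varphi_A H'| \leq C\sech(3\omega x/4) \leq C\rho^2$ (from \eqref{on:phiA}) provide the bound $C\|z_1\|_{T,\rho}^2$.

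The delicate contributions are the modulation terms $(\dot y - c)\gamma \cI$, $\dot c \gamma \cJ$, $\dot c \gamma^3 (\cZ_2 + \cZ_3)$ and the forcing piece $\cN$. A term-by-term inspection using $|\varphi_A|\leq CA$, $|\varphi_A'|\leq 1$, the bounds on $|\varphi_A''|$, $|(x\varphi_A)'|$, $|(x\varphi_A')'|$ following from \eqref{on:zetaK}--\eqref{on:phiA}, and the elementary estimates on $F$ and $\partial_x F$ above, gives $|\cI|\leq C\delta^2$ while $|\cJ|,\,|\cZ_2+\cZ_3|\leq CA\delta^2$. Combining with~\eqref{bd:yc}, together with $\sech(\omega x/4) \leq C\rho^2$, one gets the time-integrated control $\int_0^T \gamma(|\dot y - c| + \gamma^2|\dot c|)\, dt \leq C\|z_1\|_{T,\rho}^2$, so the total contribution of these terms is at most $C A \delta^2 \|z_1\|_{T,\rho}^2$; the scaling constraint~\eqref{delta:A} forces $A\delta^2 \leq A^{-7}$, making this absorbable into $C\|z_1\|_{T,\rho}^2$. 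The forcing terms in $\cN$ each carry a factor $\dot y-c$ or $\dot c$, and the spatial integrals always couple $\varphi_A$ (or $\varphi_A'$, or $x\varphi_A$) to a derivative of $H$, which is uniformly bounded by \eqref{on:phiA}; one deduces $|\cN(t)|\leq C(|\dot y-c|+|\dot c|)\delta$, and~\eqref{bd:yc} delivers a contribution $\leq C\delta\|z_1\|_{T,\rho}^2$. The main obstacle is precisely this bookkeeping step: every factor of $A$ introduced by $\varphi_A$ must be compensated either by the cubic structure of the nonlinearity (via $\zeta_A^2$) or by $\delta^2$ through~\eqref{delta:A}, and every occurrence of $\dot y-c$ or $\dot c$ must be discharged through time integration via~\eqref{bd:yc}.
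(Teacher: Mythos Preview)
Your proposal is correct and follows essentially the same approach as the paper: apply part~(2) of Lemma~\ref{le:cK} to $\ab=\zz$ with $\varphi=\varphi_A$ and $\betab=\Omegab(\HH)$, then estimate each contribution exactly as you describe, using \eqref{eq:preta} for the curvature term, Taylor expansion for the nonlinear residuals, \eqref{on:phiA} for the $\varphi_A H^{(k)}$ products in $\cN_{\HH}$, the size bounds $|\cI_{\zz}|\leq C\delta^2$, $|\cJ_{\zz}|+|\cZ_2|+|\cZ_3|\leq CA\delta^2$ combined with \eqref{bd:yc} and \eqref{delta:A}, and finally $|\cZ|\leq CA\delta^2$ for the boundary term. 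One cosmetic remark: since $W$ is only assumed $\cC^3$, writing the remainder as $O(z_1^4)$ is not quite justified, but the bound $|2F(z_1)-z_1 f(z_1)|\leq C|z_1|^3$ that you actually need follows directly from the $\cC^3$ Taylor formula.
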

\begin{proof}
From (2) of Lemma~\ref{le:cK}, it holds
\begin{align*}
\dot \cZ &
=- 2 \int (\partial_x w_1 + c \gamma w_2)^2 
-\int z_1^2 \left[ \zeta_A'' \zeta_A - (\zeta_A')^2\right]\\
&\quad + \int \left[ 2W(H+z_1)-(W'(H+z_1)+W'(H))z_1-2W(H)\right]\zeta_A^2\\
&\quad + 2\int \left[W'(H+z_1)-W''(H)z_1-W'(H)\right] H' \varphi_A\\
&\quad +\cN_{\HH} + (\dot y-c) \gamma \cI_{\zz} +\dot c \gamma \cJ_{\zz}
+\dot c \gamma^3 (\cZ_2+\cZ_3),
\end{align*}
where
\begin{align}
\cN_{\HH} & = \int (2 \varphi_A \partial_x \Omega_1(\HH) + \varphi_A' \Omega_1(\HH))z_2
 + \int (2 \varphi_A \partial_x z_1 + \varphi' z_1) \Omega_2(\HH)\nonumber \\
&\quad + 2c\gamma \int \left[-\varphi_A (\partial_x \Omega_1(\HH))(\partial_x z_1)
+ \varphi_A \Omega_1(\HH) (W'(H+z_1)-W'(H))\right]
\nonumber \\ 
&\quad + 2 c\gamma\int \varphi_A\Omega_2(\HH) z_2 ,\label{def:NH}
\end{align}
and
\begin{align}
\cI_{\zz} & = \int \left[c\gamma (\partial_x z_1)^2 -c\gamma z_2^2-2 (\partial_x z_1) z_2 \right] \varphi_A'
- \int z_1 z_2 \varphi_A ''\nonumber \\
&\quad -2c\gamma\int [W(H+z_1)-W(H)-W'(H)z_1]\varphi_A'\nonumber\\
&\quad -2c\gamma \int [W'(H+z_1)-W'(H)-W''(H)z_1] H'\varphi_A,\label{def:Iz}
\end{align}
\begin{align}
\cJ_{\zz} & =\int 2 (\partial_x z_1)^2 (1+c^2\gamma^2) \varphi_A
+c\gamma[-c\gamma (\partial_x z_1)^2 +2(\partial_x z_1) z_2 
+c \gamma z_2^2] (x\varphi_A)' \nonumber \\
&\quad -\frac 12 \int z_1^2\varphi_A'' +c\gamma \int z_1 z_2 (x\varphi_A')' \nonumber\\
&\quad +2c^2\gamma^2\int  (x\varphi_A)'[W(H+z_1)-W(H)-W'(H)z_1]\nonumber\\
&\quad +2c^2\gamma^2\int x\varphi_A H'[W'(H+z_1)-W'(H)-W''(H)z_1]. \label{def:Jz}
\end{align}
By~\eqref{eq:preta}
\[
\int z_1^2 \left| \zeta_A'' \zeta_A - (\zeta_A')^2\right|
\leq CA^{-1} \int z_1^2 \rho^2.
\]
By Taylor expansion and the bound $\|z_1\|_{L^\infty} \leq C \delta$ (from~\eqref{bd:z}), we have
\begin{equation*}
\int \left|2W(H+z_1)-(W'(H+z_1)+W'(H))z_1-2W(H)\right|\zeta_A^2
\leq C \int |z_1|^3 \zeta_A^2.
\end{equation*}
By~\eqref{on:phiA}, we have
$|\varphi_A H'|\leq \rho^2$, and so by Taylor expansion,
\begin{equation*}
\int \left|\left[W'(H+z_1)-W'(H)-W''(H)z_1\right]H' \varphi_A \right|
\leq C \int z_1^2\rho^2 .
\end{equation*}

Next, we compute and estimate the term $\cN_{\HH}$.
From~\eqref{def:NH}, we compute
\begin{equation*}
\cN_{\HH}=(\dot y -c)\gamma p_{\zz}+\dot c \gamma q_{\zz}
\end{equation*}
where
\begin{align*}
p_{\zz}&=
\int (2\varphi_A H''+\varphi_A'H')z_2\\
&\quad +2c\gamma \int \left\{ (\varphi_A H'')' z_1+\varphi_A H''[W'(H+z_1)-W'(H)]\right\},
\end{align*}
and
\begin{align*}
q_{\zz}&=-c\gamma\int (2\varphi_A(\Lambda H)'+\varphi_A'\Lambda H) z_2
\\&\quad -\int\left[2\varphi_A H''+\varphi_A'H'
-2 c^2 \gamma^2 (\varphi_A (\Lambda H)')'\right]z_1\\
&\quad -2c^2\gamma^2 \int \varphi_A (\Lambda H)'[W'(H+z_1)-W'(H)].
\end{align*}
Using the decay properties~\eqref{eq:Hasym} of the derivatives of $H$ and~\eqref{on:phiA}, we observe that
\[
|p_{\zz}|+|q_{\zz}|\leq C \| \zz\|_{L^2}.
\]
Using \eqref{def:Omega},  \eqref{bd:z} and~\eqref{bd:yc}, we obtain
\[
|\cN_{\HH}| \leq C \delta \| z_1 \rho\|_{L^2}^2.
\]
By the expressions of $\cI_{\zz}$ and $\cJ_{\zz}$ in~\eqref{def:Iz}-\eqref{def:Jz} and
\eqref{eq:Hasym},~\eqref{on:phiA}, we observe that
\[
|\cI_{\zz}|\leq C \|\zz\|_{H^1\times L^2}^2\leq C \delta^2,\quad
|\cJ_{\zz}|\leq C A\|\zz\|_{H^1\times L^2}^2 \leq C A \delta^{2}.
\]
Thus, by~\eqref{bd:yc}, we obtain
\[
|(\dot y-c) \cI_{\zz}|+|\dot c \gamma \cJ_{\zz}|
\leq C A \delta^2\| z_1 \rho\|_{L^2}^2.
\]
Last, we obtain similarly, by~\eqref{bd:yc},
\[
|\cZ_2|+|\cZ_3|\leq C A \|\zz\|_{H^1\times L^2}^2 \leq CA\delta^2,
\]
and
\[
|\dot c \cZ_2|+|\dot c \cZ_3|\leq C A \delta^2\| z_1 \rho\|_{L^2}^2.
\]
Gathering these estimates, and using \eqref{delta:A}, we have proved
\begin{equation*}
\dot \cZ \leq - 2 \int (\partial_x w_1 + c \gamma w_2)^2
+ C \int z_1^2 \rho^2+ C \int |z_1|^3 \zeta_A^2.
\end{equation*}
Integrating in time on $[0,T]$ and using
\[
|\cZ|\leq C A \|\zz\|_{H^1\times L^2}^2 \leq C A \delta^{2},
\]
the proof is complete.
\end{proof}

\subsection{Technical estimates}
First, we prove a general inequality to be used to estimate  the cubic term of Lemma~\ref{le:cZ}.
This result is analoguous to Claim~1 in~\cite{KMM4}. Here, since we need to control the cubic term by a norm of  $\partial_x w_1+ c\gamma w_2$ instead on simply $\partial_x w_1$, time integration is necessary.

\begin{lemma}\label{le:cubic} For any $T>0$,
let $w\in \cC([0,T],H^1)\cap \cC^1([0,T],L^2)$ and $K>0$
be such that 
\[(t,x)\mapsto w(t,x)\cosh\left(\frac{\omega}{K} x\right) \in L^\infty([0,T]\times \R).\]
and  $\sigma\in \cC^1([0,T], \R)$ such that
$$
\sup_{t \in [0,T]} |\dot \sigma (t)|\le \frac{\omega}{8 K}.
$$
Then, the following inequality holds
\begin{multline*}
\int_0^T\int |w|^3 \cosh\left(\frac{\omega}{K} x\right) \ud x \ud t
\\ \leq \frac{24 K}{\omega}  \|w \cosh\left(\frac{\omega}{K} x\right)\|_{L^\infty([0,T]\times \R)} \\
 \quad \times\left[\frac{K}{\omega} \int_0^T \int (\partial_x w+ \sigma \dot w)^2\ud x\ud t
+   \sup_{t\in [0,T]} |\sigma(t)|\|w(t)\|_{L^2}^2 \right].
\end{multline*}
\end{lemma}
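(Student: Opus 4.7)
The plan is to integrate by parts in $x$ against the primitive $\Phi(x):=\frac{K}{\omega}\sinh(\omega x/K)$ of $\cosh(\omega x/K)$ (so $|\Phi|\le\frac{K}{\omega}\cosh(\omega x/K)$), then to decompose $\partial_x w = (\partial_x w + \sigma\dot w) - \sigma\dot w$ and treat the two pieces by different means. Writing $M := \|w\cosh(\omega x/K)\|_{L^\infty([0,T]\times\R)}$, the bound $|w|\le M\sech(\omega x/K)$ makes the spatial boundary terms vanish at $\pm\infty$, and using $\partial_x(|w|^3) = 3|w|w\,\partial_x w$ I obtain
\[
\int_0^T\!\int |w|^3\cosh(\omega x/K)\,\ud x\,\ud t = -3\int_0^T\!\int\Phi\,|w|w\,\partial_x w\,\ud x\,\ud t,
\]
which I would then split according to the decomposition of $\partial_x w$ and estimate each piece.

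For the piece involving $\sigma\dot w$, namely $3\int_0^T\!\int\sigma\Phi|w|w\dot w$, I would use the identity $3|w|w\dot w = \partial_t(|w|^3)$ to rewrite it as $\int_0^T\sigma(t)\int\Phi\,\partial_t(|w|^3)$ and integrate by parts in $t$. The two boundary contributions at $t=0,T$ are each bounded, by using $|\Phi|\le\frac{K}{\omega}\cosh$ together with the pointwise estimate $|w|^3\cosh\le M w^2$, by $\frac{K}{\omega}M\sup_t|\sigma(t)|\sup_t\|w(t)\|_{L^2}^2$. The remaining bulk term $-\int_0^T\dot\sigma\int\Phi|w|^3$ is bounded, by the same pointwise estimate and the hypothesis $|\dot\sigma|\le\omega/(8K)$, by $\frac{1}{8}\int_0^T\!\int|w|^3\cosh(\omega x/K)$, which will be absorbed into the left-hand side.

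For the piece involving $\partial_x w + \sigma\dot w$, Cauchy--Schwarz in $(t,x)$ gives
\[
3\left|\int_0^T\!\int\Phi|w|w(\partial_x w+\sigma\dot w)\right|\le 3\left(\int_0^T\!\int\Phi^2 w^4\right)^{1/2}\left(\int_0^T\!\int(\partial_x w+\sigma\dot w)^2\right)^{1/2}.
\]
The decisive pointwise inequality is
\[
\Phi^2 w^4\le (K/\omega)^2\cosh^2(\omega x/K)\,w^4 = (K/\omega)^2\bigl(|w|\cosh(\omega x/K)\bigr)\,|w|^3\cosh(\omega x/K)\le (K/\omega)^2 M\,|w|^3\cosh(\omega x/K),
\]
which converts the $L^2_{t,x}$ norm of $\Phi w^2$ into $(K/\omega)\sqrt{M}$ times the square root of the very quantity on the left-hand side. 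Applying AM--GM with parameter $\tfrac14$ yields an estimate of the form $\tfrac14\int_0^T\!\int|w|^3\cosh + 9M(K/\omega)^2\int_0^T\!\int(\partial_x w+\sigma\dot w)^2$.

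Combining the three estimates, the absorbable terms $(\tfrac14+\tfrac18)\int|w|^3\cosh$ leave the coefficient $\tfrac58$ on the left; rearranging produces constants $\tfrac{72}{5}$ and $\tfrac{16}{5}$, both less than $24$, yielding the stated inequality with the claimed factor $24K/\omega$. The main obstacle is locating the pointwise inequality in the previous paragraph: without it, Cauchy--Schwarz would only produce a $T$-dependent norm such as $\int_0^T\!\int w^4\cosh^2$ that cannot be recovered from the right-hand side, and the argument would close only up to an extra $\sqrt{T}$ factor. Once this observation is in place, the remainder is routine bookkeeping of signs and boundary terms.
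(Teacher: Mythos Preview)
Your proof is correct and follows essentially the same approach as the paper's: integrate by parts in $x$ against the primitive of $\cosh(\omega x/K)$, split $\partial_x w=(\partial_x w+\sigma\dot w)-\sigma\dot w$, integrate the $\sigma\dot w$ piece by parts in $t$, and handle the main piece by Cauchy--Schwarz together with the pointwise bound $w^4\cosh^2\le M\,|w|^3\cosh$. The only differences are cosmetic (you carry $\Phi=\tfrac{K}{\omega}\sinh$ while the paper keeps $\sinh$ and tracks the factor $\omega/K$ separately, and your AM--GM splitting parameter differs slightly from the paper's, yielding the constants $\tfrac{72}{5}$ and $\tfrac{16}{5}$ where the paper obtains $24$ and $16$; both fit under the stated factor $24K/\omega$).
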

\begin{proof}
On the one hand, for fixed $t\in [0,T]$, integrating by parts and using the assumption on $w\cosh\left(\frac{\omega}{K} x\right)$
to eliminate the terms at $\pm \infty$, we have
\[
\int |w|^3 \cosh\left(\frac{\omega}{K} x\right) \ud x
=- 3 \frac{K}{\omega}\int (\partial_x w) w |w| \sinh\left(\frac{\omega}{K} x\right) \ud x.
\]
On the other hand, for fixed $x\in \R$,
\[
 3 \int_0^T \sigma(t) ( \dot w w |w|)(t,x) \ud t
 = \sigma(T)|w(T,x)|^3 - \sigma(0) |w(0,x)|^3 
 -  \int_0^T \dot \sigma(t) |w|^3(t,x) \ud t.
\]
Thus, using the Fubini theorem
\begin{align*}
\frac{\omega}{K}\int_0^T \int |w|^3 \cosh(x) \ud x \ud t 
& = -3  \int_0^T\int (\partial_x w+\sigma \dot w) w |w| \sinh\left(\frac{\omega}{K} x\right) \ud x \ud t
\\ &\quad + \int (\sigma(T)|w(T,x)|^3 - \sigma(0) |w(0,x)|^3)\sinh\left(\frac{\omega}{K} x\right) \ud x
\\  &\quad - \int_0^T \dot \sigma(t)\int  |w|^3(t,x) \sinh \left(\frac{\omega}{K} x\right) \ud x \ud t
\end{align*}

Now, we estimate terms on  the right-hand side of the above identity.
First, we observe that for $t=0,T$, using $|\sinh x|\leq \cosh x$, it holds
\begin{align*}
&\left|\sigma(t)\int |w(t,x)|^3\sinh\left(\frac{\omega}{K} x\right) \ud x\right| \\
&\quad \leq \|w \cosh\left(\frac{\omega}{K} x\right)\|_{L^\infty([0,T]\times \R)} \sup_{t\in [0,T]} |\sigma(t)|\|w(t)\|_{L^2}^2 .
\end{align*}
and, by the assumption on the size of $\dot \sigma$,
\begin{equation*}
    \left| \int_0^T \dot \sigma(t)\int  |w|^3(t,x)  \sinh\left(\frac{\omega}{K} x\right) \ud x \ud t \right|
    \le\frac{\omega}{8 K} \int_0^T \int |w|^3 \cosh\left(\frac{\omega}{K} x\right) \ud x \ud t .
\end{equation*}
Second, by $|\sinh x|\leq \cosh x$ and the Cauchy-Schwarz inequality 
\begin{align*}
&\left|\int_0^T\int (\partial_x w+\sigma \dot w) w |w| \sinh\left(\frac{\omega}{K} x\right) \right|\\
& \leq \|w \cosh\left(\frac{\omega}{K} x\right)\|_{L^\infty([0,T]\times \R)}^{\frac 12}
\left(\int_0^T\hspace{-0.6em}\int (\partial_x w+ \sigma \dot w)^2 \right)^{\frac 12}
\left(\int_0^T\hspace{-0.6em}\int |w|^3 \cosh\left(\frac{\omega}{K} x\right) \right)^{\frac 12}
\\ & \leq \frac{\omega}{4K} \int_0^T\hspace{-0.6em}\int |w|^3 \cosh\left(\frac{\omega}{K} x\right) + \frac{K}{\omega}
\|w \cosh\left(\frac{\omega}{K} x\right)\|_{L^\infty([0,T]\times \R)}
\int_0^T\hspace{-0.6em}\int (\partial_x w+ \sigma \dot w)^2
\end{align*}
Combining the above estimates we have proved
\begin{align*}
\frac{\omega}{8K }\int_0^T \int |w|^3 \cosh\left(\frac{\omega}{K} x\right) 
&\leq \frac{3 K}{\omega} \|w \cosh\left(\frac{\omega}{K} x\right)\|_{L^\infty([0,T]\times \R)}
 \int_0^T\int (\partial_x w+ \sigma \dot w)^2 \\
&+ 2 \|w \cosh\left(\frac{\omega}{K} x\right)\|_{L^\infty([0,T]\times \R)} \sup_{t\in [0,T]}|\sigma(t)| \|w(t)\|_{L^2}^2,
\end{align*}
which provides the desired inequality.
\end{proof}

Second, the following lemma, inspired by Lemma~4 in~\cite{KMM4}, will allow us to compare localized norms of a function $w$ at different scales $K$. As in the previous lemma, the use of the quantity $\partial_x w + \sigma \dot w$ instead of  $\partial_x w$ requires time integration.

\begin{lemma}\label{le:tech:h}
Let $J$ be any non empty open interval.
There exists $C=C(J)>0$ such that, for any $K\geq 1$, $T>0$, 
$w\in \cC([0,T],H^1)\cap \cC^1([0,T],L^2)$, and  $\sigma\in \cC^1([0,T], \R)$ such that
\[
\sup_{t \in [0,T]} |\dot \sigma (t)|\le \frac{\omega}{4 K}
\]
the following inequality holds
\begin{multline*}
 \int_0^T\int w^2(t,x) \sech\left(\frac{\omega}Kx\right) \ud x \ud t 
 \leq C K^2 \int_0^T \int (\partial_x w + \sigma \dot w)^2(t,x) \ud x\ud t\\
\qquad+ C K \int_0^T \int_J w^2(t,x) \ud x \ud t +C K \sup_{t\in [0,T]}|\sigma(t)| \|w(t)\|_{L^2}^2.
\end{multline*}
\end{lemma}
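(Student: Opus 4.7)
The plan is to combine a weighted integration by parts in space — a Poincaré-type argument using a bounded antiderivative of $\sech(\omega x/K)$ modulo a bump in $J$ — with an integration by parts in time, which converts the $\sigma\dot w$ contribution into a $\dot\sigma$ term that is small by hypothesis. The overall structure is analogous to the proof of Lemma~\ref{le:cubic}. To build the weight, I fix any nonnegative $\chi\in \cC^\infty_c(J)$ with $\int\chi=1$ and set
$$\phi_K(x) := \int_{-\infty}^x\!\Bigl(\sech(\omega y/K)-\tfrac{\pi K}{\omega}\chi(y)\Bigr)\ud y.$$
Since $\int\sech(\omega y/K)\,\ud y = \pi K/\omega$, the weight $\phi_K$ belongs to $L^\infty(\R)$ and satisfies $\phi_K(\pm\infty)=0$. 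Using the antiderivative $\tfrac{2K}{\omega}\arctan(e^{\omega y/K})$ together with the elementary inequalities $\arctan(t)\le t$ and $e^{-|y|}\le \sech(y)$, one verifies the crucial pointwise bound
$$|\phi_K(x)|\le C(J)\,K\,\sech(\omega x/K),\qquad x\in\R,$$
with $C(J)$ independent of $K\ge 1$.

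The identity $\phi_K'(x)=\sech(\omega x/K)-(\pi K/\omega)\chi(x)$, an integration by parts in $x$ (the boundary terms vanish because $w(t,\cdot)\in H^1$ and $\phi_K\in L^\infty$), the decomposition $\partial_x w=(\partial_x w+\sigma\dot w)-\sigma\dot w$, and $2w\dot w=\partial_t(w^2)$ together give
$$\int w^2\sech\,\ud x = \tfrac{\pi K}{\omega}\!\int w^2\chi\,\ud x - 2\!\int w(\partial_x w+\sigma\dot w)\phi_K\,\ud x + \sigma(t)\frac{d}{dt}\!\int w^2\phi_K\,\ud x.$$
I integrate this over $t\in[0,T]$ and integrate by parts in time on the last term. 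The resulting boundary contributions at $t=0,T$ are bounded by $2C(J)K\sup_{[0,T]}(|\sigma|\|w\|_{L^2}^2)$ using only $|\phi_K|\le C(J)K$. Since $\chi$ is supported in $J$, the first term is bounded by $CK\int_0^T\!\!\int_J w^2$. Combining the hypothesis $|\dot\sigma|\le \omega/(4K)$ with the key pointwise bound yields
$$\Bigl|\int_0^T\!\dot\sigma\int w^2\phi_K\,\ud x\,\ud t\Bigr| \le \tfrac{C(J)\omega}{4}\int_0^T\!\!\int w^2\sech,$$
while Cauchy--Schwarz in $x$ with weight $\sech^{1/2}$ (again invoking $|\phi_K|\le C(J)K\sech$), then in $t$, followed by Young's inequality, controls the cross term by $\tfrac14\int_0^T\!\int w^2\sech + CK^2\int_0^T\!\int(\partial_x w+\sigma\dot w)^2$.

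The two coefficients $1/4$ and $C(J)\omega/4$ in front of $\int_0^T\!\int w^2\sech$ on the right-hand side sum to a constant strictly less than $1$ (after fine-tuning the Young's inequality parameter); absorbing this term into the left-hand side and relabeling constants yields the claimed inequality. The main obstacle is the construction of $\phi_K$: one needs a bounded antiderivative of $\sech(\omega x/K)$ minus a bump in $J$, whose size at infinity is controlled by $K\sech(\omega x/K)$ itself, rather than merely by a constant of order $K$. This sharper decay, forced by the exact cancellation $\int(\sech(\omega y/K)-(\pi K/\omega)\chi(y))\,\ud y=0$, is indispensable: without it, the time integration-by-parts remainder $\int_0^T\dot\sigma\int w^2\phi_K$ would only be controllable by $\int_0^T\|w\|_{L^2}^2\,\ud t$, a quantity that does not appear on the right-hand side of the claimed inequality.
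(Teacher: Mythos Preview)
Your overall strategy is sound and close in spirit to the paper's, but the absorption step at the end has a gap when $J$ lies away from the origin. Your elementary inequalities $\arctan t\le t$ and $e^{-|u|}\le\sech u$ do yield $|\phi_K(x)|\le\tfrac{2K}{\omega}\sech(\omega x/K)$ whenever $x<0$ is to the left of $\mathrm{supp}\,\chi$ or $x>0$ is to the right of $\mathrm{supp}\,\chi$. But if, say, $J\subset(0,\infty)$, then for $0<x<\inf(\mathrm{supp}\,\chi)$ one has $\phi_K(x)=\tfrac{2K}{\omega}\arctan(e^{\omega x/K})$, which at $K=1$ and $x$ close to $\inf J$ is of order $1/\omega$, while $\sech(\omega x)$ is of order $e^{-\omega\,\mathrm{dist}(0,J)}$. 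Hence the best constant in your pointwise bound obeys $C(J)\gtrsim\omega^{-1}e^{\omega\,\mathrm{dist}(0,J)}$. Since the $\dot\sigma$-remainder contributes the coefficient $\tfrac{C(J)\omega}{4}$ in front of $\int_0^T\!\int w^2\sech$, this coefficient can exceed $1$; tuning the Young parameter only shrinks the other $\tfrac14$ coming from the cross term, not this one. So the absorption fails as written for general $J$.

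The repair is easy: center the weight in $J$ rather than at $0$. Picking $y_0\in J$, $\chi$ supported near $y_0$, and setting $\phi_K(x)=\int_{-\infty}^x\bigl(\sech(\omega(y-y_0)/K)-\tfrac{\pi K}{\omega}\chi(y)\bigr)\ud y$, your same inequalities give $|\phi_K(x)|\le\tfrac{2K}{\omega}\sech(\omega(x-y_0)/K)$ outside $\mathrm{supp}\,\chi$ (and a comparable bound on $\mathrm{supp}\,\chi$, since $\sech$ is bounded below there by a constant depending on $J$). Now the absorption constant is about $\tfrac12$, and you conclude via $\sech(\omega x/K)\le e^{\omega|y_0|}\sech(\omega(x-y_0)/K)$, absorbing $e^{\omega|y_0|}$ into $C(J)$. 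This is precisely the paper's manoeuvre: there one fixes $y\in J$, uses the one-sided exponential weight $e^{-\omega|x-y|/K}$ to control $\int_0^T\!\int w^2 e^{-\omega|x-y|/K}$, and then averages over $y\in J$, using $\tfrac{1}{|J|}\int_J e^{-\omega|x-y|/K}\ud y\ge C(J)\sech(\omega x/K)$ to recover the stated weight.
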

\begin{proof}
Let $y\in J$. Integrating by parts in space, one has
\[
2 \int_y^\infty (\partial_x w) w e^{-\frac \omega K(x-y)} \ud x
=\frac \omega K \int_y^\infty w^2e^{-\frac \omega K(x-y)} \ud x
- w^2(t,y).
\]
Integrating by parts in time, one has
\[
2 \int_0^T \sigma(t) \dot w (t)w(t) \ud t = \sigma(T)w^2(T)-\sigma(0) w^2(0) - \int_0^T \dot \sigma (t) w^2(t) \ud t
\]
Thus, it holds
\begin{align*}
 \frac \omega K \int_0^T \int_y^\infty w^2e^{-\frac \omega K(x-y)} \ud x \ud t
= 2 \int_0^T \int_y^\infty (\partial_x w + \sigma \dot w) w e^{-\frac \omega K(x-y)} \ud x \ud t \\
  + \int_0^T w^2(t,y) \ud t - \int_y^\infty e^{-\frac \omega K(x-y)}\left(\sigma(T) w^2(T)- \sigma(0) w^2(0)\right) \ud x\\
+\int_0^T \dot \sigma (t)\int_y^{\infty} w^2(t,x) e^{-\frac \omega K(x-y)}\ud{x} \ud t
\end{align*}
By the Cauchy-Schwarz inequality and $e^{-\frac \omega K(x-y)} \leq 1 $ for $x\geq y $, we obtain
\begin{align*}
 \left|\int_0^T \int_y^\infty (\partial_x w + \sigma \dot w) w e^{-\frac \omega K(x-y_1)}  \ud x \ud t\right| 
  &  \leq \frac \omega{4K} \int_0^T \int_y^\infty w^2 e^{-\frac \omega K (x-y)}  \ud x \ud t
\\&\quad + \frac{K}{\omega} \int_0^T \int_y^\infty (\partial_x w+ \sigma \dot w)^2  \ud x \ud t.
\end{align*}
On the other hand, by the assumption on $\dot \sigma$
\begin{align*}
  \left|  \int_0^T \dot \sigma (t)\int_y^{\infty} w^2(t,x) e^{-\frac \omega K(x-y)}\ud{x} \ud t \right|  
  \le \frac{\omega}{4 K} \int_0^T\int_y^{\infty} w^2(t,x) e^{-\frac \omega K(x-y)}\ud{x} \ud t 
\end{align*}
and thus
\begin{align*}
\frac{\omega}{4 K} \int_0^T \int_y^\infty w^2e^{-\frac \omega K(x-y)}  \ud x \ud t
\leq  \frac{2 K}{\omega}\int_0^T \int_y^\infty (\partial_x w + \sigma \dot w)^2 \ud x \ud t \\
 + \int_0^T w^2(t,y)\ud t + 2\sup_{[0,T]} |\sigma(t)| \|w(t)\|_{L^2}^2.
\end{align*}
Summing the analogous estimate for $x<y$, we obtain
\begin{align*}
\frac{\omega}{4 K} \int_0^T \int_\R w^2 e^{-\frac \omega K|x-y|}  \ud x \ud t
&\leq \frac{2K}{\omega}\int_0^T \int_\R (\partial_x w + \sigma \dot w)^2 \ud x \ud t \\
 &+ 2 \int_0^T w^2(t,y)\ud t + 4\sup_{[0,T]} |\sigma(t)| \|w(t)\|_{L^2}^2.
\end{align*}
Finally, averaging the inequality over $y\in J$ and using, for $K\ge 1$
\begin{align*}
    \frac{1}{|J|} \int_{J} e^{-\frac{\omega}{K}|x-y|} \ud y
   & \ge  \frac{e^{-\frac \omega K |x|} }{|J|} \int_{J} e^{-\frac{\omega}{K}|y|} \ud y\\
   & \ge \frac{e^{-\frac \omega K |x|} }{|J|} \int_{J} e^{- \omega|y|}  \ge C(J) \sech\left(\frac{\omega}{K} x\right) ,
\end{align*}
we obtain the desired bound.
\end{proof}

We also state the following elementary estimates that will be necessary to treat regularized functions.
The Fourier transform of a function $g$ is denoted by $\hat g$.

\begin{lemma}\label{le:4p7}
For $\varepsilon> 0$, let $\XX=(1-\varepsilon \partial_x^2)^{-1}$ be the bounded operator from $L^2$ to $H^2$
defined by its Fourier transform as
\[
\widehat{X_\epsilon g} (\xi) = \frac {\hat g(\xi)}{1+\varepsilon \xi^2}\quad \mbox{for any $g\in L^2$.}
\]
The following estimates hold.
\begin{enumerate} 
\item For any $\varepsilon\in (0,1)$ and $g\in L^2$,
\begin{equation}\label{on:Xeps}\begin{aligned}
&\|\XX g\|_{L^2} \leq \|g\|_{L^2},\quad
\|\partial_x \XX g\|_{L^2} \leq \varepsilon^{-\frac 12}\|g\|_{L^2},\\
& \|\partial_x^2 \XX g\|_{L^2} \leq \varepsilon^{-1}\|g\|_{L^2},\quad
\|\partial_x \XX^{\frac 12} g\|_{L^2} \leq \varepsilon^{-\frac 12}\|g\|_{L^2}.
\end{aligned}\end{equation}
\item There exist $\varepsilon_1>0$ and $C>0$ such that for any $\varepsilon\in (0,\varepsilon_1)$, $K\ge 1$ and $g\in L^2$,
\begin{equation}\label{on:Xepsloc}
\left\|\sech\left(\frac \omega K x\right) \XX g \right\|_{L^2}
\leq C \left\|\XX \left[\sech\left(\frac \omega K x\right) g\right]\right\|_{L^2},
\end{equation}
and
\begin{equation}\label{on:Xepslocbis}
\left\|\cosh\left(\frac \omega K x\right) \XX \left[\sech\left(\frac \omega K x\right) g\right]\right\|_{L^2}
\leq C \left\|\XX g\right\|_{L^2}.
\end{equation}
\end{enumerate}
\end{lemma}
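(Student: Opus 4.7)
The proof splits naturally into two quite different pieces. Part (1) is a direct Plancherel calculation, while Part (2) is a commutator estimate relying on the fact that multiplication by $\sech(\omega x/K)$ almost commutes with $\XX$ when $\varepsilon$ is small enough, uniformly in $K\ge 1$.

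For Part (1), I would use that $\XX$ has Fourier symbol $(1+\varepsilon\xi^2)^{-1}$ to reduce each bound to a pointwise multiplier estimate. From $(1+\varepsilon\xi^2)^{-1}\le 1$, $|\xi|(1+\varepsilon\xi^2)^{-1}\le (2\sqrt\varepsilon)^{-1}$ (via the AM--GM bound $1+\varepsilon\xi^2\ge 2\sqrt{\varepsilon}\,|\xi|$), $\xi^2(1+\varepsilon\xi^2)^{-1}\le\varepsilon^{-1}$ and $|\xi|(1+\varepsilon\xi^2)^{-1/2}\le\varepsilon^{-1/2}$, Plancherel gives the four inequalities.

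For Part (2), the key is the commutator identity obtained by writing $\XX^{-1}=1-\varepsilon\partial_x^2$, applying Leibniz to move the weight past $\partial_x^2$, and then applying $\XX$ to both sides. Setting $\sigma_K(x):=\sech(\omega x/K)$ and $u=\XX g$, one obtains
\[
\sigma_K\,\XX g \;=\; \XX[\sigma_K g] \;-\; \varepsilon\,\XX[\sigma''_K\,u] \;-\; 2\varepsilon\,\XX[\sigma'_K\,\partial_x u].
\]
For the first inequality, I would bound the two remainders using Part (1) together with the pointwise estimates $|\sigma'_K|\le(\omega/K)\sigma_K$ and $|\sigma''_K|\le(\omega/K)^2\sigma_K$ (uniformly in $K\ge 1$), combined with the identity $\XX[\sigma'_K \partial_x u]=\partial_x\XX[\sigma'_K u]-\XX[\sigma''_K u]$. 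The result is a bound of the form $C(\sqrt\varepsilon+\varepsilon)\,\|\sigma_K\,\XX g\|_{L^2}$ on the correction, which is absorbed into the left-hand side for $\varepsilon_1$ small enough, leaving a constant depending only on $\omega$.

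For the second inequality, the parallel identity with $h=\XX g$ gives
\[
\cosh(\omega x/K)\,\XX[\sigma_K\,\XX^{-1} h] \;=\; h \;+\; \varepsilon\,\cosh(\omega x/K)\,\XX[\sigma''_K\,h + 2\sigma'_K\,h'].
\]
This time I cannot simply appeal to the $L^2$-boundedness of $\XX$, since $\cosh(\omega x/K)$ \emph{expands} mass; instead, I would estimate the correction pointwise using the explicit kernel $K_\varepsilon(x-y)=(2\sqrt\varepsilon)^{-1}e^{-|x-y|/\sqrt\varepsilon}$ of $\XX$ together with the basic inequality $\cosh(\omega x/K)/\cosh(\omega y/K)\le e^{(\omega/K)|x-y|}$. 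After integrating by parts once to move the derivative off $h'$, all remainder terms reduce to convolutions with kernels decaying like $e^{-|x-y|(1/\sqrt\varepsilon-\omega/K)}$; for $\varepsilon_1\le 1/(4\omega^2)$, and hence $\omega/K\le 1/(2\sqrt\varepsilon)$ uniformly in $K\ge 1$, this is at worst $e^{-|x-y|/(2\sqrt\varepsilon)}$, so Young's convolution inequality yields a correction of size $C\sqrt\varepsilon\,\|h\|_{L^2}=C\sqrt\varepsilon\,\|\XX g\|_{L^2}$. The main technical obstacle is precisely this second estimate: the kernel-level bound has to beat the exponential growth provided by $\cosh/\cosh$, which is possible only thanks to the faster decay of $K_\varepsilon$, and this margin becomes available uniformly in $K\ge 1$ exactly when $\varepsilon_1$ is chosen small in terms of $\omega$.
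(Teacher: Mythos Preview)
Your proof is correct. Part~(1) is essentially the Fourier-side version of the paper's integration-by-parts computation, and your treatment of~\eqref{on:Xepsloc} via the commutator identity and absorption is very close in spirit to the paper's (the paper rearranges slightly by introducing the modified resolvent $[(1-\varepsilon\omega^2K^{-2})-\varepsilon\partial_x^2]^{-1}$, but the mechanism is the same).

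The genuine difference is in~\eqref{on:Xepslocbis}. You pass to the explicit Green's kernel $K_\varepsilon(x)=(2\sqrt\varepsilon)^{-1}e^{-|x|/\sqrt\varepsilon}$, use $\cosh(\omega x/K)/\cosh(\omega y/K)\le e^{(\omega/K)|x-y|}$, and close by Young's inequality once $1/\sqrt\varepsilon>\omega/K$. The paper avoids the kernel entirely: setting $k=\XX[\sech(\omega x/K)g]$, it computes
\[
(1-\varepsilon\partial_x^2)\bigl[\cosh(\tfrac{\omega x}{K})k\bigr]
= g - \tfrac{2\varepsilon\omega}{K}\,\partial_x\bigl[\sinh(\tfrac{\omega x}{K})k\bigr]
+ \tfrac{\varepsilon\omega^2}{K^2}\cosh(\tfrac{\omega x}{K})k,
\]
applies $\XX$, and uses only the bounds of Part~(1) together with $|\sinh|\le\cosh$ to obtain the self-referential estimate $\|\cosh(\omega x/K)k\|_{L^2}\le\|\XX g\|_{L^2}+C\varepsilon^{1/2}\|\cosh(\omega x/K)k\|_{L^2}$, which absorbs. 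So your remark that one ``cannot simply appeal to the $L^2$-boundedness of $\XX$'' is not quite right: with the identity written in terms of $\cosh(\omega x/K)k$ rather than $h=\XX g$, one can. Your kernel argument is a perfectly valid alternative and makes the role of the smallness condition $\varepsilon_1\lesssim\omega^{-2}$ more transparent; the paper's operator-level argument is shorter and reuses the same absorption trick as for~\eqref{on:Xepsloc}.
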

\begin{remark}\label{rk:4.1}
In particular, \eqref{on:Xepsloc} and \eqref{on:Xepslocbis} for $K=10$ write
\begin{gather}
\|\rho \XX g\|_{L^2} \leq C \|\XX [\rho g]\|_{L^2}\leq  C \|\rho g\|_{L^2},\label{rho:Xeps}\\
\|\rho^{-1} \XX [\rho g]\|_{L^2} \leq C \|\XX g\|_{L^2}\leq  C \|g\|_{L^2}.\label{rho:Xepsbis}
\end{gather}
\end{remark}
\begin{proof}
(1) Let $f=\XX g$ so that 
$g = (1-\varepsilon \partial_x^2) f $, and by expanding and integrating by parts,
\begin{equation*}
\|g\|_{L^2}^2 = \|(1-\varepsilon \partial_x^2) f\|_{L^2}^2
= \|f\|_{L^2}^2 + 2 \varepsilon \|\partial_x f\|_{L^2}^2 + \varepsilon^2 \|\partial_x^2 f\|_{L^2}^2.
\end{equation*}
Moreover,
\[
\|\XX^\frac 12\partial_x g\|_{L^2}^2 = |\langle \XX \partial_x^2 g,g\rangle|
\leq \varepsilon^{-1} \|g\|_{L^2}^2.
\]
This implies~\eqref{on:Xeps}.

The proof of (2) is an adaptation of~\cite[Lemma 5]{KMM4}.
Let $h = \sech\left(\frac \omega K x\right) \XX g$ and $k = \XX [\sech\left(\frac \omega K x\right) g]$.
We have
\begin{align*}
g & = \cosh\left(\frac \omega K x\right) (1-\varepsilon \partial_x^2) k
 = (1-\varepsilon \partial_x^2) \left[ \cosh\left(\frac \omega K x\right) h\right] \\
 & = \cosh\left(\frac \omega K x\right) h - \frac{\varepsilon\omega^2}{K^2} \cosh\left(\frac \omega K x\right) h 
 -2 \frac{\varepsilon\omega}{K} \sinh\left(\frac \omega K x\right) h' - \varepsilon \cosh\left(\frac \omega K x\right) h''.
\end{align*}
Thus,
\[
 (1-\varepsilon \partial_x^2) k 
 = \left[ \left(1- \frac{\varepsilon\omega^2} {K^2}\right) - \varepsilon \partial_x^2\right] h 
 - 2\frac{\varepsilon\omega} K \tanh\left(\frac \omega K x\right) h'.
\]
Applying the operator $[(1-\varepsilon\omega^2 K^{-2}) - \varepsilon \partial_x^2]^{-1}$ to this identity, we obtain
\begin{align*}
h &= \left[ \left(1-\frac{\varepsilon\omega^2}{K^2}\right) - \varepsilon \partial_x^2\right]^{-1}(1-\varepsilon \partial_x^2) k\\
&\quad + 2 \frac{\varepsilon\omega} K \left[ \left(1-\frac{\varepsilon\omega^2}{K^2}\right) - \varepsilon \partial_x^2\right]^{-1}
\left[\tanh\left(\frac {\omega}K x\right)h'\right].
\end{align*}
We note that for a constant $C$ uniform for $\varepsilon$ small and $K\geq 1$,
\begin{align*}
& \| [ (1-\varepsilon\omega^2 K^{-2}) - \varepsilon \partial_x^2]^{-1} (1-\varepsilon \partial_x^2) \|_{\cLL (L^2,L^2)} \leq C,\\
& \| [ (1-\varepsilon\omega^2 K^{-2}) - \varepsilon \partial_x^2]^{-1} \partial_x \|_{\cLL (L^2,L^2)} \leq C\varepsilon^{-\frac 12}.
\end{align*}
Thus, 
\[\|[ (1-\varepsilon\omega^2 K^{-2}) - \varepsilon \partial_x^2]^{-1}(1-\varepsilon \partial_x^2) k\|_{L^2}\leq 
C \|k\|_{L^2}\]
and 
\begin{align*}
&\left\|\left[ \left(1-\frac{\varepsilon\omega^2}{K^2}\right) - \varepsilon \partial_x^2\right]^{-1}
\left[\tanh\left(\frac \omega K x\right)h'\right]\right\|_{L^2}
\\&\quad \leq 
\left\|\left[ \left(1-\frac{\varepsilon\omega^2}{K^2}\right) - \varepsilon \partial_x^2\right]^{-1}
\left[\tanh\left(\frac \omega K x\right)h\right]'\right\|_{L^2}\\
&\quad \quad 
+ \frac 1K\left\|\left[ \left(1-\frac{\varepsilon\omega^2}{K^2}\right) - \varepsilon \partial_x^2\right]^{-1}
\left[\sech^2\left(\frac \omega Kx\right) h \right]\right\|_{L^2}\leq C \varepsilon^{-\frac 12}\|h\|_{L^2}.
\end{align*}
We deduce, for a constant $C>0$ uniform for $\varepsilon$ small and $K\geq 1$,
\[
\|h\|_{L^2} \leq C \|k\|_{L^2} + C \varepsilon^{\frac 12} \|h\|_{L^2},
\]
which implies~\eqref{on:Xepsloc} for $\varepsilon$ small enough.

We prove~\eqref{on:Xepslocbis} similarly.
Using $(1-\varepsilon \partial_x^2) k=\sech\left(\frac \omega K x\right) g$, we compute
\begin{equation*}
(1-\varepsilon \partial_x^2) \left[\cosh\left(\frac \omega K x\right)k\right]
=g - 2 \frac{\varepsilon\omega}K \partial_x \left[\sinh\left(\frac \omega K x\right) k\right]
+\frac{\varepsilon\omega^2}{K^2} \cosh\left(\frac \omega K x\right) k,
\end{equation*}
and so
\[
\cosh\left(\frac \omega K x\right)k = \XX g
- 2\frac{\varepsilon\omega}K \partial_x \XX \left[\sinh\left(\frac \omega K x\right) k\right]
+\frac{\varepsilon\omega^2}{K^2} \XX\left[\cosh\left(\frac \omega K x\right) k\right]
\]
Using~\eqref{on:Xeps} and $K\geq 1$, it follows that
\[
\left\|\cosh\left(\frac \omega K x\right)k \right\|_{L^2}
\leq \|\XX g\|_{L^2} + C  \varepsilon^{\frac 12}\left\|\cosh\left(\frac \omega K x\right)k \right\|_{L^2},
\]
and the result follows for $\varepsilon$ small enough.
\end{proof}

\subsection{Transfer estimates}
The next lemma will allow us to exchange information between the components of $\zz$, at any localization scale $K\geq 10$.
\begin{lemma}\label{le:H}
For any $K\geq 10$ and any $T>0$, it holds
\[
\int_0^T\int z_2^2 \zeta_K^2
\leq C \delta^2 + C\int_0^T \int \left[ (\partial_x z_1+c\gamma z_2)^2 +z_1^2 \right] \zeta_K^2.
\]
\end{lemma}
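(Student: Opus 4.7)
The plan is to construct a virial-type functional whose time derivative isolates the missing $L^2$-weighted mass of $z_2$. Motivated by the relation $\dot z_1 = z_2 + \MM_1$ coming from the system~\eqref{eq:z}, I will use
\[
\cH(t) := -\int z_1(t,x)\, z_2(t,x)\, \zeta_K^2(x)\, \ud x.
\]
By Cauchy-Schwarz and the orbital stability bound~\eqref{bd:z}, $|\cH(t)|\leq \|z_1(t)\|_{L^2}\|z_2(t)\|_{L^2}\leq C\delta^2$ uniformly in $t\in\R$, so the boundary contributions produced by integrating $\dot\cH$ on $[0,T]$ will be $O(\delta^2)$, matching the $C\delta^2$ term on the right-hand side of the claim.

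Differentiating $\cH$, substituting $\dot z_1 = z_2 + \MM_1$ (which directly yields the good term $-\int z_2^2\zeta_K^2$) and the formula for $\dot z_2$ from~\eqref{eq:z}, and then integrating by parts twice — first using $-\int z_1\partial_x^2 z_1\zeta_K^2 = \int(\partial_x z_1)^2\zeta_K^2 - \int z_1^2 (\zeta_K\zeta_K')'$, and second using $-2c\gamma\int z_1\partial_x z_2\zeta_K^2 = 2c\gamma\int(\partial_x z_1)z_2\zeta_K^2 + 4c\gamma\int z_1 z_2\zeta_K\zeta_K'$ — combined with the algebraic identity $(\partial_x z_1)^2 + 2c\gamma z_2\partial_x z_1 = (\partial_x z_1 + c\gamma z_2)^2 - c^2\gamma^2 z_2^2$ and the relation $1+c^2\gamma^2 = \gamma^2$, I expect the identity
\[
\dot\cH = -\gamma^2\int z_2^2\zeta_K^2 + \int(\partial_x z_1 + c\gamma z_2)^2\zeta_K^2 + \cR,
\]
where $\cR$ gathers four pieces: the cross term $4c\gamma\int z_1 z_2\zeta_K\zeta_K'$, the Taylor term $\int z_1[W'(H+z_1)-W'(H)]\zeta_K^2$, the weight-derivative term $-\int z_1^2(\zeta_K\zeta_K')'$, and the two modulation terms $-\int z_2\MM_1\zeta_K^2 - \int z_1\MM_2\zeta_K^2$.

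Each term in $\cR$ is then estimated by $\tfrac14\gamma^2\int z_2^2\zeta_K^2 + C\int z_1^2\zeta_K^2$ after applying Young's inequality. For the cross term I will use $|\zeta_K'|\leq(C/K)\zeta_K\leq C\zeta_K$ (valid since $K\geq 10$); for the weight-derivative term, the bound~\eqref{on:zetaK}; for the Taylor term the splitting $W'(H+z_1)-W'(H) = W''(H)z_1 + O(z_1^2)$, whose leading piece is controlled by $\|W''(H)\|_{L^\infty}\int z_1^2\zeta_K^2$ and whose cubic remainder by $\|z_1\|_{L^\infty}\int z_1^2\zeta_K^2\leq C\delta\int z_1^2\zeta_K^2$; and for modulation the crucial input is~\eqref{bd:yc}, which gives $|\dot y-c|+|\dot c|\leq C\|z_1\loc\|_{L^2}^2$ together with $\loc\leq C\zeta_K$, so that each modulation integrand splits via spatial Cauchy-Schwarz using exponential decay of $H^{(k)}$ for the $H$-part and $\|\partial_x z_1\|_{L^2}\leq C\delta$ for the $z_1$-part, then a further Cauchy-Schwarz in time produces the desired form. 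Integrating over $[0,T]$, absorbing the small $z_2^2$ pieces on the left, and using $|\cH(0)|+|\cH(T)|\leq C\delta^2$, I get
\[
\tfrac12\gamma^2\int_0^T\!\!\int z_2^2\zeta_K^2\leq C\delta^2 + \int_0^T\!\!\int(\partial_x z_1+c\gamma z_2)^2\zeta_K^2 + C\int_0^T\!\!\int z_1^2\zeta_K^2,
\]
and dividing by $\tfrac12\gamma^2\geq\tfrac12$ yields the stated inequality.

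The main technical obstacle I anticipate is the modulation contribution involving $\Lambda z_1 = x\partial_x z_1$ inside $\MM_1$ and $\MM_2$, because $|x\zeta_K|$ is only $O(K)$ rather than $O(1)$. Controlling this requires combining the quadratic smallness $|\dot c|\gamma^3\leq C\|z_1\loc\|_{L^2}^2\leq C\delta^2$ with Cauchy-Schwarz in time so that two extra factors of $\delta$ appear, ensuring that the $K$-dependent factor stays harmless — consistent with the smallness regime~\eqref{delta:A} under which the lemma is invoked.
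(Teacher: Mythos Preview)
Your overall strategy is exactly the paper's: set $\cH=\pm\int z_1 z_2\,\zeta_K^2$, differentiate using~\eqref{eq:z}, complete the square to produce $(\partial_x z_1+c\gamma z_2)^2$ and isolate $(1+c^2\gamma^2)\int z_2^2\zeta_K^2=\gamma^2\int z_2^2\zeta_K^2$, estimate the remainders, and integrate on $[0,T]$ using $|\cH|\leq C\delta^2$. The computation and the handling of the Taylor, weight-derivative, and cross terms are all correct.

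The one point where your proposal is weaker than the paper concerns the $\Lambda$-pieces in the modulation terms. You plan to estimate $\int z_2\MM_1\zeta_K^2$ and $\int z_1\MM_2\zeta_K^2$ \emph{separately}, which forces you to carry the factor $|x\zeta_K|=O(K)$ and then absorb it via the extra $\delta^2$ from $|\dot c|\lesssim\|z_1\rho\|_{L^2}^2$. That yields a constant of order $K^2\delta^4$ in front of $\int z_1^2\zeta_K^2$, which is bounded only under an extra constraint like $K\lesssim\delta^{-2}$ not assumed in the lemma. The paper instead \emph{adds} the two terms first and integrates by parts: since
\[
z_2\,\Lambda z_1+z_1\,\Lambda z_2=x\,\partial_x(z_1z_2),
\qquad
z_2\,\partial_x z_1+z_1\,\partial_x z_2=\partial_x(z_1z_2),
\]
all the $z$-quadratic contributions become $\int z_1z_2\,(\zeta_K^2)'$, $\int z_1z_2\,(x\zeta_K^2)'$, $\int z_1^2\,(\zeta_K^2)'$, and the bounds $|(\zeta_K^2)'|+|(x\zeta_K^2)'|\leq C$ from~\eqref{on:zetaK} are \emph{uniform in $K$}. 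This gives $\big|\int(z_2\MM_1+z_1\MM_2)\zeta_K^2\big|\leq C\delta\|z_1\zeta_K\|_{L^2}^2$ with $C$ independent of $K$, and hence the lemma as stated for \emph{all} $K\geq 10$. Your version proves a slightly weaker statement that does suffice for every application in \S\ref{S:4} (where always $K\leq A/2$ and $\delta\leq A^{-4}$), but not the lemma as written.
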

\begin{proof}
For $K\geq 10$, let
\begin{equation}\label{def:H}
\cH = \int z_1 z_2 \zeta_K^2.
\end{equation}
We have, using~\eqref{eq:z} and integration by parts
\begin{align*}
\dot \cH 
& = \int \dot z_1 z_2 \zeta_K^2 + \int z_1 \dot z_2 \zeta_K^2 \\
& = \int z_2^2 \zeta_K^2 - \int \left\{ (\partial_x z_1)^2+z_1 \left[W'(H+z_1)- W'(H)\right]\right\}\zeta_K^2
- 2 c \gamma \int (\partial_x z_1) z_2 \zeta_K^2 \\
&\quad +\frac 12 \int z_1^2 (\zeta_K^2)''- 2 c \gamma \int z_1 z_2 (\zeta_K^2)'
+\int \left(z_2 \MM_1 + z_1 \MM_2 \right) \zeta_K^2 .
\end{align*}
We rewrite
\begin{align*}
\dot \cH 
& = (1+c^2\gamma^2) \int z_2^2 \zeta_K^2 - \int \left\{ (\partial_x z_1+c\gamma z_2)^2+z_1 \left[W'(H+z_1)- W'(H)\right]\right\}\zeta_K^2\\
&\quad +\frac 12 \int z_1^2 (\zeta_K^2)''- 2 c \gamma \int z_1 z_2 (\zeta_K^2)'
+\int \left(z_2 \MM_1 + z_1 \MM_2 \right) \zeta_K^2 .
\end{align*}
From~\eqref{def:M1},~\eqref{def:M2} and integration by parts, we have
\begin{multline*}
\int \left(z_2 \MM_1 + z_1 \MM_2 \right) \zeta_K^2 =
(\dot y-c) \gamma \int H' z_2 \zeta_K^2
-\dot c c\gamma \int (\Lambda H) z_2 \zeta_K^2 
+\dot c \gamma \int H' z_1 \zeta_K^2\\
 -(\dot y-c) \gamma \int z_1 z_2 (\zeta_K^2)'
+\dot c c \gamma^2\int z_1 z_2 \left(x \zeta_K^2\right)'
-\frac{\dot c \gamma}2 \int z_1^2 (\zeta_K^2)'.
\end{multline*}
By the properties of $H$ and $\zeta_K^2+|(\zeta_K^2)'|+ |x (\zeta_K^2)'|\leq C$ from~\eqref{on:zetaK}, \eqref{bd:z} and~\eqref{bd:yc}, we obtain
\[
\left| \int \left(z_2 \MM_1 + z_1 \MM_2 \right) \zeta_K^2 \right|
\leq C \delta \|z_1 \rho\|_{L^2}^2
\leq C \delta\|z_1 \zeta_K\|_{L^2}^2
\]
(note that $\rho\leq C \zeta_K$ follows from the assumption $K\geq 10$, see \eqref{zeta:rho}).
Now, we estimate the other terms in the expression of $\dot \cH$ above.
Using the Cauchy-Schwarz inequality,
\[
\left| 2 c \gamma \int z_1 z_2 (\zeta_K^2)' \right|
\leq \frac 12 \int z_2^2 \zeta_K^2 + C \int z_1^2 \zeta_K^2,
\]
then $|\int z_1^2 (\zeta_K^2)''|\leq C \int z_1^2 \zeta_K^2$ and last
\[
\left| \int z_1 \left[W'(H+z_1)- W'(H)\right] \zeta_K^2 \right|
\leq C \int z_1^2 \zeta_K^2,
\]
so that
\[
\dot \cH \geq \frac 12 \int z_2^2 \zeta_K^2
- \int (\partial_x z_1+c\gamma z_2)^2 \zeta_K^2 - C \int z_1^2 \zeta_K^2.
\]
Integrating in time on $[0,T]$ and using the bound
$|\cH|\leq \|z_1\|_{L^2}\|z_2\|_{L^2} \leq C \delta^2$,
we have proved the lemma.
\end{proof}

In the sequel, we will also need a similar estimate for the function $\partial_x z_1 +c\gamma z_2$.
We introduce
\begin{equation}\label{def:kk}
\begin{cases}
k_1 = \partial_x z_1 +c\gamma z_2\\
k_2 = \partial_x z_2 +c\gamma \Theta(\zz).
\end{cases}
\end{equation}
We compute from~\eqref{eq:z} the system formally satisfied by $\kk=(k_1,k_2)$.
First, using the relation $\frac {\ud}{\ud t} (c\gamma) = \gamma^3\dot c $,
\begin{align*}
\dot k_1 & = \partial_x \dot z_1 + c\gamma \dot z_2 
+ \dot c\gamma^3 z_2 \\
&= \partial_x z_2 + c\gamma \Theta(\zz) + \partial_x \MM_1 + c\gamma \MM_2 + \dot c\gamma^3 z_2 \\
& = k_2+ \OO_1,
\end{align*}
where we have set
\[
\OO_1 = \partial_x \MM_1 + c\gamma \MM_2+ \dot c\gamma^3 z_2.
\]
Second, 
\begin{align*}
\dot k_2
& = \partial_x \dot z_2 + c\gamma (\partial_x^2 \dot z_1 - W''(H+z_1)\dot z_1) + 2 c^2 \gamma^2\partial_x \dot z_2
+2 \dot c c \gamma^4 \partial_x z_2+\dot c\gamma^3 \Theta(\zz)\\
&= \partial_x \Theta(\zz) + c\gamma (\partial_x^2 z_2 - W''(H+z_1)z_2) + 2 c^2 \gamma^2 \partial_x \Theta(\zz) + \OO_2,
\end{align*}
where
\[
\OO_2= \partial_x \MM_2 + c\gamma (\partial_x^2 \MM_1 - W''(H+z_1)\MM_1) 
+ 2c^2\gamma^2 \partial_x M_2
+2 \dot c c \gamma^4 \partial_x z_2+ \dot c\gamma^3 \Theta(\zz).
\]
We observe
\begin{align*}
\partial_x \Theta(\zz)
&=\partial_x^2 (\partial_x z_1)
- W''(H+z_1) \partial_x z_1-\left[W''(H+z_1)- W''(H)\right] H'\\
&\quad+ 2 c \gamma\partial_x (\partial_x z_2).
\end{align*}
Thus,
\begin{equation*}
\dot k_2
= \partial_x^2 k_1 - W''(H+z_1) k_1-\left[W''(H+z_1)- W''(H)\right] H' + 2 c \gamma \partial_x k_2+ \OO_2.
\end{equation*}
In conclusion, $\kk$ satisfies the system
\begin{equation*}
\begin{cases}
\dot k_1 = k_2+\OO_1\\
\dot k_2 = \partial_x^2 k_1 - W''(H) k_1
+ 2 c\gamma \partial_x k_2\\
\quad\quad -\left[W''(H+z_1)- W''(H)\right] (H'+k_1) + \OO_2.
\end{cases}
\end{equation*}

Now, we define $\jj=(j_1,j_2)$ where
\begin{equation}\label{def:jj}
j_1 = \XX k_1,\quad
j_2 = \XX k_2,
\end{equation}
where $\XX=(1-\varepsilon\partial_x^2)^{-1}$, see Lemma \ref{le:4p7}.
Using
\[
(1-\varepsilon \partial_x^2) \left[ W''(H) j_1 \right]
= W''(H) k_1 - \varepsilon (W''(H))'\partial_x j_1
-\varepsilon \partial_x [(W''(H))' j_1],
\]
we obtain the system satisfied by $\jj$
\begin{equation*}
\begin{cases}
\partial_t j_1 =j_2+\XX \OO_1\\
\partial_t j_2 = \partial_x^2 j_1 - W''(H) j_1 + 2 c\gamma \partial_x j_2 \\
\quad\qquad - \varepsilon \XX \left[(W''(H))'\partial_x j_1 +\partial_x ((W''(H))' j_1)\right]\\
\quad\qquad -\XX \left[\left(W''(H+z_1)- W''(H)\right)(H'+k_1)\right]+ \XX \OO_2.
\end{cases}
\end{equation*}
\begin{lemma}\label{le:jj}
With $\varepsilon_1>0$ defined in Lemma \ref{le:4p7} there exists $\varepsilon_2\in (0,\varepsilon_1)$ such that
for any $\varepsilon\in (0,\varepsilon_2)$ the following holds. Assume ~\eqref{delta:A}
 and let $K\in [10,A]$. Then,
\begin{multline*}
\varepsilon\int_0^T\int ( j_2^2+\varepsilon (\partial_x j_2)^2) \zeta_K^2\\
\leq C \delta^2 + C \|z_1 \|_{T,\rho}^2
 + C\int_0^T \int \left[j_1^2+\varepsilon (\partial_x j_1)^2 + \varepsilon^2 (\partial_x^2 j_1)^2\right]  \zeta_K^2.
 \end{multline*}
\end{lemma}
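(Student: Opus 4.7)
The proof proposal mirrors the structure of Lemma~\ref{le:H}, but at the level of the regularized pair $\jj=(j_1,j_2)$, which satisfies the system just displayed before the statement. The plan is to introduce a virial-type functional that, upon differentiation in $t$, produces the target quantity $\varepsilon\int j_2^2\zeta_K^2+\varepsilon^2\int(\partial_x j_2)^2\zeta_K^2$ as a positive main term on the right-hand side, with all remaining contributions controllable by the quantities appearing in the statement. A natural candidate is
\[
\cH_\varepsilon \;=\; \varepsilon\int j_1 j_2 \,\zeta_K^2 \;+\; \varepsilon^2\int (\partial_x j_1)(\partial_x j_2)\,\zeta_K^2.
\]
Using $\jj=(\XX k_1,\XX k_2)$, $\|\zz\|_{H^1\times L^2}\le C\delta$ from~\eqref{bd:z}, and~\eqref{on:Xeps}, one checks $\|j_1\|_{L^2}\le C\delta$, $\|j_2\|_{L^2}\le C\varepsilon^{-1/2}\delta$, $\|\partial_x j_1\|_{L^2}\le C\varepsilon^{-1/2}\delta$, $\|\partial_x j_2\|_{L^2}\le C\varepsilon^{-1}\delta$, which gives $|\cH_\varepsilon|\le C\delta^2$ uniformly in $\varepsilon\in(0,\varepsilon_2)$. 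After integration in time, this boundary contribution will feed the leading $C\delta^2$ on the right-hand side.

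Next I would compute $\dot\cH_\varepsilon$ using the $\jj$-system. The terms $\varepsilon\int \dot j_1\, j_2\,\zeta_K^2$ and $\varepsilon^2\int(\partial_x\dot j_1)(\partial_x j_2)\zeta_K^2$ produce, using $\dot j_1=j_2+\XX\OO_1$, the main positive contributions $\varepsilon\int j_2^2\zeta_K^2+\varepsilon^2\int(\partial_x j_2)^2\zeta_K^2$ plus error terms containing $\XX\OO_1$. The remaining two terms $\varepsilon\int j_1\dot j_2\,\zeta_K^2$ and $\varepsilon^2\int(\partial_x j_1)(\partial_x\dot j_2)\zeta_K^2$ are expanded via the $\dot j_2$ equation; integrating by parts against $\zeta_K^2$ yields $-\varepsilon\int(\partial_x j_1)^2\zeta_K^2-\varepsilon^2\int(\partial_x^2 j_1)^2\zeta_K^2$ plus lower-order terms of the form $\int j_1^2\,|(\zeta_K^2)''|+\int(\partial_x j_1)^2|(\zeta_K^2)''|$, which by~\eqref{on:zetaK} are bounded by the integrals on the right-hand side of the claimed estimate. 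The transport term $2c\gamma\partial_x j_2$ produces cross terms such as $2c\gamma\varepsilon\int(\partial_x j_1)j_2\zeta_K^2$ and $2c\gamma\varepsilon^2\int(\partial_x^2 j_1)(\partial_x j_2)\zeta_K^2$; these are absorbed by Cauchy--Schwarz, extracting a fraction (say $\tfrac12$) of each main positive term while leaving contributions already on the right-hand side, as was done for $\cH$ in Lemma~\ref{le:H}. The boundary-type terms $-2c\gamma\varepsilon\int j_1 j_2(\zeta_K^2)'$ etc.\ come with a factor $K^{-1}$ (since $|(\zeta_K^2)'|\le C K^{-1}\zeta_K^2$), hence are absorbable because $K\ge 10$.

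The error terms split in three groups. First, the contributions from $\XX\OO_1$ and $\XX\OO_2$: expanding $\OO_1$ and $\OO_2$ using~\eqref{def:M1}--\eqref{def:M2} and~\eqref{other:Theta}, every term carries either $(\dot y-c)$ or $\dot c$, which by~\eqref{bd:yc} are bounded by $C\int z_1^2\rho^2$; combined with~\eqref{on:Xeps},~\eqref{rho:Xeps}--\eqref{rho:Xepsbis} and the uniform smallness~\eqref{bd:z}, these produce contributions $\le C\delta\int z_1^2\rho^2+C\delta^3$. Second, the $\varepsilon$-correction terms $\varepsilon\XX[(W''(H))'\partial_x j_1+\partial_x((W''(H))'j_1)]$ are paired with $\varepsilon j_1\zeta_K^2$ or $\varepsilon^2(\partial_x j_1)\zeta_K^2$; since $W''(H)'$ is Schwartz and thanks to~\eqref{on:Xepsloc} (localization passes through $\XX$ up to an $\varepsilon^{1/2}$ loss), each such term is bounded by $C\varepsilon\int(j_1^2+\varepsilon(\partial_x j_1)^2)\zeta_K^2$, which is absorbable. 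Third, the nonlinear term $\XX[(W''(H+z_1)-W''(H))(H'+k_1)]$: the $L^\infty$ bound on $z_1$ gives $|W''(H+z_1)-W''(H)|\le C|z_1|$, so its contribution is bounded by $C\delta\int |z_1|\rho^2+C\delta\int|z_1||k_1|$, which after Cauchy--Schwarz and~\eqref{bd:z} gives $C\|z_1\|_{T,\rho}^2+C\delta^2$.

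Putting everything together, one obtains
\[
\dot\cH_\varepsilon \;\ge\; \tfrac12\varepsilon\int j_2^2\zeta_K^2+\tfrac12\varepsilon^2\int(\partial_x j_2)^2\zeta_K^2 - C\!\!\int[j_1^2+\varepsilon(\partial_x j_1)^2+\varepsilon^2(\partial_x^2 j_1)^2]\zeta_K^2 - C\!\int z_1^2\rho^2 - C\delta^2\!\int z_1^2\rho^2;
\]
integrating on $[0,T]$ and using $|\cH_\varepsilon(T)-\cH_\varepsilon(0)|\le C\delta^2$ yields the desired inequality after choosing $\varepsilon_2$ small enough to absorb the $\varepsilon^{1/2}$ losses from Lemma~\ref{le:4p7}. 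The main obstacle is the careful bookkeeping of the three groups of error terms, in particular ensuring that the weighted estimates~\eqref{on:Xepsloc}--\eqref{on:Xepslocbis} for $\XX$ are correctly applied so that the nonlinear and modulation contributions do not generate uncontrolled $\varepsilon^{-1}$ factors that would break the smallness; this is precisely where the hypothesis $\varepsilon\le\varepsilon_2$ and~\eqref{delta:A} are used.
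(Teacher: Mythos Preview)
Your approach is essentially the same as the paper's. The paper uses the single functional $\cK=\int k_1 j_2\,\zeta_K^2$ and multiplies the resulting identity by $\varepsilon$ at the end; since $k_1=(1-\varepsilon\partial_x^2)j_1$, integration by parts shows that $\varepsilon\cK$ differs from your $\cH_\varepsilon=\varepsilon\int j_1 j_2\,\zeta_K^2+\varepsilon^2\int(\partial_x j_1)(\partial_x j_2)\,\zeta_K^2$ only by a harmless term $\varepsilon^2\int(\partial_x j_1)j_2(\zeta_K^2)'$. The compact form $\int k_1 j_2\,\zeta_K^2$ streamlines the algebra slightly (for instance $\int k_2 j_2\,\zeta_K^2$ immediately yields $\int(j_2^2+\varepsilon(\partial_x j_2)^2)\zeta_K^2$ up to boundary terms), and the paper handles the $\OO_1$, $\OO_2$ errors via the splitting $\int j_2\OO_1\zeta_K^2=\langle\XX^{1/2}k_2,\XX^{1/2}(\OO_1\zeta_K^2)\rangle$, which cleanly absorbs the $\varepsilon^{-1/2}$ losses; but the structure of the argument and all the main terms are identical to what you describe.
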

\begin{proof}
We introduce the following functional
\[
\cK = \int k_1 j_2 \zeta_K^2.
\]
The first observation is that $\cK$ is well-defined and satisfies
$|\cK|\leq C \varepsilon^{-\frac 12} \delta^2$ since
by~\eqref{on:Xeps} and the definitions of $\kk$ and $\jj$, one has
\[
\|k_1\|_{L^2}\leq \|\zz\|_{H^1\times L^2},\quad
\|j_2\|_{L^2}=\|\XX k_2\|_{L^2}
\leq \varepsilon^{-\frac 12} \|\zz\|_{H^1\times L^2}.
\]
Next, we have, using the systems satisfied by $\kk$ and $\jj$ and integrating by parts
\begin{align*}
\dot\cK & = \int (\partial_t k_1) j_2 \zeta_K^2 + \int k_1 (\partial_t j_2)\zeta_K^2\\
& = \int (j_2^2+\varepsilon (\partial_x j_2)^2) \zeta_K^2
- \int \left[(\partial_x j_1)^2+\varepsilon (\partial_x^2 j_1)^2
+ W''(H) k_1 j_1\right]\zeta_K^2\\
&\quad
-\frac \varepsilon 2 \int j_2^2 (\zeta_K^2)''
+\frac 12 \int j_1^2 (\zeta_K^2)''
+2 c\gamma \int k_1 (\partial_x j_2) \zeta_K^2\\
&\quad - \varepsilon \int k_1 \XX \left[(W''(H))'\partial_x j_1+\partial_x[(W''(H))' j_1]\right] \zeta_K^2 \\
&\quad -\int k_1 \XX\left[\left(W''(H+z_1)- W''(H)\right) (H'+k_1)\right]\zeta_K^2\\
&\quad + \int (j_2 \OO_1 + k_1 \XX \OO_2) \zeta_K^2.
\end{align*}
We observe
\[
\left| \int W''(H) k_1 j_1 \zeta_K^2 \right|
\leq C \int (j_1^2 +k_1^2) \zeta_K^2
\leq C \int \left[j_1^2 + \varepsilon^2 (\partial_x^2 j_1)^2\right]  \zeta_K^2,
\]
and, by \eqref{on:zetaK}, for $\varepsilon$ small enough,
\[
\left|\frac \varepsilon 2 \int j_2^2 (\zeta_K^2)''\right|
\leq \frac 14 \int j_2^2 \zeta_K^2,\quad 
\left| \int j_1^2 (\zeta_K^2)''\right| \leq C \int j_1^2 \zeta_K^2.
\]
Moreover,
\begin{align*}
\left|2c\gamma \int k_1 (\partial_x j_2) \zeta_K^2\right|
&\leq \frac{\varepsilon}4 \int (\partial_x j_2)^2 \zeta_K^2 + C \varepsilon^{-1} \int k_1^2 \zeta_K^2 \\
&\leq \frac{\varepsilon}4 \int (\partial_x j_2)^2 \zeta_K^2
+C\varepsilon^{-1} \int \left[j_1^2+\varepsilon^2 (\partial_x^2 j_1)^2\right]\zeta_K^2.
\end{align*}
Next,
\begin{equation*}
\left| \varepsilon \int k_1 \XX \left\{(W''(H))'\partial_x j_1+\partial_x[(W''(H))' j_1]\right\} \zeta_K^2\right|
\leq C\int \left[j_1^2 + \varepsilon^2 (\partial_x^2 j_1)^2\right]\zeta_K^2.
\end{equation*}
Besides, by Taylor expansion and~\eqref{eq:Hasym},
\[
\left|\left(W''(H+z_1)- W''(H)\right) H'\right|
\leq C |z_1|\rho
\]
so that
\begin{multline*}
\left| \int k_1 \XX\left[\left(W''(H+z_1)- W''(H)\right) (H'+k_1)\right]\zeta_K^2\right|\\
\leq C\| k_1 \zeta_K\|_{L^2}^2
+ C\|z_1\rho \|_{L^2}^2 
\leq C \int \left[j_1^2 + \varepsilon^2 (\partial_x^2 j_1)^2\right]\zeta_K^2
+C\|z_1\rho \|_{L^2}^2.
\end{multline*}
Last, to estimate $\int (j_2 \OO_1 + k_1 \XX \OO_2) \zeta_K^2$, we do not seek special cancellation, but
we use the regularisation from $\XX$ and thus estimates~\eqref{on:Xeps}.
We start by observing
\[
\left|\int j_2 \OO_1 \zeta_K^2\right|
=\left|\int (\XX^\frac 12 k_2)\XX^\frac12(\OO_1 \zeta_K^2)\right|
\leq \|\XX^\frac 12 k_2\|_{L^2}\|\XX^\frac12( \OO_1 \zeta_K^2) \|_{L^2}.
\]
By the definition of $k_2$ and \eqref{on:Xeps}, it holds
\[
\| \XX^\frac12 k_2\|_{L^2}^2 
\leq \varepsilon^{-\frac 12}\|\zz\|_{H^1\times L^2}
\leq \varepsilon^{-\frac 12} \delta.
\]
By the definition of $\OO_1$, \eqref{on:Xeps}, \eqref{on:zetaK},  and then~\eqref{bd:yc}
and  $K\leq A\leq \delta^{-\frac 14}$, it holds
\begin{align*}
\|\XX^\frac12(\OO_1 \zeta_K^2)\|_{L^2}
&\leq \left(|\dot y -c| + |\dot c|\right)(C+C\varepsilon^{-\frac 12}K\|\zz\|_{H^1\times L^2})\\
&\leq \|z_1 \rho\|_{L^2}^2  (C+C\varepsilon^{-\frac 12}K\|\zz\|_{H^1\times L^2})
\leq C \varepsilon^{-\frac 12}\|z_1 \rho\|_{L^2}^2.
\end{align*}
Thus,
\[
\left|\int j_2 \OO_1 \zeta_K^2\right|
\leq C \varepsilon^{-1}\delta \|z_1 \rho\|_{L^2}^2.
\]
Similarly, we check that
\[
\|k_1\|_{L^2} \leq C \delta,\quad
\| (\XX \OO_2) \zeta_K^2\|_{L^2}
\leq C  \varepsilon^{-1} \|z_1 \rho\|_{L^2}^2.
\]
We conclude
\[
\left| \int (j_2 \OO_1 + k_1 \XX \OO_2) \zeta_K^2\right|
\leq C \varepsilon^{-1}\delta \|z_1 \rho\|_{L^2}^2.
\]

Combining these estimates, we have proved
\begin{align*}
\varepsilon\dot \cK \geq \frac \varepsilon2 \int (j_2^2+\varepsilon (\partial_x j_2)^2) \zeta_K^2
- C  \|z_1 \rho\|_{L^2}^2
- C \int \left[j_1^2+\varepsilon (\partial_x j_1)^2 + \varepsilon^2 (\partial_x^2 j_1)^2\right]  \zeta_K^2.
\end{align*}
Integrating this estimate in time on $[0,T]$ and using the bound
$|\cK|\leq C \varepsilon^{-\frac 12} \delta^2$ proved earlier, we have proved~\eqref{le:jj}.
\end{proof}

\subsection{First key estimate}
Using the virial argument in the variable $\zz$ (Lemma~\ref{le:cZ}) and the above technical estimates,
we are in a position to 
state the first key estimate for asymptotic stability, relating the directional derivative of $\zz$ 
on a large scale $A$ (see the definition of $\ww$ in \eqref{w1w2}) to the weighted $L^2$ norm of $z_1$
(see the definition of $\|\cdot\|_{T,\rho}$ in~\eqref{def:NTrho}). Below, $\varepsilon_2$ is the constant in the statement of Lemma \ref{le:jj}.

\begin{proposition}\label{pr:2}
There exist $\varepsilon_3\in (0,\varepsilon_2)$, $A_1 >1$  and $C>0$ such that for any $\varepsilon\in (0,\varepsilon_3)$ and $A \in [A_1,\delta^{-1/4}] $,
the following holds.
For any $T>0$ we have
\begin{equation}\label{eq:virialz}
\int_0^T \int (\partial_x w_1 + c \gamma w_2)^2 
\leq C (A \delta^2 + \|z_1\|_{T,\rho}^2).
\end{equation}
Moreover, for any $K\in [10, A/2]$,
\begin{equation}\label{eq:vir2}
\int_0^T \int (\partial_x z_1 + c\gamma z_2)^2 \zeta_K^2
\leq C (A \delta^2 + \|z_1\|_{T,\rho}^2),
\end{equation}
\begin{equation}\label{eq:vir3}
\int_0^T \int \left[(\partial_x z_1)^2+z_1^2+z_2^2\right]\zeta_K^2
\leq CK^2 (A \delta^2 +\|z_1\|_{T,\rho}^2),
\end{equation}
\begin{equation}\label{eq:vir4}
\int_0^T \int \left[j_1^2+\varepsilon (\partial_x j_1)^2+\varepsilon^2 (\partial_x^2 j_1)^2 + \varepsilon j_2^2 + \varepsilon^2 (\partial_x j_2)^2\right]\zeta_K^2
\leq C (A \delta^2 + \|z_1\|_{T,\rho}^2).
\end{equation}
\end{proposition}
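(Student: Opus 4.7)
The plan is to prove \eqref{eq:virialz} first and then deduce \eqref{eq:vir2}, \eqref{eq:vir3}, \eqref{eq:vir4} from it by combining the technical lemmas developed in the preceding subsections.

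For \eqref{eq:virialz}, I start from Lemma~\ref{le:cZ}, which reduces the problem to absorbing the cubic remainder $C\int_0^T\!\!\int|z_1|^3\zeta_A^2$. To this end, apply Lemma~\ref{le:cubic} with $w=w_1=\zeta_A z_1$, scale $K=A$ and $\sigma(t)=c(t)\gamma(t)$. The comparability $\zeta_A(x)\cosh(\omega x/A)\sim 1$ makes $|w_1|^3\cosh(\omega x/A)$ and $|z_1|^3\zeta_A^2$ pointwise comparable, while $\|w_1\cosh(\omega x/A)\|_{L^\infty}\le C\|z_1\|_{L^\infty}\le C\delta$. The smallness hypothesis $|\dot\sigma|=\gamma^3|\dot c|\le C\delta^2\le \omega/(8A)$ holds under \eqref{delta:A}. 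Using $\dot z_1 = z_2+\MM_1$, one gets the key identity
\[
\partial_x w_1+\sigma\dot w_1 = (\partial_x w_1 + c\gamma w_2) + c\gamma\zeta_A\MM_1,
\]
in which the first piece is exactly the quantity to be absorbed and the second is an error, controlled via $|\zeta_A\MM_1|\le C(|\dot y-c|+|\dot c|)\rho$ and \eqref{bd:yc} by $\int_0^T\!\!\int(\zeta_A\MM_1)^2\le C\delta^2\|z_1\|_{T,\rho}^2$. Lemma~\ref{le:cubic} then yields
\[
\int_0^T\!\!\int|z_1|^3\zeta_A^2 \le CA^2\delta\int_0^T\!\!\int(\partial_x w_1+c\gamma w_2)^2 + CA^2\delta^3\|z_1\|_{T,\rho}^2 + CA\delta^3,
\]
and since $A^2\delta\le A^{-2}\ll 1$ under \eqref{delta:A}, the first term is absorbed into the left-hand side of Lemma~\ref{le:cZ}, giving \eqref{eq:virialz}.

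Next, \eqref{eq:vir2} follows algebraically from
\[
\partial_x z_1+c\gamma z_2 = \zeta_A^{-1}\bigl((\partial_x w_1+c\gamma w_2)-\zeta_A' z_1\bigr),
\]
together with $\zeta_K\le \zeta_A$ for $K\le A/2$ and $|\zeta_A'/\zeta_A|\le C/A$; the residual $A^{-2}\int z_1^2\zeta_K^2$ is handled jointly with \eqref{eq:vir3}, exploiting $A^{-2}K^2\le 1/4$. The estimate \eqref{eq:vir3} is obtained by applying Lemma~\ref{le:tech:h} to a suitably localized version of $z_1$, chosen so that the forcing $\partial_x w + \sigma\dot w$ carries a weight compatible with \eqref{eq:vir2} (and not the unweighted $\int(\partial_x z_1+c\gamma z_2)^2$, which is not time-integrable): the forcing then reduces to $\zeta$-weighted versions of $\partial_x z_1+c\gamma z_2$ and $\MM_1$, both controlled as above. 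The bounds on $z_2$ and $\partial_x z_1$ then follow from Lemma~\ref{le:H} and the identity $\partial_x z_1=(\partial_x z_1+c\gamma z_2)-c\gamma z_2$. Finally, \eqref{eq:vir4} is obtained from Lemma~\ref{le:jj} after controlling the $j_1$-terms: since $j_1=\XX k_1$ with $k_1=\partial_x z_1+c\gamma z_2$, the estimate \eqref{on:Xepsloc} gives $\|j_1\zeta_K\|_{L^2}\le C\|k_1\zeta_K\|_{L^2}$ (controlled by \eqref{eq:vir2}), the algebraic identity $\varepsilon\partial_x^2 j_1=j_1-k_1$ handles $\varepsilon^2(\partial_x^2 j_1)^2$, and an integration by parts together with $|(\zeta_K^2)''|\le C\zeta_K^2$ handles $\varepsilon(\partial_x j_1)^2\zeta_K^2$.

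The main difficulty is the cubic absorption in the first step: the $O(A)$ size of the virial functional in Lemma~\ref{le:cZ} forces the use of the time-integrated cubic estimate Lemma~\ref{le:cubic} (a pointwise-in-time version would cost a factor of $T$), and this in turn requires replacing $z_2$ by $\dot z_1$ and carefully bookkeeping the modulation error through \eqref{bd:yc}. A secondary but conceptually important point is that Lemma~\ref{le:tech:h} must be applied to a localized variant of $z_1$ when proving \eqref{eq:vir3}, since the naive choice $w=z_1$ produces a forcing that is not time-integrable uniformly in $T$.
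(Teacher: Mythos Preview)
Your plan is correct and matches the paper's proof essentially step for step: Lemma~\ref{le:cubic} on $w_1$ to absorb the cubic term into Lemma~\ref{le:cZ}, the algebraic identity to pass from $w_1$ to $z_1$ for \eqref{eq:vir2}, Lemma~\ref{le:tech:h} applied to $w_1$ for \eqref{eq:vir3}, and the expansion of $\int k_1^2\zeta_K^2$ together with Lemma~\ref{le:jj} for \eqref{eq:vir4}. Two small corrections: the pointwise bound $|\zeta_A\MM_1|\le C(|\dot y-c|+|\dot c|)\rho$ is not literally true because $\MM_1$ contains $\partial_x z_1$ and $x\partial_x z_1$; the paper instead bounds $\|\zeta_A\MM_1\|_{L^2}$ using $\zeta_A\le 1$, $|x|\zeta_A\le CA$ and $\|\partial_x z_1\|_{L^2}\le C\delta$ (see \eqref{eq:ag}), which still gives your conclusion. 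Also, in the paper the ``suitably localized version of $z_1$'' in the proof of \eqref{eq:vir3} is precisely $w_1=\zeta_A z_1$, and the forcing $\int_0^T\!\int(\partial_x w_1+c\gamma\dot w_1)^2$ is controlled directly by \eqref{eq:virialz} rather than \eqref{eq:vir2}; your coupled-system variant with the factor $A^{-2}K^2\le 1/4$ also closes, but the direct route is slightly cleaner.
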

\begin{proof}
First, we prove~\eqref{eq:virialz}.
Using  \eqref{on:zetaK}, we write
$$
\int_0^T \int |z_1|^3 \varphi_A'
\leq C \int_0^T\int |w_1|^3 \cosh\left(\frac{\omega x}A\right).
$$
By \eqref{eq:param}, and the assumption on $A$,
\begin{equation} \label{eq:fix_A1}
\left|\frac d{dt} (c\gamma) \right| \le C\delta^2 \le  \frac{C A_1^{-7} }{A} 
\end{equation}
and thus, by taking $A_1$ sufficiently large, we may apply Lemma~\ref{le:cubic} with $w=w_1$, $K= A$ and $\sigma= c\gamma$.
We conclude that 
\begin{align*}
\int_0^T \int |z_1|^3 \varphi_A'
&\leq C \int_0^T\int |w_1|^3 \cosh\left(\frac{x}A\right)\\
&\leq C A \|z_1\|_{L^\infty([0,T]\times \R)}
\left[ A \int_0^T \int (\partial_x w_1 + c\gamma \dot w_1)^2+ \sup_{[0,T]} \|w_1\|_{L^2}^2 \right].
\end{align*}
By~\eqref{bd:z}, $\|z_1\|_{L^\infty([0,T]\times \R)}\leq C \delta$ and $A\leq \delta^{-\frac 14}$,
we obtain
\[
\int_0^T \int |z_1|^3 \varphi_A'
\leq C A^{-2}\left[ \int_0^T \int (\partial_x w_1 + c\gamma w_2)^2
+\int_0^T \int (w_2 - \dot w_1)^2\right]
+ C A \delta^3.
\]
From $\dot z_1 - z_2 = \MM_1$, the expression of $\MM_1$ in \eqref{def:M1}, \eqref{bd:yc},
$\|z_1\|_{H^1}\leq C \delta$ and 
the estimates $\zeta_A\leq 1$, $|x|\zeta_A\leq C A$ (from \eqref{on:zetaK}), we observe that
(using also $\delta\leq A^{-4} $)
\begin{equation}\label{eq:ag}
\|w_2-\dot w_1\|_{L^2}
= \|M_1 \zeta_A\|_{L^2}
\leq C \|z_1 \rho\|_{L^2}^2 + C A \delta \|z_1 \rho\|_{L^2}^2\leq C \|z_1 \rho\|_{L^2}^2.
\end{equation}
Thus, we have proved
\[
\int_0^T \int |z_1|^3 \varphi_A'
\leq C A^{-2} \int_0^T \int (\partial_x w_1 + c\gamma w_2)^2
+ C \delta^{\frac34}\int_0^T \|z_1 \rho\|_{L^2}^2 +C A \delta^3.
\]
Estimate~\eqref{eq:virialz} now follows from Lemma~\ref{le:cZ} taking $A_1$ large enough.

Second, we prove~\eqref{eq:vir2} and~\eqref{eq:vir3}.
Let $10\leq K\leq \frac A2$. Using the inequality $(u+v)^2 \geq \frac 12u^2- v^2$, we observe
\begin{align*}
\int (\partial_x w_1 + c\gamma w_2)^2\frac{\zeta_K^2}{\zeta_A^2}
&=\int \left[ (\partial_x z_1 + c\gamma z_2)\zeta_A + z_1 \zeta_A'\right]^2 \frac{\zeta_K^2}{\zeta_A^2}\\
&\quad \geq \frac 12 \int (\partial_x z_1 + c\gamma z_2)^2\zeta_K^2 
- \int z_1^2 \left( \frac{\zeta_A'}{\zeta_A}\right)^2\zeta_K^2.
\end{align*}
Thus, by \eqref{on:zetaK},
\begin{equation}\label{TRUC}
\int (\partial_x z_1 + c\gamma z_2)^2\zeta_K^2
\leq 2 \int (\partial_x w_1 + c\gamma w_2)^2
+ \frac C{A^2}\int z_1^2 \zeta_K^2.
\end{equation}
Using $10 \leq K \leq \frac A2$ and \eqref{on:zetaK}, we see that
\[
\int z_1^2 \zeta_K^2
=\int w_1^2 \frac{\zeta_K^2}{\zeta_A^2}
\leq \int w_1^2 \sech\left(\frac \omega K x\right).
\]
By \eqref{eq:fix_A1}, we may apply Lemma~\ref{le:tech:h} on $w_1$ with $\sigma=c\gamma$.
Therefore
\begin{align*}
\int_0^T \int z_1^2 \zeta_K^2
&\leq \int_0^T \int w_1^2 \sech\left(\frac \omega K x\right)\\
&\leq C K^2 \int_0^T \int (\partial_x w_1 + c\gamma \dot w_1)^2  
+ CK \int_0^T \int w_1^2 \rho^2 + CK \delta^2\\
&\leq C K^2 \int_0^T \int (\partial_x w_1 + c\gamma w_2)^2 
+ C K^2 \|z_1\|_{T,\rho}^2+ CK\delta^2
\end{align*}
where we have used~\eqref{eq:ag} in the last line.
Using~\eqref{eq:virialz}, we obtain
\[
\int_0^T \int z_1^2 \zeta_K^2
\leq CK^2 (A\delta^2 + \|z_1\|_{T,\rho}^2),
\]
and so, by \eqref{TRUC},
\[
\int_0^T \int k_1^2 \zeta_K^2=
\int_0^T \int (\partial_x z_1 + c\gamma z_2)^2\zeta_K^2
\leq CA \delta^2 +C\|z_1\|_{T,\rho}^2,
\]
which is \eqref{eq:vir2}.
Last, by Lemma~\ref{le:H}, we obtain
\[
\int_0^T\int z_2^2 \zeta_K^2
\leq CK^2 (A \delta^2+ \|z_1\|_{T,\rho}^2)
\]
and \eqref{eq:vir3} follows.

Finally, we prove~\eqref{eq:vir4}. Indeed, expanding
\[
\int k_1^2 \zeta_K^2=
\int \left[j_1^2 + 2 \varepsilon (\partial_x j_1)^2 + \varepsilon^2 (\partial_x^2 j_1)^2\right]\zeta_K^2
- \varepsilon \int j_1^2 (\zeta_K^2)''.
\]
and using $|(\zeta_K^2)''|\leq \frac CK\zeta_K^2$ (from \eqref{on:zetaK}), for $\varepsilon$ small, we obtain
\[
\int \left[j_1^2 + \varepsilon (\partial_x j_1)^2 + \varepsilon^2 (\partial_x^2 j_1)^2\right]\zeta_K^2
\leq C \int k_1^2 \zeta_K^2.\]
We complete the proof of \eqref{eq:vir4} by using~Lemma~\ref{le:jj} and~\eqref{eq:vir2}.
\end{proof}

\subsection{Transformed problem}\label{S:4.9}
Following the heuristic strategy outlined  in \S\ref{s.4.6}, with $\zz=(z_1, z_2)$ satisfying \eqref{eq:z}, we set
\begin{equation}\label{def:g}
\begin{cases}
g_1 = U z_1 +c\gamma z_2\\
g_2 = U z_2 +c\gamma \Theta(\zz).
\end{cases}
\end{equation}
We compute from~\eqref{eq:z} the system formally satisfied by $\gb=(g_1,g_2)$.
First, using the relation $\frac {\ud}{\ud t} (c\gamma) =  \gamma^3\dot c$,
\begin{align*}
\dot g_1 & = U \dot z_1 + c\gamma \dot z_2+\dot c\gamma^3 z_2 \\
&= U z_2 + c\gamma \Theta(\zz) + U \MM_1 + c\gamma \MM_2 + \dot c\gamma^3 z_2 \\
&= g_2+ \NN_1,
\end{align*}
where we have set
\[
\NN_1 = U \MM_1 + c\gamma \MM_2+ \dot c\gamma^3 z_2.
\]
Second, 
\begin{align*}
\dot g_2
& = U \dot z_2 + c\gamma (\partial_x^2 \dot z_1 - W''(H+z_1)\dot z_1) + 2 c^2 \gamma^2\partial_x \dot z_2
+2 \dot c c \gamma^4 \partial_x z_2+\dot c\gamma^3 \Theta(\zz)\\
&= U \Theta(\zz) + c\gamma (\partial_x^2 z_2 - W''(H+z_1)z_2) + 2 c^2 \gamma^2 \partial_x \Theta(\zz) + \NN_2,
\end{align*}
where
\[
\NN_2= U \MM_2 + c\gamma (\partial_x^2 \MM_1 - W''(H+z_1)\MM_1) 
+ 2c^2\gamma^2 \partial_x M_2
+2 \dot c c \gamma^4 \partial_x z_2+ \dot c\gamma^3 \Theta(\zz).
\]
Using the definition of $\RRR$ and identities from Lemma~\ref{le:identities},
\begin{equation}\label{eq:id}
2 U\partial_x = L-L_0 + 2\partial_x U,\quad UL = L_0 U,
\end{equation}
we observe
\begin{align*}
U \Theta(\zz)
&= U (-L z_1 + 2 c\gamma \partial_x z_2 + \RRR)\\
&= -L_0(Uz_1) + c\gamma (L-L_0)z_2+2c\gamma \partial_x U z_2 + U \RRR\\
&= -L_0 g_1 + c\gamma Lz_2+2c\gamma \partial_x U z_2 + U \RRR. \end{align*}
Thus,
\begin{equation*}
\dot g_2 = -L_0 g_1+2c\gamma \partial_x g_2+\SSS+ \NN_2
\end{equation*}
where $\SSb=(0,S_2)$ and
\begin{equation*}
\SSS=-c\gamma [W''(H+z_1)-W''(H)] z_2 + U \RRR.
\end{equation*}
In conclusion, $\gb$ satisfies the system
\begin{equation}\label{eq:g}
\begin{cases}
\dot g_1 =g_2+\NN_1\\
\dot g_2 = -L_0 g_1 + 2 c\gamma \partial_x g_2 + \SSS + \NN_2,
\end{cases}
\end{equation}
and we will now simplify the expressions of $\NN_1$, $\NN_2$ and $\SSS$.
Using 
\[
U H' = 0,\quad -U \Lambda H + H' = 0,
\]
and next (from~\eqref{eq:id})
\begin{align*}
U\partial_x z_1 
&= \partial_x U z_1 +\frac 12 ( L-L_0 ) z_1
= \partial_x U z_1 + Q z_1,\\
U\Lambda z_1 - \partial_x z_1 & = \Lambda U z_1 + x Q z_1 ,
\end{align*}
where we used $Q$, defined in~\eqref{def:Q} as
\[
Q =\frac 12 ( L-L_0 )= \frac{Y'' Y - (Y')^2}{Y^2} = (\log Y)''.
\]
We observe that $\NN_1$ reads
\begin{align*}
\NN_1 & = (\dot y-c) \gamma ( U\partial_x z_1 + c\gamma \partial_x z_2)
-\dot c c\gamma^2 (U\Lambda z_1 - \partial_x z_1 + c\gamma \Lambda z_2)+ \dot c\gamma^3 z_2\nonumber\\
&=(\dot y-c) \gamma \left( \partial_x g_1+ Q z_1\right)
-\dot c c\gamma^2 \left(\Lambda g_1 + x Q z_1\right)+ \dot c\gamma^3 z_2.
\end{align*}
Similar computations lead to 
\begin{equation*}
\NN_2 
=(\dot y-c) \gamma \left( \partial_x g_2+ Q z_2\right)
-\dot c c\gamma^2 \left(\Lambda g_2 + x Q z_2\right)
+\dot c \gamma \partial_x g_1 + \dot c\gamma^3 \Theta(\zz) .
\end{equation*}
Using the notation of~\eqref{def:Omega}, we find
\begin{align}
\NN_1
&=\Omega_1(\gb)+(\dot y-c) \gamma Q z_1
-\dot c c\gamma^2 x Q z_1 + \dot c\gamma^3 z_2,\label{def:N1}\\
\NN_2
&=\Omega_2(\gb)+(\dot y-c) \gamma Q z_2 
-\dot c c\gamma^2 x Q z_2 
+ \dot c\gamma^3 \Theta(\zz).\label{def:N2}
\end{align}
Last, we observe that
\begin{equation}\label{def:S1}\begin{aligned}
\SSS
&=- g_1 \left[W''(H+z_1)-W''(H)\right]\\
&\quad + \frac {H''}{H'} \left[ W'(H+z_1)-W'(H)-W''(H+z_1) z_1\right]\\
&\quad-H' \left[W''(H+z_1)-W''(H)-W'''(H) z_1\right].
\end{aligned}\end{equation}

Now, we define
\begin{equation}\label{def:f}
f_1 = \XX g_1,\quad f_2 = \XX g_2.
\end{equation}
The next lemma provides simple estimates relating the functions $\zz$, $\gb$ and $\ff$. The constant $\varepsilon_3$ below is defined in Proposition \ref{pr:2}.
\begin{lemma}\label{le:4p10}
There exist $C>0$ and $\varepsilon_4\in (0,\varepsilon_3)$ such that for any $\varepsilon\in (0,\varepsilon_4)$,
the following estimates hold
\begin{align}
&\|g_1\|_{L^2} \leq C \|\zz\|_{H^1\times L^2}, \label{bd:gz1}\\
&\|g_1\rho\|_{L^2}\leq C  \left(\|(\partial_x z_1) \rho\|_{L^2}+\|z_2 \rho\|_{L^2}+\|z_1 \rho\|_{L^2}\right),\label{bd:gz2}\\
&\|f_1\|_{L^2}+ \varepsilon^{\frac 12}\|\partial_x f_1 \|_{L^2}+ \varepsilon\|\partial_x^2 f_1 \|_{L^2}
\leq C\|\zz\|_{H^1\times L^2},\label{bd:gz3}\\
&\varepsilon^{\frac 12}\|f_2\|_{L^2} +\varepsilon \|\partial_x f_2 \|_{L^2}
 \leq C  \|\zz\|_{H^1\times L^2},\label{bd:gz4}\\
& \|f_1 \rho\|_{L^2}+\varepsilon^{\frac 12} \|(\partial_x f_1)\rho\|_{L^2}\leq C \|g_1 \rho\|_{L^2}.\label{on:fgrho} 
\end{align}
\end{lemma}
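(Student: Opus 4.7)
The proof plan naturally splits according to which tools each estimate requires. Estimates \eqref{bd:gz1} and \eqref{bd:gz2} are elementary consequences of the explicit form of $g_1$; estimates \eqref{bd:gz3} and \eqref{bd:gz4} follow by combining these with the regularization bounds of Lemma~\ref{le:4p7}; and \eqref{on:fgrho} is the only genuinely new computation, based on an integration by parts using the relation $g_1 = (1-\varepsilon\partial_x^2)f_1$. Throughout I shall freely use that $|c\gamma|$ is bounded (since $c$ stays in a compact subset of $(-1,1)$ by \eqref{bd:z}) and that $H''/H'$ is bounded on $\R$ (from the asymptotics in the proof of Lemma~\ref{pr:H}).

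For \eqref{bd:gz1} and \eqref{bd:gz2}, the plan is to expand $g_1 = \partial_x z_1 - (H''/H')\,z_1 + c\gamma z_2$ using $U = Y \partial_x Y^{-1}$ with $Y = H'$, and then apply the triangle inequality pointwise. Since $H''/H'$ is bounded, this immediately gives both statements (the weight $\rho$ is inert under this pointwise manipulation). For \eqref{bd:gz3}, I simply insert $f_1 = \XX g_1$ into the three bounds of \eqref{on:Xeps}; after multiplying by the matching powers of $\varepsilon$, the right-hand sides collapse to $C\|g_1\|_{L^2}$, which \eqref{bd:gz1} controls by $C\|\zz\|_{H^1\times L^2}$.

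Estimate \eqref{bd:gz4} is more delicate because $g_2$ contains the term $c\gamma\,\Theta(\zz)$, which through $\partial_x^2 z_1$ lies only in $H^{-1}$. The idea is to split $g_2 = A + c\gamma\,\partial_x^2 z_1$, where
\[
A = U z_2 - c\gamma[W'(H+z_1)-W'(H)] + 2c^2\gamma^2\,\partial_x z_2
\]
satisfies $\|A\|_{L^2}\leq C\|\zz\|_{H^1\times L^2}$ (using the $\cC^3$ regularity of $W$, the $L^\infty$ bound \eqref{eq:coerLLbis}, and the boundedness of $H''/H'$). The mollifier $\XX$ commutes with $\partial_x$, so $\XX\partial_x^2 z_1 = \partial_x \XX\,\partial_x z_1$ and $\partial_x^2 \XX = \varepsilon^{-1}(\XX - 1)$; combined with \eqref{on:Xeps} this yields $\varepsilon^{1/2}\|\XX\partial_x^2 z_1\|_{L^2}\leq \|\partial_x z_1\|_{L^2}$ and $\varepsilon\|\partial_x^2\XX\,\partial_x z_1\|_{L^2}\leq 2\|\partial_x z_1\|_{L^2}$, which together with the $\XX A$ contribution prove \eqref{bd:gz4}.

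The main obstacle is \eqref{on:fgrho}, since the $\XX$-regularity of $f_1$ is consumed by the weight $\rho$ and a naive commutator argument loses a factor of $\varepsilon^{-1/2}$. The plan is to start from the identity $g_1 = f_1 - \varepsilon\,\partial_x^2 f_1$, square it, and integrate against $\rho^2$:
\[
\int g_1^2\rho^2 = \int f_1^2\rho^2 - 2\varepsilon\int f_1(\partial_x^2 f_1)\rho^2 + \varepsilon^2\int(\partial_x^2 f_1)^2\rho^2.
\]
Integrating the cross term by parts twice gives
\[
-\int f_1(\partial_x^2 f_1)\rho^2 = \int(\partial_x f_1)^2\rho^2 - \tfrac{1}{2}\int f_1^2(\rho^2)'',
\]
so that
\[
\int f_1^2\rho^2 + 2\varepsilon\int(\partial_x f_1)^2\rho^2 + \varepsilon^2\int(\partial_x^2 f_1)^2\rho^2 = \int g_1^2\rho^2 + \varepsilon\int f_1^2(\rho^2)''.
\]
Since $|(\rho^2)''|\leq C\rho^2$, the last term is absorbed on the left for $\varepsilon\leq\varepsilon_4$ small enough, yielding \eqref{on:fgrho} (with a bonus control on $\varepsilon\|\rho\,\partial_x^2 f_1\|_{L^2}^2$ that will not be needed).
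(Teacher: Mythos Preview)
Your treatment of \eqref{bd:gz1}--\eqref{bd:gz3} matches the paper's, and your proof of \eqref{on:fgrho} is correct though different: the paper simply quotes the weighted commutator estimate \eqref{rho:Xeps} from Lemma~\ref{le:4p7}, whereas you bypass that lemma entirely by expanding $\|g_1\rho\|_{L^2}^2 = \|(1-\varepsilon\partial_x^2)f_1\,\rho\|_{L^2}^2$ and integrating by parts. Your route is more self-contained and even gives the extra term $\varepsilon^2\|\rho\,\partial_x^2 f_1\|_{L^2}^2$ for free; the paper's route has the advantage that the commutator estimate \eqref{rho:Xeps} is reusable elsewhere.

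There is, however, a genuine gap in your argument for \eqref{bd:gz4}. You claim that
\[
A = U z_2 - c\gamma\bigl[W'(H+z_1)-W'(H)\bigr] + 2c^2\gamma^2\,\partial_x z_2
\]
lies in $L^2$ with $\|A\|_{L^2}\leq C\|\zz\|_{H^1\times L^2}$. But $\zz\in H^1\times L^2$ only guarantees $z_2\in L^2$, so $\partial_x z_2\in H^{-1}$; both $U z_2=\partial_x z_2 - (Y'/Y)z_2$ and the explicit $2c^2\gamma^2\,\partial_x z_2$ fail to be in $L^2$. The fix is immediate and uses exactly the same mechanism you already set up for $\partial_x^2 z_1$: write
\[
g_2 = \partial_x\bigl[(1+2c^2\gamma^2)z_2 + c\gamma\,\partial_x z_1\bigr]
-\tfrac{Y'}{Y}z_2 - c\gamma\bigl[W'(H+z_1)-W'(H)\bigr],
\]
that is, $g_2 = \partial_x C + B$ with $C,B\in L^2$ and $\|C\|_{L^2}+\|B\|_{L^2}\leq C\|\zz\|_{H^1\times L^2}$. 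Then $f_2=\partial_x\XX C + \XX B$, and the bounds \eqref{on:Xeps} give $\varepsilon^{1/2}\|f_2\|_{L^2}+\varepsilon\|\partial_x f_2\|_{L^2}\leq C\|\zz\|_{H^1\times L^2}$ exactly as you intended.
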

\begin{proof}
By the definition of $g_1$ in terms of $\zz$ and 
the definition of $U$, $Uz = \partial_x z - \frac{Y'}{Y} z$, we observe that
\begin{equation*}
 |g_1|\leq C (|\partial_x z_1| + |z_1| + |z_2|),
\end{equation*}
which is enough to justify~\eqref{bd:gz1} and~\eqref{bd:gz2}.
Next,~\eqref{bd:gz3} follows easily from~\eqref{on:Xeps}.
Moreover, since
\[
f_2 = \XX g_2 \quad
\mbox{where} \quad g_2 = \partial_x z_2 - \frac{Y'}{Y} z_2 + c\gamma \Theta(\zz),
\]
estimate~\eqref{bd:gz4} follows from~\eqref{on:Xeps}. Note that $Y'/Y$ is bounded on $\R$ because of \eqref{on:ddH}.
Last, \eqref{on:fgrho} follows from~\eqref{on:Xeps} and~\eqref{rho:Xeps}.
\end{proof}

\subsection{Second key estimate}

The second key point of the proof of asymptotic stability is the estimate of the weighted $L^2$ norm of $\zz$ by  similar quantities in $\ff$.
At this point, it is essential to use the orthogonality relations \eqref{ortho:z1}, \eqref{ortho:z2} on $\zz$ since without them some information would be lost in passing to the variable $\ff$.
The following result is inspired by Lemma 6 in \cite{KMM4}.

\begin{proposition}\label{le:coer} Let $\varepsilon_4>0$ be the constant defined in Lemma \ref{le:4p10}. For any $\varepsilon\in (0,\varepsilon_4)$ and any $T>0$,
\begin{equation}\label{eq:coer}
\int_0^T\int z_1^2 \rho^2
\leq CA \delta^{4} + C \int_0^T\int \left(f_1^2+ (\partial_x f_1)^2+f_2^2\right) \rho.
\end{equation}
\end{proposition}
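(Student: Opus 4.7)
The estimate is an ``invertibility'' bound: the weighted spacetime norm of $z_1$ is controlled by the analogous norms of the regularized transformed variable $\ff$. The approach is to unwind the two transformations $z_1 \mapsto g_1 = Uz_1 + c\gamma z_2$ and $g_1 \mapsto f_1 = \XX g_1$ separately. The key structural point is that $U = Y\,\partial_x\, Y^{-1}$ has kernel spanned by $Y = H'$, so $z_1$ is determined by $g_1$ and $z_2$ only up to a multiple of $Y$; the orthogonality conditions \eqref{ortho:z1}--\eqref{ortho:z2} on $\zz$ are tailored to control precisely this missing direction.

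For the first step, decompose $z_1(t,\cdot) = a(t)\, Y + z_1^\perp(t,\cdot)$ with $\inprod{z_1^\perp}{Y} = 0$. Since $L = U^\star U$ with $LY = 0$, we have $\inprod{Lz_1}{z_1} = \|Uz_1\|_{L^2}^2 = \|g_1 - c\gamma z_2\|_{L^2}^2$, and the coercivity \eqref{eq:coerL0} yields $\|z_1^\perp\|_{H^1}^2 \le C(\|g_1\|_{L^2}^2 + \|z_2\|_{L^2}^2)$. A weighted version adapted to $\rho$ (using $Uz = Y(z/Y)'$ and a Hardy-type argument with weight $\rho^2$ concentrated where $Y^{-1}$ is bounded) upgrades this to $\int (z_1^\perp)^2 \rho^2 \le C\int (g_1^2 + z_2^2) \rho$. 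The scalar $a$ is then controlled using \eqref{ortho:z2}: integrating by parts $\inprod{z_1}{xY'} = -\inprod{z_1}{Y} - \inprod{x\partial_x z_1}{Y}$ gives
\[
\inprod{z_1}{Y} \;=\; 2c^2\gamma^2 \inprod{x\partial_x z_1}{Y} \;-\; c\gamma\,\inprod{z_2}{xY},
\]
so $|a(t)|$ is a linear combination of local weighted norms of $\partial_x z_1$ and $z_2$.

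For the second step, write $g_i = (1 - \varepsilon\partial_x^2) f_i$ and integrate by parts against $\rho$; the resulting $\varepsilon$-terms are absorbed using Lemma~\ref{le:4p10} (in particular \eqref{on:fgrho} and \eqref{on:Xeps}), giving $\int g_1^2 \rho \le C\int (f_1^2 + (\partial_x f_1)^2)\rho$. The contribution of $z_2$ is reduced to $f_2$ by applying the same $\XX$-inversion to the identity $g_2 = Uz_2 + c\gamma\,\Theta(\zz)$, with the residual piece involving $z_1$ closed by Lemma~\ref{le:H} and estimate \eqref{eq:vir3} of Proposition~\ref{pr:2}. Integrating in time on $[0,T]$ and using the orbital bound $\|\zz\|_{H^1\times L^2} \le C\delta$ together with Proposition~\ref{pr:2} (applied at scale $K=10$ so that $\rho \lesssim \zeta_K$) converts the localized norms of $\partial_x z_1$, $z_2$ appearing from Step~1 into $C(A\delta^2 + \|z_1\|_{T,\rho}^2)$; the nonlinear remainders from Taylor expansions give the cubic and quartic gains.

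\textbf{Main obstacle.} The delicate point is showing that the $a$-contribution $\int_0^T a(t)^2\,dt$ satisfies the \emph{quartic} bound $CA\delta^4$ rather than the naive quadratic one. A direct application of the orthogonality formula above only yields $a(t)^2 \lesssim \delta^2$ pointwise, hence $CA\delta^2$ after time integration, which is insufficient. To upgrade the scaling, one uses the improved estimate \eqref{eq:vir3} to replace the $\|\partial_x z_1\|$ and $\|z_2\|$ factors by $\|z_1\|_{T,\rho}^2 + A\delta^2$, and then exploits the smallness of $\delta$ (condition \eqref{delta:A}) together with the prefactor $(c\gamma)^2 \le 1$ so that the linear piece of the bound for $a$ is absorbed into the left-hand side $\|z_1\|_{T,\rho}^2$, leaving only the quartic-in-$\delta$ residue $CA\delta^4$.
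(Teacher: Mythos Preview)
Your proposal has a genuine gap in the treatment of the coefficient $a(t)$, and the claimed resolution in the ``Main obstacle'' paragraph does not work.

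From your formula $\inprod{z_1}{Y}=2c^2\gamma^2\inprod{x\partial_x z_1}{Y}-c\gamma\inprod{z_2}{xY}$ you obtain at best
\[
a(t)^2\;\le\; C\!\int\!\big((\partial_x z_1)^2+z_2^2\big)\rho^2,
\]
and then, via \eqref{eq:vir3} at scale $K=10$,
\[
\int_0^T a(t)^2\,\ud t\;\le\; C\big(A\delta^2+\|z_1\|_{T,\rho}^2\big).
\]
There is no small prefactor on either term: $(c\gamma)^2$ is not $\le 1$ in general (and even when bounded, it is an $O(1)$ constant depending on $c_0$), and the constant in \eqref{eq:vir3} is not small. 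Hence the term $C\|z_1\|_{T,\rho}^2$ cannot be absorbed into the left-hand side, and the remaining piece is $CA\delta^{2}$, not $CA\delta^{4}$. The same issue recurs when you ``reduce $z_2$ to $f_2$'': inverting $U$ on $z_2$ produces another constant of integration that your scheme only controls by weighted $z$-norms, again without a small prefactor.

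The paper's proof avoids this by never bounding the integration constants through $z$-quantities. It combines the two relations $g_1=Uz_1+c\gamma z_2$ and $g_2=Uz_2+c\gamma\Theta(\zz)$, together with $L+2\partial_x U=U^2$, to obtain the single identity
\[
U\big(z_2-c\gamma Uz_1\big)=(f_2-2c\gamma\partial_x f_1-c\gamma R_2)-\varepsilon\partial_x^2(f_2-2c\gamma\partial_x f_1),
\]
and then integrates twice in $x$ (once for $z_2-c\gamma Uz_1$, once more for $z_1$). The two integration constants $a$ and $b$ are fixed by \eqref{ortho:z1} and \eqref{ortho:z2} respectively, and---crucially---each is bounded \emph{directly} by $\int(f_1^2+(\partial_x f_1)^2+f_2^2)\rho$ plus a nonlinear residual $C\delta^2\int z_1^2\rho$ coming only from $R_2=O(z_1^2)$. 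This yields the pointwise-in-time estimate
\[
\int z_1^2\rho^2\;\le\;C\delta^2\!\int z_1^2\rho\;+\;C\!\int\!\big(f_1^2+(\partial_x f_1)^2+f_2^2\big)\rho,
\]
and the $\delta^2$ prefactor is what produces $CA\delta^4$ after applying \eqref{eq:vir3} with $K=20$ and absorbing $C\delta^2\|z_1\|_{T,\rho}^2$. In short: the quartic gain comes from the fact that, at the linear level, $z_1$ is recovered \emph{entirely} in terms of $\ff$, and only the quadratic nonlinearity $R_2$ leaks a $z_1$-term back---with a built-in $\delta^2$ factor. Your decomposition $z_1=aY+z_1^\perp$ loses this structure because it estimates $a$ through $(\partial_x z_1,z_2)$ rather than through $(f_1,f_2)$.
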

\begin{remark}
Observe that terms in the left-hand and right-hand sides correspond to the same level of regularity
from the definitions of $\gb$ and then $\ff$ in \eqref{def:g} and~\eqref{def:f}.
\end{remark}
\begin{proof}
First, we prove the following estimate.
\begin{equation}\label{eq:coer2}
\int z_1^2 \rho^2
\leq C\delta^2 \int z_1^2 \rho + C \int \left(f_1^2+ (\partial_x f_1)^2+f_2^2\right) \rho .
\end{equation}
Proof of \eqref{eq:coer2}.
Recall from~\eqref{def:g} and~\eqref{def:f} that
\begin{equation}\label{on:ggg}
\begin{aligned}
f_1 -\varepsilon \partial_x^2 f_1 & = g_1=U z_1 + c\gamma z_2\\
f_2 -\varepsilon \partial_x^2 f_2 & =g_2= U z_2 +c \gamma \Theta (\zz).
\end{aligned}
\end{equation}
(Observe that the first line makes sense in $L^2$ while the second line makes sense in $H^{-1}$).
Using the expression of $\Theta(\zz) = - L z_1 + \RRR + 2 c\gamma \partial_x z_2$
from \eqref{other:Theta}, and combining the two lines of~\eqref{on:ggg}, we have
\begin{equation*}
(f_2 -2c\gamma \partial_x f_1)-\varepsilon \partial_x^2 (f_2 - 2c\gamma f_1)
=U z_2 - c\gamma (L+2\partial_x U ) z_1 
+c\gamma \RRR.
\end{equation*}
From~\eqref{eq:fact} and $U^\star+2 \partial_x =U$ (see~\S\ref{s.4.6}) we obtain
\[
L+2\partial_x U 
= U^\star U + 2 \partial_x U = U^2.
\]
Thus,
\begin{equation}\label{UU}
U (z_2 - c\gamma Uz_1)=
(f_2 -2c\gamma \partial_x f_1 -c\gamma \RRR)-\varepsilon \partial_x^2 (f_2 - 2c\gamma \partial_x f_1) .
\end{equation}
We observe that for any function $v$,
\[
\frac{v''}{Y}=
\partial_x\left(\frac{v'}{Y} + \frac{Y'}{Y^2} v \right)+\frac PY v .
\]
Using   the expression of $U$ and the above formula with $v=\varepsilon (f_2 - 2c\gamma \partial_x f_1)$, we rewrite identity~\eqref{UU}  as
\begin{align*}
&\partial_x \left\{\frac 1Y \left(z_2 - c\gamma Uz_1
+\varepsilon \partial_x(f_2 -2c\gamma \partial_x f_1)\right)
+\varepsilon\frac{Y'}{Y^2} (f_2 -2c\gamma \partial_x f_1) \right\}\\
&\quad= \frac {1-\varepsilon P} Y (f_2 -2c\gamma \partial_x f_1)
- c\gamma \frac \RRR Y.
\end{align*}
Integrating on $[0,x]$, and then multiplying by $Y$, it holds, for a constant $a$,
\begin{equation}\label{gtilde}
 z_2 - c\gamma U z_1 =aY + \tilde g_1+ \tilde g_2+ \tilde g_3,
\end{equation}
where
\begin{align*}
\tilde g_1 &= -\varepsilon \partial_x(f_2 -2c\gamma \partial_x f_1)
-\varepsilon \frac{Y'}{Y} (f_2 -2c\gamma \partial_x f_1),\\
\tilde g_2&=
 Y \int_0^x \left\{\frac {1-\varepsilon P} Y(f_2 -2c\gamma \partial_x f_1)\right\},\\
\tilde g_3&=
- c\gamma  Y \int_0^x \frac\RRR Y.
\end{align*}
We use the orthogonality relation~\eqref{ortho:z1} to estimate $a$.
Indeed, we have by taking the scalar product of~\eqref{gtilde} with $Y$
\[
|a| \leq C |\langle z_2 - c\gamma U z_1, Y\rangle|
+C |\langle \tilde g_1,Y\rangle| + C |\langle\tilde g_2 , Y \rangle|+ C |\langle\tilde g_3 , Y \rangle|.
\]
Using $U^\star Y=-2Y'$ and~\eqref{ortho:z1}, it holds
\begin{equation*}
\langle z_2 - c\gamma U z_1, Y\rangle
 = \langle z_2 , Y \rangle + 2 c\gamma \langle z_1, Y'\rangle=0.
\end{equation*}
Integrating by parts
\[
\langle \tilde g_1, Y \rangle
=\varepsilon \int (f_2 -2c\gamma \partial_x f_1) Y'
-\varepsilon \int Y' (f_2 -2c\gamma \partial_x f_1)=0.
\]
By the Cauchy-Schwarz inequality and the  properties of $Y=H'$ (see Lemma~\ref{pr:H})
\begin{align}
|\tilde g_2(x)|^2
&\leq C Y^2(x) \left(\int_0^{|x|}  \frac 1{ Y^2\rho}\right)
 \int  \left(f_2^2+(\partial_x f_1)^2\right)\rho \nonumber\\
& \leq \frac C{\rho(x)} \int \left(f_2^2+(\partial_x f_1)^2\right)\rho,\label{on:g2t}
\end{align}
and so, in particular,
\[
\langle \tilde g_2, Y \rangle^2 
\leq C \int \left(f_2^2 + (\partial_x f_1)^2 \right) \rho.
\]
Moreover, since $|R_2|=|W'(H+z_1)-W'(H)-W''(H)z_1|\leq C z_1^2\leq C \delta |z_1|$, we have similarly
\begin{equation}\label{on:g3t}
|\tilde g_3(x)|^2\leq \frac{C \delta^2}{\rho(x)} \int z_1^2\rho,\quad
\langle \tilde g_3, Y \rangle^2\leq C\delta^2 \int z_1^2\rho.
\end{equation}
Thus, we have obtained the following estimate for $a$
\begin{equation}\label{bd:aa}
a^2\leq C \int (f_2^2 + (\partial_x f_1)^2 ) \rho + C \delta^2 \int z_1^2 \rho.
\end{equation}

Combining the first line of~\eqref{on:ggg} and~\eqref{gtilde}, we obtain
\begin{equation}\label{valueofz2}
z_2 = \frac 1{1+c^2\gamma^2}\left(aY + \tilde g_1+\tilde g_2+\tilde g_3 + c\gamma f_1
-\varepsilon c\gamma \partial_x^2 f_1\right),
\end{equation}
and
\begin{equation}\label{valueofz1}\begin{aligned}
U z_1 & = \frac 1{1+c^2\gamma^2}\left(-c\gamma aY -c\gamma \tilde g_1 -c\gamma( \tilde g_2+\tilde g_3)
+f_1-\varepsilon \partial_x^2 f_1\right)\\
&=\frac 1{1+c^2\gamma^2}\left(-c\gamma aY +\varepsilon c\gamma\frac{Y'}{Y} (f_2 -2c\gamma \partial_x f_1)-c\gamma (\tilde g_2 + \tilde g_3)+f_1\right)\\
&\quad +\frac \varepsilon{1+c^2\gamma^2} \partial_x\left[ c\gamma (f_2 -2c\gamma \partial_x f_1)
- \partial_x f_1\right].
\end{aligned}\end{equation}
The identity~\eqref{valueofz1} rewrites
\begin{align*}
&\partial_x \left\{ \frac {z_1}Y 
-\frac \varepsilon{1+c^2\gamma^2} \left[\frac{c\gamma} Y (f_2 -2c\gamma \partial_x f_1)
- \frac{\partial_x f_1}Y\right]\right\}\\
&\quad =\frac 1{1+c^2\gamma^2} 
\left(-c\gamma a 
 +2\varepsilon c\gamma\frac{Y'}{Y^2} (f_2 -2c\gamma \partial_x f_1)
-c\gamma \frac{\tilde g_2+\tilde g_3}Y  +\frac{f_1}Y - \varepsilon  \frac{Y'}{Y^2} \partial_x f_1\right).
\end{align*}
Thus, by integration, it holds, for a constant $b$
\begin{equation}\label{eq:morr}
 z_1 = b Y+\frac \varepsilon{1+c^2\gamma^2} \left[c\gamma(f_2 -2c\gamma \partial_x f_1)-\partial_x f_1\right]
  + \tilde k,
\end{equation}
where
\begin{multline*}
\tilde k=\frac Y{1+c^2\gamma^2} \int_0^x   \bigg(-c\gamma a 
+2\varepsilon c\gamma\frac{Y'}{Y^2} (f_2 -2c\gamma \partial_x f_1)
\\ -c\gamma \frac{\tilde g_2+\tilde g_3}Y  +\frac{f_1}Y
-\varepsilon  \frac{Y'}{Y^2} \partial_x f_1\bigg). 
\end{multline*}
Arguing as before and using~\eqref{on:g2t}, \eqref{on:g3t} and \eqref{bd:aa}, we obtain
\[
\tilde k^2\leq \frac C{\rho(x)} \int \left[ f_2^2+(\partial_x f_1)^2 + f_1^2 \right]\rho
+ \frac {C\delta^2}{\rho(x)} \int z_1^2\rho,
\]
and so
\begin{equation}\label{on:ktilde}
\int \tilde k^2 \rho^2
\leq C \int \left( f_2^2+(\partial_x f_1)^2+f_1^2\right) \rho + C \delta^2 \int z_1^2 \rho.
\end{equation}
Now, we use the orthogonality relation~\eqref{ortho:z2} to obtain a bound on $b$.
Indeed,
projecting~\eqref{eq:morr} on $(1+c^2)Y + 2c^2 xY'$, using
\[
\langle (1+c^2)Y + 2c^2 xY', Y\rangle = \|Y\|_{L^2}^2,
\]
and
\[
\langle z_1 ,\gamma ((1+c^2)Y + 2c^2 xY') \rangle + c \langle z_2, x Y \rangle =0,
\]
we obtain 
\begin{align*}
b \|Y\|_{L^2}^2 
&= -c\gamma^{-1}\langle z_2, x Y \rangle - \langle \tilde k , (1+c^2)Y + 2c^2 xY' \rangle\\
&\quad -\frac \varepsilon{1+c^2\gamma^2} \langle c\gamma(f_2 -2c\gamma \partial_x f_1)-\partial_x f_1,(1+c^2)Y + 2c^2 xY'\rangle.
\end{align*}
We estimate  $\langle z_2, x Y \rangle$ from the expression of $z_2$ in~\eqref{valueofz2}.
First, by the expression of $\tilde g_1$ and integration by parts,
\[
\langle \tilde g_1,x Y\rangle 
= \varepsilon \langle f_2 -2c\gamma \partial_x f_1,Y\rangle.
\]
Second, using~\eqref{on:g2t}
\[
\langle  \tilde g_2 ,xY\rangle^2
\leq C \int \left( f_2^2+(\partial_x f_1)^2\right) \rho,\quad
\langle  \tilde g_3 ,xY\rangle^2
\leq C \delta^2 \int z_1^2 \rho.
\]
Last, by integration by parts,
\[
\langle  \partial_x^2 f_1,xY\rangle^2
=\langle f_1, (xY)'' \rangle^2\leq \int f_1^2 \rho.
\]
Thus, using also \eqref{bd:aa}, it holds
\[
\langle z_2, xY\rangle^2\leq C \int \left( f_2^2+(\partial_x f_1)^2+f_1^2\right) \rho
+ C\delta^2 \int z_1^2\rho.
\]
Therefore, using~\eqref{on:ktilde} for the term in $\tilde k$ and the
Cauchy-Schwarz inequality for the last term in the expression of $b$, we have proved
\[
b^2\leq C \int \left( f_2^2+(\partial_x f_1)^2+f_1^2\right) \rho + C\delta^2 \int z_1^2\rho.
\]
Inserting this information in~\eqref{eq:morr} and using again~\eqref{on:ktilde}, we have proved~\eqref{eq:coer2}.

Now, we complete the proof of~\eqref{eq:coer}. Using \eqref{eq:vir3} with $K=20$,
and then the constraint $A\leq \delta^{-\frac 14}$, we have
\begin{equation*}
\delta^2 \int_0^T \int z_1^2 \rho
\leq C \delta^2 (A \delta^2 +\|z_1\|_{T,\rho}^2)
\leq C A \delta^{4} + C \delta^2 \|z_1\|_{T,\rho}^2.
\end{equation*}
Thus, \eqref{eq:coer} follows by integrating~\eqref{eq:coer2} on $[0,T]$ and taking $\delta$ small enough.
\end{proof}

\subsection{Third key estimate}\label{S:4.11}

The third key estimate relies on a Virial computation for the transformed problem
where the assumption~\eqref{on:V} is decisive.
For simplicity, we fix the regularization parameter $\varepsilon$ that appears in the definitions of $\jj$ and $\ff$
in~\eqref{def:jj} and~\eqref{def:f} and in Lemma \ref{le:4p10} in terms of $A$ as follows
\begin{equation}\label{epsilon:A}
\varepsilon = \frac 1{\sqrt{A}}.
\end{equation}

\begin{proposition}\label{pr:3}
Let $\varepsilon_4>0$ be the constant appearing in Lemma \ref{le:4p10}. There exist $A_2\geq \max(A_1,1/\varepsilon_4^2)$ such that the following is true.
Let $A\geq A_2$ and assume that ~\eqref{delta:A} and~\eqref{epsilon:A} hold.
Then, for any $T>0$,
\begin{equation}\label{eq:pr3}
\int_0^T \int \left(f_1^2+ (\partial_x f_1)^2+f_2^2\right) \rho
\leq C A \delta^2+ C  A^{-\frac 14} \|z_1\|_{T,\rho}^2.
\end{equation}
\end{proposition}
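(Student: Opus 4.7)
The plan is to implement the heuristic of \S\ref{s.4.6} by performing a localized virial argument in the regularized transformed variable $\ff=(\XX g_1,\XX g_2)$, weighted by $\PSI=\CHI^2\varphi_B$ as in \eqref{def:CHI}, at the intermediate scale $1\ll B\ll A$. Applying $\XX$ to the $\gb$-system \eqref{eq:g} places $\ff$ in the framework of \eqref{systab} with $f(x,a_1)=P(x)\,a_1$, hence $F(x,a_1)=\frac12 P(x)a_1^2$, and with a forcing $\betab$ that collects the modulation pieces $\XX\NN_1,\XX\NN_2$ from \eqref{def:N1}--\eqref{def:N2}, the source $\XX\SSS$ from \eqref{def:S1}, and the regularization commutator $[\XX,P]g_1$ arising from $\XX L_0 g_1=-\partial_x^2 f_1+P f_1+[\XX,P]g_1$. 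Introducing the functional $\cF$ analogous to $\cZ$ but with $\varphi_A$ replaced by $\PSI$ and the $W''(H+z_1)$ contribution replaced by the quadratic form $P f_1^2$, part (2) of Lemma \ref{le:cK} yields, with $\hh=(f_1\sqrt{\PSI'},f_2\sqrt{\PSI'})$,
\begin{equation*}
\dot\cF=-2\int(\partial_x h_1+c\gamma h_2)^2-\int h_1^2\!\left(\frac{\zeta''}{\zeta}-\frac{(\zeta')^2}{\zeta^2}\right)+\int f_1^2\,P'\PSI+\cN_\ff,
\end{equation*}
since the exact cancellation $2F-a_1 f(a_1)=0$ removes the would-be $\PSI'$-contribution of the potential.

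The decisive observation is the sign of the third integral: by Lemma \ref{le:sp}, $P'\PSI\leq 0$ on $\R$ with $P'\PSI\leq -1/C_0$ on $[x_1,x_2]$; combined with the bound $(C/B)\mathbf{1}_{[x_1,x_2]}$ from \eqref{eq:preta} and the uniform boundedness of $\PSI'$ on $[x_1,x_2]$ for $A$ large, these two terms collapse into a strict negative contribution $-\mu\int_{[x_1,x_2]}f_1^2$, provided $B$ is fixed large enough (independently of $A$ and $\delta$).

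The delicate step, which I expect to be the main obstacle, is the control of $\cN_\ff$. The modulation terms are proportional to $|\dot y-c|+|\dot c|$ times a weight of size $O(B)$, and are thus absorbed by $CB\delta\|z_1\|_{T,\rho}^2$ thanks to \eqref{bd:yc}, Lemma \ref{le:4p10}, and the weighted boundedness \eqref{on:Xepsloc}--\eqref{on:Xepslocbis}. The source $\XX\SSS$ from \eqref{def:S1} is quadratic in $z_1$ and contributes at most $C\|z_1\|_{T,\rho}^2$ through Remark \ref{rk:4.1}. The commutator $[\XX,P]g_1$, estimated by the same mechanism as in the proof of \eqref{on:Xepsloc}, has size $O(\varepsilon^{1/2})=O(A^{-1/4})$ in $\rho$-weighted $L^2$: this is precisely the source of the gain factor $A^{-1/4}$ appearing in \eqref{eq:pr3}. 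Finally, the boundary terms generated by the cut-off $\CHI$ are supported on $A<|x|<2A$ and involve quantities of the form $(\partial_x f_1+c\gamma f_2)^2$ and $f_1^2$ in that strip; I would transfer these back to $\zz$ through Lemma \ref{le:4p10} and absorb them using the first virial estimate \eqref{eq:vir2} of Proposition \ref{pr:2} at scale $A/2$, at the cost of a factor $B/A\ll 1$.

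Integrating $\dot\cF$ over $[0,T]$ and using $|\cF(t)|\leq CB\delta^2$, which follows from $\|\PSI\|_{L^\infty}\leq CB$ combined with Lemma \ref{le:4p10}, yields a localized bound on $\int_0^T\!\!\int(\partial_x h_1+c\gamma h_2)^2$ and $\int_0^T\!\!\int_{[x_1,x_2]}f_1^2$. To upgrade this to the $\rho$-weighted norm of \eqref{eq:pr3}, I apply Lemma \ref{le:tech:h} to $w=h_1$ at scale $B$ with $\sigma=c\gamma$, which recovers $\int_0^T\!\!\int f_1^2\rho$; the transfer mechanism of Lemmas \ref{le:H} and \ref{le:jj}, suitably adapted to the $\ff$-system (which has the same abstract shape as \eqref{eq:z} with $L$ replaced by $L_0$), then delivers the corresponding bounds on $\int_0^T\!\!\int(\partial_x f_1)^2\rho$ and $\int_0^T\!\!\int f_2^2\rho$, completing \eqref{eq:pr3}. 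The scale separation $1\ll B\ll A$ with $\varepsilon=A^{-1/2}$ and $\delta\leq A^{-4}$ ensures that $B$ can be chosen first (to absorb the second integrand in $\dot\cF$), then $A$ large (to absorb cut-off errors), and finally $\delta$ small, consistently with the constants.
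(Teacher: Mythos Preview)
Your overall architecture is the paper's: virial in $\ff$ with weight $\PSI=\CHI^2\varphi_B$, the repulsivity from Lemma~\ref{le:sp} fixing $B$, control of the commutator $[\XX,P]$ yielding the $\varepsilon^{1/2}=A^{-1/4}$ gain, cut-off errors absorbed via Proposition~\ref{pr:2} at scale $A/2$, then Lemma~\ref{le:tech:h} and a transfer functional $\int f_1f_2\rho$ to pass from $\hh$ to the $\rho$-weighted norm of $\ff$. One technical slip: you cannot invoke part~(2) of Lemma~\ref{le:cK} with $\hh=(f_1\sqrt{\PSI'},f_2\sqrt{\PSI'})$, since $\PSI'=\CHI^2\zeta_B^2+(\CHI^2)'\varphi_B$ is negative on the strip $A<|x|<2A$. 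The paper uses part~(1), sets $\hh=(\CHI\zeta_B f_1,\CHI\zeta_B f_2)$ as in~\eqref{h1h2}, and redoes the algebra of part~(2) by hand; the mismatch between $\PSI'$ and $\CHI^2\zeta_B^2$ is exactly what generates the cut-off errors $\theta$ you anticipate.

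The substantive gap is your treatment of the nonlinear source. You assert that $\XX\SSS$ ``contributes at most $C\|z_1\|_{T,\rho}^2$,'' but a bare constant in front of $\|z_1\|_{T,\rho}^2$ would defeat the entire scheme: the point of~\eqref{eq:pr3} is the small prefactor $A^{-1/4}$, without which the combination with Proposition~\ref{le:coer} gives nothing. The paper exploits that $\SSS$ is cubic, writing $|\SSS|\leq C\delta(|\partial_x z_1|+|z_2|+|z_1|)$, pairs it against $(\partial_x f_1+c\gamma f_2)$ and $f_1$ through the pointwise bounds~\eqref{bd:gz10}--\eqref{bd:gz11}, and then invokes~\eqref{eq:vir3}--\eqref{eq:vir4} at scale $K=A/2$; the outcome is $\int_0^T|\cN_{\SSb}|\leq CB\delta\varepsilon^{-1}A^2(A\delta^2+\|z_1\|_{T,\rho}^2)$, which only after using $\delta\leq A^{-4}$ becomes $O(A^{-3/2})(A\delta^2+\|z_1\|_{T,\rho}^2)$. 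Your control of $\SSS$ ``through Remark~\ref{rk:4.1}'' alone does not see this mechanism. A related undercount: since $\|f_2\|_{L^2}\leq C\varepsilon^{-1/2}\delta$ by~\eqref{bd:gz4}, the functional satisfies $|\cF|\leq C\varepsilon^{-1}\delta^2=CA^{1/2}\delta^2$, not $CB\delta^2$; this is still admissible but larger than you state.
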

A crucial point in the estimate \eqref{eq:pr3} is that the term $\|z_1\|_{T,\rho}^2$ in the right hand side
is multiplied by the small factor $A^{-\frac 14}$. This, together with the coercivity result of Proposition \ref{eq:coer} closes the estimate of $\|z_1\|_{T,\rho}^2$.
The rest of \S\ref{S:4.11} is devoted to the proof of Proposition~\ref{pr:3}.

\subsubsection{Equation of $\ff$}
 From~\eqref{eq:g} we compute the system satisfied by $\ff=(f_1,f_2)$
\begin{equation*}
\begin{cases}
\dot f_1 =f_2+\XX \NN_1\\
\dot f_2 = - \XX L_0 g_1 + 2 c\gamma \partial_x f_2 + \XX \SSS + \XX \NN_2.
\end{cases}
\end{equation*}
Since $L_0 = -\partial_x^2 + P$, we have
\begin{align*}
(1-\varepsilon \partial_x^2) L_0 f_1 &= 
L_0 (1-\varepsilon \partial_x^2)f_1 - \varepsilon P' \partial_x f_1 - \varepsilon \partial_x (P' f_1)\\
&=L_0 g_1- \varepsilon P' \partial_x f_1 - \varepsilon \partial_x (P' f_1).
\end{align*}
Thus,
\[
\XX L_0 g_1 = L_0 f_1
+ \varepsilon \XX \left[P' \partial_x f_1 + \partial_x (P' f_1)\right].
\]
Similarly, we have
\begin{align*}
\XX \Omega_1(\gb) & = \Omega_1(\ff) - 2 \varepsilon \dot c c \gamma^2 \XX \partial_x^2 f_1,\\
\XX \Omega_2(\gb) & = \Omega_2(\ff) - 2 \varepsilon \dot c c \gamma^2 \XX \partial_x^2 f_2,
\end{align*}
to be used combined with~\eqref{def:N1} and~\eqref{def:N2}.
Therefore, the system for $(f_1,f_2)$ writes
\begin{equation}\label{eq:f}
\begin{cases}
\dot f_1 =f_2+\Omega_1(\ff)+ \xi_1 \\
\dot f_2 = - L_0 f_1 + 2 c\gamma \partial_x f_2+ \TTT+ \XX \SSS  +\Omega_2(\ff) + \xi_2,
\end{cases}
\end{equation}
where $\TTb=(0,\TTT)$,
\begin{equation}\label{def:T1}
\TTT =- \varepsilon \XX \left[P' \partial_x f_1 + \partial_x (P' f_1)\right],
\end{equation}
and $\xib=(\xi_1,\xi_2)$ is defined by
\begin{align}
\xi_1 & = -2 \varepsilon \dot c c \gamma^2 \XX \partial_x^2 f_1
+(\dot y-c) \gamma \XX (Q z_1)
-\dot cc\gamma^2 \XX (x Qz_1) + \dot c \gamma^3 \XX z_2,\label{def:xi1}\\
\xi_2 & = -2 \varepsilon \dot c c \gamma^2 \XX \partial_x^2 f_2
+(\dot y-c) \gamma \XX (Q z_2)
-\dot cc\gamma^2 \XX (x Qz_2) + \dot c \gamma^3 \XX \Theta(\zz).\label{def:xi2}
\end{align}

\subsubsection{Virial computation for $\ff$}
For some $B \geq 40$ to be fixed in \eqref{wesetB}. Recall that the function $\PSI$ is defined in~\eqref{def:CHI}.
We set
\begin{equation}\label{cF}
\cF = \cF_1 + c \gamma (\cF_2 + \cF_3),
\end{equation}
where
\begin{align*}
\cF_1 & =\int \left(2 \PSI \partial_x f_1 + \PSI' f_1 \right) f_2,\\
\cF_2 & = - 2 \int (\partial_x f_1)^2 \PSI,\\
\cF_3 & = \int \left[ f_2^2 + (\partial_x f_1)^2+ 2 P f_1^2 \right] \PSI,
\end{align*}
and
\begin{equation}\label{h1h2}
h_1 = \CHI\zeta_B f_1, \quad h_2 = \CHI\zeta_B f_2.
\end{equation}
Note that the system~\eqref{eq:f} is in a form that allows the use of Lemma~\ref{le:cK}
with $f(x,z_1)=Pz_1$ and so $F(x,z_1)=\frac 12 Pz_1^2=\frac 12 f(x,z_1) z_1$.
Using (1) of Lemma~\ref{le:cK} on $(f_1,f_2)$, it holds
\begin{align*}
\dot \cF &= -2 \int \left(\partial_x f_1 + c\gamma f_2 \right)^2\PSI'
-2\int (\partial_x f_1+ c\gamma f_2) f_1\PSI'' -\frac 12 \int f_1^2 \PSI'''\\
&\quad + \int \PSI P' f_1^2 +\cN_{\TTb} + \cN_{\SSb}
+ (\dot y-c)\gamma \cI_{\ff}+ \dot c\gamma \cJ_{\ff} + \cN_{\xib} + \dot c\gamma^3 ( \cF_2+\cF_3),
\end{align*}
where
\begin{align*}
\cI_{\ff} & = \int \left[c\gamma (\partial_x f_1)^2 -c\gamma f_2^2-2 (\partial_x f_1) f_2 \right] \PSI'
- \int f_1 f_2 \PSI''-c\gamma\int (P\PSI)' f_1^2 ,\\
\cJ_{\ff} & = \int 2 (\partial_x f_1)^2 (1+c^2\gamma^2) \PSI
+ c\gamma[-c\gamma (\partial_x f_1)^2 +2(\partial_x f_1) f_2 
+c \gamma f_2^2] (x\PSI)' \\
&\quad -\frac 12 \int f_1^2\PSI'' +c\gamma \int f_1 f_2 (x\PSI')' 
+c^2\gamma^2\int (x\PSI P)' f_1^2,
\end{align*}
\begin{align}
\cN_{\TTb}& =\int \left[2 \PSI (\partial_x f_1 +c\gamma f_2)+\PSI' f_1\right] \TTT, \label{NT}\\
\cN_{\SSb}& = 
\int \left[2 \PSI (\partial_x f_1+c\gamma f_2) + \PSI' f_1\right] \XX \SSS, \label{NS}
\end{align}
and
\begin{align*}
\cN_{\xib} & = \int (2 \PSI \partial_x \xi_1 + \PSI' \xi_1)f_2
 + \int (2 \PSI \partial_x f_1 + \PSI' f_1) \xi_2\\
&\quad + 2c\gamma \int \left[-\PSI (\partial_x \xi_1)(\partial_x f_1) + \PSI \xi_1 P f_1 \right]+ 2 c\gamma\int \PSI\xi_2 f_2 .
\end{align*}
We cannot use directly (2) of Lemma~\ref{le:cK} since the function $\PSI$ is not monotone due to the cut-off $\CHI$, but we follow closely its proof.
Using $h_1 = \CHI\zeta_B f_1$ and 
$h_2 = \CHI\zeta_B f_2$, we have
\begin{align*}
\int (\partial_x h_1 + c \gamma h_2)^2
&=\int \left( (\partial_x f_1 +c\gamma f_2) \CHI\zeta_B+f_1(\CHI\zeta_B)' \right)^2
\\
&=\int (\partial_x f_1 +c\gamma f_2)^2 \CHI^2\zeta_B^2 +f_1^2 ((\CHI\zeta_B)')^2
\\ &\quad + \int (\partial_x f_1 +c\gamma f_2) f_1(\CHI^2\zeta_B^2)'.
\end{align*}
Using $\PSI'=\CHI^2\zeta_B^2+(\CHI^2)' \varphi_B$, we obtain
\begin{align*}
&-2 \int \left(\partial_x f_1 + c\gamma f_2 \right)^2\PSI'
-2\int (\partial_x f_1+ c\gamma f_2) f_1\PSI'' -\frac 12 \int f_1^2 \PSI'''\\
&\quad = -2 \int (\partial_x h_1 + c \gamma h_2)^2
- 2 \int (\partial_x f_1+ c\gamma f_2)^2 (\CHI^2)'\varphi_B\\
&\qquad - 2 \int (\partial_x f_1+ c\gamma f_2) f_1 \left[ \PSI''-(\CHI^2\zeta_B^2)'\right]- \int f_1^2 \left[ \frac 12 \PSI''' - 2\left((\CHI\zeta_B)'\right)^2\right].
\end{align*}
Note that
\[
\PSI''-(\CHI^2\zeta_B^2)'
=((\CHI^2)'\varphi_B)',
\]
and
\begin{align*}
\frac 12 \PSI''' - 2\left((\CHI\zeta_B)'\right)^2
& = \CHI^2 \left[ \zeta_B'' \zeta_B - (\zeta_B')^2 \right]\\
& \quad +(\CHI^2)' \zeta_B'\zeta_B +(\CHI')^2\zeta_B^2
+ 3 \CHI'' \CHI \zeta_B^2 + \frac 12 (\CHI^2)''' \varphi_B.
\end{align*}
Thus,
\begin{align*}
&-2 \int \left(\partial_x f_1 + c\gamma f_2 \right)^2\PSI'
-2\int (\partial_x f_1+ c\gamma f_2) f_1\PSI'' -\frac 12 \int f_1^2 \PSI'''\\
& \quad =
-2\int (\partial_x h_1 + c \gamma h_2)^2
- \int \left( \frac{\zeta_B''}{\zeta_B}
- \frac{(\zeta_B')^2}{\zeta_B^2}\right) h_1^2
+\theta
\end{align*}
where the error term $\theta$ defined by
\begin{equation}\label{def:theta}
\begin{aligned}
\theta &= 
-2\int \left(\partial_x f_1+c\gamma f_2\right)^2(\CHI^2)' \varphi_B
 -2 \int (\partial_x f_1+c\gamma f_2) f_1 \left( (\CHI^2)' \varphi_B\right)'  \\
&\quad - \int f_1^2 \left[ \frac 12 (\CHI^2)''' \varphi_B
+ \left(3 \CHI \CHI'' + (\CHI')^2 \right) \zeta_B^2
+(\CHI^2)' \zeta_B \zeta_B'\right]
\\
&=\theta_1+\theta_2+\theta_3
\end{aligned}
\end{equation}
is related to the cut-off function $\CHI$.

In conclusion, we have obtained
\begin{equation}\label{eq:tobeint}
\begin{aligned}
\dot \cF &= 
-2\int \left[ (\partial_x h_1 + c \gamma h_2)^2+ Z_B h_1^2 \right]+\cN_{\TTb}\\
&\quad 
 + \cN_{\SSb} +\theta + (\dot y-c)\gamma \cI_{\ff}+ \dot c\gamma \cJ_{\ff} + \cN_{\xib} + \dot c\gamma^3 ( \cF_2+\cF_3),
\end{aligned}
\end{equation}
where the potential $Z_B$ is defined by
\begin{equation}\label{def:ZB}
Z_{B} = 
\frac 12 \left( \frac{\zeta_B''}{\zeta_B}
- \frac{(\zeta_B')^2}{\zeta_B^2}\right) - \frac{\varphi_B P'}{2\zeta_B^2}.
\end{equation}
 
\subsubsection{Lower bound on the potential $Z_B$}\label{S:step3}
Recall from~\eqref{eq:preta} that
for some constant $C>0$ independent of $B$
\[
\left| \frac{\zeta_B''}{\zeta_B}
- \frac{(\zeta_B')^2}{\zeta_B^2}\right| \leq
C B^{-1}\ONE_{[x_1,x_2]}.
\]
Moreover, by $\zeta_B \leq 1$ on $\R$ and Lemma~\ref{le:sp}, one has
\[
- \frac{\varphi_B P'}{2\zeta_B^2} \geq 0 \mbox{ on $\R$ and }
- \frac{\varphi_B P'}{2\zeta_B^2} \geq- \frac{\varphi_B P'}2 \geq \frac 1{2C_0} \mbox{ on $[x_1,x_2]$},
\]
where the constant $C_0$ is independent of $B$.
Thus for $B\geq B_1$, where 
\begin{equation}\label{wesetB}
B_1=\max (40,  2C_0C),
\end{equation}
it holds
\begin{equation}\label{bh1}
Z_B \geq 0 \mbox{ on $\R$ and } Z_B \geq \frac 1 {4C_0} \mbox{ on $[x_1,x_2]$.}
\end{equation}
We fix $B=B_1$ and we will not track the dependence on $B$ anymore.

\subsubsection{Technical estimates}
\begin{lemma}\label{le:yetaL}
It holds
\begin{align}
\|(\partial_x f_1+c\gamma f_2) \chi_A \rho \|_{L^2} &\leq C \|\partial_x h_1+c\gamma h_2\|_{L^2}
 +C B^{-1}\|f_1\rho\|_{L^2},\label{bd:gz7}\\
\int_{|x|<A} (\partial_x f_1+c\gamma f_2)^2 \rho 
&\leq C \int (\partial_x h_1+c\gamma h_2)^2
+C\int_{|x|<A} f_1^2 \rho,\label{pr:finir2}\\
\int_{|x|<A} f_1^2 \rho &\leq C \int h_1^2 \rho^\frac 12,  \label{bd:gz7bis}\\
\int_{|x|<2A} (\partial_x f_1+c\gamma f_2)^2&
\leq C   \int \left[(\partial_x j_1)^2 + j_2^2 + k_1^2 \right]\zeta_A^4
+ C  \int z_1^2 \rho^2,\label{bd:gz10}\\
\int_{|x|<2A} |f_1|^2&
\leq C \int \left(|\partial_x z_1|^2 + |z_2|^2 + |z_1|^2\right) \zeta_A^4.\label{bd:gz11}
\end{align}
\end{lemma}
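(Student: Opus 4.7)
The five estimates share a common pattern: each transfers a quantity built from $\ff$ (obtained from $\zz$ by the transformation $U$ and the regularization $\XX$) to a quantity built either from $\hh$ (the localization of $\ff$ at the small scale $B$) or from $(\zz,\kk,\jj)$ (at the large scale $A$). The unified strategy is: write an explicit algebraic identity, multiply by the appropriate weight, and then exploit two ingredients, namely (i) pointwise comparisons between the weights $\rho$, $\zeta_B^{\pm 1}$, $\zeta_A^{\pm 1}$ coming from~\eqref{on:zetaK} together with the threshold $B\geq 40$; and (ii) Lemma~\ref{le:4p7}, which permits any $\sech$-type weight to pass through $\XX$ up to a harmless constant.

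For~\eqref{bd:gz7} and~\eqref{pr:finir2}, the starting point is the product rule
\[
\partial_x h_1 + c\gamma h_2 \;=\; \CHI\zeta_B\bigl(\partial_x f_1 + c\gamma f_2\bigr)+(\CHI'\zeta_B + \CHI\zeta_B')f_1,
\]
which I would solve for $\CHI(\partial_x f_1+c\gamma f_2)$ and multiply by $\rho$. Using $\rho/\zeta_B\le C$, $|\CHI'|\le C/A\le C/B$, and $|\zeta_B'/\zeta_B|\le C/B$ from~\eqref{on:zetaK}, estimate~\eqref{bd:gz7} follows. Restricting to $\{|x|<A\}$ kills $\CHI'$, and the refined pointwise bound $\rho/\zeta_B^{2}\le C$ then yields~\eqref{pr:finir2}. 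For~\eqref{bd:gz7bis}, the identity $f_1=\zeta_B^{-1}h_1$ on $\{|x|<A\}$ reduces the claim to the pointwise inequality $\rho\zeta_B^{-2}\le C\rho^{1/2}$, equivalent to $2/B\le 1/20$, which is exactly the threshold~$B\ge 40$ fixed in~\eqref{wesetB}; this is where that threshold is used.

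The main obstacle, and the only place where a genuinely new algebraic input is needed, is the commutator identity underlying~\eqref{bd:gz10}:
\[
\partial_x g_1 + c\gamma g_2 \;=\; Uk_1 + c\gamma k_2 - Q z_1.
\]
I would derive it by expanding $\partial_x g_1 = \partial_x U z_1 + c\gamma \partial_x z_2$, substituting $\partial_x U = U\partial_x - Q$ from~\eqref{4.30} (recall $Q=(L-L_0)/2$, see~\S\ref{S:4.9}), and regrouping $U\partial_x z_1 + c\gamma U z_2 = Uk_1$ and $c\gamma(\partial_x z_2 + c\gamma \Theta(\zz)) = c\gamma k_2$. Applying $\XX$ and using that $\partial_x$ commutes with it gives
\[
\partial_x f_1 + c\gamma f_2 \;=\; \partial_x j_1 + c\gamma j_2 - \XX\bigl[(Y'/Y)k_1\bigr] - \XX[Q z_1].
\]
Since $\mathbf{1}_{\{|x|<2A\}}\le C\zeta_A^{4}$ uniformly in $A\ge 1$ (because $\zeta_A$ stays bounded below on the set $|x|<2A$), squaring, integrating, and invoking~\eqref{on:Xepsloc} to commute a $\zeta_A^{2}$-type weight through $\XX$ yields the first three contributions $\int[(\partial_x j_1)^2+j_2^2+k_1^2]\zeta_A^{4}$; for the remaining term, the decay $|Q|\leq C\rho$ from Lemma~\ref{le:PQ} gives $\int(\XX[Qz_1])^2\zeta_A^{4}\le C\|Qz_1\|_{L^2}^2\le C\int z_1^2\rho^2$. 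Estimate~\eqref{bd:gz11} is the simpler analogue: write $f_1=\XX(Uz_1+c\gamma z_2)$, apply the same $\zeta_A^{2}$-commutation from Lemma~\ref{le:4p7}, and use the pointwise bound $|Uz_1|+|c\gamma z_2|\le C(|\partial_x z_1|+|z_1|+|z_2|)$ coming from boundedness of $Y'/Y$.
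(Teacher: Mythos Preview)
Your proposal is correct and follows essentially the same route as the paper: the same product-rule identity for $\partial_x h_1+c\gamma h_2$ drives \eqref{bd:gz7}--\eqref{pr:finir2}, the same pointwise comparison $\rho\zeta_B^{-2}\le C\rho^{1/2}$ (under $B\ge 40$) gives \eqref{bd:gz7bis}, and the same final formula $\partial_x f_1+c\gamma f_2=\partial_x j_1+c\gamma j_2-\XX[(Y'/Y)k_1]-\XX[Qz_1]$ together with \eqref{on:Xepsloc} yields \eqref{bd:gz10}--\eqref{bd:gz11}. The only cosmetic difference is that you derive this last identity via the commutator $\partial_x U=U\partial_x-Q$ at the $g$-level, whereas the paper writes it directly from $g_i=k_i-(Y'/Y)z_i$ and then applies $\XX$; both are one-line computations leading to the same expression.
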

\begin{proof}
We prove~\eqref{bd:gz7}.
By the definition of $\hh$ in \eqref{h1h2}, we have
\[
\partial_x h_1 +c\gamma h_2
=\CHI \zeta_B (\partial_x f_1+c\gamma f_2)+(\CHI\zeta_B)' f_1,
\]
and so using~\eqref{on:zetaK} and $A>B$,
\begin{align*}
\CHI^2 \zeta_B^2 (\partial_x f_1+c\gamma f_2)^2
&\leq 2(\partial_x h_1 +c\gamma h_2)^2 + 2[(\CHI \zeta_B)' f_1]^2\\
&\leq 2(\partial_x h_1 +c\gamma h_2)^2 + C B^{-2} \zeta_B^2f_1^2.
\end{align*}
In particular, since $B\geq 20$,
\begin{equation*}
\CHI^2\rho (\partial_x f_1 + c\gamma f_2)^2 
\leq C (\partial_x h_1 +c\gamma h_2)^2 +CB^{-2} f_1^2 \rho,
\end{equation*}
which implies~\eqref{bd:gz7} (multiplicating by $\rho$ and integrating on $\R$) and \eqref{pr:finir2}
(integrating on $[-A,A]$).
Moreover, since $B\geq 40$ (see \eqref{wesetB}), by the definitions of $h_1$ and $\CHI$, we have for $|x|<A$,
\[
f_1^2 \rho = h_1^2 \frac{\rho}{\zeta_B^2}\leq h_1^2 \rho^{\frac 12},
\]
which implies \eqref{bd:gz7bis}.
Last, we prove~\eqref{bd:gz10}-\eqref{bd:gz11}. Recall (\eqref{def:kk}, \eqref{def:jj}, \eqref{def:g}):
\begin{align}
g_1 & = U z_1 + c\gamma z_2 = \partial_x z_1 + c\gamma z_2 - \frac{Y'}{Y} z_1 = k_1 - \frac{Y'}{Y} z_1, \label{auxx1}\\
g_2 & = U z_2+c\gamma \Theta(\zz)=\partial_x z_2+c\gamma\Theta(\zz)-\frac{Y'}{Y} z_2=k_2 - \frac{Y'}{Y} z_2, \nonumber
\end{align}
and thus
\begin{equation}\label{on:fj}
f_1 = j_1 - \XX \left( \frac{Y'}Y z_1\right),\quad f_2 = j_2 - \XX \left( \frac{Y'}Y z_2\right).
\end{equation}
In particular,
\begin{equation*}
\partial_x f_1 + c\gamma f_2 = \partial_x j_1 + c\gamma j_2 - \XX \left( \frac{Y'}Y k_1\right)
-\XX \left( Q z_1\right),
\end{equation*}
where the function $Q= (Y'/Y)'$ is defined in~\eqref{def:Q}.
Using also~\eqref{on:Xeps},~\eqref{on:Xepsloc}, we obtain~\eqref{bd:gz10}.
Last, from \eqref{auxx1},
\begin{equation*}
\int_{|x|<2A} |f_1|^2
\leq C \int  |f_1|^2\zeta_A^4\leq C \int  |g_1|^2\zeta_A^4\
\leq C \int \left(|\partial_x z_1|^2 + |z_2|^2 + |z_1|^2\right) \zeta_A^4
\end{equation*}
which is~\eqref{bd:gz11}.
\end{proof}

\subsubsection{Control of regularization terms $\cN_{\TTb}$}
Recall $\cN_{\TTb}$ in \eqref{NT} and the expression of $\TTT$ in~\eqref{def:T1}
\[\TTT =- \varepsilon \XX \left[P' \partial_x f_1 + \partial_x (P' f_1)\right] .\]
(Note that since the function $P$ is of class $\mathcal C^1$ - see Lemma~\ref{rk:P} - we will use the regularization $X_\varepsilon$  to absorb the derivative of the term $P'f_1$ above.)
Using the Cauchy-Schwarz inequality, then $|\varphi_B|\leq C B$ (see \eqref{on:PSI}), \eqref{rho:Xepsbis}
and the decay property of $P'$ from~\eqref{on:P},
\begin{align*}
&\left| \int 2 \PSI (\partial_x f_1 +c\gamma f_2) \TTT\right|\\
&\qquad \leq C \varepsilon\| \rho  [\PSI (\partial_x f_1 +c\gamma f_2)]\|_{L^2}
\| \XX \left[\rho^{-1}(P' \partial_x f_1 + \partial_x(P' f_1))\right]\|_{L^2}\\
&\qquad \leq C B \varepsilon\| \rho \CHI(\partial_x f_1 +c\gamma f_2)\|_{L^2}
\left(\|(\partial_x f_1)\rho\|_{L^2}+\varepsilon^{-\frac 12}\|f_1\rho\|_{L^2}\right).
\end{align*}
Using~\eqref{on:fgrho} and \eqref{bd:gz7}, we obtain
\begin{align*}
\left|\int 2\PSI (\partial_x f_1 +c\gamma f_2) \TTT\right|
&\leq  CB \varepsilon^{\frac 12} \|\partial_x h_1 +c\gamma h_2\|_{L^2}\|g_1\rho\|_{L^2}
 +C\varepsilon^{\frac 12}\|g_1 \rho\|_{L^2}^2\\
&\leq \|\partial_x h_1 +c\gamma h_2\|_{L^2}^2
+C\varepsilon^\frac 12 \|g_1 \rho\|_{L^2}^2.
\end{align*}
By~\eqref{bd:gz2},
and then estimate~\eqref{eq:vir3} for $K=10$, we deduce
\begin{equation*}
\int_0^T \left|\int \PSI (\partial_x f_1 +c\gamma f_2) \TTT\right|
\leq \int_0^T \int (\partial_x h_1 +c\gamma h_2)^2
+C  \varepsilon^\frac 12 \left( A \delta^2 + \|z_1\|_{T,\rho}^2\right).
\end{equation*}
We treat the second term of $\cN_{\TTb}$ similarly
\begin{align*}
\left|\int \PSI' f_1 \TTT\right|
&=\varepsilon \left| \int   (\PSI'  f_1)  \XX (P' \partial_x f_1 + \partial_x(P' f_1)\right|\\
&\leq \varepsilon \|\rho \chi_Af_1\|_{L^2} \left(\|(\partial_x f_1)\rho\|_{L^2} +\varepsilon^{-\frac 12}\|f_1\rho\|_{L^2}\right)
\leq C \varepsilon^{\frac 12} \|g_1 \rho\|_{L^2}^2.
\end{align*}
Therefore, using $\varepsilon=A^{-\frac 12}$, 
\begin{equation}\label{bh2}
\int_0^T  |\cN_{\TTb}|
\leq \int_0^T \int (\partial_x h_1 +c\gamma h_2)^2 + C A^{-\frac14}\left(A\delta^2 + \|z_1\|_{T,\rho}^2\right).
\end{equation}

We will see that the remaining terms in the virial computation for $f$ 
(the second line of~\eqref{eq:tobeint})
 can be considered as error terms.

\subsubsection{Control of nonlinear terms $\cN_{\SSb}$}
These terms are at least cubic in the size of the perturbation $\delta$.
Recall the expressions of $\cN_{\SSb}$ in \eqref{NS} and $\SSS$ from~\eqref{def:S1}
\begin{align*}
\SSS
&=- g_1 \left[W''(H+z_1)-W''(H)\right]\\
&\quad + \frac {H''}{H'} \left[ W'(H+z_1)-W'(H)-W''(H+z_1) z_1\right]\\
&\quad-H' \left[W''(H+z_1)-W''(H)-W'''(H) z_1\right].
\end{align*}
By Taylor expansions and the definition of $g_1$ in \eqref{def:g}, we see that
\begin{equation}\label{bd:SSS}
\begin{aligned}
|\SSS|
&\leq C |z_1|\left(|g_1|+|z_1|\right)
\leq C |z_1|\left(|\partial_x z_1|+|z_2|+|z_1| \right)\\
&\leq C \delta \left(|\partial_x z_1|+|z_2|+|z_1| \right).
\end{aligned}
\end{equation}
Thus, by~\eqref{on:Xepsloc} and then~\eqref{on:Xeps} and \eqref{on:zetaK},
\begin{align*}
\int_{|x|<2A} |\XX \SSS|^2
&\leq C \int |\XX \SSS|^2 \sech^2\left(\frac {|x|}{2A}\right)
\leq C \int |\SSS|^2 \sech^2\left(\frac {|x|}{2A}\right)\\
&\leq C \delta^2 \int \left((\partial_x z_1)^2+z_2^2+z_1^2 \right) \zeta_A^4.
\end{align*}
Besides, recall from \eqref{on:PSI} that
\[
|\PSI|\leq CB \ONE_{|x|<2A}(x),\quad  |\PSI'|\leq C\ONE_{|x|<2A}(x).
\]
Therefore, by the Cauchy-Schwarz inequality
and~\eqref{bd:gz10}-\eqref{bd:gz11}, we have
\begin{align*}
|\cN_{\SSb}| & \leq C B \delta
\int \left( (\partial_x j_1)^2 + j_2^2+   (\partial_x z_1)^2 + z_2^2 + z_1^2  \right) \zeta_A^4.
\end{align*}
By~\eqref{eq:vir3} and~\eqref{eq:vir4} with $K=\frac A2$, using~$\varepsilon=A^{-\frac 12}$
and $\delta\leq A^{-4}$, we obtain
\[
\int_0^T |\cN_{\SSb}|
\leq C \delta \varepsilon^{-1}B A^2 (A \delta^2 +\|z_1\|_{T,\rho}^2)
\leq C A^{-\frac 32} (A \delta^2 +\|z_1\|_{T,\rho}^2).
\]

\subsubsection{Control of cut-off terms $\theta$} 
Terms in $\theta$ appear because of the presence of the cut-off $\CHI^2$ at scale $A$ in the definition of the function $\PSI = \CHI^2 \varphi_B$, see \eqref{def:theta}.

First, we deal with $\theta_1$.
Using~\eqref{on:PSI}, $|(\chi_A^2)' \varphi_B|\leq C \frac BA \ONE_{|x|<2A}$, and thus by~\eqref{bd:gz10},
\begin{align*}
 |\theta_1|
&\leq C \frac BA \int_{|x|<2A} (\partial_x f_1+c\gamma f_2)^2\\
&\leq \frac CA \int \left[(\partial_x j_1)^2 + j_2^2 + k_1^2\right]\zeta_A^4
+ \frac CA \int z_1^2 \rho^2.
\end{align*}
Now, we apply~\eqref{eq:vir2} and \eqref{eq:vir4} with $K=\frac A2$
so that $\zeta_A^4 \leq C \zeta_K^2$. Using~$\varepsilon=A^{-\frac 12}$,
we find
\[
\int_0^T | \theta_1| \leq \frac{C}{\varepsilon A} (A\delta^2 + \|z_1\|_{T,\rho}^2)
\leq C A^{-\frac 12}(A\delta^2 +\|z_1\|_{T,\rho}^2).
\]
Second, we estimate the term $\theta_3$ in \eqref{def:theta}.
By~\eqref{bd:gz11} 
\begin{equation*}
\left| \int f_1^2  (\CHI^2)''' \varphi_B\right|
\leq C \frac {B}{A^3} \int_{|x|<2A} f_1^2
\leq \frac C{A^3} \int \left[ (\partial_x z_1)^2+z_1^2+z_2^2\right] \zeta_{ A/2}^2.
\end{equation*}
Moreover, since $\CHI'(x)=0$ for $x$ such that $|x|<A$ and $\zeta_B(x)\leq C e^{-\frac AB}$ for 
$|x|>A$, it holds using~\eqref{bd:gz11},
\begin{multline*}
\int f_1^2 \left[\CHI|\CHI''| \zeta_B^2 + (\CHI')^2 \zeta_B^2 + |(\CHI^2)' \zeta_B \zeta_B'|\right]\\
\leq \frac{C}{A} e^{-2 \frac{A}{B}}\int_{A\leq |x|\leq 2A}  f_1^2
\leq \frac{Ce^{-2 \frac{A}{B}}}{A}  \int \left[ (\partial_x z_1)^2+z_1^2+z_2^2\right] \zeta_{ A/2}^2.
\end{multline*}
Thus, by~\eqref{eq:vir3} with $K=\frac A2$, we obtain
\[
\int_0^T |\theta_3| \leq C A^{-1} (A\delta^2 +\|z_1\|_{T,\rho}^2).
\]
Third, we consider the term
\[
\theta_2
=\int (\partial_x f_1+c\gamma f_2) f_1 \left[(\CHI^2)'' \varphi_B+ (\CHI^2)' \zeta_B^2\right].
\]
We observe by similar estimates and the Cauchy-Schwarz inequality
\[
\theta_2 
\leq  C \frac BA \int_{|x|<2A} (\partial_x f_1+c\gamma f_2)^2
+C \frac {B}{A^3} \int_{|x|<2A} f_1^2,
\]
and thus, as before
\[
\int_0^T |\theta_2|
\leq C A^{-1} (A\delta^2 +\|z_1\|_{T,\rho}^2).
\]
In conclusion, we have obtained
\begin{equation}\label{bh3}
\int_0^T |\theta| \leq  C A^{-\frac 12} (A \delta^2+ \|z_1\|_{T,\rho}^2).
\end{equation}

\subsubsection{Control of the modulation terms}
Here, we estimate the following terms from~\eqref{eq:tobeint}
\[ (\dot y-c)\gamma \cI_{\ff},\quad
\dot c\gamma \cJ_{\ff}, \quad
\cN_{\xib}, \quad
\dot c\gamma^3 ( \cF_2+\cF_3).
\]
As in the proof of Lemma~\ref{le:cZ} for $\cI_{\zz}$ and $\cJ_{\zz}$, by the expressions of $\cI_{\ff}$ and $\cJ_{\ff}$ 
in Lemma~\ref{le:cK} and
estimates~\eqref{eq:Hasym},~\eqref{on:phiA} and then~\eqref{bd:gz3}-\eqref{bd:gz4}, \eqref{bd:z}, we observe that
\[
 |\cI_{\ff}|\leq C \|\ff\|_{H^1\times L^2}^2\leq C \varepsilon^{-1}\delta^2,\quad
|\cJ_{\ff}|\leq C A\|\ff\|_{H^1\times L^2}^2 \leq C\varepsilon^{-1} A \delta^2,
\]
and so using~\eqref{bd:yc} and then $\varepsilon=A^{-\frac 12}$ and $\delta<A^{-4}$, we obtain
\[
|(\dot y-c)\gamma \cI_{\ff}| + |\dot c\gamma \cJ_{\ff}|
\leq C\varepsilon^{-1} A \delta^2 \|z_1 \rho\|_{L^2}^2
\leq C A^{-\frac {13}2} \|z_1 \rho\|_{L^2}^2.
\]
Recall that $\xib$ is defined in~\eqref{def:xi1}-\eqref{def:xi2}.
By~\eqref{bd:gz3}-\eqref{bd:gz4} and then~\eqref{bd:z}, \eqref{bd:yc}, we observe that
\begin{equation*}
\|\xi_1\|_{H^1} + \|\xi_2\|_{L^2}
 \leq C \varepsilon^{-\frac 12}\left( |\dot y-c|+|\dot c| \right) \|\zz\|_{H^1\times L^2}
 \leq C \varepsilon^{-\frac 12} \delta \|z_1 \rho\|_{L^2}^2.
\end{equation*}
Thus, by the definition of $\cN_{\xib}$, the Cauchy-Schwarz inequality and $|\PSI|\leq C B$, $|\PSI'|\leq C$,
(see \eqref{on:PSI}),  we obtain (using also $\varepsilon=A^{-\frac 12}$)
\[
|\cN_{\xib}|
\leq C \varepsilon^{-\frac 12} B \delta \|z_1 \rho\|_{L^2}^2\|\ff\|_{H^1\times L^2}
\leq C \varepsilon^{-\frac 32}  \delta^2 \|z_1 \rho\|_{L^2}^2
\leq C A^{-\frac {29}4} \|z_1\rho\|_{L^2}^2.
\]
Last, since by~\eqref{bd:gz3}-\eqref{bd:gz4}
\[
|\cF_2|+|\cF_3|\leq C B \|\ff|_{H^1\times L^2}^2 \leq C\varepsilon^{-1} \delta^2,
\]
we obtain similarly using~\eqref{bd:yc}
\[
|\dot c \cF_2|+|\dot c \cF_3|\leq C\varepsilon^{-1} \delta^2 \| z_1 \rho\|_{L^2}^2
\leq C  A^{-\frac {15}2} \|z_1 \rho\|_{L^2}^2.
\]
Thus, for the modulation terms, we have obtained
\begin{equation}\label{bh4}
\int_0^T \left\{ |(\dot y-c)\gamma \cI_{\ff}| + |\dot c\gamma \cJ_{\ff}|+
|\cN_{\xib}|+|\dot c \cF_2|+|\dot c \cF_3| \right\}\leq  C A^{-\frac {13}2} \|z_1 \|_{T,\rho}^2.
\end{equation}

\medskip

\subsubsection{First conclusion of the virial argument}
Recall $\cF$ from \eqref{cF}. By~\eqref{bd:gz3}-\eqref{bd:gz4} and then~\eqref{epsilon:A} we have
\[
|\cF|\leq C B \|\ff\|_{H^1\times L^2}^2\leq C \varepsilon^{-1}\delta^2
\leq C A^{\frac 12} \delta^{2}.
\] 
Thus,  by integrating~\eqref{eq:tobeint} on $[0,T]$
and using the estimates \eqref{bh1}, \eqref{bh2}, \eqref{bh3} and \eqref{bh4} on error terms,
we have obtained
\begin{equation}\label{eq:virialff11}
\int_0^T\left[ \int (\partial_x h_1 + c\gamma h_2)^2 +\int_{[x_1,x_2]} h_1^2\right]
\leq C A\delta^2+ CA^{-\frac 14} \|z_1\|_{T,\rho}^2.
\end{equation}

\subsubsection{Transfer estimate for $\ff$}
We prove for $\ff$ a result in the  spirit of Lemma~\ref{le:H}. Set
\[
\cT = \int f_1 f_2 \rho.
\]
First note that $\cT$ is well-defined and satisfies $|\cT|\leq \|f_1\|_{L^2}\|f_2\|_{L^2}
\leq C\varepsilon^{-\frac 12} \delta^2$ by~\eqref{bd:gz3}, \eqref{bd:gz4}.
Next, using~\eqref{eq:f}, we compute
\begin{align*}
\dot \cT
& = \int \dot f_1 f_2 \rho + \int f_1 \dot f_2 \rho\\
& = (1+c^2\gamma^2) \int f_2^2 \rho - \int (\partial_x f_1+c\gamma f_2)^2\rho 
 - 2 c \gamma \int f_1 f_2 \rho' +\frac 12 \int f_1^2 \rho'' - \int  f_1^2 P\rho\\
& \quad + \int [f_2 \Omega_1(\ff) + f_1 \Omega_2(\ff)]\rho +\int [f_2 \xi_1 +  f_1 \xi_2] \rho 
+\int f_1 (\TTT+ \XX \SSS) \rho.
\end{align*}
We observe
\[
\left|2 c \gamma \int f_1 f_2 \rho'\right|\leq \frac 12 \int f_2^2\rho + C \int f_1^2 \rho\]
and
\[
\left|\int f_1^2 \rho''\right| + \left| \int  f_1^2 P\rho\right| \leq C \int f_1^2 \rho.
\]
Next, using the notation~\eqref{def:Omega}, and integration by parts,
\[
\int [f_2 \Omega_1(\ff) + f_1 \Omega_2(\ff)]\rho
=-(\dot y-c) \gamma \int f_1 f_2 \rho'
+\dot c c\gamma^2 \int f_1 f_2 (x\rho)'
-\frac 12 \dot c\gamma \int f_1^2 \rho',
\]
and so by \eqref{bd:yc}, \eqref{bd:gz3}, \eqref{bd:gz4},
\[
  \int | f_2 \Omega_1(\ff) + f_1 \Omega_2(\ff) | \rho
\leq C\varepsilon^{-\frac 12} \delta^2 \|z_1 \rho\|_{L^2}^2.
\]
By the definitions of $\xi_1$ and $\xi_2$ in \eqref{def:xi1}, \eqref{def:xi2}, we have
\[
\|\xi_1\|_{L^2}+\|\xi_2\|_{L^2} \leq C \varepsilon^{-\frac 12}\delta \|z_1 \rho\|_{L^2}^2,
\]
and so
\[
\int |f_2 \xi_1 +  f_1 \xi_2| \rho \leq C \varepsilon^{-1}\delta^2 \|z_1 \rho\|_{L^2}^2.
\]
By the definition of $T_2$ \eqref{def:T1}, we have
\[
\int |\TTT|^2 \rho \leq C\varepsilon \int \left[(\partial_x f_1)^2+ f_1^2\right] \rho,
\]
and thus by the Cauchy-Schwarz inequality
\[
\int |\TTT f_1| \rho \leq C \varepsilon^{\frac 12} \int \left[(\partial_x f_1)^2+ f_1^2\right] \rho.
\]
Last, by \eqref{bd:SSS} and then~\eqref{eq:vir3}, we have
\begin{align*}
\int_0^T\int |f_1 (\XX \SSS)| \rho
&\leq C \int_0^T\int f_1^2 \rho + C \delta^2 \int_0^T\int \left[(\partial_x z_1)^2+z_2^2+z_1^2\right] \rho\\
&\leq C \int_0^T\int f_1^2 \rho + C \delta^2 \|z_1\|_{\rho,T}^2.
\end{align*}

Therefore, using $\varepsilon = A^{-\frac 12}$, we obtain
\begin{align*}
\int_0^T \int f_2^2 \rho 
&\leq CA^{\frac 14} \delta^2 + C \int_0^T \int \left[(\partial_x f_1+c\gamma f_2)^2 + f_1^2\right]\rho\\
&\quad + C A^{-\frac 14} \int  (\partial_x f_1)^2\rho + CA^{\frac 12} \delta^2  \|z_1\|_{T,\rho}^2.
\end{align*}
Using also
\[
 \int  (\partial_x f_1)^2 \rho \leq 2 \int \left[(\partial_x f_1+c\gamma f_2)^2 + (c \gamma f_2)^2\right] \rho,
\]
taking $A$ large enough and using \eqref{delta:A}, it follows that
\begin{equation}\label{transf:ff}
\begin{aligned}
\int_0^T \int \left[ f_2^2  +(\partial_x f_1)^2 \right] \rho 
&\leq C A^{\frac 14} \delta^2  + C \int_0^T \int \left[(\partial_x f_1+c\gamma f_2)^2 + f_1^2\right]\rho\\
&\quad + C A^{-\frac {15}2} \|z_1\|_{T,\rho}^2.
\end{aligned}
\end{equation}

\subsubsection{End of the proof of Proposition~\ref{pr:3}}\label{S:stepfin}
In view of \eqref{transf:ff}, our goal is now to estimate the quantity $\int_0^T\int \left[(\partial_x f_1+c\gamma f_2)^2 + f_1^2\right]\rho$ using the estimate~\eqref{eq:virialff11}.
We decompose
\begin{align*}
\int_0^T\int \left[(\partial_x f_1+c\gamma f_2)^2 + f_1^2\right] \rho
& \leq  C\rho^{\frac 12}(A)\int_0^T \int \left[(\partial_x f_1+c\gamma f_2)^2 + f_1^2\right] \rho^\frac 12\\
&\quad + C \int_0^T\int_{|x|<A} \left[(\partial_x f_1+c\gamma f_2)^2 + f_1^2\right] \rho  .
\end{align*}
For the first term in the right hand side, we recall the pointwise bounds 
(see~\eqref{on:fj})
\begin{align*}
&|f_1|\leq |j_1|+\left|\XX\left[ \frac {Y'}{Y} z_1\right]\right|,
\quad
|f_2|\leq |j_2|+\left|\XX\left[ \frac {Y'}{Y} z_2\right]\right|,\\
& |\partial_x f_1|\leq |\partial_x j_1|+\left|\partial_x \XX\left[ \frac {Y'}{Y} z_1\right]\right|.
\end{align*}
Then, by \eqref{on:Xepsloc} and \eqref{on:Xeps}, with $K=40$,
\[
\int \left|\XX\left[ \frac {Y'}{Y} z_1\right]\right|^2 \rho^\frac 12
\leq C \int \left| \XX\left[\frac {Y'}{Y}z_1 \sech\left(\frac \omega {40} x\right)\right] \right|^2
\leq C \int z_1^2 \rho^{\frac 12},
\]
and similarly,
\[
\int \left[ \left|\XX\left[ \frac {Y'}{Y} z_2\right]\right|^2 + 
\left|\partial_x \XX\left[ \frac {Y'}{Y} z_1\right]\right|^2 \right]\rho^{\frac 12}
\leq C  \int (z_2^2+z_1^2+(\partial_x z_1)^2)  \rho^{\frac 12}.
\]
Thus the estimates \eqref{eq:vir3}-\eqref{eq:vir4} of Proposition~\ref{pr:2} (with $K=40$)
and $\varepsilon=A^{-\frac 12}$ yield for $A$ large
\begin{align}
\rho^{\frac 12}(A)  \int_0^T \int \left[(\partial_x f_1+c\gamma f_2)^2 + f_1^2\right] \rho^\frac 12
&\leq C\varepsilon^{-1} \rho^{\frac 12}(A) \left( A \delta^2 + \|z_1\|_{T,\rho}^2\right)\nonumber\\
&\leq C A \delta^2 + C A^{-\frac 14} \|z_1\|_{T,\rho}^2.\label{T:AAAA}
\end{align}
Second, we claim
\begin{equation}\label{finish}\begin{aligned}
&\int_0^T\int_{|x|<A} \left[(\partial_x f_1 + c\gamma f_2)^2+f_1^2\right] \rho\\
&\qquad\leq C \int_0^T\left[ \int (\partial_x h_1 + c\gamma h_2)^2 +\int_{[x_1,x_2]} h_1^2\right]
+C\delta^2+ C A^{-\frac14}\|z_1\|_{T,\rho}^2.
\end{aligned}\end{equation}
Observe that combined with \eqref{eq:virialff11}, \eqref{transf:ff} and \eqref{T:AAAA},
this estimate completes the proof of the Proposition.
To prove~\eqref{finish}, we first use \eqref{pr:finir2} and~\eqref{bd:gz7bis}
\begin{equation*}
\int_{|x|<A} \left[(\partial_x f_1+c\gamma f_2)^2 + f_1^2\right] \rho \leq C 
 \int \left[ (\partial_x h_1 + c\gamma h_2)^2 + h_1^2\rho^{\frac 12}  \right].
\end{equation*}
Using Lemma~\ref{le:tech:h} 
with $K = 20$, $\sigma= c\gamma$ and $J=[x_1,x_2]$ to estimate the term $\int_0^T \int h_1^2 \rho^{\frac 12}$,
we obtain (since $A \ge A_1$)
\begin{align*}
&\int_0^T\int_{|x|<A} \left[(\partial_x f_1 + c\gamma f_2)^2+f_1^2\right] \rho\\
&\quad
\leq C\int_0^T \left[\int (\partial_x h_1 + c\gamma h_2)^2 +\int (\dot h_1 -h_2)^2 + \int_{[x_1,x_2]} h_1^2\right] + C\delta^2.
\end{align*}
Using $\dot h_1 - h_2 = \CHI \zeta_B \XX \NN_1$ ($\NN_1$ is defined in \eqref{def:N1})
and~\eqref{bd:yc}, \eqref{on:zetaK}, \eqref{on:Xeps}, \eqref{on:Xepsloc}, \eqref{bd:gz1}, 
and then $\varepsilon=A^{-\frac 12}$, $A\leq \delta^{-\frac 14}$ and \eqref{bd:z}, we have
\[
\|\dot h_1-h_2\|_{L^2} 
\leq C B \varepsilon^{-\frac 12} \|\zz\|_{H^1\times L^2}  \|z_1\rho\|_{L^2}^2
\leq C\delta^{\frac {15}{16}} \|z_1\rho\|_{L^2}^2
\leq CA^{-\frac{31}{4}} \|z_1\rho\|_{L^2}.
\]
Thus, \eqref{finish} is proved, which completes the proof of Proposition~\ref{pr:3}.

\subsection{Conclusion of the proof of asymptotic stability}\label{end2end}
We place ourselves in the context of Propositions~\ref{le:coer} and~\ref{pr:3}.
In particular, $A\geq A_2$ is to be chosen later and $B=B_1$. 
The parameter $\varepsilon>0$ is fixed as in~\eqref{epsilon:A}, and the parameter $\delta>0$ is chosen small enough
with the additional constraint~\eqref{delta:A}.
Set
\[
\cLL = \int \left[ (\partial_x z_1)^2+z_1^2+z_2^2 \right] \rho^2.
\]
First, we claim
\begin{equation}\label{eq:cl}
\int_0^{+\infty} \cLL(t) \ud t \leq C \delta^2.
\end{equation}
{\it{Proof of~\eqref{eq:cl}}}. Let $T>0$.
Combining estimate~\eqref{eq:coer} of Proposition~\ref{le:coer}
with estimate~\eqref{eq:pr3} of Proposition~\ref{pr:3}, we obtain
\[
\|z_1\|_{T,\rho}^2 
\leq CA \delta^2 + C A^{-\frac 14} \|z_1\|_{T,\rho}^2.
\]
Thus, for $A\geq A_3$, where 
\[A_3=\max(A_2; (2C)^4),\]
 it holds
\[
\|z_1\|_{T,\rho}^2 
\leq 2 C A\delta^2.
\]
Now, $A=A_3$ is fixed and we will not mention the dependence on $A$ anymore.
Note that $\varepsilon$ is also fixed by~\eqref{epsilon:A}.
Using~\eqref{zeta:rho} and \eqref{eq:vir3} with~$K=10$, we obtain the estimate
$\int_0^T \cLL(t) \ud t \leq C \delta^2$. Passing to the limit $T\to \infty$, we have proved~\eqref{eq:cl}.

Second, we prove that
\begin{equation}\label{eq:cv0}
\lim_{t\to +\infty} \cLL(t)=0.
\end{equation}
Using~\eqref{eq:z}, we have
\begin{align*}
\dot\cLL
&= 2 \int \left[ (\partial_x \dot z_1)(\partial_x z_1)+ \dot z_1 z_1+\dot z_2 z_2\right] \rho^2\\
&= 2 \int \left[ (\partial_x z_2)(\partial_x z_1)+z_2z_1+\Theta(\zz)z_2 
+(\partial_x \MM_1) (\partial_x z_1) + \MM_1 z_1 + \MM_2 z_2\right]\rho^2.
\end{align*}
By integration by parts,
\begin{align*}
\dot\cLL
&= 2 \int z_2 \left[z_1-(W'(H+z_1)-W'(H))\right] \rho^2
-2\int (\partial_x z_1) z_2 (\rho^2)'\\
&\quad + 2 (\dot y-c)\gamma \int (H''\partial_x z_1+H'z_1) \rho^2
-(\dot y-c)\gamma \int [(\partial_x z_1)^2 + z_1^2+z_2^2] (\rho^2)'\\
&\quad -2 \dot c c\gamma^2\int [(\Lambda H)'\partial_x z_1 + (\Lambda H)z_1]\rho^2
+2\dot c\gamma \int H'z_2\rho^2\\
&\quad + \dot c c\gamma \int [ (\partial_x z_1)^2 (\rho^2-x(\rho^2)') + z_1^2 (x\rho^2)'+z_2^2(x\rho^2)']
+2\dot c\gamma^2 \int (\partial_x z_1) z_2\rho^2.
\end{align*}
Using this identity, we deduce from the estimate~\eqref{bd:yc} and the decay properties of the functions~$\rho$, $H'$ and $H''$ that
\begin{equation}\label{eq:bof}
|\dot \cLL(t)|\leq C \cLL(t).
\end{equation}
From~\eqref{eq:cl}, there exists a sequence $t_n\uparrow \infty$ such that
$\lim_{n\to \infty} \cLL(t_n)=0$. Let $t\geq 0$. For $n$ large enough so that $t_n>t$, integrating~\eqref{eq:bof} on $[t,t_n]$
and then passing to the limit as $n\to \infty$, we obtain
\[
\cLL(t) \leq C \int_t^{\infty} \cLL(s) ds.
\]
Thus, $\lim_{t\to +\infty} \cLL(t)=0$ follows from~\eqref{eq:cl}.

Third, since $|\dot c|\leq C \cLL$ (see  \eqref{bd:yc}),
it follows from \eqref{eq:cl} that there exists $c_+\in (-1,1)$ such that
\begin{equation}\label{ccp}
\lim_{+ \infty} c = c_+.
\end{equation}
Moreover, by~\eqref{bd:yc} and~\eqref{bd:z} 
\[
\lim_{+ \infty} \dot y = c_+,
\quad
\gamma_0^2 |\dot y - c_0|+
\gamma_0^2 |c_+-c_0| \leq C \delta^2.
\]
By the change of variable~\eqref{z1z2}, estimate~\eqref{eq:cl} implies~\eqref{eq:int0T}.
Finally,~\eqref{eq:cv0}, \eqref{ccp} and \eqref{eq:Hclose}, \eqref{eq:nul}
combined give the conclusion of Theorem~\ref{th:2}.

\subsection{Perturbations of the nonlinearity}

Let $W$ satisfy the condition \eqref{on:W}, 
$\mathcal V$ be a neighborhood of $[\zeta_-,\zeta_+]$ and 
$\alpha_0>0$ be a small parameter to be chosen.
We consider perturbed potentials of the form
\begin{equation}\label{eq:W0W}
W_\alpha  =  (1+\alpha )W
\end{equation}
where the function $\alpha$ satisfies  
\begin{equation}\label{on:alpha}
\begin{cases}
\mbox{$\alpha:\mathcal V\to \R$ is of class $\mathcal C^3$},\\
\mbox{$\sup_{\mathcal V} |\alpha^{(k)}|\leq \alpha_0$ for $k=0,1,2,3$.}
\end{cases}
\end{equation}
It is straightforward to check that for $\alpha_0$ small, the potential
$W_\alpha$ also satisfies the condition~\eqref{on:W}, with the same zeros $\zeta_-$ and $\zeta_+$.
Moreover, the corresponding kink $H_\alpha$ defined in Lemma~\ref{pr:H} is close to $H$
(see \eqref{diff:H0H} below).

In contrast, even if the potential $W$ satisfies~\eqref{on:V},
the perturbed potential $W_\alpha$ may not satisfy \eqref{on:V} for arbitrarily small~$\alpha_0>0$.
We refer to Remark~\ref{rk:perturb} for an explicit example in the $P(\phi)_2$ theory.
However, we show that the result of Theorem~\ref{th:2} extends to such
potentials $W_\alpha$ for $\alpha_0$ small enough.

\begin{corollary}\label{cor:new}
Let $W$ satisfy \eqref{on:W}. Assume that the transformed potential $V$ verifies $V'\not\equiv 0$ on $(\zm,\zp)$ and~\eqref{on:V}.
There exists $\alpha_0>0$ such that if $\alpha$
verifies \eqref{on:alpha} then the kink $H_\alpha$ corresponding to the potential $W_\alpha$ defined in~\eqref{eq:W0W} is asymptotically stable.
\end{corollary}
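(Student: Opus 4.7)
The plan is to run the proof of Theorem~\ref{th:2} for $W_\alpha$, but with the weights $\eta,\chi,\zeta_K,\varphi_A,\varphi_B$ kept \emph{equal to those constructed from the unperturbed potential $W$}. The hypothesis \eqref{on:V} enters the proof of Theorem~\ref{th:2} only through Lemma~\ref{le:sp} and the lower bound of $Z_B$ obtained in \S\ref{S:step3}. The strategy is thus to quantify how much $Z_B$ is perturbed when $P$ is replaced by $P_\alpha$, and then to absorb the resulting error by the cut-off localisation.

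First I would collect the basic perturbation estimates. For $\alpha_0$ small, $W_\alpha$ satisfies \eqref{on:W} with the same wells $\zeta_\pm$, and $\omega_{\alpha,\pm}^2=(1+\alpha(\zeta_\pm))\omega_\pm^2$ is close to $\omega_\pm^2$. Using the explicit representation $G_\alpha(H_\alpha(x))=x$ from the proof of Lemma~\ref{pr:H}, and fixing the translation by $H_\alpha(0)=H(0)$, a direct comparison of the integrals defining $G$ and $G_\alpha$ yields, for $0<\alpha_0$ small enough,
\[
\sum_{k=0}^{4}|H_\alpha^{(k)}(x)-H^{(k)}(x)|+|P_\alpha(x)-P(x)|+|P'_\alpha(x)-P'(x)| \leq C\alpha_0\, e^{-\frac{\omega}{2}|x|}.
\]
With these estimates all the material of Sections~\ref{S:2}--\ref{S:3} (coercivity of $\cL_{c,y}$, expansions of $\cE,\cP,\cM$, modulation Lemmas~\ref{le:modulation1} and \ref{le:modulation2}, orbital stability Theorem~\ref{th:1}) applies to $W_\alpha$ with constants uniform in $\alpha_0$.

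Next I would rerun \S\ref{S:4} verbatim, keeping the cut-offs $\eta,\chi,\zeta_K,\varphi_A,\varphi_B$ defined from $W$ but with the perturbed operators $U_\alpha=Y_\alpha\partial_x Y_\alpha^{-1}$, $L_{0,\alpha}=-\partial_x^2+P_\alpha$ and the corresponding transformed function $\ff_\alpha$. Propositions \ref{pr:2} and \ref{le:coer}, together with the regularity Lemmas~\ref{le:jj}--\ref{le:4p10} and the transfer estimate \eqref{transf:ff}, depend on $H_\alpha$ only through its exponential decay bounds and through $\|Y_\alpha'/Y_\alpha\|_{L^\infty}$, hence go through unchanged. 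The only place where \eqref{on:V} is crucial is the lower bound \eqref{bh1} on $Z_B$ in \S\ref{S:step3}. For the perturbed problem the same computation produces
\[
Z_{B,\alpha}=\frac{1}{2}\Bigl(\frac{\zeta_B''}{\zeta_B}-\frac{(\zeta_B')^2}{\zeta_B^2}\Bigr)-\frac{\varphi_B P'_\alpha}{2\zeta_B^2} = Z_B - \frac{\varphi_B(P'_\alpha-P')}{2\zeta_B^2}.
\]
Because $\zeta_B(x)\gtrsim e^{-\frac{\omega}{B}|x|}$ and $|\varphi_B|$ is bounded, the ratio $\varphi_B/\zeta_B^2$ grows at most like $e^{\frac{2\omega}{B}|x|}$; combined with $|P'_\alpha-P'|\leq C\alpha_0 e^{-\omega|x|/2}$ and the choice $B=B_1\geq 40$ (so $2/B<1/2$), this gives the uniform bound $|\varphi_B(P'_\alpha-P')/\zeta_B^2|\leq C\alpha_0$ on $\R$. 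Hence, for $\alpha_0$ small enough (independent of $A$),
\[
Z_{B,\alpha}\geq -C\alpha_0 \text{ on } \R, \qquad Z_{B,\alpha}\geq \tfrac{1}{8C_0} \text{ on } [x_1,x_2].
\]

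The main obstacle is now the loss of global positivity of $Z_{B,\alpha}$: the virial identity \eqref{eq:tobeint} produces an extra term $-\,C\alpha_0\int h_1^2$ that must be absorbed before closing the estimates. Here I would fix first $A=A_3$ and $B=B_1$ as in the proof of Theorem~\ref{th:2}, and only afterwards choose $\alpha_0$. Since $h_1=\CHI\zeta_B f_1$ is supported in $[-2A_3,2A_3]$ where $\rho^{1/2}\geq C(A_3)^{-1}>0$, one has $\int h_1^2 \leq C(A_3)\int h_1^2\rho^{1/2}$, and then Lemma~\ref{le:tech:h} applied as in \S\ref{S:stepfin} yields
\[
\int_0^T\!\int h_1^2\,\leq\, C(A_3)\!\int_0^T\!\Bigl[\int(\partial_x h_1+c\gamma h_2)^2+\int_{[x_1,x_2]}h_1^2\Bigr]+ C(A_3)\delta^2.
\]
Choosing $\alpha_0=\alpha_0(A_3)$ small enough so that $C\alpha_0 C(A_3)<\tfrac12$, the extra term is reabsorbed by the left-hand side of the analogue of \eqref{eq:virialff11}. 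The rest of the argument of \S\ref{S:4} (Propositions~\ref{pr:2}, \ref{le:coer}, \ref{pr:3} and the conclusion in \S\ref{end2end}) then closes exactly as before, giving asymptotic stability of $H_\alpha$.
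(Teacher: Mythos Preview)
Your proposal is correct and follows essentially the same route as the paper: keep the virial weights from the unperturbed $W$, observe that \eqref{on:V} enters only through the lower bound on $Z_B$ in \S\ref{S:step3}, bound $|Z_{B,\alpha}-Z_B|$ via the perturbation estimates on $P_\alpha'-P'$, and absorb the resulting error using Lemma~\ref{le:tech:h}. The one technical difference is that the paper retains the exponential decay $|Z_{B,\alpha}-Z_B|\leq C\alpha_0 e^{-\omega|x|/8}$ and applies Lemma~\ref{le:tech:h} directly with $K=8$ (giving a constant $C_3$, and hence an $\alpha_0$, independent of $A$), whereas you discard the decay to the uniform bound $C\alpha_0$ and then use the compact support of $h_1$ in $[-2A_3,2A_3]$, which forces your $\alpha_0$ to depend on $A_3$; both are valid since $A_3$ is fixed.
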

\begin{remark}\label{rk:new}
Considering a multiplicative perturbation~\eqref{eq:W0W} of the potential $W$ simplifies the proof
since the wells of $W$ and $W_\alpha$ are the same.
However, using the change of variable described in~\S\ref{S:5.1}, one may also consider small perturbations of the potential $W$ with different wells.
\end{remark}
\begin{proof}
One can observe that the only place in the proof where the assumption~\eqref{on:V} is required is to obtain the lower bound~\eqref{bh1} on $Z_B$ (\S\ref{S:step3} of the proof of Proposition~\ref{pr:3}).
To prove Proposition~\ref{pr:3} in the case of the perturbed potential $W_\alpha$,
we consider the same choice of $x_0,x_1,x_2\in \R$ and  $C_0,B>0$ as for $W$, to define the localization functions of Section \ref{s:notvirial}.
Let $Z_B$ be defined in \eqref{def:ZB} and let
$Z_{\alpha,B}$ be defined similarly for the potential $W_\alpha$.
To obtain a lower bound on $Z_{\alpha,B}$, we write
\[
Z_{\alpha,B}\geq Z_B - |Z_{\alpha,B}-Z_B|.
\]
The goal is thus to bound the remainder term 
\[
C\alpha_0 \int_0^T \int |Z_{\alpha,B}-Z_B|
\]
by a fraction of the main term appearing in~\eqref{eq:virialff11}
\[
\int_0^T \left[\int (\partial_x h_1 + c \gamma h_2)^2+  \int_{[x_1,x_2]} h_1^2 \right].
\]

In order to do so, we start by the following estimates that follow directly from Lemma~\ref{pr:H}:
for  $k=1,2,3$, for all $x\in\R$,
\begin{equation}\label{on:H0H}
|H_\alpha(x)-\zpm|\leq C e^{\mp \frac 12 \omega x},\quad 
|H_\alpha^{(k)}(x)|\leq C e^{-\frac 12 \omega |x|};
\end{equation}
the coefficient of the exponentials in the above estimates are fixed to $\frac 12 \omega$ because
the values of $\omega_\pm$ defined in \eqref{def:omega} can be slightly different for $W_\alpha$.

Next,
we check that the kink $H_\alpha$ is close to the kink $H$ in the following sense:
for $k=0,1,2,3$, for all $x\in \R$,
\begin{equation}\label{diff:H0H}
|H_\alpha^{(k)}(x)-H^{(k)}(x)|\leq C \alpha_0  e^{-\frac 14 \omega |x|}.
\end{equation}
{\it{Proof of \eqref{diff:H0H}}}.
We define the functions $G$ and $G_\alpha$ corresponding respectively to $W$ and $W_\alpha$ as given by formula~\eqref{def:G}.
Since $x=G_\alpha(H_\alpha(x))=G(H(x))$, the following identity holds
\begin{equation*}
G(H_\alpha(x))-G_\alpha(H_\alpha(x))=G(H_\alpha(x))-G(H(x)).
\end{equation*}
On the one hand, we have
\begin{equation*}
G(H_\alpha(x))-G_\alpha(H_\alpha(x))
=\int_{\zeta_0}^{H_\alpha(x)}
\frac {\alpha(s)} {1+\sqrt{1+\alpha(s)}} \frac{\ud s}{\sqrt{2W_\alpha(s)}},
\end{equation*}
and so by \eqref{on:alpha} and $G_\alpha(H_\alpha(x))=x$,
\begin{equation*}
|G(H_\alpha(x))-G_\alpha(H_\alpha(x))| \leq  \alpha_0 |x|.
\end{equation*}
On the other hand, 
\[
G(H_\alpha(x)) - G(H(x)) =  \int_{H(x)}^{H_\alpha(x)} \frac {\ud s}{\sqrt{2W(s)}},
\]
and thus, using
\[
\sqrt{W(s)} \leq C (s-\zeta_-) (\zeta_+-s),
\]
for  $s\in(\zeta_-,\zeta_+)$, we obtain by \eqref{on:H0H}
\[
|G(H_\alpha(x)) - G(H(x))| \geq \frac 1 C  e^{\frac 12 \omega |x|} |H_\alpha(x)-H(x)|.
\]
Therefore, \eqref{diff:H0H} for $k=0$ is proved
(the coefficient of the exponentials in \eqref{diff:H0H} became $\frac 14 \omega$ to absorb the factor $|x|$).
For $k=1,2,3$, it suffices to observe
that 
\[
H_\alpha'-H' = \sqrt{2 W_\alpha(H_\alpha)} - \sqrt{2 W(H)},
\]
and to differentiate this relation to derive the estimate~\ref{diff:H0H}.

Next, we check that the transformed potential $V_\alpha$ corresponding to $W_\alpha$ is close
to the transformed potential $V$ of $W$.
From~\eqref{def:V}, we have the following formula
\[
V_\alpha = (1+\alpha) V + W \left(\frac{(\alpha')^2}{1+\alpha}-\alpha''\right)
\]
and thus setting $P_\alpha=V_\alpha(H_\alpha)$,
\[
P_\alpha'
= H_\alpha'V_\alpha'(H_\alpha)
=H_\alpha'V_\alpha'( H) + H_\alpha'R_1(H_\alpha),
\]
where
\[
R_1= \alpha V'+\alpha' V+W' \left(\frac{(\alpha')^2}{1+\alpha}-\alpha''\right)
+ W \left(-\frac{(\alpha')^3}{(1+\alpha)^2}+\frac{2\alpha'\alpha''}{1+\alpha}-\alpha'''\right).
\]
Using \eqref{on:alpha} and \eqref{diff:H0H}, we observe that
\[
|R_1|\leq C \alpha_0,\quad |V'(H_\alpha)-V_0'(H)|\leq C \alpha_0.
\]
Finally, we estimate
\begin{equation}\label{diff:P0P}
\begin{aligned}
|P_\alpha' -P' |
&\leq  |H_\alpha'- H'| |V'(H)| + H_\alpha' |V'(H_\alpha)-V'(H)| + H_\alpha'|R_1(H_\alpha)|\\
&\leq C \alpha_0 e^{-\frac 14 \omega|x|}.
\end{aligned}
\end{equation}

Therefore, we conclude that
\[
|Z_{\alpha,B}-Z_B|=
\left|\frac{\varphi_B P_\alpha'}{\zeta_B^2}-\frac{\varphi_B P'}{\zeta_B^2}\right|
\leq CBe^{\frac {\omega |x|} {B}} |P_\alpha'-P'|
\leq C\alpha_0 e^{-\frac 18\omega|x|}.
\]
Using Lemma~\ref{le:tech:h} with $K=8$ and $\sigma= c\gamma$, we estimate
\begin{align*}
&\int_0^T\int |Z_{\alpha,B}-Z_B| h_1^2 \\
&\qquad\leq C_3 \alpha_0 \int_0^T\int h_1^2 \frac{\phi_B}{2\zeta_B^2}|P_\alpha' -P' | \\
&\qquad \leq  C_3\alpha_0 \left(\int_0^T \int (\partial_x h_1 + c\gamma \dot h_1)^2
 +  \int_0^T \int_{[x_1,x_2]} h_1^2  + \sup_{[0,T]} \gamma(t) \norm{h_1(t)}_{L^2}^2 \right),
\end{align*}
for some constant $C_3$ independent of $c$.
As in \S\ref{S:stepfin} of the proof of Proposition~\ref{pr:3},
we have
\[
\int_0^T \int |\dot h_1 - h_2|^2 \leq
C A^{-\frac 14}  \|z_1 \|_{T,\rho}^2.
\]
Therefore, for $\alpha_0 \le \frac{3}{4} C_3^{-1}$, the remaining terms in \eqref{eq:tobeint} can be bounded as before. We obtain \eqref{eq:virialff11} and from there prove Proposition~\ref{pr:3}
for the potential $W_\alpha$.
\end{proof}

\section{Applications}\label{S:applications}
In this final section, we illustrate the applicability of Theorem~\ref{th:2} by discussing some concrete examples of scalar field models for which the sufficient condition~\eqref{on:V} of asymptotic stability is indeed verified.
Recall that~\eqref{on:V} is a simple condition set on the transformed
potential $V$ defined by~\eqref{def:V}.
In this section, we provide an analytic verification of this criterion for various models
that appear in the Physics literature, notably~\cite{Campbell,Lohe}.
For more complex models, it is possible to check the condition numerically by plotting the associated $V$.

The first condition in~\eqref{on:V} is that the transformed potential $V$ is not constant.
In \S\ref{S:5.2}, we consider the threshold case where $V$ is constant and prove that it is equivalent to 
$W=W_\SG$ (the sine-Gordon potential defined in~\eqref{W:sineG}) up to invariances.
Invariances in the general setting~\eqref{on:W} are discussed in \S\ref{S:5.1}.

Second, in the case of a non constant transformed potential $V$, we observe that the sufficient condition \eqref{on:V} is satisfied either when $V'(\phi)$ has a constant sign $+$ or $-$ in the range $(\zeta_-,\zeta_+)$ of the kink, or when it changes sign only once at some $\zeta_0\in (\zeta_-, \zeta_+)$ and then it holds both $V'\geq 0$ on $(\zeta_-,\zeta_0)$ and $V'\leq 0$ on $(\zeta_0,\zeta_+)$. We will see in \S\ref{S:5.3}-\ref{S:5.4} that these different possibilities do occur in explicit examples of potential.

\subsection{Change of variables}\label{S:5.1}
We summarize how invariances under dilations, translations, scaling and reflection allow to fix some properties of the kink and the potential.
The outcome of this discussion is that for any potential $W$ satisfying \eqref{on:W},
we can restrict ourselves to the case where
\begin{equation}\label{eq:reduction}
\zeta_-=\zeta_1,\quad \zeta_+=\zeta_2,\quad 1=W''(\zeta_1)\leq W''(\zeta_2),
\end{equation}
for any given values $\zeta_1<\zeta_2$ (typical cases are $(\zeta_1,\zeta_2)=(0,1)$ or $(0,2\pi)$).
However, if $W''(\zeta_-)< W''(\zeta_+)$ then one cannot use invariances to reduce to $W''(\zeta_-)= W''(\zeta_+)$.

First, by possibly changing $W(\phi)$ into $W(-\phi)$, we can assume without loss of generality that
$0<W''(\zeta_-)\leq W''(\zeta_+)$ (of course, this changes the values of $\zeta_\pm$, but this will be settled right away).
Second, let $\phi$ satisfy \eqref{eq:W}. 
For any $a\in \R$ and $\lambda,\mu>0$, setting 
\[
\tilde \phi(t,x)=   \lambda(a+ \phi(\mu t, \mu x )),
\]
the equation of $\tilde\phi$ is
\[
\partial_t^2 \tilde \phi - \partial_x^2 \tilde \phi
= \lambda \mu^2 W'(\lambda^{-1} \tilde \phi - a) = \tilde W'(\tilde \phi),
\]
where the potential $\tilde W$ is defined by
\[
\tilde W(\tilde \phi)=  \lambda^2 \mu^2 W(\lambda^{-1} \tilde \phi - a).
\]
Choosing $\lambda>0$ and $a\in \R$ such that
\[
\lambda^{-1} \zeta_1 -a = \zeta_- \quad \mbox{and}\quad \lambda^{-1} \zeta_2 -a = \zeta_+,
\]
we have prescribed the zeros of $\tilde W$ to $\zeta_1$ and $\zeta_2$.
Moreover, since
\[
\tilde W''(\tilde \phi)=  \mu^2 W''(\lambda^{-1} \tilde \phi -a )
\]
by adjusting $\mu>0$, we can fix $\tilde W''(\zeta_1)$ to any given positive value (for example $1$ as \eqref{eq:reduction}). 
By the first reduction, we have $\tilde W''(\zeta_2)\geq \tilde W''(\zeta_1)$, but we have no other free parameter to change $\tilde W''(\zeta_2)$.

\subsection{Case of a constant transformed potential}\label{S:5.2}

In this section, we prove that if the transformed potential $V$ is constant then 
$W$ is the sine-Gordon potential given in \eqref{W:sineG}, up to the invariances.
Using \S\ref{S:5.1}, we restrict ourselves to the case where $(\zeta_-,\zeta_+)=(0,2\pi)$
and we assume $W''(0)=1$.

Recall that the transformed potential $V$ has the form \eqref{def:V}
\[
V=  -W\left(\frac{W'}{W}\right)'.
\]
Note from~\eqref{eq:L2.1} that $V(0) = W''(0)=1$.
Thus, assuming that $V$ is constant on $[0,2\pi]$, we have $V\equiv 1$ and the potential $W$ satisfies on $(0,2\pi)$
\[
\left(\frac{W'}{W}\right)'=-\frac 1W.
\]
Multiplying by $\frac{W'}W$ and integrating, one finds on $(0,2\pi)$,
\[
\frac 12 \left(\frac {W'}{W}\right)^2 = \frac 1 W + C,
\]
where $C$ is a constant. Note that the constant $C$ has to be negative since otherwise
$W'$ does not vanish on $(0,2\pi)$ and so $W(2\pi)=0$ is impossible.
We set $C=-\frac{a^2}2$ where $a>0$, so that the equation satisfied by $W$ on $[0,2\pi]$ becomes
\[
(W')^2 = 2 W - a^2 W^2  = \frac 1{a^2} \left[ 1  - \left(1-a^2 W\right)^2\right],
\]
and setting $U(\phi)=1-a^2 W(\phi/a)$, we find $(U')^2+U^2=1$.
Since $U(0)=1$, $U'(0)=0$ and $U''(0)=-1$, we obtain $U(\phi)=\cos \phi$, and thus
$W(\phi) = a^{-2} (1 - \cos (a \phi))$.
Since $\zeta_+=2\pi$, we have $a=1$ and so $W(\phi)=1-\cos \phi$, the sine-Gordon potential~\eqref{W:sineG}.
Since the kinks of the sine-Gordon equation are not asymptotically stable (in the sense of Theorem~\ref{th:2}, see references in the Introduction) this shows that the first condition
$V'\not \equiv 0$ in \eqref{on:V} is necessary.

\subsection{The $P(\phi)_2$ theory for low degrees}\label{S:5.3}

Following \cite{Lohe}  we consider two families of models with finitely many wells. 
Let $n\geq 1$ and $0\leq m_1< m_2<\cdots<m_n$ be  $n$ distinct numbers  representing  the locations of the wells
on the half line $(0,\infty)$. Set
\begin{align}
W_{4n}(\phi;m_1, \dots, m_n)&=\prod_{k=1}^n (\phi^2-m_k^2)^2,\label{def: w4n}\\
W_{4n+2}(\phi;m_1, \dots, m_n)&=\phi^2\prod_{k=1}^n (\phi^2-m_k^2)^2.\label{def: w4n2}
\end{align}
These two potentials represent respectively the $\phi^{4n}$ and $\phi^{4n+2}$ models in generalized form.
The usual $\phi^{4n}$ and $\phi^{4n+2}$ models correspond to specific choices of the wells, see~\cite{Lohe},
~\S\ref{S:5.3bis} and to a suitable normalization to approximate the sine-Gordon theory in the limit
$n\to \infty$. We discuss in this section specific results for $\phi^4$, $\phi^6$, $\phi^8$ and $\phi^{10}$, possibly depending on the location of the wells.

\subsubsection{Failure of the criterion for the $\phi^4$ model}\label{S:5.3.1}
For $n=1$ and $m_1=1$ in \eqref{def: w4n} (note that by \S\ref{S:5.1}, we may restrict ourselves to $m_1=1$ without loss of generality),
we obtain the $\phi^4$ model, with potential \eqref{W:phi4}.
The static kink of the $\phi^4$ model, corresponding to the potential~\eqref{W:phi4} is given explicitely by
$\HH_4=(H_4,0)$
where $H_{4}(x)=\tanh\left(\sqrt{2} {x}\right)$.
By direct computation, the corresponding transformed potential $V_4$ defined by \eqref{def:V} writes
\[V_4(\phi)=4(\phi^2+1) \quad \mbox{and so}\quad V_4'(\phi)=8\phi.\]
We observe that the criterion (\ref{on:V}) is not satisfied on $[-1,1]$, while the unique (increasing) kink for this model connects $\zeta_-=-1$ to $\zeta_+=1$.
This is related to the fact that the $\phi^4$ kink has an internal mode in addition to the zero eigenfunction~$H'$.

In particular, recall the result from \cite{KMM}.

\begin{theorem}[The static $\phi^4$ kink under odd perturbation~\cite{KMM}]\label{th:phi4}
There exists $\delta>0$ such that for any odd $\pp^{in}$ with
$\|\pp^{in}-\HH_4 \|_{H^1\times L^2}\leq \delta$,
the solution $\pp$ of the $\phi^4$ model with $\pp(0)=\pp^{in}$
satisfies, for any bounded interval $I$,
\[\lim_{t \to \pm\infty} \|\pp(t)-\HH_4\|_{(H^1\times L^2)(I)} =0.
\]
\end{theorem}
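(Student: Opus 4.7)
The plan is to follow the strategy of \cite{KMM}, which handles the $\phi^4$ internal mode via a Fermi Golden Rule (FGR) argument coupled to a modified virial estimate for the radiation, rather than trying to apply Theorem~\ref{th:2} directly (which, as noted in \S\ref{S:5.3.1}, fails because the transformed potential $V_4$ is not monotone).

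First I would exploit parity: since $W_4$ is even, \eqref{eq:W} preserves oddness in $x$, so $\pp(t,\cdot)$ remains odd and the perturbation $\uu(t)=\pp(t)-\HH_4$ stays odd for all $t$. Consequently $\langle u_1,H_4'\rangle=0$ holds automatically (the kink stays centered at the origin, so no modulation in the translation parameter is needed), and the even threshold resonance of the linearized operator $L_4=-\partial_x^2+W_4''(H_4)$ at the edge of the essential spectrum is orthogonal to $u_1$ as well. The only spectral obstruction that survives on $L^2_{\mathrm{odd}}$ is the \emph{odd} internal mode $Y_1$ of $L_4$ with eigenvalue $\lambda_1^2=6$, which a P\"oschl--Teller analysis of $L_4=-\partial_x^2+8-12\sech^2(\sqrt2 x)$ identifies explicitly.

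I would then decompose $u_1=\alpha(t)Y_1+u_1^\sharp$, $u_2=\beta(t)Y_1+u_2^\sharp$ with $\langle u_i^\sharp,Y_1\rangle=0$. Projecting \eqref{syst} onto $Y_1$ yields an oscillator $\ddot\alpha+\lambda_1^2\alpha=N(\alpha,u^\sharp)$ with quadratic--and--higher nonlinearity, and projecting onto $Y_1^\perp$ gives the equation for the radiation. For the radiation I would apply the factorization $U=Y\partial_x Y^{-1}$ from \S\ref{s.4.6}: because $u_1^\sharp$ is odd, $g_1^\sharp=Uu_1^\sharp$ is even, and it is orthogonal to the even eigenfunction $UY_1$ of $L_0=UU^\star$. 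Although the transformed potential $P_4=V_4(H_4)=4(H_4^2+1)$ is not repulsive in the sense of Lemma~\ref{rk:P} (it is in fact attractive at the origin), after a second Darboux conjugation adapted to $UY_1$ one obtains a Schr\"odinger operator with a genuinely repulsive potential. A localized virial computation in the spirit of Propositions~\ref{pr:2}--\ref{pr:3}, applied to the twice--transformed radiation, then yields a spacetime bound of the form
\[
\int_0^\infty \|u_1^\sharp(t)\rho\|_{L^2}^2\,dt \;\leq\; C\delta^2+C\int_0^\infty \alpha^4(t)\,dt,
\]
the quartic tail recording the back--reaction of the bound state on the radiation.

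The decay of $\alpha$ itself comes from the FGR. Setting $E(t)=\lambda_1^2\alpha^2+\beta^2$ and computing $\dot E$ to second order, the leading resonant channel is the forcing of the radiation by the quadratic source $\tfrac12 W_4'''(H_4)Y_1^2$ at frequency $2\lambda_1=2\sqrt6$, which lies \emph{strictly inside} the continuous spectrum of $L_4$ since $2\sqrt6>\sqrt8$. A direct computation shows that the FGR coefficient $\Gamma=\pi|\langle W_4'''(H_4)Y_1^2,\psi_{2\lambda_1}\rangle|^2$ (with $\psi_{2\lambda_1}$ a generalized eigenfunction of $L_4$ at frequency $2\lambda_1$) is strictly positive, so that $\dot E\leq -\Gamma E^2+\textup{errors}$ and hence $E(t)\lesssim(1+|t|)^{-1}$. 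The hard part will be closing the bootstrap between the two estimates: the virial bound carries the quartic error $\int\alpha^4$ that must be absorbed by the FGR decay, while the FGR estimate produces mixed terms controlled by local norms of $u_1^\sharp$. A careful choice of weights and of the time--integration window makes both estimates consistent, and a standard propagation argument à la \S\ref{end2end} then gives $\|u_1(t)\|_{L^2(I)}+\|u_2(t)\|_{L^2(I)}\to 0$ as $t\to\pm\infty$ on any bounded interval $I$, which is the assertion of the theorem.
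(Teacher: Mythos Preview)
This theorem is not proved in the present paper; it is quoted from~\cite{KMM}, with the surrounding text only explaining why criterion~\eqref{on:V} fails for $\phi^4$ and why the restriction to odd data eliminates the even threshold resonance. There is therefore no in-paper proof to compare against beyond the citation.

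Your outline is a correct sketch of the mechanism in~\cite{KMM}: parity freezes the translation and removes the even resonance, the split $u_1=\alpha Y_1+u_1^\sharp$ isolates the odd internal mode at $\lambda_1^2=6$, a virial estimate controls the radiation, and the Fermi Golden Rule gives $\dot E\lesssim -\Gamma E^2$, hence $\alpha^2\lesssim(1+|t|)^{-1}$ and $\int\alpha^4<\infty$, which feeds back and closes the bootstrap. Two caveats are worth flagging. First, the strict positivity of $\Gamma$ is not a routine ``direct computation'': it is the one genuinely model-specific verification in~\cite{KMM}, and without it the entire argument collapses. Second, your twice-iterated Darboux route for the virial---conjugating once more by $UY_1$ to land on the free operator $-\partial_x^2+8$, using that $L_0=-\partial_x^2+8-4\sech^2(\sqrt2\,x)$ is the $\ell=1$ P\"oschl--Teller operator---is more in the spirit of the present paper and of~\cite{KMM4} than of~\cite{KMM} itself, which organizes the radiation estimate differently; either route should close, but you should be aware that this clean reduction to a free operator is a $\phi^4$-specific algebraic accident. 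Finally, your last line should also recover $\partial_x u_1$ in $L^2(I)$, since the statement asserts convergence in $H^1\times L^2$ on bounded intervals.
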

Recall that the $\phi^4$ model has an even resonance, which justifies the restriction to odd initial perturbation of the odd static kink. The main difficulty in~\cite{KMM} was to deal with the odd internal mode, which yields a weak decay rate of the solution.

\subsubsection{Asymptotic stability for all kinks of the $\phi^6$ model}
The $\phi^6$ model corresponds to \eqref{W:phi6}, which is the potential in \eqref{def: w4n2} with $n=1$ and $m=1$. By  \S\ref{S:5.1}, we may restrict ourselves to the case $m_1=1$. By direct computations, we obtain the corresponding transformed potential
\[
V_6(\phi)=2(\phi^2-1)^2+4\phi^2(\phi^2+1)
=6\phi^4 +2 \quad \mbox{and so}\quad  V_6'(\phi)=24 \phi^3.
\]
Since $(1-\phi) V'_6(\phi)\leq 0$ on $[0,1]$, the static kink of the $\phi^6$ model which connects $0$ to $1$ and is explicitly given by
\[
H_6(x)=\left(\frac{1+\tanh ({\sqrt 2}x)}{2}\right)^{1/2}
\]
satisfies the condition \eqref{on:V} and
is thus asymptotically stable. This is also the case of all moving kinks obtained from $H_6$ by the Lorentz boost.

\begin{theorem}[$\phi^6$ model]\label{th:phi6}
The kink $H_6$ of the $\phi^6$ model is asymptotically stable.
\end{theorem}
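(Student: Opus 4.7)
The plan is to apply Theorem~\ref{th:2} directly. The proof reduces to two verifications: (i) the basic assumptions \eqref{on:W} for $W_6$, and (ii) the sufficient condition \eqref{on:V} on the associated transformed potential $V_6$. Since $W_6(\phi) = \phi^2(\phi^2-1)^2$, the static increasing kink $H_6$ joins the consecutive wells $\zeta_- = 0$ and $\zeta_+ = 1$, and I would first quickly check \eqref{on:W}: $W_6 \geq 0$ is a smooth polynomial, is strictly positive on $(0,1)$, and the nondegeneracy conditions hold since $W_6''(0) = 2 > 0$ and $W_6''(1) = 8 > 0$.

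The heart of the matter is the computation of $V_6$. Rather than working with $(\log W_6)''$ directly, I would use the equivalent form $V = -W'' + (W')^2/W$, which follows from \eqref{def:V} by expanding $(\log W)'' = W''/W - (W'/W)^2$. The key algebraic observation is that $W_6'(\phi)$ factors as $2\phi(\phi^2-1)(3\phi^2-1)$, so that $(W_6')^2 / W_6$ is in fact a polynomial, namely $4(3\phi^2 - 1)^2$; combining with $W_6'' = 30\phi^4 - 24\phi^2 + 2$ gives the clean expression $V_6(\phi) = 6\phi^4 + 2$, hence $V_6'(\phi) = 24\phi^3$.

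With $V_6$ in hand, the criterion \eqref{on:V} is verified by taking $\zeta_0 = \zeta_+ = 1$: for every $\phi \in (0,1)$,
\begin{equation*}
(\phi - \zeta_0) V_6'(\phi) = 24\, \phi^3 (\phi - 1) \leq 0,
\end{equation*}
and clearly $V_6' \not\equiv 0$ on $(0,1)$. This choice corresponds to the ``repulsivity at $+\infty$'' alternative in Lemma~\ref{rk:P}, reflecting the fact that the relevant Schr\"odinger potential $P = V_6 \circ H_6$ is nondecreasing on $\R$. Theorem~\ref{th:2} then yields the asymptotic stability of $H_6$ (and, by the statement of Definition~\ref{def:1}, of all its Lorentz-boosted translates).

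There is essentially no serious obstacle here; the entire proof is a one-line verification once $V_6$ is computed, and the computation itself is painless precisely because the $\phi^2(\phi^2-1)^2$ structure of $W_6$ makes $(W_6')^2$ divisible by $W_6$ with polynomial quotient. The only mildly delicate point is recognizing that the sign of $V_6'$ on the kink's range forces $\zeta_0$ to be taken at the boundary $\zeta_+$ rather than in the interior.
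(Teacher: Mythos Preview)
Your proof is correct and follows essentially the same approach as the paper: both compute $V_6(\phi)=6\phi^4+2$, observe $V_6'(\phi)=24\phi^3\geq 0$ on $(0,1)$, and verify \eqref{on:V} with $\zeta_0=\zeta_+=1$ before invoking Theorem~\ref{th:2}. The only cosmetic difference is that the paper obtains $V_6$ via the decomposition $2(\phi^2-1)^2+4\phi^2(\phi^2+1)$ (an instance of the general formula \eqref{def v4n}), whereas you compute it through $(W_6')^2/W_6-W_6''$; your explicit check of \eqref{on:W} is a welcome addition that the paper leaves implicit.
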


\subsubsection{The generalized $\phi^{8}$ model}\label{subsec v8}
We consider the generalized $\phi^8$ model with the potential
\begin{equation}\label{W:phi8}
W_8(\phi; m)=(\phi^2-1)^2(\phi^2-m^2)^2,
\end{equation}
where we assume $m_1=1$ and $m_2=m>1$ without loss of generality by \S\ref{S:5.1}.
Note that this model has four potential wells at $\pm 1$ and $\pm m$ and it has three static kinks: the unique odd kink 
$H_{8}{\{-1,1\}}$ connecting $-1$ to $+1$ and two non-symmetric kinks $H_8{\{-m,-1\}}$ and $H_8{\{1,m\}}$ 
(images of each other by the symmetry $x\mapsto -x$, $\phi\mapsto -\phi$), connecting respectively $-m$ to $-1$ and $1$ to $m$. 

We study the function $V_8$ and its derivative $V_8'$ in terms of the parameter~$m>1$. 
By direct computations
\begin{equation*}
V_8(\phi;m)=4\left[(\phi^2+1)(\phi^2-m^2)^2+(\phi^2+m^2)(\phi^2-1)^2\right]
\end{equation*}
and so
\begin{equation*}
V'_8(\phi; m)=8\phi U_8(\phi;m),
\end{equation*}
where
\begin{align*}
U_8(\phi;m)
&= (\phi^2-m^2)^2+2(\phi^2-m^2)(\phi^2+1)+2(\phi^2-1)(\phi^2+m^2)+(\phi^2-1)^2\\
&= m^4 - 2 m^2 (\phi^2+2) + 6\phi^4-2\phi^2+1.
\end{align*}
In particular, sign changes for $V'_8$ are related to the zeros of $U(\phi;m)$.
We introduce the functions
\begin{align*}
M^+_8(\phi) & = \sqrt{\phi^2 + 2 +\sqrt{-5\phi^4+ 6\phi^2+3}} ,\quad\mbox{for}\quad |\phi| \leq  \sqrt{\frac{3+2\sqrt{6}}{5}}\\
M^-_8(\phi) & = \sqrt{\phi^2 + 2 -\sqrt{-5\phi^4+ 6\phi^2+3}}, \quad\mbox{for}\quad \frac 13 \sqrt{\frac 72}\leq |\phi| \leq \sqrt{\frac{3+2\sqrt{6}}{5}}.
\end{align*}
We will use Figure \ref{figure phi8} to guide us through different cases. 
The curves correspond to portions of the graphs of $\phi  \mapsto M^\pm_8(\phi)$.
\begin{figure}[ht]
\vspace*{-20pt}
\scalebox{0.45}
{\includegraphics{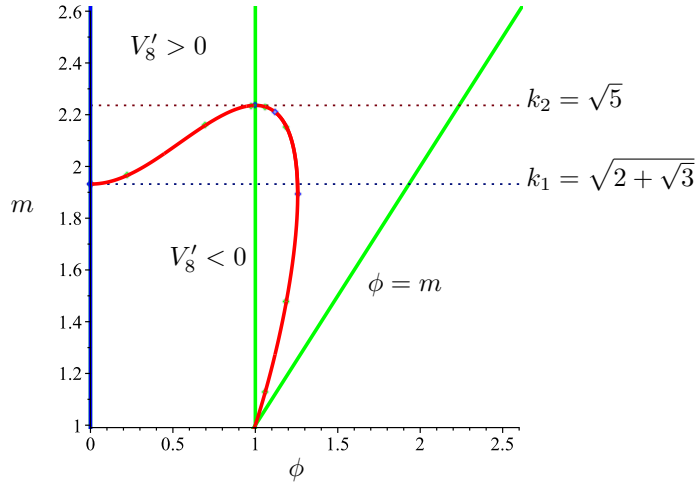}}
\put(-70,290){$k_2=\sqrt{5}$}
\put(-70,260){$k_1=\sqrt{2+\sqrt{3}}$}
\put(-130,220){$\phi=m$}
\put(-205,230){$V'_8<0$}
\put(-160,150){$\phi$}
\put(-220,310){$V'_8>0$}
\put(-265,250){$m$}
\vspace*{-150pt}
\caption{The zero level set of $V'_8(\phi;m)$ for $\phi \geq 0$ and $m\geq 1$.}
\label{figure phi8}
\end{figure}

We check the criterion \eqref{on:V} for the two kinks $H_8\{-1,1\}$ and $H_8\{1,m\}$, positive results for these kinks also holds for their Lorentz boosted versions. We see that there are $2$ threshold values $k_1=\sqrt{2+\sqrt{3}}$ and $k_2=\sqrt{5}$ of the parameter $m$ (vertical axis) at which changes for the criteron~\eqref{on:V} take place.
\begin{itemize}
\item[(i)] Case $1<m\leq k_1$: 
\begin{itemize}
\item[(a)] Since $\phi V'_8(\phi;m)=8\phi^2 U_8(\phi;m)\leq 0$ for all $\phi\in [-1,1]$, the criterion \eqref{on:V} is satisfied on $(-1,1)$ and so the kink $H_8\{-1, 1\}$ is asymptotically stable by Theorem~\ref{th:2}.
\item[(b)] For the kink $H\{1,m\}$ the criterion is inconclusive since in the range $(1,m)$ of this kink, the function $V'_8(\phi;m)$ is negative and then positive after changing sign on the red curve.
\end{itemize}
\item[(ii)] Case $k_1<m<k_2$: the criterion is inconclusive for both kinks.
Indeed, $\phi\in (-1,1)\mapsto V'_8(\phi;m)$ changes sign twice while $\phi\in (1,m)\mapsto V_8'(\phi;m)$
is negative and then positive.
\item[(iii)] Case $m\geq k_2$:
\begin{itemize}
\item[(a)] For $H\{-1,1\}$ the criterion is inconclusive since $\phi\in (-1,1)\mapsto V'_8(\phi;m)$ is negative and then positive.
\item[(b)] Since  $(\phi-1)V'_8(\phi;m)\leq 0$ for any  $\phi\in [1,m]$, the criterion~\eqref{on:V} is satisfied on $(1,m)$ and so the kink $H\{1, m\}$ is asymptotically stable.  By symmetry, the kink $H\{-m, -1\}$ is also asymptotically stable in this case.
\end{itemize}
\end{itemize}

\begin{theorem}[Generalized $\phi^8$ model]\label{th:phi8}
For  the $\phi^8$ model with potential~\eqref{W:phi8}, 
\begin{enumerate}
\item If $1< m\leq \sqrt{2+\sqrt{3}}$  the kink $H_8\{-1,1\}$ is asymptotically stable.
\item If $m\geq \sqrt{5}$  the kink $H_8\{1,m\}$ is asymptotically stable.
\end{enumerate}
\end{theorem}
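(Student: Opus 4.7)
The plan is to apply Theorem~\ref{th:2} in each case, by verifying that the sufficient condition~\eqref{on:V} is satisfied by the transformed potential $V_8(\,\cdot\,;m)$ on the range of the given kink. The discussion preceding the statement already yields the factorization
\[
V_8'(\phi;m) = 8\phi\, U_8(\phi;m),\quad U_8(\phi;m)= m^4 - 2m^2(\phi^2+2) + 6\phi^4 - 2\phi^2 + 1,
\]
so the question reduces to analyzing the sign of $U_8$ (together with that of $\phi$) on the relevant interval.

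The key observation is that $U_8(\phi;\,\cdot\,)$ is a quadratic polynomial in $X = m^2$, opening upward, whose discriminant equals $4(-5\phi^4 + 6\phi^2 + 3)$; when it is non-negative, the two roots are exactly $(M_8^{\pm}(\phi))^2$. Thus $U_8(\phi;m)\leq 0$ is equivalent to $m^2 \in [(M_8^-(\phi))^2, (M_8^+(\phi))^2]$, while $U_8(\phi;m)\geq 0$ holds either when $m^2$ lies outside this interval, or when the discriminant is negative (i.e.\ $\phi^2 > (3+2\sqrt{6})/5$), in which case $U_8 > 0$ unconditionally.

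For (1), I would take $\zeta_0 = 0$ and show $\phi V_8'(\phi;m) = 8\phi^2 U_8(\phi;m) \leq 0$ on $[-1,1]$, which amounts to $U_8 \leq 0$ there, i.e.\ to the two inequalities $\max_{[-1,1]} (M_8^-(\phi))^2 \leq m^2 \leq \min_{[-1,1]} (M_8^+(\phi))^2$. A direct one-variable analysis of the functions $u \mapsto u + 2 \pm \sqrt{-5u^2 + 6u + 3}$ on $u = \phi^2 \in [0,1]$, locating interior critical points through $(5u-3)^2 = -5u^2 + 6u + 3$ (which yields $u = 1/5$), shows that $(M_8^+)^2$ is non-decreasing on $[0,1]$ with minimum $2+\sqrt{3} = k_1^2$ at $u = 0$, and that $(M_8^-)^2$ attains its maximum $1$ at $u = 1$. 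Consequently, for $1 < m \leq k_1$ the value $m^2$ lies in $[(M_8^-(\phi))^2, (M_8^+(\phi))^2]$ for every $\phi\in[-1,1]$, and~\eqref{on:V} holds with $\zeta_0 = 0$.

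For (2), I would take $\zeta_0 = \zeta_+ = m$ and show $U_8(\phi;m) \geq 0$ on $[1,m]$, which yields $(\phi - m) V_8'(\phi;m) \leq 0$. On the subrange of $[1,m]$ where $\phi^2 > (3+2\sqrt{6})/5$ the discriminant is negative and $U_8 > 0$ automatically. On the complementary subrange $\phi^2 \in [1,(3+2\sqrt{6})/5]$, the same calculus gives that $(M_8^+)^2$ is non-increasing in $u$, so its maximum is $(M_8^+(1))^2 = 5 = k_2^2$; hence $m \geq k_2$ forces $m^2 \geq (M_8^+(\phi))^2$ throughout, and $U_8 \geq 0$. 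The main obstacle in both cases is the monotonicity/extremum analysis of $M_8^{\pm}$; once it is in place, Theorem~\ref{th:2} supplies the asymptotic stability of each kink and, by Definition~\ref{def:1}, of all its Lorentz boosted versions.
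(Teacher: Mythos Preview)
Your proof is correct and follows essentially the same approach as the paper: both use the factorization $V_8'(\phi;m)=8\phi\,U_8(\phi;m)$, the roots $M_8^{\pm}$ of the quadratic in $m^2$, and then a sign analysis of $U_8$ on the relevant interval to verify~\eqref{on:V}. Your version supplies the explicit one-variable calculus for the extrema of $(M_8^{\pm})^2$ that the paper reads off from Figure~\ref{figure phi8}; your choice $\zeta_0=m$ in part~(2) is the right one (the paper's inequality $(\phi-1)V_8'\leq 0$ appears to be a sign slip, since the analysis gives $V_8'\geq 0$ on $[1,m]$ for $m\geq\sqrt{5}$).
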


\begin{remark}
After change of variable, the $\phi^8$ model considered in \cite{Lohe} (see also \S\ref{S:5.3.6})
corresponds to $m=3>\sqrt{5}$ and thus the kink $H_8\{1,3\}$ is asymptotically stable. 
\end{remark}
\begin{remark}\label{rk:perturb}
There exists $\tau>0$ small such that for any $m\in (k_1,k_1+\tau)$, the condition \eqref{on:V} for
$V$ does not holds on $(-1,1)$, however, by Corollary~\ref{cor:new}, the kink $H\{-1,1\}$ is asymptotically stable. Indeed, setting
\[
\alpha(\phi) = \frac{W_8(\phi;m)}{W_8(\phi,k_1)}-1
=\frac{(\phi^2-m^2)^2}{(\phi^2-k_1^2)^2}-1,
\]
we see that for $m\in (k_1,k_1+\tau)$ with $\tau>0$ small, for $k=0,1,2,3$, and any $\phi \in (-\frac 32,\frac 32)$,
\[
|\alpha^{(k)}(\phi)| \leq C |m-k_1| \leq C \tau,
\]
 which is sufficient to apply Corollary~\ref{cor:new}.
\end{remark}

\subsubsection{The generalized $\phi^{10}$ model}
The potential of the generalized $\phi^{10}$ model is 
\begin{equation}\label{W:phi10}
W_{10}(\phi;m)=\phi^2(\phi^2-1)^2(\phi^2-m^2)^2
\end{equation}
where $m>1$ is a parameter. For this model there are two types of kinks $H_{10}\{0,1\}$ and $H_{10}\{1,m\}$, and their symmetric counterparts. 

The transformed potential $V_{10}$ and then $V'_{10}$ are computed
\begin{align*}
V_{10}(\phi;m) &= 2(\phi^2-1)^2(\phi^2-m^2)^2
\\&\quad + 4 \phi^2 \left[ (\phi^2+1)(\phi^2-m^2)^2+(\phi^2-1)^2(\phi^2+m^2)\right]\\
V'_{10}(\phi;m)&= 24 \phi^3 U_{10}(\phi;m),
\end{align*}
where
\begin{equation*}
U_{10}(\phi;m)  =m^4 - 2  \left(\phi^2+\frac 23\right) m^2 
+\frac {10}3\phi^4 - 2\phi^2 +1.
\end{equation*}
We introduce
\begin{align*}
&M_{10}^+ (\phi)  = \sqrt{ \phi^2 + \frac 23 + \sqrt{-\frac 73 \phi^4 +\frac {10}3 \phi^2 -\frac 59}},\\
&M_{10}^- (\phi) = \sqrt{ \phi^2 + \frac 23 - \sqrt{-\frac 73 \phi^4 +\frac {10}3 \phi^2 -\frac 59}},\\
&\mbox{for} \quad\sqrt{\frac 57 - \sqrt{\frac{40}{147}}}= \phi_1  \leq \phi \leq \phi_2=\sqrt{\frac 57 +\sqrt{\frac{40}{147}}}.
\end{align*}
As in the previous case we use 
Figure \ref{figure phi10}. 
\begin{figure}[ht]
\vspace*{-25pt}
\scalebox{0.5}
{\includegraphics{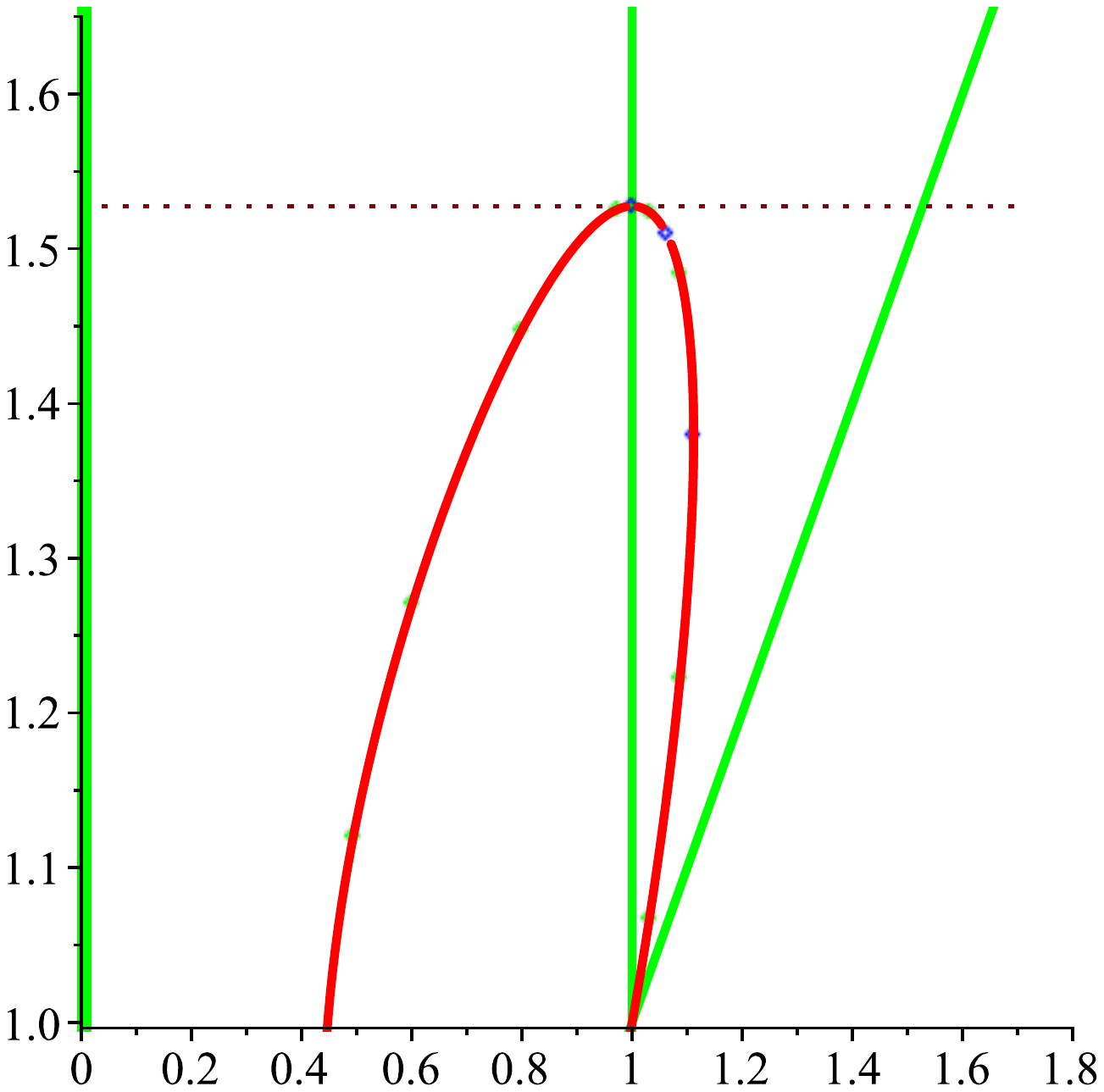}}
\put(-92,333){$k_1=\frac{\sqrt{21}}{3}$}
\put(-120,290){$\phi=m$}
\put(-160,175){$\phi$}
\put(-280,290){$m$}
\put(-200,230){$V'_{10}<0$}
\put(-200,350){$V'_{10}>0$}
\vspace*{-175pt}
\caption{The zero level set of $V'_{10}(\phi;m)$ for $\phi \geq 0$ and $m\geq 1$.}
\label{figure phi10}
\end{figure}
\begin{itemize}
\item[(i)] Case $1<m<k_1=\frac{\sqrt{21}}{3}$:
\begin{itemize}
\item[(a)] 
On $(0,1)$, the function $\phi\mapsto V'_{10}(\phi;m)$
is positive and then negative. The condition \eqref{on:V} holds on $(0,1)$ and so the kink $H_{10}\{0, 1\}$ is asymptotically stable.
\item[(b)] On $(1,m)$, the function $\phi\mapsto V'_{10}(\phi;m)$ 
is negative and then positive. Thus the criterion is inconclusive for the kink $H_{10}\{1,m\}$.
\end{itemize}
\item[(ii)] Case $m\geq k_1$:  $V'_{10}(\phi;m)\geq 0$ for any $\phi\geq 0$, and so both kinks $H_{10}\{0,1\}$ and $H_{10}\{1,m\}$ are asymptotically in this case stable.
\end{itemize}

\begin{theorem}[Generalized $\phi^{10}$ model]\label{th:phi10}
For the $\phi^{10}$ model with potential~\eqref{W:phi10}, 
\begin{enumerate}
\item If $m>1$ then the kink $H_{10}\{0,1\}$ is asymptotically stable.
\item If $m\geq \frac{\sqrt{21}}{3}$ then the kink $H_{10}\{1,m\}$ is asymptotically stable.
\end{enumerate}
\end{theorem}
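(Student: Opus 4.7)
The plan is to apply Theorem~\ref{th:2} by verifying the sufficient condition~\eqref{on:V} on the transformed potential $V_{10}$ associated to $W_{10}(\phi;m)$, separately on each kink range. The expression $V'_{10}(\phi;m)=24\phi^3 U_{10}(\phi;m)$ with $U_{10}(\phi;m)=m^4-2(\phi^2+\tfrac{2}{3})m^2+\tfrac{10}{3}\phi^4-2\phi^2+1$ is already recorded in the paragraph preceding the theorem. Since the factor $24\phi^3$ is positive on both $(0,1)$ and $(1,m)$, the analysis reduces entirely to the sign of the biquadratic $U_{10}$, and $V'_{10}\not\equiv 0$ is automatic since $U_{10}$ is not identically zero.

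I would view $U_{10}$ as an upward-opening quadratic in $s=\phi^2$ with vertex at $s_*=\tfrac{3}{10}(m^2+1)$ and record the two key values
\begin{equation*}
U_{10}\big|_{s=0}=(m^2-\tfrac{2}{3})^2+\tfrac{5}{9}>0, \qquad U_{10}\big|_{s=1}=(m^2-1)(m^2-\tfrac{7}{3}).
\end{equation*}
For part~(1) with $1<m<\sqrt{21}/3$, the value at $s=1$ is negative, so there is a unique $\bar s\in(0,1)$ at which $U_{10}$ changes from positive to negative; setting $\zeta_0=\sqrt{\bar s}$ then gives $(\phi-\zeta_0)V'_{10}(\phi;m)\leq 0$ on $(0,1)$. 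For part~(1) with $m\geq\sqrt{21}/3$, the vertex satisfies $s_*\geq 1$, so $U_{10}$ is monotone decreasing on $s\in(0,1)$ with terminal value $U_{10}\big|_{s=1}\geq 0$; hence $V'_{10}\geq 0$ on $(0,1)$ and the choice $\zeta_0=1$ works.

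For part~(2) with $m\geq\sqrt{21}/3$, I would show that $U_{10}\geq 0$ on $s\in(1,m^2)$. The vertex $s_*$ belongs to $(1,m^2)$ precisely when $m>\sqrt{21}/3$, and the minimum value of $U_{10}$ at $s_*$ equals $(21m^4-58m^2+21)/30$; its roots in $m^2$ are $3/7$ and $7/3$, so this minimum is nonnegative exactly when $m^2\geq 7/3$, producing the sharp threshold in the statement. In the boundary case $m=\sqrt{21}/3$ the vertex coincides with $s=1$ and $U_{10}$ is increasing on $(1,m^2)$ from the value $U_{10}\big|_{s=1}=0$. In both situations $V'_{10}\geq 0$ on $(1,m)$, and the choice $\zeta_0=m=\zeta_+$ verifies~\eqref{on:V}.

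The only delicate bookkeeping I foresee is the vertex computation identifying $\sqrt{21}/3$ as the sharp threshold in part~(2); the explicit factorization of $U_{10}\big|_{s=1}$ together with the closed form $(21m^4-58m^2+21)/30$ for the interior minimum makes this transparent, and both asymptotic stability statements then follow directly from Theorem~\ref{th:2}.
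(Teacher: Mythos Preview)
Your proposal is correct and follows the same overall strategy as the paper: reduce the verification of~\eqref{on:V} to the sign analysis of $U_{10}(\phi;m)$ on the relevant kink ranges, then invoke Theorem~\ref{th:2}. The paper carries out this sign analysis by viewing $U_{10}=0$ as defining curves $m=M_{10}^\pm(\phi)$ in the $(\phi,m)$-plane and reading off the cases from Figure~\ref{figure phi10}, whereas you treat $U_{10}$ as an upward-opening quadratic in $s=\phi^2$ for fixed $m$, computing the vertex $s_*=\tfrac{3}{10}(m^2+1)$, the boundary values $U_{10}|_{s=0}=(m^2-\tfrac{2}{3})^2+\tfrac{5}{9}$ and $U_{10}|_{s=1}=(m^2-1)(m^2-\tfrac{7}{3})$, and the interior minimum $(21m^4-58m^2+21)/30$. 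Your presentation is more self-contained and makes the threshold $m^2=7/3$ transparent without appeal to a figure; the paper's version is shorter but leans on the graphical description. Both yield exactly the same case splits and conclusions.
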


\begin{remark}
After change of variable, the $\phi^{10}$ model considered in \cite{Lohe} (see also \S\ref{S:5.3.5}) corresponds to $m=2>\frac{\sqrt{21}}3$ for which both kinks are asymptotically stable.
\end{remark}

\subsection{Maximal number of asymptotically stable kinks in the $P(\phi)_2$ theory}
By the previous section, any kink of the generalized $\phi^8$ and $\phi^{10}$ models \eqref{W:phi10}, \eqref{W:phi10}
except the odd kink of the $\phi^8$ model, is asymptotically stable  for $m$ large enough
(see Theorems~\ref{th:phi8}, \ref{th:phi10}).
Now we prove a similar result for any $n\geq 2$: for $n$ wells sufficiently far away, any kink of the  generalized  $\phi^{4n}$ and $\phi^{4n+2}$ models, except possibly the odd kink of the $\phi^{4n}$ model, is asymptotically stable.
Recall that without loss of generality, one may assume that $m_1=1$ (see \S\ref{S:5.1}).

\begin{theorem}\label{th:lambdan}
There exists a sequence $(\lambda_n)_n$ with $\lambda_n>1$ such that 
for any $n\geq 2$, and  $(m_1,\ldots,m_n)$ satisfying
\begin{equation}\label{on:mm}
m_{j+1} \geq \lambda_j m_j>0\quad \mbox{for all $j\in \{1,\ldots,n-1\}$,}
\end{equation}
the following hold.
\begin{enumerate}
\item
Any kink of the $\phi^{4n}$ model with potential (\ref{def: w4n}), except possibly the unique odd kink, is asymptotically stable.
\item
Any kink of the $\phi^{4n+2}$ model with potential (\ref{def: w4n2}) is asymptotically stable.\end{enumerate}
\end{theorem}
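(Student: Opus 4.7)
My plan is to establish, for each kink of $W_{4n}$ (distinct from the odd kink $-m_1 \to m_1$) and each kink of $W_{4n+2}$, the sufficient condition \eqref{on:V} of Theorem~\ref{th:2}. The approach applies Corollary~\ref{cor:new} with a carefully chosen reference potential $W_\star$ which retains only the two wells bordering the kink but absorbs the cumulative effect of the remaining wells as an appropriate power of $\phi$. By the invariances of \S\ref{S:5.1} and the symmetry $\phi\mapsto -\phi$, it suffices to treat kinks supported on $[0,m_1]$ (in the $W_{4n+2}$ case) or on $[m_k,m_{k+1}]$ for $1\le k\le n-1$. The sequence $(\lambda_n)$ will be defined by taking each $\lambda_n$ large enough so that all of the following perturbation estimates hold.

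The kink $[0,m_1]$ of $W_{4n+2}$ is handled at once: rescaling so $m_1=1$ and writing $(\phi^2-\tilde m_j^2)^2 = \tilde m_j^4(1-\phi^2/\tilde m_j^2)^2$ for $j\ge 2$, I would express
\begin{equation*}
W_{4n+2}(\phi) = C\cdot W_6(\phi)\cdot\bigl(1+\alpha(\phi)\bigr),\qquad C=\prod_{j\ge 2}\tilde m_j^4,
\end{equation*}
where $\alpha$ is $\mathcal{C}^3$-small on a neighborhood of $[0,1]$ for $\lambda_1$ sufficiently large. Since $W_6$ satisfies \eqref{on:V} by Theorem~\ref{th:phi6}, Corollary~\ref{cor:new} yields asymptotic stability. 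For a kink $[m_k,m_{k+1}]$ with $k\ge 1$, after rescaling $m_k=1$, $m_{k+1}=\mu\ge\lambda_k$, I would use the two factorizations
\begin{equation*}
(\phi^2-\tilde m_j^2)^2 = \phi^4\bigl(1-\tilde m_j^2/\phi^2\bigr)^2\ (j<k),\quad (\phi^2-\tilde m_j^2)^2 = \tilde m_j^4\bigl(1-\phi^2/\tilde m_j^2\bigr)^2\ (j>k+1),
\end{equation*}
to obtain the decomposition $W_{4n}(\phi) = C'\,W_\star(\phi)\,(1+\tilde\alpha(\phi))$ with
\begin{equation*}
W_\star(\phi) := \phi^{4(k-1)}(\phi^2-1)^2(\phi^2-\mu^2)^2,
\end{equation*}
where $C'>0$ and $\tilde\alpha$ is $\mathcal{C}^3$-small on a neighborhood of $[1,\mu]$ bounded away from the origin, provided the preceding $\lambda_j$'s are large enough. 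The $W_{4n+2}$ analog has reference $\phi^{4k-2}(\phi^2-1)^2(\phi^2-\mu^2)^2$. The reference $W_\star$ satisfies \eqref{on:W} with $(\zeta_-,\zeta_+)=(1,\mu)$: the extra degenerate zero at $\phi=0$ is harmless since \eqref{on:W} only demands two non-degenerate consecutive zeros with $W_\star>0$ between them.

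The main obstacle, and the step that fixes $\lambda_n$ inductively, is the direct verification of \eqref{on:V} for $W_\star$. Setting $p=2(k-1)$, a short computation using $V=-W(\log W)''$ yields
\begin{equation*}
V_\star(\phi) = \phi^{2p-2}\left[2p\,(\phi^2-1)^2(\phi^2-\mu^2)^2 + \phi^2\,V_8(\phi;1,\mu)\right],
\end{equation*}
and one must analyze $V_\star'$ on $(1,\mu)$. When $k=1$ (so $p=0$) this reduces to $V_\star=V_8$ and the criterion is that of \S\ref{subsec v8}, requiring $\mu\ge\sqrt5$. For $k\ge 2$ the additional unimodal bump $2p(\phi^2-1)^2(\phi^2-\mu^2)^2$, which attains its maximum near $\phi^2=(1+\mu^2)/2$, competes with the $\phi^2 V_8$ contribution; my expectation is that, after dividing $V_\star'$ by $\phi^{2p-3}$, one obtains a polynomial in $\phi^2$ that factors as a degree-one term (capturing the unique sign change) multiplied by a positive polynomial, provided $\mu$ is chosen large enough relative to $p$. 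The resulting threshold $\mu_\star(p)$ dictates the choice of $\lambda_n$. The delicate point is that the standard $W_8$ analysis identified two sign changes in $U_8$ for intermediate $\mu$, and one must verify that the extra $2p$ term does not reintroduce such an obstruction in the intermediate region $(1,\mu)$ once $\mu$ is sufficiently large.
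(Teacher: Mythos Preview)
Your approach differs from the paper's and has a genuine gap at its central step, plus a uniformity issue that would prevent it from closing as written.

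\textbf{The paper's argument.} The paper does not use Corollary~\ref{cor:new} at all. Instead it computes $\partial_\phi V_{4n}(\phi;\mathbf m)=8\phi\,U_{4n}(\phi;\mathbf m)$ explicitly and proves, by induction on $n$, the quantitative lower bound $U_{4n}\ge c_n R_{4n}$ on $[0,\infty)$, where $R_{4n}=\sum_k\prod_{j\neq k}(\phi^2-m_j^2)^2>0$. Adding a new well $m_{n+1}=\epsilon$ gives a recursion for $U_{4(n+1)}$ in terms of $U_{4n}$, $V_{4n}$, $W_{4n}$ and a quadratic (in $\phi^2$) remainder $P_{4n}(\phi;m_n,\epsilon)$; the inequality $\epsilon\ge\lambda_n m_n$ with $\lambda_n=\sqrt{(c_n+8)/c_n}$ makes this remainder nonnegative. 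This yields $V_{4n}'>0$ on $(0,\infty)$, so condition~\eqref{on:V} holds (with $\zeta_0=\zeta_-$) on every interval $[m_k,m_{k+1}]$. For $W_{4n+2}$ the paper simply observes $\partial_\phi V_{4n+2}=8\phi^3(U_{4n}+R_{4n})\ge 0$.

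\textbf{Where your plan stalls.} First, your ``main obstacle'' is not actually carried out: you never verify \eqref{on:V} for $W_\star=\phi^{2p}(\phi^2-1)^2(\phi^2-\mu^2)^2$ on $(1,\mu)$ when $p\ge 2$, only voice an expectation. This is the core computation, and the added bump $2p(\phi^2-1)^2(\phi^2-\mu^2)^2$ in your formula for $V_\star$ is large precisely in the interior of $(1,\mu)$, so the claimed factorization of $V_\star'$ requires proof.

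Second, and more structurally, applying Corollary~\ref{cor:new} with reference $W_\star$ gives an $\alpha_0$ that depends on $W_\star$, hence on $\mu$ and $p$. In its proof the admissible $\alpha_0$ is governed by constants ($x_0,x_1,x_2,C_0,B$ in \S\ref{s:notvirial} and $C_3$ in the final step) that are tied to the kink of $W_\star$; as $\mu$ ranges over the unbounded set $[\lambda_k,\infty)$ there is no reason these are uniform. But your choice of $\lambda_{k+1}$ (which controls the smallness of the perturbation from the wells $j>k+1$) would have to beat this $\mu$-dependent threshold, so a single sequence $(\lambda_j)$ independent of the configuration cannot be extracted this way. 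Your treatment of the kink $[0,m_1]$ via $W_6$ escapes this problem only because there the reference is fixed; for the interior kinks it does not.

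The paper's direct inductive bound sidesteps both issues: it never invokes a perturbation of a moving reference, and the constants $c_n,\lambda_n$ depend only on $n$.
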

Note that the condition on the wells is invariant by the change of variable~\S\ref{S:5.1}.

\begin{proof}
Let $W_{4n}(\phi;\mathbf{m})$ and $W_{4n+2}(\phi;\mathbf{m})$ where $\mathbf{m}=(m_1, \dots, m_n)$ 
be the potentials defined in (\ref{def: w4n}) and (\ref{def: w4n2}).
We start by proving (1).
By direct computations, we have
\begin{align}
V_{4n}(\phi; \mathbf{m})&=4\sum_{k=1}^n(\phi^2+m_k^2)\prod_{j\neq k}^n(\phi^2-m_j^2)^2,\label{def v4n}\\
\partial_\phi V_{4n}(\phi; \mathbf{m})&=8\phi U_{4n}(\phi;\mathbf m),\label{dv4n}
\end{align}
where
\[
U_{4n}(\phi;\mathbf m) =\sum_{k=1}^n \prod_{j\neq k}^n (\phi^2-m_j^2)^2+2\sum_{k=1,j\neq k}^n(\phi^2+m_k^2)(\phi^2-m_j^2)\prod_{l\neq k, l\neq j}^n(\phi^2-m_l^2)^2.
\]
Set
\begin{equation}\label{def:R4n}
R_{4n}(\phi;\mathbf m)= \sum_{k=1}^n \prod_{j\neq k}^n (\phi^2-m_j^2)^2.
\end{equation}
Note that $R_{4n}(\phi;\mathbf m) >0$ as soon as $m_j < m_{j+1}$ for some $j\in \{1,\ldots,n-1\}$.
We will prove the following statement by mathematical induction on $n\geq 2$:
there exist constants $c_n$ depending only on $n$ and $\lambda_1,\ldots,\lambda_{n-1}$ with $\lambda_j>1$ ,
such that if \eqref{on:mm} holds then, for any $\phi \geq 0$
\begin{equation}
\label{ind 2}
U_{4n}(\phi;\mathbf m)\geq c_n R_{4n}(\phi;\mathbf m).
\end{equation}
Suppose that this statement is true.
By \eqref{dv4n}, it implies that $\partial_\phi V_{4n}(\phi;\mathbf{m})>0$ for all $\phi\in (0,\infty)$, 
and in particular, the condition \eqref{on:V} is satisfied for the kink connecting the wells
$m_j$ and $m_{j+1}$, for any $j\in \{1,\ldots,n-1\}$. It also implies that $0$ is the minimum of $V_{4n}$, so condition \eqref{on:V} is not satisfied for the odd kink connecting $-m_1$ to $+m_1$.

First, we check that this property is satisfied for $n=2$. Taking $m_1=1$ and $m_2=m>1$ without loss of generality, 
we recall from \S\ref{subsec v8}
\begin{equation*}
6 U_{8}(\phi;\mathbf m)
 =  [6\phi^2 - (1+m^2)]^2 + (m^2-5)(5m^2-1).
\end{equation*}
Thus, fixing any $\lambda_1>\sqrt{5}$ (which is consistent with 
(2) of Theorem~\ref{th:phi8}), the property $U_{8}(\phi;\mathbf m)\geq c_2 \left((\phi^2-1)^2+ (\phi^2-m^2)^2\right)$  holds true for any $m\geq \lambda_1$, with some $c_2>0$ depending on the choice for $\lambda_1$.

Now we suppose that the property holds true up to a given $n\geq 2$ and we consider the $\phi^{4n}$ model with a configuration of wells $\mathbf m=(m_1,\ldots,m_n)$ satisfying the condition \eqref{on:mm} and thus~\eqref{ind 2}.
We add another well to this model, located at $m_{n+1}=\epsilon>m_n$, which means that we consider the potential
$W_{4(n+1)}(\phi; \mathbf{m},\epsilon)$. 
We check that the corresponding transformed potential writes
\begin{align*}
V_{4(n+1)}(\phi; \mathbf{m},\epsilon)
&=V_{4n}(\phi;\mathbf m)(\phi^2-\epsilon^2)^2+4W_{4n}(\phi;\mathbf m)(\phi^2+\epsilon^2)
\end{align*}
where we have used \eqref{def v4n}. 
Differentiating with respect to $\phi$, using the notation~\eqref{dv4n} and the induction hypothesis, we obtain
\begin{align*}
    {2} U_{4(n+1)}(\phi;\mathbf m,\epsilon)
    &=
    (\phi^2 - \epsilon^2)V_{4n}(\phi;\mathbf m) +{2}(\phi^2-\epsilon^2)^2 U_{4n}(\phi;\mathbf m) \\
    & \quad +2 W_{4n}(\phi;\mathbf m) +  (\phi^2 +\epsilon^2)\frac{\partial_\phi  W_{4n}(\phi;\mathbf m)}{\phi} \\
    & \ge  2 c_n (\phi^2 - \epsilon^2)^2R_{4n}(\phi;\mathbf m) +    2 W_{4n}(\phi;\mathbf m) \\
    &\quad +   (\phi^2 - \epsilon^2)V_{4n}(\phi;\mathbf m) +   (\phi^2 +\epsilon^2)\frac{\partial_\phi  W_{4n}(\phi;\mathbf m)}{\phi}.
\end{align*} 
For $\phi^2 \ge \epsilon^2$, both terms on the second line are non-negative and since
\[
R_{4(n+1)}(\phi;\mathbf m, \epsilon) = (\phi^2-\epsilon^2)R_{4n}(\phi;\mathbf m) + W_{4n}(\phi;\mathbf m),
\]
the property is satisfied for $\phi^2 \ge \epsilon^2$.

We now restrict our attention to the case $\phi^2 \in [0,\epsilon^2)$.
We bound
\begin{align*}
    \frac{\partial_\phi  W_{4n}(\phi;\mathbf m)}{\phi}
    = 4 \sum_{k=1}^n (\phi^2 - m_k^2)\prod_{j\neq k}^n (\phi^2-m_j^2)^2 
    \ge 4 (\phi^2 - m_n^2) R_{4n}(\phi;\mathbf m)
\end{align*}
and 
\begin{align*}\label{eq:upper_V}
    V_{4n}(\phi;\mathbf m) = 4 \sum_{k=1}^n (\phi^2 + m_k^2)\prod_{j\neq k}^n (\phi^2-m_j^2)^2 
    \le 4 (\phi^2 + m_n^2) R_{4n}(\phi;\mathbf m).
\end{align*}
Therefore
\begin{align*} 
 {2}   U_{4(n+1)}(\phi;\mathbf m,\epsilon)
    \ge \min\left(2, {c_n}\right)  R_{4(n+1)}(\phi;\mathbf m,\epsilon)
    +  P_{4n}(\phi; m_n,\epsilon)R_{4n}(\phi;\mathbf m) ,
    \end{align*} 
where we have defined 
\[
 P_{4n}(\phi ;m_n, \epsilon) = {c_n}(\phi^2-\epsilon^2)^2 + 4(\phi^2 -\epsilon^2)(\phi^2 + m_n^2) + 4 (\phi^2 + \epsilon^2)(\phi^2 -m_n^2).
\]
Expanding and completing the square gives
\begin{align*}
  P_{4n}(\phi;m_n,\epsilon) &=  {c_n}(\phi^2 - \epsilon^2)^2 + 8 (\phi^4 -\epsilon^2 m_n^2)\\
    &= (c_n + 8 )\left(\phi^2 - \frac{c_n \epsilon^2}{ c_n+8}\right)^2 - \frac{c_n^2 \epsilon^4}{c_n+8} + \epsilon^4 c_n - 8 \epsilon^2 m_n^2\\
    &\ge 8 \epsilon^4 \left(\frac{ c_n}{c_n + 8} -  \frac{m_n^2}{\epsilon^2} \right).
\end{align*}
Thus, if $m_{n+1}=\epsilon \ge \lambda_n m_n$ with $\lambda_n:= \sqrt{\frac{c_n + 8}{c_n}}>1$, then
\[
 P_{4n}(\phi ; m_n,\epsilon) \ge 0 \text{ for all } \phi \in \R,
\]
and we conclude that, with $c_{n+1} = { \min (1, c_n/2)}$,
\begin{align*}
    U_{4(n+1)}(\phi;\mathbf m,\epsilon) \ge c_{n+1} R_{4(n+1)}(\phi;\mathbf m,\epsilon).
    \end{align*}
To prove (2), observe that 
$$
V_{4n+2}(\phi;\mathbf m) = 2 W_{4n}(\phi;\mathbf m) + \phi^2 W_{4n}(\phi;\mathbf m)
$$
and by using the explicit expression \eqref{def v4n},
$$
\partial_\phi V_{4n+2}(\phi;\mathbf m) =8 \phi^3 \left( U_{4n}(\phi;\mathbf m) +  R_{4n}(\phi;\mathbf m)\right),
$$
which is non-negative if $\mathbf{m}$ satisfies \eqref{on:mm}.
\end{proof}

\subsection{The $P(\phi)_2$ theory as approximation of the sine-Gordon theory}\label{S:5.3bis}
Following~\cite{Lohe}, we recall that
the potentials $W_{4n}$ and $W_{4n+2}$ with suitable choices of wells $\{m_k\}_{k=1,\ldots,n}$
and after change of variables are approximations of the sine-Gordon potential for $n$ large.
We study the asymptotic stability of the kinks of these models in the sine-Gordon limit.

\subsubsection{The $\tilde W_{4n+2}$ potentials as approximation of sine-Gordon potential}\label{S:5.3.5}
Recall the formula
\begin{equation*}
\frac{\sin(\pi \phi)}{\pi \phi }  = \prod_{k=1}^\infty \left( 1 - \frac{\phi^2}{k^2} \right).
\end{equation*}
Thus, the sine-Gordon potential \eqref{W:sineG} can be written as 
\begin{align*}
W_\SG(\phi)=1-\cos \phi = 2 \sin^2\left( \frac \phi 2 \right)
=\frac 12 \phi^2 \prod_{k=1}^\infty \left(1-\frac{\phi^2}{(2\pi k)^2}\right)^2.
\end{align*}
For $n\geq 1$, setting
\begin{equation}\label{Wt:4n2}
\tilde W_{4n+2} (\phi )= \frac 12 \phi^2 \prod_{k=1}^n \left(1-\frac{\phi^2}{(2\pi k)^2}\right)^2 ,
\end{equation}
we obtain an approximation of the sine-Gordon potential, with $2n$ kinks denoted by
$\tilde H_{4n+2}\{2\pi j,2\pi (j+1)\}$, for $j=-n,\ldots,n-1$.
To study this perturbation, we use the formulation
\[
\tilde W_{4n+2}(\phi) = \left(1-\cos \phi\right) P_n(\phi)
\quad\mbox{where}\quad
P_n(\phi)=\prod_{k=n+1}^\infty \left(1-\frac{\phi^2}{(2\pi k)^2}\right)^{-2}.
\]
Then,
\[
-\frac{\tilde W_{4n+2}'}{\tilde W_{4n+2}}
= - \frac{\sin \phi}{1-\cos \phi}
- \frac 1{\pi^2}\sum_{k=n+1}^\infty \frac 1{k^2}\phi \left(1-\frac{\phi^2}{(2\pi k)^2}\right)^{-1} 
\]
and
\begin{equation*}
\tilde V_{4n+2}
=-\tilde W_{4n+2} \left(\frac{\tilde W_{4n+2}'}{\tilde W_{4n+2}}\right)'
=P_n  \left[1 -  A_n (1-\cos \phi)\right],
\end{equation*}
where
\[
A_n
=\left(\frac{P_n'}{P_n}\right)'
=\frac 1{\pi^2} \sum_{k=n+1}^\infty \frac 1{k^2}\left(1+\frac{\phi^2}{(2\pi k)^2}\right) \left(1-\frac{\phi^2}{(2\pi k)^2}\right)^{-2}.
\]
Thus,
\begin{align*}
\tilde V_{4n+2}'
&=P_n'\left[1 - A_n (1-\cos \phi) \right]
- P_n A_n \sin \phi - P_n A_n' (1-\cos \phi).
\end{align*}
The value of $\phi$ being fixed, for $n$ large, the following estimates hold
\[
P_n \sim 1,\quad P_n' \sim \frac \phi{\pi^2 n},\quad A_n\sim \frac 1{\pi^2 n},\quad
|A_n'|\leq \frac C{n^3},
\]
and so
\[
\tilde V_{4n+2}'(\phi) \sim  \frac 1{\pi^2 n} \left( \phi - \sin \phi \right).
\]
For any $J>0$, $\tilde V_{4n+2}'(\phi)$ has constant sign on $(-J,0)$ and $(0,J)$, for $n$ large enough, depending on $J$. Thus, the kinks of $\tilde W_{4n+2}$ with range in $(-J,J)$ are asymptotically stable for such $n$.

\begin{theorem}[The sine-Gordon limit for the $\phi^{4n+2}$ models]\label{th:Wt4n2}
For any $J\geq 1$, there exists $n(J)\geq 1$ such that for $n\geq n(J)$ and any $j\in \Z$ with $|j|\leq J$,
the kink $\tilde H_{4n+2}\{2\pi j,2\pi(j+1)\}$ of the $\phi^{4n+2}$ 
model with potential \eqref{Wt:4n2} is asymptotically stable.
\end{theorem}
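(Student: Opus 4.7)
The plan is to apply Theorem~\ref{th:2} separately to each kink $\tilde H_{4n+2}\{2\pi j, 2\pi(j+1)\}$ with $|j|\leq J$, by checking the sufficient condition \eqref{on:V} on its range $(2\pi j, 2\pi(j+1))$. The heavy lifting has already been started in the excerpt: the decomposition $\tilde W_{4n+2}(\phi) = (1-\cos\phi) P_n(\phi)$ gives
\[
\tilde V_{4n+2}'(\phi) = P_n'\bigl[1 - A_n(1-\cos\phi)\bigr] - P_n A_n \sin\phi - P_n A_n'(1-\cos\phi),
\]
with the indicated formal limit $\tilde V_{4n+2}'(\phi)\sim (\pi^2 n)^{-1}(\phi-\sin\phi)$, which is of the right sign since $\phi-\sin\phi$ has the sign of $\phi$.

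First, I would make this asymptotic uniform on the compact interval $I_J = [-2\pi(J+1), 2\pi(J+1)]$. Setting $\sigma_n = \sum_{k>n} k^{-2}$ and $\tau_n = \sum_{k>n} k^{-4}$, a direct Taylor expansion of the infinite products yields, uniformly on $I_J$, bounds of the type $P_n = 1+O(n^{-1})$, $P_n'(\phi)=\phi\sigma_n/\pi^2 + O(n^{-2})$, $A_n(\phi)=\sigma_n/\pi^2+O(n^{-3})$ and $A_n'(\phi)=O(\phi/n^3)$. Substituting gives
\[
\tilde V_{4n+2}'(\phi) = \frac{\sigma_n}{\pi^2}\bigl(\phi-\sin\phi\bigr) + E_n(\phi),\qquad |E_n(\phi)|\leq C_J/n^2.
\]
Second, I would exploit that $\phi-\sin\phi$ is strictly positive on $(0,\infty)$ and strictly negative on $(-\infty,0)$. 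On the part of $I_J$ where $|\phi|\geq \delta$ for any fixed $\delta>0$, we have $|\phi-\sin\phi|\geq c(\delta,J)>0$, so for $n$ large enough the main term $(\sigma_n/\pi^2)(\phi-\sin\phi)\sim 1/n$ dominates the $O(1/n^2)$ error and $\tilde V_{4n+2}'$ has the sign of $\phi$ on this set.

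The main obstacle is near $\phi=0$, which only matters for the two central kinks $\tilde H\{0,2\pi\}$ and $\tilde H\{-2\pi,0\}$ (the other ranges are bounded away from $0$). There $\phi-\sin\phi \sim \phi^3/6$, so the leading term vanishes cubically and the coarse estimate $|E_n|\leq C/n^2$ is not sharp enough. To handle this, I would push the Taylor expansions of $P_n,P_n',A_n,A_n'$ one order further, carefully exploiting the cancellation of the $\phi^0$ and $\phi^1$ contributions, to obtain
\[
\tilde V_{4n+2}'(\phi) = \frac{\sigma_n\,\phi^{3}}{6\pi^{2}} + O\!\bigl(\phi^{3}/n^{2}\bigr) + O\!\bigl(\phi^{5}/n\bigr),
\]
valid for $\phi$ in a fixed neighborhood of $0$ and $n$ large. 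Since $\sigma_n\sim 1/n$, the leading term dominates for large $n$ uniformly in $\phi\neq 0$ in this neighborhood, so $\tilde V_{4n+2}'(\phi)$ again has the sign of $\phi$.

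Combining the two regimes, for $n\geq n(J)$ we obtain that $\tilde V_{4n+2}'$ has the strict sign of $\phi$ on $I_J\setminus\{0\}$; in particular $\tilde V_{4n+2}'\not\equiv 0$. On each kink range $(2\pi j,2\pi(j+1))$ this gives $\tilde V_{4n+2}'>0$ if $j\geq 0$ (take $\zeta_0=2\pi(j+1)$) and $\tilde V_{4n+2}'<0$ if $j\leq -1$ (take $\zeta_0=2\pi j$), so condition \eqref{on:V} is verified in both cases. Theorem~\ref{th:2} then delivers the asymptotic stability of each such kink $\tilde H_{4n+2}\{2\pi j,2\pi(j+1)\}$.
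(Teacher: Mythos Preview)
Your proposal is correct and follows the same approach as the paper: verify condition~\eqref{on:V} on each kink range by exploiting the asymptotic $\tilde V_{4n+2}'(\phi)\sim(\pi^2 n)^{-1}(\phi-\sin\phi)$. The paper is terse at precisely the point you worry about---it simply asserts that $\tilde V_{4n+2}'$ has constant sign on $(-J,0)$ and $(0,J)$ for $n$ large---whereas you make the near-zero analysis explicit by pushing the expansion to $\phi^3$ order; this extra care is justified (the $\phi^1$ coefficients of the main term and of the error both vanish, as you in effect show), and your two-regime argument closes the gap cleanly.
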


\subsubsection{The $\tilde W_{4n}$ potentials as approximation of the sine-Gordon potential }\label{S:5.3.6}
Consider the shifted sine-Gordon potential
$ W_{\SSG}(\phi) = 1-\cos (\phi + \pi) = 1+\cos (\phi)$. For this model, there is an odd kink
connecting $-\pi$ to $\pi$ and infinitely many other identical translated kinks.
Recall 
\[
\cos(\pi \phi) = \frac 1 2 \frac{\sin(2\pi \phi)}{\sin (\pi \phi)} 
= \prod_{k=1}^\infty \left( 1 - \frac{\phi^2}{(k - \tfrac 12)^2} \right).
\]
Thus, the shifted sine-Gordon potential has the infinite product expansion
\begin{align*}
W_\SSG = 1+\cos \phi = 2 \cos^2\left( \frac \phi 2 \right)
= 2 \prod_{k=1}^\infty \left(1-\frac{\phi^2}{\pi^2(2k-1)^2}\right)^2.
\end{align*}
For $n\geq 1$, setting
\begin{equation}\label{Wt:4n}
\tilde W_{4n}(\phi )= 2 \prod_{k=1}^n \left(1-\frac{\phi^2}{\pi^2(2k-1)^2}\right)^2 ,
\end{equation}
we obtain another approximation of the sine-Gordon potential.
This model has an odd kink connecting $-\pi$ to $\pi$, denoted by $\tilde H_{4n}\{-\pi,\pi\}$,
$n-1$ positive kinks denoted by $\tilde H_{4n}\{\pi (2j-1),\pi (2j+1)\}$, for $j=1,\ldots,n-1$,
and $n-1$ negative kinks $\tilde H_{4n}\{-\pi (2j+1),-\pi (2j-1)\}$.
Write
\begin{equation*}
\tilde W_{4n}(\phi) = \left(1+\cos \phi\right) Q_n(\phi)
\quad\mbox{where}\quad
Q_n(\phi)=\prod_{k=n+1}^\infty \left(1-\frac{\phi^2}{\pi^2(2k-1)^2}\right)^{-2}.
\end{equation*}
Then,
\[
-\frac{\tilde W_{4n}'}{\tilde W_{4n}}
= \frac{\sin \phi}{1+\cos \phi}
- \frac 4{\pi^2}\sum_{k=n+1}^\infty \frac 1{(2k-1)^2}\phi \left(1-\frac{\phi^2}{\pi^2(2k-1)^2}\right)^{-1} 
\]
and
\begin{equation*}
\tilde V_{4n}
=-\tilde W_{4n} \left(\frac{\tilde W_{4n}'}{\tilde W_{4n}}\right)'
=Q_n  \left[1 -  B_n (1+\cos \phi)\right],
\end{equation*}
where
\[
B_n=\frac 4{\pi^2} \sum_{k=n+1}^\infty \frac 1{(2k-1)^2}\left(1+\frac{\phi^2}{\pi^2(2k-1)^2}\right) \left(1-\frac{\phi^2}{\pi^2(2k-1)^2}\right)^{-2}.
\]
Thus,
\begin{align*}
\tilde V_{4n}'
&=Q_n'\left[1 - B_n (1+\cos \phi) \right]
+Q_n B_n \sin \phi- Q_n B_n' (1+\cos \phi).
\end{align*}
The value of $\phi$ being fixed, for $n$ large, the following estimates hold
\[
Q_n \sim 1,\quad Q_n' \sim \frac \phi{\pi^2 n},\quad B_n\sim \frac 1{\pi^2 n},\quad
|B_n'|\leq \frac C{n^3},
\]
and so
\[
\tilde V_{4n}'(\phi) \sim  \frac 1{\pi^2 n} \left( \phi + \sin \phi \right).
\]
We observe that the criterion related to the condition~\eqref{on:V} is inconclusive for the odd kink
$\tilde H_{4n}\{-\pi,\pi\}$ since on the interval $(-\pi,\pi)$ the function $\tilde V_{4n}'$
is negative and then positive. For all the other kinks, the condition~\eqref{on:V} holds since the sign of
$\tilde V_{4n}'$ is constant on $(0,\infty)$ and on $(-\infty,0)$.

\begin{theorem}[The sine-Gordon limit for the $\phi^{4n}$ models]\label{th:Wt4n}
For any $J\geq 1$, there exists $n(J)\geq 1$ 
such that for $n\geq n(J)$ and any $j\in \N$ with $1\leq j\leq J$,
the kinks $\tilde H_{4n}\{\pi (2j-1),\pi(2j+1)\}$ and $\tilde H_{4n}\{-\pi (2j+1),-\pi(2j-1)\}$ of the $\phi^{4n}$
model with potential \eqref{Wt:4n} are asymptotically stable.
\end{theorem}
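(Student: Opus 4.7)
The plan is to apply Theorem~\ref{th:2}, so the task reduces to verifying condition~\eqref{on:V} for the transformed potential $\tilde V_{4n}$ on the interval $((2j-1)\pi,(2j+1)\pi)$ (and the mirror-image interval for the negative kinks) for each $1\leq j\leq J$, provided $n$ is chosen sufficiently large. The computations performed immediately before the statement already furnish the explicit formula
\[
\tilde V_{4n}'(\phi)=Q_n'(\phi)\bigl[1-B_n(\phi)(1+\cos\phi)\bigr]+Q_n(\phi)B_n(\phi)\sin\phi - Q_n(\phi)B_n'(\phi)(1+\cos\phi),
\]
together with the heuristic $n\pi^2\,\tilde V_{4n}'(\phi)\to \phi+\sin\phi$ as $n\to\infty$, so the real work is to make this asymptotic uniform over the range of the kinks.

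First, I would fix the compact set $K_J=[-(2J+1)\pi,(2J+1)\pi]$ and choose $n_0(J)$ so large that for $n\geq n_0(J)$ all singularities of $Q_n$ (which occur at $\pm\pi(2k-1)$ for $k>n$) stay at distance at least $\pi$ from $K_J$. Using the standard tail estimate
\[
\sum_{k=n+1}^{\infty}\frac{1}{(2k-1)^2}=\frac{1}{4n}+O(n^{-2})
\]
and differentiating under the sum, one obtains the uniform expansions $Q_n(\phi)=1+O(n^{-1})$, $Q_n'(\phi)=\phi/(\pi^2 n)+O(n^{-2})$, $B_n(\phi)=1/(\pi^2 n)+O(n^{-2})$, and $B_n'(\phi)=O(n^{-3})$ for $\phi\in K_J$. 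Substitution into the displayed formula yields
\[
\tilde V_{4n}'(\phi)=\frac{\phi+\sin\phi}{\pi^2 n}+O(n^{-2})\quad\text{uniformly on }K_J.
\]

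Second, I would exploit the sign of the leading term. The map $\phi\mapsto \phi+\sin\phi$ is nondecreasing on $\R$ (derivative $1+\cos\phi\geq 0$) and equals $(2j-1)\pi$ at $\phi=(2j-1)\pi$, so on $[(2j-1)\pi,(2j+1)\pi]$ with $j\geq 1$ it is bounded below by $(2j-1)\pi-1\geq \pi-1>0$. Combined with the uniform remainder estimate, this forces $\tilde V_{4n}'(\phi)>0$ throughout that interval for $n$ large enough depending only on $J$. In particular $\tilde V_{4n}'\not\equiv 0$, and condition~\eqref{on:V} is satisfied by picking $\zeta_0=(2j+1)\pi=\zeta_+$, since then $(\phi-\zeta_0)\tilde V_{4n}'(\phi)\leq 0$ on the interior. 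Theorem~\ref{th:2} delivers the asymptotic stability of $\tilde H_{4n}\{(2j-1)\pi,(2j+1)\pi\}$. The negative kinks $\tilde H_{4n}\{-(2j+1)\pi,-(2j-1)\pi\}$ are deduced from the change of variables $x\mapsto -x$, $\phi\mapsto -\phi$, under which the even potential $\tilde W_{4n}$ is invariant (see Remark~\ref{rk:RS}(3)).

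The main obstacle is the uniformity in $\phi$ of the asymptotic expansion, because the quantities $Q_n,Q_n',B_n,B_n'$ are infinite series whose convergence deteriorates near the poles at $\pm(2k-1)\pi$, $k>n$. The key point making this harmless is that we choose $J$ first and let $n$ grow afterwards, so the poles stay uniformly at distance at least $\pi$ from $K_J$; this allows every tail to be compared to $\int_n^{\infty}(2k-1)^{-2}\,dk$ with constants depending only on $J$. No other subtlety appears, and the entire argument only uses the criterion~\eqref{on:V} through the elementary positivity $\phi+\sin\phi>0$ on $[\pi,\infty)$.
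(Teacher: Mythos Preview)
Your proposal is correct and follows essentially the same route as the paper: both compute $\tilde V_{4n}'$ via the factorization $\tilde W_{4n}=(1+\cos\phi)Q_n$, obtain the asymptotic $\tilde V_{4n}'(\phi)\sim(\phi+\sin\phi)/(\pi^2 n)$ for $\phi$ in a fixed compact set, and conclude from the strict positivity of $\phi+\sin\phi$ on $[\pi,\infty)$ that condition~\eqref{on:V} holds for each positive kink when $n$ is large. Your write-up is in fact more explicit than the paper's about the uniformity in $\phi\in K_J$ of the tail estimates (the paper simply records the pointwise asymptotics and then asserts the sign conclusion), so your added discussion of the distance from the poles of $Q_n$ is a welcome clarification rather than a departure. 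One cosmetic slip: since $\sin((2j-1)\pi)=0$, the lower bound for $\phi+\sin\phi$ on $[(2j-1)\pi,(2j+1)\pi]$ is $(2j-1)\pi$, not $(2j-1)\pi-1$; this does not affect the argument.
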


\subsection{The double sine-Gordon model}\label{S:5.4}
Last, following \cite{Campbell}, we consider a model with infinitely many wells which is related to the sine-Gordon equation.

First, for
$\eta>-\frac 14$, we consider the potential
\begin{align}
W_\DSG(\phi;\eta)
&=\frac{4}{1+4|\eta|}\left[\eta (1-\cos\phi)+1+\cos\left(\frac{\phi}{2}\right)\right]\label{W:DSG1}\\
&=\frac{4}{1+4|\eta|}\left[1+\cos\left(\frac{\phi}{2}\right)\right]
\left[1+2\eta \left(1-\cos\left(\frac{\phi}{2}\right)\right)\right].\nonumber
\end{align}
For $\eta\geq 0$, this potential interpolates between $1+\cos\left(\frac{\phi}{2}\right)$ for $\eta=0$ and $1-\cos \phi$ as $\eta\to \infty$.
Since the potential is periodic of period $4\pi$, it is sufficient to check the criterion for the kink $H_\DSG$ connecting $-2\pi$ with $2\pi$.
We compute
\[
-\frac{W_\DSG'}{W_\DSG}
= \frac 12 \frac{\sin\left(\frac{\phi}{2}\right)}{1+\cos\left(\frac{\phi}{2}\right)}
-\frac{\eta \sin\left(\frac{\phi}{2}\right)}{1+2\eta \left(1-\cos\left(\frac{\phi}{2}\right)\right)},
\]
and
\begin{align*}
V_\DSG
&=-W_\DSG\left(\frac{W_\DSG'}{W_\DSG}\right)'=\frac{1+4\eta}{1+4|\eta|}
-\frac{2\eta}{1+4|\eta|}
\frac{\left(1+\cos\left(\frac{\phi}2\right)\right)^2}{1+2\eta \left(1-\cos\left(\frac{\phi}{2}\right)\right)}.
\end{align*}
Thus,
\[
V_\DSG'(\phi)
= \frac{2\eta}{1+4|\eta|}
\left(1+\cos\left(\frac{\phi}2\right)\right)\sin\left(\frac{\phi}2\right)\frac{ 
 1+3\eta -\eta \cos\left(\frac{\phi}{2}\right)}
{\left[1+2\eta \left(1-\cos\left(\frac{\phi}{2}\right)\right)\right]^2}.
\]
We see that for $-\frac 14 <\eta<0$, it holds $V_\DSG'\not \equiv 0$ and $\phi V_\DSG'(\phi)\leq 0$ on $[-2\pi, 2\pi]$, thus the kink $H_\DSG$ is asymptotically stable by Theorem~\ref{th:2}.
For $\eta>0$, the criterion is inconclusive.

\begin{theorem}[Double sine-Gordon model I]\label{th:DSG1}
For any $-\frac 14<\eta <0$, 
the kink of the double sine-Gordon model with potential~\eqref{W:DSG1} is asymptotically stable.
\end{theorem}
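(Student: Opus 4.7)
The plan is to verify that the transformed potential $V_\DSG$ associated with $W_\DSG$ satisfies the sufficient condition~\eqref{on:V} of Theorem~\ref{th:2} with $\zeta_0=0$, and then conclude by direct invocation of that theorem. The potential $W_\DSG(\phi;\eta)$ is $4\pi$-periodic, so it is enough to consider the kink $H_\DSG$ connecting the consecutive zeros $\zm=-2\pi$ and $\zp=2\pi$; one first checks quickly that $W_\DSG>0$ on $(-2\pi,2\pi)$ and $W_\DSG''(\pm 2\pi)>0$ for $-\frac 14<\eta<0$, so \eqref{on:W} holds.

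The explicit formula for $V_\DSG'$ derived in the excerpt reads
\[
V_\DSG'(\phi)=\frac{2\eta}{1+4|\eta|}\,\bigl(1+\cos\tfrac{\phi}{2}\bigr)\sin\tfrac{\phi}{2}\,
\frac{1+3\eta-\eta\cos\tfrac{\phi}{2}}{\bigl[1+2\eta(1-\cos\tfrac{\phi}{2})\bigr]^2}.
\]
I then check the sign of each factor on $(-2\pi,2\pi)$ for $-\frac14<\eta<0$. The prefactor $2\eta/(1+4|\eta|)$ is strictly negative. The factor $1+\cos(\phi/2)$ is non-negative on $[-2\pi,2\pi]$ and vanishes only at the endpoints. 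The denominator $[1+2\eta(1-\cos(\phi/2))]^2$ is a positive square; moreover $1+2\eta(1-\cos(\phi/2))\geq 1+4\eta>0$ since $\eta>-\frac 14$, so it never vanishes. For the bracket $1+3\eta-\eta\cos(\phi/2)$, since $\eta<0$ one writes $-\eta\cos(\phi/2)=|\eta|\cos(\phi/2)\geq -|\eta|$, hence $1+3\eta-\eta\cos(\phi/2)\geq 1+3\eta-|\eta|=1+4\eta>0$. Finally, $\sin(\phi/2)$ has the same sign as $\phi$ on $(-2\pi,2\pi)$.

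Combining these, $V_\DSG'(\phi)$ has sign opposite to $\phi$ on $(-2\pi,2\pi)$, which gives $\phi\,V_\DSG'(\phi)\leq 0$ for all $\phi\in(-2\pi,2\pi)$. Thus~\eqref{on:V} holds with $\zeta_0=0\in[\zm,\zp]$. Since $\eta\neq 0$, clearly $V_\DSG'\not\equiv 0$. Theorem~\ref{th:2} then yields the asymptotic stability of $H_\DSG$ and of its Lorentz boosts, completing the proof.

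There is essentially no difficulty here: all the heavy lifting is contained in Theorem~\ref{th:2}, and the computation of $V_\DSG'$ is already carried out in the paragraph preceding the theorem statement. The only point requiring a moment of care is the inequality $1+3\eta-\eta\cos(\phi/2)>0$ and the non-vanishing of the denominator, both of which are the reason the constraint $\eta>-\frac 14$ appears; for $\eta\leq -\frac 14$ the positivity of $W_\DSG$ itself would already fail. The case $\eta>0$ is left untreated because the bracket $1+3\eta-\eta\cos(\phi/2)$ remains positive but $\sin(\phi/2)$ and the prefactor no longer combine to give the required sign near $\phi=\pm\pi$, reflecting that the criterion is genuinely inconclusive then.
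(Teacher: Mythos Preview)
Your proposal is correct and follows exactly the paper's approach: verify condition~\eqref{on:V} with $\zeta_0=0$ by checking the sign of each factor in the explicit formula for $V_\DSG'$, then invoke Theorem~\ref{th:2}. You simply spell out the sign analysis (in particular the bounds $1+2\eta(1-\cos\tfrac{\phi}{2})\geq 1+4\eta>0$ and $1+3\eta-\eta\cos\tfrac{\phi}{2}\geq 1+4\eta>0$) that the paper asserts in one line.
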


Second, we consider the other case of double sine-Gordon models, designated as ``Region I'' in \cite[Fig.~I]{Campbell}. For $\eta<-\frac 14$, we set
\begin{align}
W_\DSG(\phi;\eta)
&=\frac{4}{1+4|\eta|}\left[- \eta \cos\phi +\cos\left(\frac{\phi}{2}\right) -\eta - \frac 1{8\eta}\right]
\label{W:DSG2}\\
&=\frac{8 |\eta|}{1+4|\eta|}\left[\cos\left(\frac{\phi}{2}\right)-\frac 1{4\eta}\right]^2.\nonumber
\end{align}
We recall from \cite{Campbell} that for this potential there exist two types of kinks.
Let $\zeta_\eta = 2 \arccos \frac 1{4\eta}\in (\pi,2\pi)$.
We denote by $H_\DSG\{-\zeta_\eta,\zeta_\eta\}$ the odd kink connecting $-\zeta_\eta$ to $\zeta_\eta$
and by $H_\DSG\{\zeta_\eta,4\pi - \zeta_\eta\}$ the kink connecting $\zeta_\eta$ to $4\pi - \zeta_\eta\in (2\pi,3\pi)$.
Other kinks for this model are deduced from these two kinks by translation and symmetry.
We compute
\[
-\frac{W_\DSG'}{W_\DSG}
=\frac {\sin\left(\frac{\phi}{2}\right)}{\cos\left(\frac{\phi}{2}\right)-\frac 1{4\eta}}
\]
and
\begin{align*}
V_\DSG
&=-W_\DSG\left(\frac{W_\DSG'}{W_\DSG}\right)'=\frac{4|\eta|}{1+4|\eta|}
\left[1-\frac 1{4\eta}\cos\left(\frac{\phi}2\right)\right].
\end{align*}
Thus,
\[
V_\DSG' = - \frac{1}{2(1+4|\eta|)}\sin\left(\frac{\phi}2\right).
\]
For the kink $H_\DSG\{-\zeta_\eta,\zeta_\eta\}$, the criterion \eqref{on:V} applies since $\phi V_\DSG'\leq 0$ on $[-\zeta_\eta,\zeta_\eta]$  and so $H_\DSG\{-\zeta_\eta,\zeta_\eta\}$ is asymptotically stable from Theorem~\ref{th:2}. For the kink $H_\DSG\{\zeta_\eta,4\pi - \zeta_\eta\}$, the criterion is inconclusive
since $V_\DSG'$ is negative on $(\zeta_\eta,2\pi)$ and positive on $(2\pi,4\pi-\zeta_\eta)$.

\begin{theorem}[Double sine-Gordon model II]\label{th:DSG2}
For any $\eta<-\frac 14$, 
the odd kink $H_\DSG\{-\zeta_\eta,\zeta_\eta\}$ 
of the double sine-Gordon model with potential~\eqref{W:DSG2}
is asymptotically stable.
\end{theorem}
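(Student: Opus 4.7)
The plan is to apply Theorem~\ref{th:2} directly to the kink $H_\DSG\{-\zeta_\eta,\zeta_\eta\}$. Since the transformed potential $V_\DSG$ and its derivative have already been computed in the paragraph preceding the statement, the proof reduces to two verifications: first, that $W_\DSG(\,\cdot\,;\eta)$ satisfies~\eqref{on:W} with consecutive wells $\zm=-\zeta_\eta$ and $\zp=\zeta_\eta$; and second, that the sufficient condition~\eqref{on:V} holds on $(-\zeta_\eta,\zeta_\eta)$.

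For the first point, I would rely on the factored form
\[
W_\DSG(\phi;\eta)=\frac{8|\eta|}{1+4|\eta|}\Bigl[\cos(\phi/2)-\tfrac{1}{4\eta}\Bigr]^2,
\]
which is manifestly nonnegative and of class $\cC^\infty$. Since $\eta<-\tfrac14$ forces $|1/(4\eta)|<1$, the equation $\cos(\phi/2)=1/(4\eta)$ has a unique solution $\zeta_\eta\in(\pi,2\pi)$ (and $-\zeta_\eta$ by evenness), and on $(-\zeta_\eta,\zeta_\eta)$ one has $\cos(\phi/2)>1/(4\eta)$, so $W_\DSG>0$ strictly in between. Differentiating the factored form shows $W_\DSG'(\pm\zeta_\eta)=0$, and a direct computation yields
\[
W_\DSG''(\pm\zeta_\eta)=\frac{4|\eta|}{1+4|\eta|}\sin^2(\zeta_\eta/2)>0,
\]
since $\zeta_\eta/2\in(\pi/2,\pi)$. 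This gives~\eqref{on:W}.

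For the second point, I use the computation
\[
V_\DSG'(\phi)=-\frac{\sin(\phi/2)}{2(1+4|\eta|)}
\]
performed above, from which $V_\DSG'\not\equiv 0$ is immediate. Choosing $\zeta_0=0\in[-\zeta_\eta,\zeta_\eta]$, I observe that for $\phi\in(-\zeta_\eta,\zeta_\eta)\subset(-2\pi,2\pi)$ one has $\phi/2\in(-\pi,\pi)$, so $\sin(\phi/2)$ has the same sign as $\phi$, hence $\phi\sin(\phi/2)\geq 0$. Consequently
\[
(\phi-\zeta_0)V_\DSG'(\phi)=-\frac{\phi\sin(\phi/2)}{2(1+4|\eta|)}\leq 0 \quad\text{on } (-\zeta_\eta,\zeta_\eta),
\]
which is precisely~\eqref{on:V}. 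Theorem~\ref{th:2} then delivers the asymptotic stability of $H_\DSG\{-\zeta_\eta,\zeta_\eta\}$.

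There is essentially no obstacle beyond a sign check: the heavy lifting—the factorization argument of \S\ref{s.4.6} and the virial analysis of Section~\ref{S:4}—has been carried out once and for all in Theorem~\ref{th:2}. The only mildly delicate point is checking that the kink's range stays inside $(-2\pi,2\pi)$, so that $\sin(\phi/2)$ has constant sign on each of the two halves; this reduces to the elementary inequality $\zeta_\eta<2\pi$, itself equivalent to $1/(4\eta)>-1$, i.e.\ to the hypothesis $\eta<-\tfrac14$.
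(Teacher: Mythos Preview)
Your proposal is correct and follows exactly the paper's approach: verify that the computed derivative $V_\DSG'(\phi)=-\frac{1}{2(1+4|\eta|)}\sin(\phi/2)$ satisfies $\phi\,V_\DSG'(\phi)\leq 0$ on $(-\zeta_\eta,\zeta_\eta)$ and invoke Theorem~\ref{th:2}. Your write-up is simply more explicit than the paper about checking~\eqref{on:W} and the range inclusion $(-\zeta_\eta,\zeta_\eta)\subset(-2\pi,2\pi)$, but the argument is the same.
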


\begin{remark}
We have not considered the case $\eta=-\frac 14$ since it corresponds to a degenerate potential, not entering the
general assumptions~\eqref{on:W}. See also (5) of Remark~\ref{rk:RS}.
\end{remark}

\end{document}